\newcommand{\rt}{\rightarrow}
\newcommand{\lrt}{\longrightarrow}
\newcommand{\lf}{\leftarrow}
\newcommand{\llf}{\longleftarrow}
\newcommand{\st}{\stackrel}
\newcommand{\im}{\mathsf{im}}
\def\ker{\operatorname{\mathsf{ker}}}
\def\cok{\operatorname{\mathsf{coker}}}
\newcommand{\D}{\mathbb{D} }
\newcommand{\Q}{\mathcal{Q} }
\newcommand{\Z}{\mathbb{Z} }
\newcommand{\I}{\mathcal{I} }
\newcommand{\C}{\mathscr{C} }
\newcommand{\G}{\mathcal{G} }
\newcommand{\m}{\mathfrak{m}}
\newcommand{\J}{\mathcal{J} }
\newcommand{\F}{\mathcal{F} }
\newcommand{\co}{{\scriptstyle \bf C}\frak{oh}}
\newcommand{\p}{\mathfrak{p} }
\newcommand{\op}{{\rm{op}}}
\def\proj{\operatorname{\mathsf{proj}}}
\def\inj{\operatorname{\mathsf{inj}}}
\newcommand{\id} {\mathsf{id}}
\def\fl{\operatorname{\mathsf{fl}}}
\def\Ext{\operatorname{{\mathsf{Ext}}}}
\def\hom{\operatorname{{\mathsf{Hom}}}}
\def\Tor{\operatorname{\mathsf{Tor}}}
\def\hom{\operatorname{\mathsf{Hom}}}
\def\H{\operatorname{\mathsf{H}}}
\def\O{\mathcal{O}}
\def\F{\mathcal{F}}
\def\G{\mathcal{G}}
\def\md{\operatorname{\mathsf{mod}}}
\def\si{\mathsf{\Sigma}}
\def\syz{\mathsf{\Omega}}
\def\coh{\operatorname{\mathsf{coh}}}
\def\qcoh{\operatorname{\mathsf{Qcoh}}}
\def\g{\mathbf g}
\def\a{\mathsf {a}}
\def\x{\mathsf {X}}
\def\pp{\mathsf {P}}
\def\f{\mathsf {F}}
\def\fl{\mathsf {Flat}}
\def\ch{\mathsf {Ch}}
\def\h{\mathbf h}
\def\L{\mathcal L}
\def\p{\mathcal P}
\def\A{\mathcal A}
\def\f{\mathbf f}
\def\g{\mathbf g}
\def\h{\mathbf h}
\def\U{\mathcal U}
\def\co{\operatorname{\mathsf{cone}}}
\def\ruf{\operatorname{\mathsf{RUF}}}
\def\luf{\operatorname{\mathsf{LUF}}}
\def\al{\operatorname{\boldsymbol{\alpha}}}
\def\be{\operatorname{\boldsymbol{\beta}}}
\def\ga{\operatorname{\boldsymbol{\gamma}}}
\def\th{\operatorname{\boldsymbol{\theta}}}
\def\ep{\operatorname{\boldsymbol{\epsilon}}}
\def\et{\operatorname{\boldsymbol{\eta}}}
\newtheorem{theorem}{Theorem}[section]
\newtheorem{cor}[theorem]{Corollary}
\newtheorem{lem}[theorem]{Lemma}
\newtheorem{prop}[theorem]{Proposition}
\theoremstyle{definition}
\newtheorem{dfn}[theorem]{Definition}
\newtheorem{example}[theorem]{Example}
\newtheorem{conv}[theorem]{Convention}
\newtheorem{rem}[theorem]{Remark}
\newtheorem{s}[theorem]{}
\theoremstyle{plain}
\theoremstyle{definition}
\numberwithin{equation}{section}
\begin{document}

\title[phantom stable categories of $n$-Frobenius categories]
{phantom stable categories of $n$-Frobenius categories}
\author[Bahlekeh, Fotouhi, Salarian and Sartipzadeh]{Abdolnaser Bahlekeh, Fahimeh Sadat Fotouhi, Shokrollah Salarian and Atousa Sartipzadeh}

\address{Department of Mathematics, Gonbad Kavous University, Postal Code:4971799151, Gonbad Kavous, Iran}
\email{bahlekeh@gonbad.ac.ir}

\address{ School of Mathematics, Institute for Research in Fundamental Science (IPM), P.O.Box: 19395-5746, Tehran, Iran}\email{ffotouhi@ipm.ir}

\address{Department of Pure Mathematics, Faculty of Mathematics and Statistics,
University of Isfahan, P.O.Box: 81746-73441, Isfahan,
 Iran and \\ School of Mathematics, Institute for Research in Fundamental Science (IPM), P.O.Box: 19395-5746, Tehran, Iran}
 \email{Salarian@ipm.ir}

\address{Department of Pure Mathematics, Faculty of Mathematics and Statistics,
University of Isfahan, P.O.Box: 81746-73441, Isfahan, Iran}
 \email{asartipz@gmail.com}

\subjclass[2010]{18E10, 18G15, 18E35, 14F05}

\keywords{$n$-Frobenius category;  phantom stable category; $n$-$\Ext$-phantom morphism; semi-separated noetherian scheme}
\thanks{{This work is based upon research funded by Iran National Science Foundation (INSF) under project No. 4001480.} The research of the second author was in part supported by a grant from IPM}

\begin{abstract}
Let $n$ be a non-negative integer. An exact category $\C$ is said to be an $n$-Frobenius category, provided that
it has enough $n$-projectives and $n$-injectives and the $n$-projectives coincide with the $n$-injectives. It is proved that any abelian category with non-zero $n$-projective objects, admits a non-trivial $n$-Frobenius subcategory. In particular, we explore several examples of $n$-Frobenius categories. Also, as a far-reaching generalization of the stabilization of a Frobenius category, we introduce and study the notion of the phantom stable category of an $n$-Frobenius category $\C$. Precisely,
assume that $\p\subseteq\Ext^n_{\C}$ is the subfunctor consisting of all conflations of length $n$ factoring through $n$-projective objects. A couple $(\C_{\p}, T)$, where $\C_{\p}$ is an additive category and $T$ is a covariant additive functor from $\C$ to $\C_{\p}$, is a phantom stable category of $\C$, provided that for any morphism $f$ in $\C$, $T(f)=0$, whenever $f$ is an $n$-$\Ext$-phantom morphism and $T(f)$ is an isomorphism in $\C_{\p}$, if $f$ acts as invertible on $\Ext^n/{\p}$, and $T$ has the universal property with respect to these conditions. The main focus of this paper is to show that the phantom stable category of an $n$-Frobenius category always exists. Some properties of phantom stable categories that reveal the efficiency of these categories are studied.
\end{abstract}
\maketitle

\tableofcontents

\section{Introduction}

Assume that $\A$ is an abelian category and $\C$ is a full additive subcategory of $\A$ which is closed under extensions. It is known that, the exact structure of $\A$ is inherited by $\C$, see \cite[Lemma 10.20]{buh}. {Assume that $n$ is a non-negative integer.} An extension which is obtained by splicing of $n$ conflations in $\C$, will be called a {\em conflation of length $n$}. For arbitrary objects $A, B\in\C$, the equivalence classes of all conflations of length $n$ in $\C$, $\Ext^n_{\C}(A, B)$, forms an abelian group with respect to the Baer sum operation. {In the case $n=0$, we set $\Ext^0_{\C}(A, B):=\hom_{\C}(A, B)$.} Moreover, the notion of $n$-projective and $n$-injective objects in $\C$, are defined in terms of the vanishing of $\Ext^{n+1}$ functor. Now we call $\C$ an {\it $n$-Frobenius category}, provided that it has enough $n$-projectives and $n$-injectives, and the $n$-projectives coincide with the $n$-injectives. Assume that $\C$ is an $n$-Frobenius category. Then, for any $k\geq 1$, a given object $N\in\C$ fits into conflations of length $k$, say $\syz^kN\rt P_{k-1}\rt\cdots\rt P_0\rt N$, where $P_i$'s are $n$-projective, which will be called {\it unit conflations}. Also, $\syz^kN$ is said to be a $k$-th syzygy of $N$. We set $\mathsf{H}:=\bigcup_{M, N\in\C}\hom_{\C}(M, N)$ and $\Ext^n:=\bigcup_{M, N\in\C}\Ext^n_{\C}(M, \syz^nN)$, where $\syz^nN$ runs over all the $n$-th syzygies of $N$. For any $f,g\in\H$, the pull-back and push-out of any conflation of length $n$ along $f$ and $g$ are again conflations of length $n$. These operations, which abbreviately are denoted by $\Ext^nf$ and $g\Ext^n$, respectively, induce an $\H$-bimodule structure on $\Ext^n$. We denote by $\p$ the subfunctor of $\Ext^n_{\C}$ consisting of all conflations of the form $\Ext^nf$, for some $f:M\rt P$ of $\H$, where $P$ is an $n$-projective object of $\C$. Particularly, Proposition \ref{equal} indicates that $\p$ is a submodule of $\Ext^n$, and then, $\Ext^n/{\p}$ will be an $\H$-bimodule. It is proved in Section 6 that if $f$ annihilates $\Ext^n/{\p}$ from the left, then it annihilates this module from the right and vice versa. Similarly, we establish that an element $g\in\H$ acts as invertible on $\Ext^n/{\p}$ from the left and the right, simultaneously, see Corollaries \ref{lr} and \ref{qo}. Now consider two classes of morphisms in $\H$, as follows:
\begin{itemize}\item The class of all morphisms in $\H$ annihilating $\Ext^n/{\p}$, that is called $n$-$\Ext$-phantom morphisms. For the historical remark on phantom morphisms, see \ref{s100}. \item The class of all morphisms in $\H$ acting as invertible on $\Ext^n/{\p}$, which will be called quasi-invertible morphisms.
\end{itemize}

By a {\it phantom stable category of $\C$}, we mean an additive category $\C_{\p}$, together with a covariant additive functor $T:\C\lrt\C_{\p}$ such that:\\ (1) $T(s)$ is an isomorphism in $\C_{\p}$, for any quasi-invertible morphism $s$. \\(2) For any $n$-$\Ext$-phantom morphism $\varphi$, $T(\varphi)=0$ in $\C_{\p}$. \\(3) Any covariant additive functor $T':\C\lrt\D$ satisfying the conditions (1) and (2), factors in a unique way through $T$.

In this paper, first we provide some important examples of $n$-Frobenius categories, and then we show that for any $n$-Frobenius category $\C$, the phantom stable category $(\C_{\p}, T)$ exists, see Theorem \ref{thmst}. Our formalism reveals that the phantom stabilization of an $n$-Frobenius category is an efficient and natural extension of the classical stable category of a Frobenius category. Indeed, in the case $n=0$, morphisms factoring through projective objects are $n$-$\Ext$-phantom morphisms, which is an ideal of $\H$. Particularly, in order to examine the efficiency of phantom stable categories, it is proved that for given two objects $M, N\in\C$ and arbitrary syzygies $\syz M$ and $\syz N$ (with respect to $n$-projectives), there is an induced map $\hom_{\C_{\p}}(M, N)\lrt\hom_{\C_{\p}}(\syz M, \syz N)$, which is an isomorphism, see Theorem \ref{syziso}.


Our motivation in studying the $n$-Frobenius categories and then introducing the concept of the phantom stable categories comes from the fact that there are categories that rarely have enough projectives or even, have no projective objects. However, they have often enough $n$-projectives or their subcategories of $n$-projective objects are non-trivial, for some integer $n\geq 1$. For instance, there are no projective objects in the category of quasi-coherent sheaves over the projective line $\mathbf{P^1}(R)$, where $R$ is a commutative ring with identity, see \cite[Corollary 2.3]{ee} and also \cite[Exercise III 6.2(b)]{har}. However, as proved by Serre, the category of coherent sheaves over a projective scheme, has enough locally free sheaves, see \cite[Corollary 5.18]{har}. More generally, the argument given in the proof of \cite[Lemma 1.12]{or} reveals that for a semi-separated noetherian scheme $\x$ of finite Krull dimension, there exists a non-negative integer $n$ such that locally free sheaves of finite rank, are $n$-projective objects in the category of coherent sheaves, $\coh(\x)$, and in particular, the subcategory of $n$-projective objects of $\coh(X)$, is non-trivial.

Assume that $\A$ is an abelian category such that its subcategory of $n$-projective objects is non-trivial, for some integer $n\geq 1$. Then it is proved that $\A$ admits an $n$-Frobenius subcategory $\C$, see Theorem \ref{subcat}. We should stress that this fact is conceivable, as it is known that any abelian category with non-zero projective objects admits a 0-Frobenius subcategory. Assume that $\x$ is a semi-separated noetherian scheme of finite Krull dimension. As we have already mentioned, the category of locally free sheaves of finite rank, $\L$, is a subcategory of $n$-$\proj\coh(\x)$, for some non-negative integer $n$. We will see that the subcategory consisting of all syzygies of complete resolutions of locally frees of finite rank, $\C(\L)$, is an $n$-Frobenius subcategory of $\coh\x$, and in particular, $n$-$\proj\C(\L)=\L$, see Proposition \ref{locally}.

In order to explore more examples of phantom stable categories, we consider the category of complexes of flat $\O_{\x}$-modules, $\ch(\fl\x)$, over the scheme $\x$. Pursuing the argument given in \cite[page 28]{ha1}, yields that this is an exact category, with the exact structure being short exact sequences of complexes. We will see in Theorem \ref{fp}, that the aforementioned category is $n$-Frobenius, for some integer $n\geq 0$, and in particular, its $n$-projective objects (and then $n$-injective objects), are exactly the flat complexes, i.e., those acyclic complexes with flat kernels, see \cite[Definition 2.5]{e}.

The paper is organized as follows. In Section 2, we study $n$-Frobenius categories and explore some examples of such categories. {It will be observed that semi-separated noetherian schemes of finite Krull dimension are good venues for searching such examples.} It is shown that any abelian category with non-zero $n$-projective objects admits an $n$-Frobenius subcategory. Assume that $\x$ is a semi-separated noetherian scheme of finite Krull dimension. Then we will see that $\C(\L)$ is an $n$-Frobenius subcategory of $\coh(\x)$, for some integer $n$. Also, it is proved that the category $\C(\fl\x)$ consisting of all syzygies of complete resolution of flat sheaves, is an $n$-Frobenius subcategory of $\qcoh(\x)$, for some non-negative integer $n$, and $n$-$\proj\C(\fl\x)=\fl\x$. In Section 3, we study those morphisms in $\H$ acting as invertible on $\Ext^{n+1}_{\C}$. It is shown that a given morphism $f$ acts as invertible on $\Ext^{n+1}_{\C}$ from the left if and only if it acts as invertible from the right. These morphisms will be called quasi-invertible morphisms.
Section 4 is devoted to studying conflations factoring through $n$-projective objects, namely, those arising from pull-back along morphisms ending at $n$-projectives. This class of conflations forms a subfunctor of $\Ext^n_{\C}$ and will be denoted by $\p$. It is proved that a given conflation lies in $\p$ if and only if it is obtained from push-out along a morphism starting at an $n$-projective object, and so, $\p$ is an $\H$-submodule of $\Ext^n$. In Section 5, we show that any conflation $\ga$ in $\C$ can be represented as a pull-back as well as a push-out of unit conflations. These representations will be called a right (left) unit factorization of $\ga$. In Section 6, we investigate those morphisms annihilating the module $\Ext^{n+1}_{\C}$. It is shown that a given object $f\in\H$ annihilates $\Ext^{n+1}_{\C}$ from the left if and only if it annihilates from the right. We call such a morphism $f$, as an $n$-$\Ext$-phantom morphism. In Section 7, we introduce a composition operator $``\circ"$ on $\Ext^n/{\p}$. It is shown that this operator is associative, also distributive over the Baer sum on both sides. In particular, for any object $M$, $\Ext^n_{\C}(M, \syz^nM)/{\p}$ has a ring structure with identity, and also, there exists a ring homomorphism $\hom_{\C}(M, M)\lrt\Ext^n_{\C}(M, \syz^nM)/{\p}$ sending quasi-invertible morphisms to invertible elements. In Section 8, we consider an equivalence relation on $\Ext^n/{\p}$. It is observed that this relation is compatible with the composition $``\circ"$ as well as the Baer sum operation. In the paper's final section, we show that the phantom stable category $(\C_{\p}, T)$ of an $n$-Frobenius category $\C$, always exists.

\section{$n$-Frobenius categories}

Let $\A$ be an abelian category and let $n$ be a non-negative integer. In this section, we study subcategories of $\A$ having enough $n$-projectives and $n$-injectives, and the class of $n$-projectives coincides with the class of $n$-injectives, which we call $n$-Frobenius categories. It is shown that if $\A$ has non-zero $n$-projective objects, then it admits a non-trivial $n$-Frobenius subcategory. Also, several examples of $n$-Frobenius categories are presented. Let us begin this section by stating our convention.

\begin{conv}Throughout the paper, $\A$ is an abelian category and $\C$ is a full additive subcategory of $\A$ which is closed under extensions. So, as we mentioned in the introduction, $\C$ becomes an exact category. We also assume that $(\x, \O_{\x})$ is a semi-separated, noetherian scheme of finite Krull dimension and all locally free sheaves are assumed to be of finite rank.
\end{conv}

\begin{dfn} We say that an extension of length $t\geq 1$, $0\rt B\rt X_{t-1}\rt\cdots\rt X_0\rt A\rt 0$ in $\A$, is a {\it conflation} of length $t$ in $\C$, provided that it is obtained by splicing $t$ conflations of length 1 in $\C$ and it will be denoted by $B\rt X_{t-1}\rt\cdots\rt X_{0}\rt A$. The set of all equivalence classes of such conflations of length $t$ will be depicted by $\Ext^t_{\C}(A, B)$. Also set $\Ext^0_{\C}(A, B):=\hom_{\C}(A, B)$. It is easily seen that for any $i\geq 0$, $\Ext^i_{\C}(-, -):\C^{\op}\times\C\lrt\mathbf{Ab}$ is a bifunctor.
Recall that two conflations $\ep, \ep'\in\Ext^t_{\C}(A, B)$ are equivalent, provided that there is a chain of conflations of length $t$, $\ep=\ep_0, \ep_1\cdots, \ep_k=\ep'$ such that for any $0\leq i\leq k-1$, we have either a morphism $\ep_i\rt\ep_{i+1}$ or a morphism $\ep_{i+1}\rt\ep_i$ with fixed ends, see \cite[Chapter VII, Proposition 3.1]{mit}.
Following Keller \cite{ke,ke1} conflations of length 1, will be called just conflations. In particular, if $B\st{f}\rt C\st{g}\rt A$ is a conflation, then $f$ (resp. $g$) is called an inflation (resp. a deflation). So if $\ep:B\rt X_{t-1}\rt\cdots\rt X_0\rt A$ is a conflation of length $t$, then $\ep=\ep_{t-1}\cdots\ep_0$, where $\ep_i^,s$ are conflations with compatible ends.
\end{dfn}

\begin{dfn}Let $n$ be a non-negative integer.\\
(1) A given object $P\in\C$ (resp. $I\in\C$) is said to be an {\it $n$-projective} (resp. {\it $n$-injective}) object of $\C$, if $\Ext^i_{\C}(P, X)=0$ (resp. $\Ext^i_{\C}(X, I)=0$) for all integers $i>n$ and all objects $X\in\C$. The class of all $n$-projective (resp. $n$-injective) objects will be denoted by $n$-$\proj\C$ (resp. $n$-$\inj\C$). \\
(2) The category $\C$ is said to have enough $n$-projectives, provided that each object $M$ in $\C$ fits into a deflation $P\rt M$ with $P$ $n$-projective. Dually one has the notion of having enough $n$-injectives.\\
(3) We say that the exact category $\C$ is {\it $n$-Frobenius}, if
$\C$ has enough $n$-projectives and $n$-injectives, and $n$-$\proj\C$ coincides with $n$-$\inj\C$.
\end{dfn}

\begin{rem}\label{remexam}(1) It follows from the definition that $0$-Frobenius categories are indeed the usual notion of Frobenius categories.\\ (2) If $\C$ is an $n$-Frobenius category, then it will be an $i$-Frobenius category, for any $i\ge n$. In particular, $n$-$\proj\C= i$-$\proj\C$.
\end{rem}

{
\begin{rem}

In the literature, there have appeared two concepts related to the notion of Frobenius categories, known as {\em Frobenius $n$-exact categories and Frobenius $n$-exangulated categories}, where $n$ is a positive integer. These notions have been introduced by Jasso \cite{ja} and Liu- Zhou \cite{lz}, respectively. \\ Extriangulated categories were introduced by Nakaoka and Palu \cite{np} by extracting those properties of $\Ext^1$ on exact categories and on triangulated categories that seem relevant
from the point of view of cotorsion pairs. As a higher dimensional analogue of this notion, Herschend, Liu, and Nakaoka \cite{hln, hln1} defined $n$-exangulated categories, for any positive integer $n$, giving also a reasonable
generalization of $n$-exact categories and $(n + 2)$-angulated categories, which has been introduced in \cite{gko}.\\
Assume that $(\mathcal{C}, \mathbb{E}, \frak{s})$ is an $n$-exangulated category. An object $P\in\mathcal{C}$ is projective, if for any distinguished $n$-exangle
$A_0\st{\alpha_0}\lrt A_1\st{\alpha_1}\lrt\cdots\rt A_{n}\st{\alpha_n}\lrt A_{n+1}\st{\delta}\dashrightarrow$ and any morphism $c\in\hom_{\mathcal{C}}(P, A_{n+1})$, there exists a morphism $b\in\hom_{\mathcal{C}}(P, A_n)$ such that $\alpha_n\circ b=c$. Moreover, $\mathcal{C}$ is said to have enough projective objects, provided that for any object $C\in\mathcal{C}$, there exists a distinguished $n$-exangle $$B\st{\alpha_0}\lrt P_1\st{\alpha_1}\lrt P_2\st{\alpha_2}\lrt\cdots\st{\alpha_{n-1}}\lrt P_n\st{\alpha_n}\lrt C\st{\delta}\dashrightarrow,$$ where $P_1,\cdots, P_n$ are projective. The notion of injective objects and also, having enough injective objects are defined dually. Now $\mathcal{C}$ is said to be a {\em Frobenius $n$-exangulated category}, if it has enough projectives, enough injectives, and the projectives coincide with the injectives, see \cite[Definition 3.2]{lz}.
The Frobenius $n$-exangulated category is called a {\em Frobenius $n$-exact category}, whenever $(\mathcal{C}, \mathbb{E}, \frak{s})$ is an $n$-exact category, see \cite[Definition 5.5]{ja} and \cite[Remark 3.3]{lz}.\\ It is worth noting that, in the case $n=1$ and $(\mathcal{C}, \mathbb{E}, \frak{s})$ is an $n$-exact category, the above-mentioned concepts agree with the classical notion of Frobenius categories, in particular, these coincide with 0-Frobenius categories in our sense, see Remark \ref{remexam}.
\end{rem}
}

\begin{s} Assume that $\C$ is an $n$-Frobenius category. Then, for any $k\geq 1$, a given object $N\in\C$ fits into conflations of length $k$;
$\syz^kN\rt P_{k-1}\rt\cdots\rt P_0\rt N$ and $N\rt P^1\rt\cdots\rt P^k\rt\syz^{-k}N$
such that $P_i, P^i$'s are $n$-projective, which will be called {\it unit conflations}. Also, $\syz^kN$ is said to be a $k$-th syzygy of $N$. Clearly, unit conflations are not uniquely determined. We denote the class of all unit conflations of length $k$ ending at $N$ (resp. beginning with $N$) by $\U_k(N)$ (resp. $\U^k(N)$). Unit conflations usually will be depicted by $\delta$.
\end{s}

\begin{rem}\label{pp1} (1) Assume that $k\geq 1$ and there exists a commutative diagram

\begin{equation}\label{11}
\begin{split}
{\footnotesize \xymatrix{\alpha:N \ ~\ar[r] \ \ \ar[d]_f& \ \ X_{k-1}\ar[r] \ \ \ar[d]_{h_{k-1}}& \cdots \ar[r]& \ \ X_0\ar[r] \ \ \ar[d]_{h_0} & \ M\ar[d]_g\\ \be:N' \ ~\ar[r] \ \ &\ \ Y_{k-1}\ar[r] \ \ &\cdots\ar[r]& \ \ Y_0\ar[r] \ \ & M',}}
\end{split}
\end{equation}
{where rows are conflations. Then we claim that $f\al=\be g$. The case $k=1$ is indeed \cite[Chapter VII, Lemma 1.1]{mit}. So assume that $k\geq 2$ and take the pull-back diagram

{\footnotesize \[\xymatrix{\be g:N'\ ~\ar[r] \ \ \ar@{=}[d]& \ \ Y_{k-1}\ar[r] \ \ \ar@{=}[d]& \cdots \ar[r]& \ \ Y_1\ar[r] \ \ \ar@{=}[d] & \ \ T\ar[r] \ \ \ar[d] & \ M\ar[d]_g\\ \beta:N' \ ~\ar[r] \ \ &\ \ Y_{k-1}\ar[r] \ \ &\cdots\ar[r]& \ \ Y_1\ar[r]& \ \ Y_0\ar[r] \ \ & M'.}\]}The universal property of the pull-back yields the existence of the commutative diagram {\footnotesize \[\xymatrix{\al:N\ ~\ar[r] \ \ \ar[d]_{f}& \ \ X_{k-1}\ar[r] \ \ \ar[d]_{h_{k-1}}& \cdots \ar[r]& \ \ X_1\ar[r] \ \ \ar[d]_{h_1} & \ \ X_0\ar[r] \ \ \ar[d] & \ M\ar@{=}[d]\\ \be g:N' \ ~\ar[r] \ \ &\ \ Y_{k-1}\ar[r] \ \ &\cdots\ar[r]& \ \ Y_1\ar[r]& \ \ T\ar[r] \ \ & M.}\]}Similarly, taking the push-out of $\al$ along $f$ and considering the latter diagram, the universal property of the push-out would give us the commutative diagram
{\footnotesize \[\xymatrix{f\al:N'\ ~\ar[r] \ \ \ar@{=}[d]& L\ar[r]\ar[d]& \ \ X_{k-2}\ar[r] \ \ \ar[d]_{h_{k-2}} & \cdots \ar[r] &\ \ X_1\ar[r] \ \ \ar[d]_{h_1} & \ \ X_0\ar[r] \ \ \ar[d] & \ M\ar@{=}[d]\\ \be g:N' \ ~\ar[r] \ \ &\ \ Y_{k-1}\ar[r] \ \ &\ \ Y_{k-2}\ar[r] \ \ &\cdots\ar[r]& \ \ Y_1\ar[r]& T\ar[r] \ \ & M.}\]}}Consequently, applying \cite[Chapter VII, Proposition 3.1]{mit} yields that $f\al=\be g$. In particular, {if $f$ (resp. $g$) is the identity morphism, then (\ref{11}) is a pull-back (resp. push-out) diagram. }\\ (2) Assume that $\ga\in\Ext^k_{\C}(C, A) $ and take two morphisms $h:C'\rt C$ and $l:A\rt A'$ in $\C$. It can be easily seen that there is a commutaive diagram
{\footnotesize \[\xymatrix{\gamma h: A \ \ ~\ar[r] \ \ \ar[d]_l& \ \ X_{k-1}\ar[r] \ \ \ar[d]& \cdots \ar[r]& \ \ X_0\ar[r] \ \ \ar[d] & \ C'\ar[d]_h\\ \ l\gamma:A' \ \ ~\ar[r] \ \ \ &\ \ Y_{k-1}\ar[r] \ \ &\cdots\ar[r]& \ \ Y_0\ar[r] \ \ & C.}\]}
So, as we have observed just above, $l(\ga h)=(l\ga)h$, see also \cite[Page 171, (2)]{mit}.
\end{rem}

\begin{s}
Set $\H:=\bigcup_{M,N\in\C}\hom_{\C}(M, N)$ and $\Ext^n:=\bigcup_{M,N\in\C}\Ext^n_{\C}(M, \syz^nN)$, where $\syz^nN$ runs over all the $n$-th syzygies of $N$. \\
Assume that $\ga\in\Ext^n_{\C}(M, \syz^n N)$, $a\in\hom_{\C}(M', M)$ and $b\in\hom_{\C}(\syz^nN, {\syz'}^nN)$. According to Remark \ref{pp1}, we have $b(\ga a)=(b\ga)a$. So $\Ext^n$ has an $\H$-bimodule structure.
\end{s}

\begin{rem}\label{zero}Assume that $f:M\rt N$ is a morphism in $\C$. So, for any $X\in\C$ and $k\geq 0$, one has the induced morphism $\Ext^k_{\C}(X, M)\rt\Ext^k_{\C}(X, N)$ mapping each $\ga$ to $f\ga$, the push-out of $\ga$ along $f$. Similarly, $\Ext^k_{\C}(N, X)\rt\Ext^k_{\C}(M, X)$ sends each object $\al$ to $\al f$, the pull-back along $f$. These morphisms can be interpreted as multiplication by $f$ from the left and the right, respectively. In this perspective, if there is no ambiguity, we denote the both morphisms again by $\f$. One should note that, in the case $k=0$, since $\Ext^0_{\C}(-, -)=\hom_{\C}(-, -)$, $f\ga$ and $\al f$ are just the composition of morphisms.
\end{rem}

In the sequel, we will see that any abelian category with non-zero $n$-projective objects, admits a non-trivial $n$-Frobenius subcategory. First, we state a definition.\\
\begin{dfn} An acyclic complex of $n$-projective objects $\mathbf{P^{\bullet}}:\cdots\lrt P^{i-1}\st{d^{i-1}}\lrt P^i\st{d^i}\lrt P^{i+1}\lrt\cdots$ is said to be {\it a complete resolution of $n$-projective objects}, if $\Ext^{n+1}_{\A}(\im d^i, Q)=0$, for any $i\in\Z$ and $Q\in n$-$\proj\A$.\\
Assume that $\I$ is a resolving subcategory of $n$-$\proj\A$, that is, in any conflation $P'\rt P\rt P''$, with $P''\in\I$, we have $P'\in\I$ if and only if $P\in\I$. Assume that $\C(\I)$ is the subcategory of $\A$ consisting of all objects $M$ which is a syzygy of a complete resolution of objects in $\I$, i.e., an acyclic complex $\mathbf{P^{\bullet}}:\cdots\lrt P^{i-1}\st{d^{i-1}}\lrt P^i\st{d^i}\lrt P^{i+1}\lrt\cdots,$ in $\I$ such that $\Ext^{n+1}_{\A}(\im d^i, Q)=0$, for any $i\in\Z$ and $Q\in\I$.
If $\I=n$-$\proj\A$, instead of $\C(n$-$\proj\A)$, we write $\C(\A).$ It is evident that $\C(\I)$ is a full subcategory of $\A$ containing all objects in $\I$ and it is closed under finite direct sums. Moreover, as the next proposition indicates, $\C(\I)$ is an exact category. First, we need a couple of preliminary lemmas. In the rest of this section, we assume that the abelian category $\A$ has non-zero $n$-projective objects.
\end{dfn}

We should mention that for a construction $\mathbf{P^{\bullet}}$, assume that $\C$ is an $n$-Frobenius category and $M\in\C$ is arbitrary. As $\C$ has enough $n$-(projective-injective) objects, one gets conflations $\syz M\rt P^0\rt M$ and $M\rt P^1\rt\syz^{-1}M$ in $\C$, where $P^0, P^1\in n$-$\proj\C$. Again, we may take conflations $\syz^2M\rt P^{-1}\rt\syz M$ and $\syz^{-1}M\rt P^2\rt\syz^{-2}M$ with $P^{-1},P^2\in n$-$\proj\C$. Now continuing this manner and splicing the resulting conflations together, yields the complete resolution of $n$-projective objects $\mathbf{P^{\bullet}}:\cdots\lrt P^{i-1}\st{d^{i-1}}\lrt P^i\st{d^i}\lrt P^{i+1}\lrt\cdots$.

\begin{lem}\label{000}{The following statements are satisfied:
\begin{enumerate}\item Let $Q\rt M\st{f}\rt N$ be a conflation in $\A$ such that $Q\in\I$. Then for any $X\in\C(\I)$ and $\ga\in\Ext^n_{\A}(X, N)$, there exists an object $\ga'\in\Ext^n_{\A}(X, M)$ such that $f\ga'=\ga$. \item Let $M\st{f}\rt N\rt Q$ be a conflation in $\A$ with $Q\in n$-$\proj\A$. Then for any $X\in\A$ and $\ga\in\Ext^n_{\A}(M, X)$, there exists an object $\ga'\in\Ext^n_{\A}(N,X)$ such that $\ga'f=\ga$.
\end{enumerate}}
\end{lem}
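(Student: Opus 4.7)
The plan is to reduce both parts to the classical long exact sequence of $\Ext^{*}_{\A}$ attached to the given conflation, combined with a single targeted vanishing of $\Ext^{n+1}_{\A}$. Since each conflation in the statement is an honest short exact sequence in the ambient abelian category $\A$, this long exact sequence is available in all degrees and the connecting maps correspond to Yoneda pull-back and push-out, which is exactly the notation $\ga'\mapsto\ga' f$ and $\ga'\mapsto f\ga'$ used in the paper.

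For part (2), I would apply the contravariant functor $\Ext^{*}_{\A}(-,X)$ to the short exact sequence $M\st{f}\rt N\rt Q$ and extract the exact piece
\[\Ext^n_{\A}(N,X)\st{\cdot f}{\lrt}\Ext^n_{\A}(M,X)\lrt\Ext^{n+1}_{\A}(Q,X).\]
By hypothesis $Q\in n$-$\proj\A$, so the right-hand group vanishes by the very definition of $n$-projectivity. Consequently $\cdot f$ is surjective and every $\ga\in\Ext^n_{\A}(M,X)$ lifts to some $\ga'\in\Ext^n_{\A}(N,X)$ with $\ga' f=\ga$.

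For part (1), I would dually apply the covariant functor $\Ext^{*}_{\A}(X,-)$ to the short exact sequence $Q\rt M\st{f}\rt N$ and extract
\[\Ext^n_{\A}(X,M)\st{f\cdot}{\lrt}\Ext^n_{\A}(X,N)\lrt\Ext^{n+1}_{\A}(X,Q).\]
The crux is now to show $\Ext^{n+1}_{\A}(X,Q)=0$. This is exactly what the standing assumption $X\in\C(\I)$ provides: by definition, $X$ is a syzygy in some complete $\I$-resolution $\mathbf{P^{\bullet}}$, so $X=\im d^i$ for some $i$, and the defining condition of $\C(\I)$ forces $\Ext^{n+1}_{\A}(\im d^i,Q')=0$ for every $Q'\in\I$. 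Applying this with $Q'=Q$ yields the required vanishing, and surjectivity of $f\cdot$ then produces the desired $\ga'\in\Ext^n_{\A}(X,M)$ with $f\ga'=\ga$.

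I do not anticipate any serious obstacle. The only point that needs verification is the identification of the algebraic connecting maps in the long exact $\Ext$-sequence with the geometric operations of push-out along $f$ and pull-back along $f$, which is standard for Yoneda $\Ext$; once that is in place, both statements reduce to a one-line vanishing argument, drawing respectively on the definition of $n$-projectivity (for (2)) and on the defining property of $\C(\I)$ (for (1)). Neither the exact subcategory $\C$ nor any $n$-Frobenius structure enters the argument, which is why the lemma can be stated and proved purely in $\A$.
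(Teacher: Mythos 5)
Your proof is correct and follows essentially the same route as the paper: the paper also invokes the long exact $\Ext$-sequence (citing Mitchell, Chapter VII, Theorem 5.1) together with the vanishing $\Ext^{n+1}_{\A}(X,Q)=0$ supplied by $Q\in\I$ and $X\in\C(\I)$ to get surjectivity of $f\cdot$ in part (1), and handles part (2) dually via $\Ext^{n+1}_{\A}(Q,X)=0$. Nothing further is needed.
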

\begin{proof}We only prove the statement (1), the other one is obtained dually. Since $Q\in\I$ and $X\in\C(\I)$, $\Ext^{n+1}_{\A}(X, Q)=0$, and so, making use of \cite[Chapter VII, Theorem 5.1]{mit} forces
$\Ext^n_{\A}(X, M)\st{\f}\rt\Ext^n_{\A}(X, N)$ to be an epimorphism, where $\f=\Ext^n(X, f)$. Hence, there exists $\ga'\in\Ext^ n_{\A}(X, M)$ such that $f\ga'=\ga$, as needed.
\end{proof}

\begin{lem}\label{cok}The following assertions hold: \begin{enumerate}\item
Let $Q\rt M\st{f}\rt N$ be a conflation in $\A$ such that $Q\in\I$. If $N\in\C(\I),$ then so does $M$.\item Let $M\st{f}\rt N\rt Q$ be a conflation in $\A$ such that $Q\in\I$. If $M\in\C(\I)$, then so does $N$.\end{enumerate}
\end{lem}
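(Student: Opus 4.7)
Both parts of the lemma are dual, interchanging pullback with pushout, so I will concentrate on (1). Given a conflation $Q\rt M\st f\rt N$ with $Q\in\I$ and $N\in\C(\I)$, fix a complete resolution $\mathbf{P}^\bullet_N$ of $N$, so every $P^i_N\in\I$ and every syzygy $\im d^i_N\in\C(\I)$. The trivial complete resolution $\mathbf{P}^\bullet_Q:\cdots\rt 0\rt Q\st{\id}\rt Q\rt 0\rt\cdots$ of $Q\in\I$ witnesses $Q\in\C(\I)$. The plan is to produce a complete resolution of $M$ by splicing $\mathbf{P}^\bullet_N$ with $\mathbf{P}^\bullet_Q$ through a chain-level representative of the extension class $\xi\in\Ext^1_{\A}(N,Q)$.

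For the left half of the candidate resolution, form the pullback
\[
\begin{array}{ccc}
P^0_M & \rt & P^0_N\\
\downarrow & & \downarrow\\
M & \st f\rt & N
\end{array}
\]
in $\A$. This yields conflations $\syz N\rt P^0_M\rt M$ and $Q\rt P^0_M\rt P^0_N$. The long exact sequence of $\Ext$ shows that $n\text{-}\proj\A$ is closed under extensions; combined with the resolving hypothesis on $\I$, the second conflation forces $P^0_M\in\I$. Splicing the first with the left half of $\mathbf{P}^\bullet_N$ past $\syz N$ gives a left $\I$-resolution of $M$ whose subsequent kernels coincide with syzygies of $\mathbf{P}^\bullet_N$, hence lie in $\C(\I)$.

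For the full complete resolution, assemble $\mathbf{P}^\bullet_M$ as the mapping cone of a chain map $\phi\colon\mathbf{P}^\bullet_N\rt\mathbf{P}^\bullet_Q[1]$ representing $\xi$: take $(\operatorname{cone}\phi)^i:=P^{i+1}_N\oplus P^{i+1}_Q$, which lies in $\I$ by closure under direct sums, and equip it with the standard twisted differential. Acyclicity of $\operatorname{cone}\phi$ follows from acyclicity of $\mathbf{P}^\bullet_N$ and $\mathbf{P}^\bullet_Q[1]$ via the short exact sequence of complexes $\mathbf{P}^\bullet_Q[1]\rt\operatorname{cone}\phi\rt\mathbf{P}^\bullet_N[1]$. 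Chasing syzygies along this short exact sequence at degree $-1$, one obtains a short exact sequence $Q\rt\im d^{-1}_{\operatorname{cone}\phi}\rt N$ whose extension class is $\xi$, so this syzygy is $M$; the left-half pullback square computed above is exactly the reflection of $\phi$ in the lowest nonzero degrees.

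The Ext condition $\Ext^{n+1}(\im d^i_{\operatorname{cone}\phi},Q')=0$ for $Q'\in\I$ is then immediate from the long exact sequence of $\Ext$ applied to the short exact sequence of complexes, using the Ext condition on $\mathbf{P}^\bullet_N$ and the trivial behaviour of $\mathbf{P}^\bullet_Q$. Part (2) is handled dually, placing $\mathbf{P}^\bullet_Q$ in the opposite half and replacing pullbacks by pushouts. The main technical obstacle is constructing $\phi$: for $n\geq 1$ the class $\xi$ is not representable by a single map $P^0_N\rt Q$ because $\Ext^1(P^0_N,Q)$ need not vanish, so $\phi$ must be built degree by degree, using the pullback squares above to handle the lowest degrees and using the vanishing $\Ext^{n+1}(\syz^i N,Q)=0$ encoded in $\C(\I)$ to propagate lifts through the entire complex. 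It is here that the resolving condition on $\I$ and the $(n+1)$-degree Ext vanishing of $\C(\I)$ jointly substitute for the naive horseshoe.
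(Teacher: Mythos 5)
Your left-hand construction is sound and is in fact exactly what the paper does on that side: the pullback of $P^0_N\rt N$ along $f$ produces a term $T'$ sitting in a conflation $Q\rt T'\rt P^0_N$, which lies in $\I$ because $\I$ is closed under extensions, and whose kernel over $M$ agrees with the old syzygy $\syz N$. The problem is the global assembly. You propose to realize the whole complete resolution of $M$ as the mapping cone of a chain map $\phi\colon\mathbf{P}^{\bullet}_N\rt\mathbf{P}^{\bullet}_Q[1]$, where $\mathbf{P}^{\bullet}_Q$ is the trivial resolution $\cdots\rt 0\rt Q\st{\mathrm{id}}\rt Q\rt 0\rt\cdots$. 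That complex is contractible, so \emph{every} chain map into it (or into its shift) is null-homotopic; consequently $\operatorname{\mathsf{cone}}(\phi)$ is isomorphic as a complex to the cone of the zero map, i.e.\ to a direct sum of shifts of $\mathbf{P}^{\bullet}_N$ and $\mathbf{P}^{\bullet}_Q$, and its syzygy at the splice point is the \emph{split} extension $N\oplus Q$, never $M$ unless the given conflation splits. The degreewise-split shape $(\operatorname{\mathsf{cone}}\phi)^i=P^{i+1}_N\oplus P^{i+1}_Q$ is already the obstruction: the terms the proof actually needs are non-split extensions of $P^1_N$ and $P^0_N$ by $Q$. Your closing remark that $\phi$ must be ``built degree by degree'' using $\Ext^{n+1}(\syz^iN,Q)=0$ does not repair this: lifting one degree at a time is obstructed by classes in $\Ext^2(-,Q)$, and the only vanishing you have is in degree $n+1$.

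What the paper does instead is lift a block of length $n$ in a single step. It takes the unit conflation $\ga\colon N\rt Q^1\rt\cdots\rt Q^n\rt\syz^{-n}N$ extracted from $\mathbf{P}^{\bullet}_N$ and uses $\Ext^{n+1}_{\A}(\syz^{-n}N,Q)=0$ (this is Lemma \ref{000}(1), valid because $\syz^{-n}N\in\C(\I)$ and $Q\in\I$) to conclude that $\Ext^n_{\A}(\syz^{-n}N,M)\st{\f}\rt\Ext^n_{\A}(\syz^{-n}N,N)$ is surjective, hence $\ga=f\ga'$ for some $\ga'\in\Ext^n_{\A}(\syz^{-n}N,M)$. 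The resulting morphism of conflations changes only the first term, replacing $Q^1$ by a generally non-split extension $T$ of $Q^1$ by $Q$ (again in $\I$ by closure under extensions) and leaving $Q^2,\dots,Q^n$ and all later syzygies untouched. Splicing $\cdots\rt Q^{-1}\rt T'\rt T\rt Q^2\rt\cdots$ then gives the complete resolution with $M=\im(T'\rt T)$. If you want to salvage your viewpoint, the correct object is a (not necessarily degreewise split) short exact sequence of complexes $\mathbf{P}^{\bullet}_Q\rt\mathbf{P}^{\bullet}_M\rt\mathbf{P}^{\bullet}_N$, and its existence is exactly what the one-shot length-$n$ lift provides; a mapping cone cannot supply it.
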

\begin{proof}(1) Since $N\in\C(\I)$, by the definition, there exists a complete resolution of objects in $\I$; $\cdots\rt Q^{-1}\st{d^{-1}}\rt Q^0\st{d^0}\rt Q^{1}\st{d^{1}}\rt Q^{2}\rt\cdots,$ such that $N=\im d^0$. Take the conflation $\ga:N\rt Q^{1}\rt\cdots\rt Q^{n}\rt\syz^{-n}N$ in $\A$. {Clearly, $\syz^{-n}N\in\C(\I)$. So} applying Lemma \ref{000}(1), gives us an object $\ga'\in\Ext^n_{\A}(\syz^{-n}N, M)$ such that $f\ga'=\ga$. Namely, we have the following push-out diagram:
{\footnotesize\[\xymatrix{\ga':M~ \ \ \ar[r]\ar[d]_{f} & \ \ T\ar[d]\ar[r] \ \ & \ \ Q^{2}\ar[r]\ \ \ar@{=}[d] &\cdots \ar[r]& \ \ Q^{n}\ar@{=}[d]\ar[r] \ \ & \ \ \syz^{-n}N\ar@{=}[d]\\ \ga:N~\ar[r] \ \ & \ \ Q^{1}\ar[r]\ \ & \ \ Q^{2}\ar[r]\ \ & \cdots \ar[r] &\ \ Q^{n}\ar[r] \ \ & \ \ \syz^{-n}N.}\]} {Next take the following pull-back diagram: \[\xymatrix{L~\ar[r]\ar@{=}[d]& T'\ar[r]\ar[d]& M\ar[d]_{f}\\ L~\ar[r] & Q^0\ar[r]& N.}\]
Since $\I$ is closed under extensions, one may get that $T$ and $T'$ belong to $\I$. In particular, we obtain the complete resolution of {objects in $\I$,} $\cdots\rt Q^{-1}\rt T'\rt T\rt Q^{2}\rt\cdots$ such that $M=\im(T'\rt T)$, and then, $M\in\C(\I)$.}\\ (2) This is obtained by dualizing the argument given in the first assertion, so we skip it. Thus the proof is completed.
\end{proof}

\begin{prop}\label{proj1}The category $\C(\I)$ is closed under extensions and kernels of epimorphisms.
\end{prop}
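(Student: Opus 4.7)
The plan is to verify the two closure properties separately—under extensions first, then under kernels of epimorphisms—by explicitly constructing a complete resolution of the target object out of complete resolutions of the given objects, using Lemmas \ref{000} and \ref{cok} together with the resolving hypothesis on $\I$ (which yields that $\I$ is closed under extensions, under kernels of deflations whose cokernel lies in $\I$, and under finite direct sums).

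For closure under extensions, suppose $L \to M \to N$ is a conflation with $L, N \in \C(\I)$. Fix complete resolutions $\mathbf{P_L^\bullet}$ and $\mathbf{P_N^\bullet}$ realizing $L$ and $N$ as prescribed syzygies and apply the horseshoe construction degreewise to obtain an acyclic complex $\mathbf{P_M^\bullet}$ with $P_M^i = P_L^i \oplus P_N^i \in \I$, fitting into a termwise split short exact sequence of complexes $\mathbf{P_L^\bullet} \to \mathbf{P_M^\bullet} \to \mathbf{P_N^\bullet}$. The induced conflations $\syz^i L \to \syz^i M \to \syz^i N$ at each syzygy level and the long exact sequence of $\Ext^\bullet_\A(-, Q)$ for $Q \in \I$ sandwich $\Ext^{n+1}_\A(\syz^i M, Q)$ between vanishing groups, so $\mathbf{P_M^\bullet}$ is complete and $M \in \C(\I)$.

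For closure under kernels of epimorphisms, suppose $L \to M \to N$ is a conflation with $M, N \in \C(\I)$. First reduce to the case where the right-hand term lies in $\I$: pick a deflation $P_N \to N$ with $P_N \in \I$ and kernel $\syz N \in \C(\I)$, and form the pull-back $M' := M \times_N P_N$. Then $M'$ fits simultaneously into $L \to M' \to P_N$ and $\syz N \to M' \to M$, and the already-proved extension case applied to the latter gives $M' \in \C(\I)$. It therefore suffices to prove: if $L \to M' \to P$ is a conflation with $M' \in \C(\I)$ and $P \in \I$, then $L \in \C(\I)$. To handle this, build the complete resolution of $L$ by assembling two halves. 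On the co-resolution side, splice $L \hookrightarrow M'$ into the co-resolution $0 \to M' \to Q^1 \to Q^2 \to \cdots$ of $M'$: the quotient $Q^1/L$ extends $\syz^{-1} M' \in \C(\I)$ by $P \in \I \subseteq \C(\I)$, hence lies in $\C(\I)$ by the extension case, and iterating with a complete resolution of $Q^1/L$ yields $0 \to L \to Q^1 \to R^2 \to \cdots$ with $R^i \in \I$ and syzygies in $\C(\I)$. On the projective side, pull back the first step $Q^0 \to M'$ of a complete resolution of $M'$ along $L \hookrightarrow M'$: the kernel $S^0$ of the composite $Q^0 \to M' \to P$ lies in $\I$ by the resolving property (both $Q^0$ and $P$ are in $\I$) and fits into $0 \to \syz M' \to S^0 \to L \to 0$ with $\syz M' \in \C(\I)$, so appending the projective resolution of $\syz M'$ completes the negative half. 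Gluing produces an acyclic complex of objects of $\I$ with $L$ as a syzygy, whose completeness at $L$ itself follows from the long exact sequence attached to $L \to M' \to P$ together with the completeness of the resolution of $M'$ and the $n$-projectivity of $P$, and whose completeness at the other syzygies is inherited from the complete resolutions of $M'$ and $Q^1/L$.

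The main obstacle is the kernel-of-epimorphism case: Lemma \ref{cok} yields membership in $\C(\I)$ only for the middle term of a conflation, whereas here the target $L$ appears as an outer term, so no direct appeal to Lemma \ref{cok} is available, and the complete resolution of $L$ must be assembled by hand. The delicate part is checking that every new syzygy produced by the pullbacks and pushouts genuinely lies in $\C(\I)$, while every new term of the resolution stays in $\I$.
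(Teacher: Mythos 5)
Your kernel-of-epimorphisms argument is essentially the paper's: the reduction via the pull-back $M'=M\times_N P_N$, the identification of $S^0=\ker(Q^0\rt M'\rt P)$ as an object of $\I$ via the resolving property, the conflation $\syz M'\rt S^0\rt L$, and the splicing of the two halves all reappear (with different labels) in the paper's proof, and your completeness checks at $L$ and at the new syzygies are correct. The problem is the extension case, on which everything else depends. The degreewise horseshoe construction does not go through when $n\geq 1$: to build the termwise split short exact sequence of complexes $\mathbf{P_L^{\bullet}}\rt\mathbf{P_M^{\bullet}}\rt\mathbf{P_N^{\bullet}}$ you must, on the resolution side, lift the deflation $P_N^{-1}\rt N$ through $M\rt N$, and on the co-resolution side, extend the inflation $L\rt P_L^{1}$ along $L\rt M$. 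The obstructions to doing so live in $\Ext^1_{\A}(P_N^{-1},L)$ and $\Ext^1_{\A}(N,P_L^{1})$ respectively, and objects of $\I$ are only $n$-projective: they kill $\Ext^{i}$ for $i>n$, not $\Ext^{1}$, so neither the lift nor the extension need exist. (For $n=0$ this is the classical horseshoe lemma and your argument is fine, but the content of the proposition is precisely the case $n\geq 1$.)

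The paper avoids this by never combining the two given resolutions termwise. Given $L\rt M\rt N$ with $L,N\in\C(\I)$, it pushes out the unit conflation $L\rt Q^{1}\rt\syz^{-1}L$ along the inflation $L\rt M$, obtaining $M\rt T\rt\syz^{-1}L$ together with a conflation $Q^{1}\rt T\rt N$; Lemma \ref{cok}(1) then puts $T$ in $\C(\I)$, a fresh inflation $T\rt P^{1}$ (available because $T\in\C(\I)$) supplies the next term of the co-resolution of $M$, and the relevant $\Ext^{n+1}$-vanishing for the new cokernel $G^{1}$ is read off from the conflation $\syz^{-1}L\rt G^{1}\rt\syz^{-1}T$. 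Iterating, and dualizing for the other half, produces a complete resolution of $M$ whose terms are \emph{not} the direct sums $P_L^{i}\oplus P_N^{i}$. If you replace your horseshoe step by this push-out/Lemma \ref{cok} mechanism, the remainder of your write-up goes through as stated.
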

\begin{proof}First let us prove that $\C(\I)$ is closed under extensions. To do this, assume that $M\st{f}\rt N\st{g}\rt K$ is a conflation in $\A$ such that $M, K\in\C(\I)$. We shall prove that $N\in\C(\I)$, as well. By the hypothesis, we may take a conflation $M\rt Q^{1}\rt\syz^{-1} M$ in $\A$ such that $Q^{1}\in\I$ and $\syz^{-1}M\in\C(\I)$. So considering the push-out diagram \[\xymatrix{M~\ar[r]\ar[d]_{f}& Q^{1}\ar[r]\ar[d]& \syz^{-1}M\ar@{=}[d]\\ N \ar[r] & T\ar[r] & \syz^{-1}M,}\] one obtains the conflation $Q^{1}\rt T\rt K$ in $\A$. As $K\in\C(\I)$, Lemma \ref{cok}(1) implies that $T\in\C(\I)$. This enables us to have the following commutative diagram: \[\xymatrix{N~\ar[r]\ar@{=}[d]& T\ar[r]\ar[d]& \syz^{-1}M\ar[d]_{f_1}\\ N~\ar[r] & P^{1}\ar[r]\ar[d]& G^1\ar[d]\\ & \syz^{-1}T~\ar@{=}[r] & \syz^{-1}T,}\] in which $P^{1}\in\I$ and $\syz^{-1}T\in\C(\I)$. As $\Ext^{n+1}_{\A}(\syz^{-1}M, Q)=0=\Ext^{n+1}_{\A}(\syz^{-1}T, Q)$ for any object $Q\in\I$, one deduces that $\Ext^{n+1}_{\A}(G^1, Q)=0$.
{Next consider a conflation $\syz^{-1}M\rt Q^2\rt\syz^{-2}M$, where $Q^2\in\I$ and $\syz^{-2}M\in\C(\I)$. Taking the push-out diagram \[\xymatrix{\syz^{-1}M~\ar[r]\ar[d]_{f_1}& Q^{2}\ar[r]\ar[d]& \syz^{-2}M\ar@{=}[d]\\ G^1 \ar[r] & T^1\ar[r] & \syz^{-2}M,}\] gives a conflation $Q^2\rt T^1\rt\syz^{-1}T$, in which $T^1\in\C(\I)$, thanks to Lemma \ref{cok}(1). Consequently, similar to the above, one may get the following commutative diagram: \[\xymatrix{G^1~\ar[r]\ar@{=}[d]& T^1\ar[r]\ar[d]& \syz^{-2}M\ar[d]_{f_2}\\ G^1~\ar[r] & P^{2}\ar[r]\ar[d]& G^2\ar[d]\\ & \syz^{-1}T^1~\ar@{=}[r] & \syz^{-1}T^1}\] such that $P^{2}\in\I$ and $\syz^{-1}T^1\in\C(\I)$. In particular,  the right-hand column, yields that $\Ext^{n+1}_{\A}(G^2, Q)=0,$ for any object $Q\in\I$.} Now, repeating this manner, gives rise to the existence of an acyclic complex $0\rt N\st{\epsilon}\rt P^{1}\st{d^{1}}\rt P^{2}\st{d^{2}}\rt\cdots,$ where each $P^i$ belongs to $\I$ and $\Ext^{n+1}_{\A}(\im d^i, Q)=0$ for any object $Q\in\I$ and $i\geq 1$. {Moreover, a dual argument gives us an acyclic complex, $\cdots\rt P^{-1}\st{d^{-1}}\rt P^0\st{\epsilon'}\rt N\rt 0$, where $P^i\in\I$ and $\Ext^{n+1}_{\A}(\im d^i, Q)=0$, for all $i\leq -1$. }Thus $N\in\C(\I)$, as required.\\ Next, we show that $\C(\I)$ is closed under kernels of epimorphisms. So for a given conflation $M\st{f}\rt N\st{g}\rt K$ in $\A$ with $N, K\in\C(\I)$, we have to show that $M\in\C(\I)$. Since $N\in\C(\I)$, there exists a commutative diagram \[\xymatrix{M~\ar[r]\ar[d]_{f}& Q^{1}\ar[r]\ar@{=}[d]& T\ar[d]\\ N \ar[r] & Q^{1}\ar[r] & \syz^{-1}N,}\] with $\syz^{-1}N\in\C(\I)$ and $Q^{1}\in\I$. So applying the snake lemma gives the conflation $K\rt T\rt \syz^{-1}N$ in $\A$. Since $\syz^{-1}N, K\in\C(\I)$, as we have already seen, $T\in\C(\I)$. Thus, taking the right half of a complete resolution $0\rt T\rt P^1\st{d^2}\rt P^2\rt\cdots$ of $T$, yields an acyclic complex $0\lrt M\lrt Q^{1}\st{d^{1}}\lrt P^{1}\st{d^{2}}\lrt\cdots$, whenever all objects, except $M$, lie in $\I$ and $\Ext^{n+1}_{\A}(\im d^i, \I)=0$, for all $i$.
{Next take the following pull-back diagram: \[\xymatrix{& \syz K\ar@{=}[r]\ar[d]& \syz K\ar[d]\\ M~\ar[r]\ar@{=}[d] & L\ar[r]\ar[d]& P\ar[d]\\ M\ar[r] & N~\ar[r] & K,}\] where $P\in\I$. Since $N, \syz K\in\C(\I)$, by the first assertion, the same is true for $L$. So considering the pull-back diagram \[\xymatrix{\syz L~\ar[r]\ar@{=}[d]& G\ar[r]\ar[d]& M\ar[d]\\ \syz L~\ar[r] & P'\ar[r]\ar[d]& L\ar[d]\\ & P~\ar@{=}[r] & P,}\] and using the fact that $\I$ is resolving, we infer that $G\in\I$. This, in conjunction with $\syz L$ being in $\C(\I)$, would guarantee the existence of a left resolution of objects in $\I$ for $M$; $\cdots\rt P^{-1}\st{d^{-1}}\rt P^0\st{d^0}\rt G\rt M\rt 0$, such that $\Ext^{n+1}_{\A}(\im d^i, \I)=0$ for all $i\leq 0$. Consequently, $M\in\C(\I)$,} as required.
\end{proof}

\begin{rem}\label{exact}Assume that $A\rt B\rt C$ is a conflation in $\C$. The same argument given in the proof of \cite[Chapter VII, Theorem 5.1]{mit} yields that for any object $X\in\C$
and any $n\geq 1$, there exists an exact sequence $$\Ext^{n-1}_{\C}(X, C)\rt\Ext^n_{\C}(X, A)\rt\Ext^n_{\C}(X, B)\rt\Ext^n_{\C}(X, C)\rt\Ext^{n+1}_{\C}(X, A).$$ Also, a dual argument gives us the exact sequence
$$\Ext^{n-1}_{\C}(A, X)\rt\Ext^n_{\C}(C, X)\rt\Ext^n_{\C}(B, X)\rt\Ext^n_{\C}(A, X)\rt\Ext^{n+1}_{\C}(C, X).$$
\end{rem}

\begin{theorem}\label{subcat}$\C(\I)$ is an $n$-Frobenius subcategory of $\A$.
\end{theorem}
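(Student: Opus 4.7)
The plan is to establish the Frobenius property in four stages: first setting up the exact structure, then providing enough $n$-projectives and $n$-injectives from the class $\I$ itself, then verifying that objects of $\I$ are simultaneously $n$-projective and $n$-injective in $\C(\I)$, and finally showing that in fact the two classes $n$-$\proj\C(\I)$ and $n$-$\inj\C(\I)$ coincide.

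By Proposition \ref{proj1}, $\C(\I)$ is a full additive subcategory of $\A$ closed under extensions, hence it inherits an exact structure whose conflations are precisely the short exact sequences of $\A$ with all three terms in $\C(\I)$. Given $M\in \C(\I)$, fix a chosen complete resolution $\cdots\rt P^{-1}\rt P^{0}\rt P^{1}\rt\cdots$ in $\I$ exhibiting $M$ as a syzygy; splicing at $M$ yields unit conflations $\syz M\rt P^{0}\rt M$ and $M\rt P^{1}\rt \syz^{-1}M$ inside $\C(\I)$, because $\syz M$ and $\syz^{-1}M$ remain syzygies of the same complete resolution. These supply a deflation onto $M$ and an inflation out of $M$ whose middle terms lie in $\I$.

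Next I would verify that every $Q\in\I$ is both $n$-projective and $n$-injective in $\C(\I)$. For $X\in\C(\I)$ and $i>n$, the long exact Ext sequences of Remark \ref{exact}, applied iteratively to the unit conflations above, allow one to dimension-shift $\Ext^{i}_{\C(\I)}(Q,X)$ and $\Ext^{i}_{\C(\I)}(X,Q)$ down to the case $i=n+1$. The natural comparison with $\Ext^{n+1}_{\A}$ then transfers the required vanishing from $\A$: on one side from $Q\in\I\subseteq n$-$\proj\A$, and on the other from the defining property $\Ext^{n+1}_{\A}(\im d^{i},\I)=0$ of complete resolutions of $\I$. Combined with the previous paragraph, this yields enough $n$-projectives and enough $n$-injectives in $\C(\I)$, all drawn from $\I$.

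The delicate point is to show $n$-$\proj\C(\I)=n$-$\inj\C(\I)$. Given $M\in n$-$\proj\C(\I)$ and an arbitrary test object $X\in\C(\I)$, I would resolve $X$ from the right using its own unit conflations in $\I$; the $n$-injectivity of each intermediate object (secured above) then lets one dimension-shift so as to express $\Ext^{n+1}_{\C(\I)}(X,M)$ in terms of Ext-groups where either the first argument lies in $\I$ (vanishing by the established $n$-injectivity) or else the second argument is $M$ in an index range where the $n$-projectivity of $M$ forces vanishing. A parallel dimension shift along a coresolution of $M$ by $\I$, combined with the preservation of $n$-projectivity under $\syz$ established via Remark \ref{exact}, closes the argument. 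The reverse inclusion $n$-$\inj\C(\I)\subseteq n$-$\proj\C(\I)$ is entirely dual.

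The main obstacle I anticipate is precisely this last step. For $n\geq 1$ the vanishing of $\Ext^{n+1}_{\C(\I)}(M,-)$ is strictly weaker than the splittability of the unit conflation exploited in the classical ($n=0$) Frobenius case, so the usual direct-summand argument is unavailable; one must instead carefully balance dimension shifts coming from the $M$-side against those coming from the $X$-side, relying heavily on the closure of $\C(\I)$ under kernels of deflations and cokernels of inflations afforded by Proposition \ref{proj1}. A subsidiary technical point, already needed in Step 3 above, is the injectivity of the comparison map $\Ext^{\ast}_{\C(\I)}\rt\Ext^{\ast}_{\A}$ on the relevant subgroups, so that the vanishing obtained in $\A$ really does transfer back into the exact structure of $\C(\I)$.
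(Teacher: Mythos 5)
Your first three steps (exact structure via Proposition \ref{proj1}, enough $n$-projectives and $n$-injectives supplied by the unit conflations of a complete resolution, and membership of $\I$ in both $n$-$\proj\C(\I)$ and $n$-$\inj\C(\I)$) are sound, and correspond to what the paper disposes of in a single sentence. The genuine gap is in your fourth step, exactly where you flag ``the delicate point'': the balanced dimension shifting you describe cannot close it. Every long exact sequence from Remark \ref{exact} that you can invoke either shifts the first argument (replacing $X$ by $\syz X$ or $\syz^{-1}X$) or shifts the second (replacing $M$ by $\syz M$ or $\syz^{-1}M$), but in all cases the group under study retains the shape $\Ext^{i}_{\C(\I)}(-,M')$ with a (co)syzygy of $M$ in the \emph{second} slot; concretely, $\syz M\rt Q\rt M$ gives $\Ext^{n+1}(X,M)\cong\Ext^{n+2}(X,\syz M)\cong\cdots$, while $M\rt P^1\rt\syz^{-1}M$ only exhibits $\Ext^{n+1}(X,M)$ as a quotient of $\Ext^{n}(X,\syz^{-1}M)$, which has no reason to vanish. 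The hypothesis $M\in n$-$\proj\C(\I)$ controls only groups $\Ext^{i}(M,-)$ with $M$ in the \emph{first} slot, and no combination of shifts on the two sides ever converts one shape into the other. Your phrase ``the second argument is $M$ in an index range where the $n$-projectivity of $M$ forces vanishing'' is the symptom: $n$-projectivity of $M$ says nothing about Ext groups with $M$ in the second argument.

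The idea actually needed, and supplied by the paper, is to convert the Ext-vanishing hypothesis into a \emph{factorization of a co-syzygy morphism through an object of $\I$}. For $N\in n$-$\inj\C(\I)$, the vanishing of $\Ext^{n+1}_{\C(\I)}(\syz^{-n-1}N,N)$ applied to the conflation $\syz^{-n}N\st{h}\rt P^{n+1}\rt\syz^{-n-1}N$ lifts the unit conflation $\delta_N\in\Ext^n_{\C(\I)}(\syz^{-n}N,N)$ to some $\ga\in\Ext^n_{\C(\I)}(P^{n+1},N)$ with $\ga h=\delta_N$; massaging $\ga$ into a unit conflation $\beta$ produces a second $n$-fold co-syzygy ${\syz'}^{-n}N$ of $N$ together with a comparison map $gh:\syz^{-n}N\rt{\syz'}^{-n}N$ that factors through $P^{n+1}\in\I$. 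Comparing the two resulting isomorphisms $\Ext^{n+1}_{\C(\I)}(N,X)\cong\Ext^{2n+1}_{\C(\I)}(\syz^{\pm})$ then forces $\Ext^{n+1}_{\C(\I)}(N,X)=0$ for every $X$, i.e.\ $N\in n$-$\proj\C(\I)$, with the converse dual. This lifting-and-factorization step is the entire content of the theorem and is absent from your outline. (Your subsidiary worry about the comparison map $\Ext^{\ast}_{\C(\I)}\rt\Ext^{\ast}_{\A}$ is legitimate but secondary: the paper works throughout with conflations inside the subcategory via Remark \ref{exact}, so that issue never arises in its argument.)
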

\begin{proof}First  note that by Proposition \ref{proj1}, $\C(\I)$ is an exact category. In view of the definition of $\C(\I)$, we only need to show that any $n$-injective object of $\C(\I)$ is also $n$-projective over $\C(\I)$ and vice versa. Take an object $N\in n$-$\inj\C(\I)$ and consider unit conflations $\delta_N:N\rt P^{1}\rt\cdots\rt P^{n}\lrt \syz^{-n}N$ and $\syz^{-n}N\st{h}\rt P^{n+1}\rt\syz^{-n-1} N$, where $P^i\in\I$, for any $i$. Since $\syz^{-n-1}N\in\C(\I)$, $\Ext^{n+1}_{\C(\I)}(\syz^{-n-1}N,N)=0$, there exists an object $\ga\in\Ext^n_{\C(\I)}( P^{n+1},N)$ such that $\ga h=\delta_N$. Namely, we have the following pull-back diagram:
{\footnotesize\[\xymatrix{\delta_N \ :N~ \ \ \ \ar[r] \ \ \ar@{=}[d] & \ \ P^{1}\ar@{=}[d]\ar[r]\ \ & \ \ \cdots \ar[r]\ \ &P^{n-1}\ar@{=}[d] \ar[r]& \ \ P^{n}\ar[d]\ar[r] \ \ & \ \ \syz^{-n}N\ar[d]^{h}\\ \ga : \ \ \ N~ \ \ \ar[r] \ \ \ & \ \ P^{1}\ar[r]\ \ & \ \ \cdots\ar[r] \ \ & P^{n-1} \ar[r] & \ \ H\ar[r] &P^{n+1}\ \ .}\]} As $H\in\C(\I)$, there is an inflation $H\rt Q$ with $Q\in\I$. So by taking the conflation $L\rt H\rt P^{n+1}$, one gets the following commutative diagram: \[\xymatrix{L~\ar[r]\ar@{=}[d]& H\ar[r]\ar[d]&P^{n+1} \ar[d]^{g}\\ L~\ar[r] & Q\ar[r]& \syz^{-1}L.}\] In particular, we have the following pull-back diagram of unit conflations:
\begin{equation}\label{xy}
\begin{split}
{\footnotesize \xymatrix{\delta_N : N~ \ \ \ \ar[r] \ \ \ar@{=}[d] & \ \ P^{1}\ar@{=}[d] \ \ \ar[r] \ \ & \ \ \cdots\ar[r] \ \ &P^{n-1} \ar[r] \ar@{=}[d]& \ \ P^{n}\ar[d]\ar[r] \ \ & \syz^{-n}N\ar[d]^{gh}\\ \beta : \ \ \ N~ \ \ \ar[r]\ \
\ & \ \ P^{1}\ar[r]\ \ & \cdots\ar[r]\ \ & P^{n-1} \ar[r] & \ \ Q\ar[r] \ \ &{\syz}^{-1}L}}
\end{split}
\end{equation}
One should note that, according to our notation, $\syz^{-1}L={\syz'}^{-n}N$.
Hence, for any object $X\in\C(\I)$, there is a commutative square{\footnotesize \[\xymatrix{\Ext_{\C(\I)}^{n+1}(N, X)~\ar[r]^{\cong}\ar@{=}[d] & \Ext_{\C(\I)}^{2n+1}({\syz'}^{-n}N, X)\ar[d]^{\g\h}\\ \Ext_{\C(\I)}^{n+1}(N, X)~\ar[r]^{\cong} & \Ext_{\C(\I)}^{2n+1}({\syz}^{-n}N, X).}\]}As $gh$ factors through $P^{n+1}$, the right column is zero, and then, $\Ext_{\C(\I)}^{n+1}(N, X)=0$, meaning that $N\in n$-$\proj\C(\I)$. Since the converse is obtained dually, we skip it. So the proof is finished.
\end{proof}

Assume that $R$ is a commutative noetherian ring. A finitely generated $R$-module $M$ is said to be of $G$-dimension zero if it is a syzygy of a complete resolution of projectives. The $G$-dimension of a finitely generated $R$-module $N$, $G$-$\dim_RN$, is the length of a shortest resolution of $N$ by $G$-dimension zero modules. If there is no such a resolution of finite length, then we write $G$-$\dim_RN=\infty$. This invariant has been defined by Auslander and Bridger \cite{ab} and provides a refinement of the projective dimension of a module. The category of all modules of $G$-dimension zero (resp. of finite $G$-dimension), is denoted by $\G$ (resp. $\G^{<\infty}$).

\begin{example}\label{ex1} Assume that $(R, \m)$ is a $d$-dimensional commutative noetherian local ring. {In view of the Auslander-Buchsbaum formula, any module of finite projective dimension, is $d$-projective. Set $\I:=d$-$\proj\md R$. Take an arbitrary object $M\in\G^{<\infty}$. So by \cite[Proposition 1.2]{et} (see also \cite[Lemma 2.17]{cfh}), there is a short exact sequence $0\rt M\rt P\rt X\rt 0$ in which $P\in\I$ and $X\in\G$. All of these facts would imply that $M\in\C(\I)$. Moreover, it is standard to see that any object in $\C(\I)$ lies in $\G^{<\infty}$. Hence by Theorem \ref{subcat}, $\G^{<\infty}$ is a $d$-Frobenius subcategory of $\md R$. In particular, if $R$ is Gorenstein, (i.e. $\id_RR$ is finite), then $\md R$ is indeed a $d$-Frobenius category. It should be noted that the category $\G$, is a $0$-Frobenius subcategory of $\md R$.}
\end{example}

In the sequel, we explore more examples of categories that admit $n$-Frobenius subcategories, for some non-negative integer $n$.
\begin{example}\label{ex123}
(1) According to \cite[Lemma 1.12]{or}, {the category of locally free sheaves of finite rank, $\L$, is a subcategory of $n$-$\proj\coh(\x)$, for some integer $n\geq 0$. So the category $n$-$\proj\coh(\x)$ is non-trivial.} Thus Theorem \ref{subcat} yields that $\coh(\x)$ has a non-trivial $n$-Frobenius subcategory, that we denote it by $\C(\x)$.\\
(2) Assume that $\qcoh(\x)$ is the category of quasi-coherent sheaves over $\x$ and $\fl\x$ is its subcategory of flats. Following the argument given in the proof of \cite[Lemma 1.12]{or}, gives rise to the existence of an integer $n\geq 0$ such that for any object $\F\in\fl\x$, $\Ext^{n+1}(\F, \Q)=0$, for all quasi-coherent sheaves $\Q$. Namely, $\fl\x$ is a subcategory of $n$-$\proj\qcoh(\x)$. Thus $\qcoh(\x)$ has enough $n$-projective objects, as it has enough flat sheaves, see \cite[Corollary 3.21]{mu}. Hence, Theorem \ref{subcat} implies that $\qcoh(\x)$ admits a non-trivial $n$-Frobenius subcategory $\C(\x)$. In addition, as $\fl\x$ is a resolving subcategory of $n$-$\proj\qcoh(\x)$, another use of Theorem \ref{subcat} yields that $\C(\fl\x)$ is also an $n$-Frobenius subcategory of $\qcoh(\x)$.\\

\end{example}

Assume that $\I$ is a resolving subcategory of $n$-$\proj\A$. In view of the definition of $\C(\I)$, we have $\I\subseteq n$-$\proj\C(\I)$. The next couple of results, provide examples in which the equality holds.

\begin{prop}\label{cf}Keeping the notation above, the equality $n$-$\proj\C(\fl\x)=\fl\x$ holds.
\end{prop}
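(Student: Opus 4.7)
The containment $\fl\x \subseteq n$-$\proj\C(\fl\x)$ is immediate: by Example~\ref{ex123}(2), every flat quasi-coherent sheaf is $n$-projective in $\qcoh(\x)$, and because $\C(\fl\x)$ is closed under extensions in $\qcoh(\x)$, this $n$-projectivity is inherited by the subcategory.

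For the reverse containment, I would take $M \in n$-$\proj\C(\fl\x)$; by Theorem~\ref{subcat}, $\C(\fl\x)$ is $n$-Frobenius, so $M$ is simultaneously $n$-projective and $n$-injective. The plan is to show, by induction on $j=0,1,\dots,n$, that every syzygy $\syz^{n-j}M$ appearing in a chosen complete flat resolution of $M$ is flat; setting $j=n$ then yields $M\in\fl\x$. For the base step, splice the left half of the complete flat resolution of $M$ into a length-$(n+1)$ unit conflation
$$\syz^{n+1}M \ri F^{-n} \rt \cdots \rt F^0 \re M,$$
whose class in $\Ext^{n+1}_{\C(\fl\x)}(M,\syz^{n+1}M)$ vanishes by the $n$-projectivity of $M$. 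Adapting the pull-back/push-out diagram chase from the proof of Theorem~\ref{subcat}, this vanishing produces a retraction of the inflation $\syz^{n+1}M \ri F^{-n}$, so $F^{-n}\cong \syz^{n+1}M\oplus\syz^n M$; since $\fl\x$ is closed under direct summands in $\qcoh(\x)$, both $\syz^{n+1}M$ and $\syz^n M$ are flat. The inductive step is the same construction shifted along the complete resolution: granted that $\syz^{n+1-j}M\in\fl\x$ (so that, by the easy containment together with the $n$-Frobenius property, $\syz^{n+1-j}M$ is also $n$-injective in $\C(\fl\x)$), splice the segment $F^{-n+j}\rt\cdots\rt F^j$ to produce a length-$(n+1)$ unit conflation
$$\syz^{n+1-j}M \ri F^{-n+j} \rt \cdots \rt F^j \re \syz^{-j}M,$$
whose class in $\Ext^{n+1}_{\C(\fl\x)}(\syz^{-j}M,\syz^{n+1-j}M)$ again vanishes; the same chase splits $\syz^{n+1-j}M$ off $F^{-n+j}$, so the complement $\syz^{n-j}M$ is flat.

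The main obstacle is the passage from the vanishing of the Ext class to an actual retraction of the leftmost inflation of the length-$(n+1)$ extension; for higher-degree Yoneda extensions this is not automatic, and requires re-engineering the pull-back/push-out chase from the proof of Theorem~\ref{subcat} in the present situation, where the key point is that the relevant composite morphism factors through an $n$-projective (flat) object. Once this splitting step is in hand, closure of $\fl\x$ under direct summands in $\qcoh(\x)$ transports flatness through the induction and terminates at $M$, establishing $n$-$\proj\C(\fl\x)\subseteq\fl\x$.
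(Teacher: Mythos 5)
Your argument has a genuine gap at its central step, and you have in fact identified it yourself: the passage from the vanishing of the class of the length-$(n+1)$ unit conflation $\syz^{n+1}M \ri F^{-n}\rt\cdots\rt F^0\re M$ in $\Ext^{n+1}_{\C(\fl\x)}(M,\syz^{n+1}M)$ to a retraction of the inflation $\syz^{n+1}M\ri F^{-n}$. For Yoneda extensions of length greater than one, triviality of the class only means the extension is connected to the split one by a zig-zag of morphisms of conflations with fixed ends; it does not make the leftmost map a split monomorphism. Worse, the splitting you want is precisely the vanishing of the class of the short exact sequence $\syz^{n+1}M\ri F^{-n}\re\syz^{n}M$ in $\Ext^1_{\C(\fl\x)}(\syz^nM,\syz^{n+1}M)$, and there is no reason for this group to vanish: flat sheaves are only $n$-projective, so dimension shifting from $\Ext^{n+1}(M,-)$ down to $\Ext^1(\syz^nM,-)$ is obstructed at every stage by the groups $\Ext^j(F^{-i},-)$ with $j\le n$, which need not be zero. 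So the "re-engineering" you defer is not a technicality to be filled in later; the splitting conclusion itself is not the right target, and the induction built on it does not get off the ground.

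The paper's proof avoids splittings entirely. It takes $N\in n$-$\inj\C(\fl\x)$ and tests flatness by showing $\Tor_1^{\ch}(X,N)=0$ for every $X\in\qcoh(\x)$. As in the proof of Theorem \ref{subcat}, the vanishing of $\Ext^{n+1}_{\C(\fl\x)}(\syz^{-n-1}N,N)$ is used to produce a morphism of unit conflations with fixed left end $N$ (diagram \ref{xy}) whose right-hand vertical map $gh:\syz^{-n}N\rt{\syz'}^{-n}N$ factors through the flat sheaf $P^{n+1}$. Dimension shifting along the flat middle terms identifies both $\Tor_{n+1}^{\ch}(X,\syz^{-n}N)$ and $\Tor_{n+1}^{\ch}(X,{\syz'}^{-n}N)$ with $\Tor_1^{\ch}(X,N)$, compatibly with the map induced by $gh$; since $\Tor_{n+1}^{\ch}(X,P^{n+1})=0$, that map is zero, and hence $\Tor_1^{\ch}(X,N)=0$. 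The key point you correctly sensed --- that the relevant comparison morphism factors through a flat object --- is exploited on $\Tor$ groups, where "factors through flat" kills the induced map outright, rather than being converted into a direct-sum decomposition of the resolution, which is not available.
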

\begin{proof}Since $\C(\fl\x)$ is an $n$-Frobenius subcategory of $\qcoh(\x)$, it suffices to prove that any object of $n$-$\inj\C(\fl\x)$ is flat. Take an arbitrary object $N\in n$-$\inj\C(\fl\x)$. Consider the unit conflations; $\delta:N\rt P^{1}\rt\cdots\rt P^{n}\rt\syz^{-n}N$ and $\syz^{-n}N\st{h}\rt P^{n+1}\rt\syz^{-n-1}N$, where $P^i\in\fl\x$, for any $i$. So, {according to the proof of Theorem \ref{subcat}, we get the diagram \ref{xy}.} In particular, for any object $X\in\qcoh(\x)$, we will have the following commutative square: {\footnotesize\[\xymatrix{\Tor_{n+1}(X, \syz^{-n}N)~\ar[r]^{\cong}\ar[d]_{\Tor_1(X, gh)} & \Tor_{1}(X, N)\ar@{=}[d]\\ \Tor_{n+1}(X, {\syz'}^{-n}N)~\ar[r]^{\cong} & \Tor_{1}(X, N).}\]}As $gh$ factors through the flat sheaf $P^{n+1}$, the left column is zero, implying that $\Tor_1(X, N)=0$, and then $N$ is flat, as needed.
\end{proof}

\begin{prop}\label{locally}For an integer $n\geq 0$, $\C(\L)$ is an $n$-Frobenius subcategory of $\coh(\x)$ with $n$-$\proj\C(\L)=\L$.
\end{prop}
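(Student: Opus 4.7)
The plan is to deduce that $\C(\L)$ is $n$-Frobenius from Theorem \ref{subcat} applied with $\A=\coh(\x)$ and $\I=\L$, and then to obtain the equality $n$-$\proj\C(\L)=\L$ by imitating the argument of Proposition \ref{cf}. The two hypotheses required by Theorem \ref{subcat} are that $\L\subseteq n$-$\proj\coh(\x)$ for some $n$, which is already known from \cite[Lemma 1.12]{or} and was used in Example \ref{ex123}(1), and that $\L$ is resolving inside $n$-$\proj\coh(\x)$.

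The resolving property will be checked stalk-wise. Given a short exact sequence $0\to P'\to P\to P''\to 0$ in $\coh(\x)$ with $P''\in\L$, the stalk sequence at every $x\in\x$ is a short exact sequence of finitely generated $\O_{\x,x}$-modules whose right-hand term is free; hence it splits, and $P_x\cong P'_x\oplus P''_x$. Since finitely generated modules over the noetherian local ring $\O_{\x,x}$ are free precisely when they are projective, $P_x$ is free if and only if $P'_x$ is. Thus $\L$ is resolving, and Theorem \ref{subcat} endows $\C(\L)$ with the $n$-Frobenius structure.

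The inclusion $\L\subseteq n$-$\proj\C(\L)$ is built into the construction of $\C(\L)$. For the reverse inclusion, I would take $N\in n$-$\inj\C(\L)=n$-$\proj\C(\L)$ and reuse the construction inside the proof of Theorem \ref{subcat} to produce the pull-back diagram~\ref{xy}, in which a morphism $gh:\syz^{-n}N\to{\syz'}^{-n}N$ factors through the locally free sheaf $P^{n+1}$. Applying the Tor functors $\Tor^{\ch}(X,-)$ for an arbitrary $X\in\qcoh(\x)$ and performing dimension shifting along the unit conflations yields, exactly as in Proposition \ref{cf}, a commutative square whose two horizontal arrows are the isomorphisms $\Tor^{\ch}_{n+1}(X,\syz^{-n}N)\cong\Tor^{\ch}_{1}(X,N)$ and $\Tor^{\ch}_{n+1}(X,{\syz'}^{-n}N)\cong\Tor^{\ch}_{1}(X,N)$, whose right-hand vertical is the identity, and whose left-hand vertical is induced by $gh$ and therefore vanishes. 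This forces $\Tor^{\ch}_{1}(X,N)=0$ for every $X\in\qcoh(\x)$, so $N$ is a flat $\O_{\x}$-module; being coherent over a noetherian scheme, $N$ is then locally free of finite rank, hence $N\in\L$.

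The only subtlety I anticipate is the last implication: flatness of $N$ combined with coherence forces each stalk $N_x$ to be a finitely generated flat, and thus free, $\O_{\x,x}$-module, which is exactly what is needed to conclude $N\in\L$. Everything else reduces either to the stalk-wise splitting argument of the previous paragraph or to a verbatim repetition of the Tor-theoretic bookkeeping of Proposition \ref{cf}, so no essentially new idea beyond the ones already deployed in the flat case should be required.
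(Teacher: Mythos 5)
Your proposal is correct and follows essentially the same route as the paper: apply Theorem \ref{subcat} with $\I=\L$ (the paper simply asserts that $\L$ is ``evidently resolving,'' which your stalk-wise splitting argument justifies), and then obtain $n$-$\proj\C(\L)=\L$ from the Tor-argument of Proposition \ref{cf} combined with the fact that a coherent flat sheaf on a noetherian scheme is locally free of finite rank. The only difference is that you spell out the adaptation of Proposition \ref{cf} to the coherent setting, whereas the paper cites it directly.
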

\begin{proof} As we mentioned in Example \ref{ex123}(1), $\L$ is a subcategory of $n$-$\proj\coh(\x)$, which is evidently resolving. So by Theorem \ref{subcat}, $\C(\L)$, is an $n$-Frobenius subcategory of $\coh(\x)$. Moreover, Proposition \ref{cf} and the fact that every coherent flat sheaf is locally free, leads us to deduce that $n$-$\proj\C(\L)=\L$. So the proof is completed.
\end{proof}
{
\begin{rem}\label{chf}According to Example \ref{ex123}(2), there exists an integer $t\geq 0$ such that for any $F\in\fl\x$, we have $F\in t$-$\proj\fl\x$. On the other hand, it is known that there is an integer $k\geq 0$ such that for any object $F\in\fl\x$, one has an exact sequence $0\rt F\rt C^0\rt\cdots\rt C^k\rt 0$, where each $C^i$ is cotorsion flat, implying that $F\in k$-$\inj\fl\x$, because each $C^i$ lies in $\inj\fl\x$. So using the same method appeared in \cite[page 28]{ha1}, we may deduce that any complex of the form $\cdots\rt 0\rt F\st{1}\rt F\rt 0\rt\cdots$ with $F$ flat, is an $n$-projective and an $n$-injective object of $\ch(\fl\x)$, for some integer $n$. Here $\ch(\fl\x)$ stands for the category of complexes of flats. We let $\J$ denote the subcategory consisting of all contractible complexes of flats. {One should note that any object of $\J$ is a direct sum of complexes of the form $\cdots\rt 0\rt F\st{1}\rt F\rt 0\rt\cdots$ with $F$ flat, and so, it belongs to $n$-$\proj\ch(\fl\x)$ as well as to $n$-$\inj\ch(\fl\x)$.} Assume that $F^{\bullet}:\cdots\rt F^{-1}\rt F^0\rt F^{1}\rt\cdots$ is an arbitrary object of $\ch(\fl\x)$. So, according to the short exact sequence of complexes $0\rt F^{\bullet}\rt\co(1_{F^{\bullet}})\rt F^{\bullet}[1]\rt 0$, we infer that $F^{\bullet}$ is embedded in (and also a homomorphic image of) an object in $\J$. It is evident that $\J$ is a subcategory of $\ch_{\pp}(\fl\x)$, the category of flat complexes. The result below indicates that $\ch(\fl\x)$ is an $n$-Frobenius category with $n$-$\proj\ch(\fl\x)=\ch_{\pp}(\fl\x)$.

\end{rem}
\begin{theorem}\label{fp}For a non-negative integer $n$, $\ch(\fl\x)$ is an $n$-Frobenius category, with $n$-$\proj\ch(\fl\x)=\ch_{\pp}(\fl\x)$.
\end{theorem}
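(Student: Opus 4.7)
The plan is to verify the defining axioms of an $n$-Frobenius category directly, using the class $\J$ of contractible complexes of flats from Remark~\ref{chf} as the basic source of $n$-(co)resolvents, and then to identify $n$-$\proj\ch(\fl\x)$ with $\ch_{\pp}(\fl\x)$ along the lines of Proposition~\ref{cf}.

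First, by Remark~\ref{chf} there is an integer $n\geq 0$ with $\J\subseteq n$-$\proj\ch(\fl\x)\cap n$-$\inj\ch(\fl\x)$. For any $F^{\bullet}\in\ch(\fl\x)$ the two standard cone conflations
\[ F^{\bullet}\rt\co(1_{F^{\bullet}})\rt F^{\bullet}[1],\qquad F^{\bullet}[-1]\rt\co(1_{F^{\bullet}[-1]})\rt F^{\bullet}\]
have middle term in $\J$, so $\ch(\fl\x)$ has enough $n$-projectives and enough $n$-injectives witnessed by $\J$-objects.

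To pin down $n$-$\proj\ch(\fl\x)=\ch_{\pp}(\fl\x)=n$-$\inj\ch(\fl\x)$, I would treat the two inclusions separately. For $n$-$\inj\ch(\fl\x)\subseteq\ch_{\pp}(\fl\x)$ (and symmetrically $n$-$\proj\subseteq\ch_{\pp}(\fl\x)$) I intend to mirror Proposition~\ref{cf}. Given $N^{\bullet}\in n$-$\inj\ch(\fl\x)$, choose unit conflations $\delta_N:N^{\bullet}\rt P^{\bullet}_1\rt\cdots\rt P^{\bullet}_n\rt\syz^{-n}N^{\bullet}$ and $\syz^{-n}N^{\bullet}\st{h}\rt P^{\bullet}_{n+1}\rt\syz^{-n-1}N^{\bullet}$ with each $P^{\bullet}_i\in\J$, and form the pull-back diagram analogous to \ref{xy}. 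Its comparison map $gh$ factors through the contractible complex $P^{\bullet}_{n+1}$, so for every $X^{\bullet}$ the induced map on $\Tor^{\ch}_{n+1}$ is simultaneously an isomorphism and zero, forcing $\Tor^{\ch}_1(X^{\bullet},N^{\bullet})=0$. By \cite[Definition 2.5]{e}, this is precisely the condition that $N^{\bullet}\in\ch_{\pp}(\fl\x)$. For the reverse inclusion $\ch_{\pp}(\fl\x)\subseteq n$-$\proj\ch(\fl\x)$, I would iterate the cone conflation to exhibit a flat complex $F^{\bullet}$ as an arbitrarily deep syzygy of $\J$-objects (each shift $F^{\bullet}[k]$ is again flat), and then invoke the finite-dimensional Krull-dimension bound underlying Remark~\ref{chf} to force $\Ext^{n+1}_{\ch(\fl\x)}(F^{\bullet},-)=0$.

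The main obstacle I anticipate is the direction $\ch_{\pp}(\fl\x)\subseteq n$-$\proj\ch(\fl\x)$: the cone iteration alone yields only unbounded $\J$-resolutions, so vanishing of $\Ext^{n+1}$ requires an external finiteness input. The natural candidate is to promote the pointwise flat-dimension bound from \cite[Lemma 1.12]{or} (already used in Example~\ref{ex123}(2) to place $\fl\x$ inside $n$-$\proj\qcoh(\x)$) to a complex-wise bound, most likely by filtering $F^{\bullet}$ through its good truncations $\tau^{\geq k}F^{\bullet}$ so that each successive quotient is concentrated in a single degree and the uniform sheaf-level flat-dimension bound can be assembled, in combination with the flatness of the cycles $Z^i$, into the desired complex-wise $\Ext$-vanishing.
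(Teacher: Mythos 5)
Your first two steps track the paper closely: the paper likewise uses Remark \ref{chf} and the cone conflations to get enough $n$-projectives and $n$-injectives witnessed by $\J$, and it proves $n$-$\inj\ch(\fl\x)\subseteq\ch_{\pp}(\fl\x)$ exactly as you propose, by forming the analogue of diagram \ref{xy} and observing that the comparison map factors through a contractible complex, so that $\H_i(N^{\bullet}\otimes_{\O_{\x}}X)$ is simultaneously hit by an isomorphism and by zero for every quasi-coherent $X$ and every $i\in\Z$ (note you need the vanishing for all $i$, not just $\Tor_1$, to conclude that $N^{\bullet}$ is a flat complex). The paper also inserts a step you leave implicit, namely that $n$-$\proj\ch(\fl\x)=n$-$\inj\ch(\fl\x)$, proved by the same diagram applied to $\Ext^i_{\ch}(-,X^{\bullet})$.

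The genuine gap is in the direction you yourself flag as the obstacle, $\ch_{\pp}(\fl\x)\subseteq n$-$\proj\ch(\fl\x)$, and your proposed repair does not close it. Iterating the cone exhibits \emph{every} object of $\ch(\fl\x)$ as an arbitrarily deep syzygy of $\J$-objects, so that observation carries no information. The truncation filtration is better than you might fear --- for an acyclic $F^{\bullet}$ with flat cycles the subquotients $\tau^{\leq k}F^{\bullet}/\tau^{\leq k-1}F^{\bullet}$ are disc complexes on the flat sheaves $Z^k$, hence contractible, not stalk complexes --- but the filtration is indexed by $\Z$, unbounded in both directions, with no smallest nonzero term. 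There is therefore no well-ordered continuous chain to which an Eklof-type closure of $\Ext^{n+1}$-orthogonality could be applied, and $\Ext^{n+1}$-vanishing in this exact category is not known to pass to such two-sided colimits; the pointwise bound from \cite[Lemma 1.12]{or} cannot be ``assembled'' across infinitely many degrees without such a closure principle. The paper avoids this entirely by invoking the cotorsion theory of \cite{hs}: by \cite[Proposition 2.6]{hs} every $N^{\bullet}\in\ch_{\pp}(\fl\x)$ sits in a short exact sequence $0\rt N^{\bullet}\rt {C^{\bullet}}^0\rt {P^{\bullet}}^1\rt 0$ with ${P^{\bullet}}^1$ a flat complex and ${C^{\bullet}}^0$ dg-cotorsion (hence itself a flat complex of cotorsion flats); iterating yields a \emph{finite} coresolution $0\rt N^{\bullet}\rt {C^{\bullet}}^0\rt\cdots\rt {C^{\bullet}}^k\rt 0$ with $k=\dim\x$, each ${C^{\bullet}}^i$ a pure acyclic complex of cotorsion flats and therefore contractible by \cite[Corollary 3.1.2]{hs}, i.e.\ $n$-projective. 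A finite coresolution by $n$-projectives immediately forces $\Ext^{i}_{\ch}(N^{\bullet},-)=0$ for $i>n$. Without this (or some equivalent finiteness input producing a bounded resolution by contractibles), your argument for the key inclusion does not go through.
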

\begin{proof}{According to Remark \ref{chf}, $\ch(\fl\x)$ has enough $n$-projective and $n$-injective objects. So it remains to examine the validity of the equalities $n$-$\proj\ch(\fl\x)=\ch_{\pp}(\fl\x)=n$-$\inj\ch(\fl\x)$. In this direction, first we show that $n$-$\proj\ch(\fl\x)=n$-$\inj\ch(\fl\x)$.
Take an arbitrary object $N^\bullet\in n$-$\inj\ch(\fl\x)$. By using Remark \ref{chf}, one may have the conflations $N^{\bullet}\rt {P^{\bullet}}^1\rt\cdots\rt {P^{\bullet}}^n\rt\syz^{-n}N^{\bullet}$ and $\syz^{-n}N^{\bullet}\st{h^{\bullet}}\rt {P^{\bullet}}^{n+1}\rt\syz^{-n-1}N^{\bullet}$, where ${P^{\bullet}}^i\in\J$, for any $i$. Since $\syz^{-n-1}N^{\bullet}\in\ch(\fl\x),$ we have $\Ext^{n+1}_{\ch}(\syz^{-n-1}N^{\bullet}, N^{\bullet})=0$.
So, the argument given in the proof of Theorem \ref{subcat}, gives us a diagram similar to \ref{xy}. Take an arbitrary object $X^{\bullet}\in\ch(\fl\x)$. As for any $i>n$, $\Ext^i_{\ch}({P^{\bullet}}^j, X^{\bullet})=0$, applying the functor $\Ext_{\ch}(-, X^{\bullet})$, gives rise to the following commutative square: {\footnotesize \[\xymatrix{\Ext_{\ch}^{i}(N^{\bullet}, X^{\bullet})~\ar[r]^{\cong}\ar@{=}[d] & \Ext_{\ch}^{n+i}({\syz'}^{-n}N^{\bullet}, X^{\bullet})\ar[d]^{\g^{\bullet}\h^{\bullet}}\\ \Ext_{\ch}^{i}(N^{\bullet}, X^{\bullet})~\ar[r]^{\cong} & \Ext_{\ch}^{n+i}({\syz}^{-n}N^{\bullet}, X^{\bullet}).}\]}Since $g^{\bullet}h^{\bullet}$ factors through an object of $\J$, the right column will be zero, and then, $\Ext_{\ch}^{i}(N^{\bullet}, X^{\bullet})=0$ for any $i>n$, meaning that $N^{\bullet}\in n$-$\proj\ch(\fl\x)$. The dual method indicates that if $N^{\bullet}\in n$-$\proj\ch(\fl\x)$, it will belong to $n$-$\inj\ch(\fl\x)$. Next, assume that $N^\bullet\in n$-$\inj\ch(\fl\x)$ and consider the latter conflations. Take an arbitrary object $X\in\qcoh(\x)$. As ${P^{\bullet}}^j$, for any $j$, is contractible, $\H_i({P^{\bullet}}^j\otimes_{\O_{\x}}X)=0$ for all $i\in \Z$. Hence, one may obtain the following commutative square: \[\xymatrix{\H_{n+i}(\syz^{-n}N^\bullet\otimes_{\O_{\x}} X)~\ar[r]^{\cong}\ar[d] & \H_{i}(N^\bullet\otimes_{\O_{\x}} X)\ar@{=}[d]\\ \H_{n+i}({\syz'}^{-n}N^\bullet\otimes_{\O_{\x}} X)~\ar[r]^{\cong} & \H_{i}(N^\bullet\otimes_{\O_{\x}} X).}\] Since $g^{\bullet}h^{\bullet}$ factors through ${P^{\bullet}}^{n+1}$, the left column will be zero, implying that $\H_i(N^\bullet\otimes_{\O_{\x}}X)=0$, for all $i\in\Z$. Since $X$ is arbitrary, we conclude that $N^\bullet\in\ch_{\pp}(\fl\x)$.
Conversely, assume that $N^\bullet\in\ch_{\pp}(\fl\x)$. By \cite[Proposition 2.6]{hs}, there is a short exact sequence, $0\lrt N^\bullet\lrt {C^\bullet}^0\lrt {P^\bullet}^1\lrt 0$ in $\ch(\fl\x)$ with ${P^\bullet}^1\in\ch_{\pp}(\fl\x)$ and
${C^\bullet}^0$ is dg-cotorsion, that is, ${C^\bullet}^0\in\ch_{\pp}(\fl\x)^{\bot}$.
Thus ${C^\bullet}^0\in\ch_{\pp}(\fl\x)$, as well. Repeating this manner, yields the existence of an exact sequence $0\lrt N^\bullet\lrt {C^\bullet}^0\lrt {C^\bullet}^1\lrt\cdots\lrt {C^\bullet}^{k-1}\lrt {C^\bullet}^k\lrt 0$ in $\ch(\fl\x)$, with $k=\dim\x$. Since ${C^\bullet}^i$, for any $i$, is a pure acyclic complex of cotorsion flats, it is contractible by \cite[Corollary 3.1.2]{hs}, and in particular, it will belong to $n$-$\proj\ch(\fl\x)$. Hence, it is easily seen that $N^{\bullet}\in n$-$\proj\ch(\fl\x)$. So the proof is finished.}

\end{proof}

\begin{rem}\label{remfin}As we mentioned in Example \ref{ex123}(1), $\L$ is a subcategory of $n$-$\proj\coh(\x)$, for some integer $n$. On the other hand, since any locally free sheaf is flat and, as observed in Remark \ref{chf}, each flat sheaf lies in $n$-$\inj\L$, we conclude that $\L$ will be also a subcategory of $n$-$\inj\L$. So, similar to Remark \ref{chf}, we infer that any complex of the form $\cdots\rt 0\rt L\st{1}\rt L\rt 0\rt\cdots$, with $L$ locally free, is an $n$-projective and $n$-injective over $\ch(\L)$, the category of complexes of locally free sheaves of finite rank. Suppose that $\J$ is the subcategory consisting of all contractible complexes of locally free sheaves of finite rank. Again, similar to Remark \ref{chf}, one may observe that any object of $\ch(\L)$ can be embedded in an object of $\J$, as well as, it is a homomorphic image of an object in $\J$. The same argument given in the proof of Theorem \ref{fp}, clarifies that the subcategory $\ch_{\pp}(\L)$ consisting of all acyclic complexes of locally free sheaves with locally free kernels, forms $n$-projective objects of $\ch(\L)$. Precisely, we have the next interesting result.
\end{rem}
\begin{theorem}$\ch(\L)$ is an $n$-Frobenius category, for some integer $n$. Moreover, $n$-$\proj\ch(\L)=\ch_{\pp}(\L)$.
\end{theorem}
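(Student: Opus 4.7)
The plan is to parallel the proof of Theorem \ref{fp} step by step, substituting $\L$ for $\fl\x$, $\ch(\L)$ for $\ch(\fl\x)$, and $\ch_{\pp}(\L)$ for $\ch_{\pp}(\fl\x)$ throughout. Remark \ref{remfin} has already supplied the two crucial inputs: the class $\J$ of contractible complexes of locally free sheaves of finite rank sits inside both $n$-$\proj\ch(\L)$ and $n$-$\inj\ch(\L)$ for some integer $n$, and every object of $\ch(\L)$ both embeds in and is a quotient of some object of $\J$. In particular, $\ch(\L)$ has enough $n$-projectives and enough $n$-injectives for this $n$.

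Next I would establish the equality $n$-$\proj\ch(\L)=n$-$\inj\ch(\L)$ by running the diagram chase from Theorem \ref{subcat} verbatim. Starting with $N^\bullet\in n$-$\inj\ch(\L)$, build unit conflations $N^\bullet\lrt {P^\bullet}^{1}\lrt\cdots\lrt {P^\bullet}^{n}\lrt\syz^{-n}N^\bullet$ and $\syz^{-n}N^\bullet\st{h^\bullet}\lrt {P^\bullet}^{n+1}\lrt\syz^{-n-1}N^\bullet$ with ${P^\bullet}^{i}\in\J$. The vanishing of $\Ext^{n+1}_{\ch(\L)}(\syz^{-n-1}N^\bullet, N^\bullet)$ produces the analog of diagram (\ref{xy}), in which the composite $g^\bullet h^\bullet$ factors through an object of $\J$. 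For any $X^\bullet\in\ch(\L)$ and $i>n$, the resulting commutative square with horizontal isomorphisms and right vertical zero forces $\Ext^i_{\ch(\L)}(N^\bullet, X^\bullet)=0$, so $N^\bullet\in n$-$\proj\ch(\L)$. The reverse inclusion is dual.

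For the identification $n$-$\proj\ch(\L)=\ch_{\pp}(\L)$, the inclusion $\subseteq$ is obtained by transcribing the tensor-product argument of Theorem \ref{fp}: given $N^\bullet$ as above and any $X\in\coh(\x)$, tensoring the analog of (\ref{xy}) with $X$ over $\O_{\x}$ yields a commutative square whose left vertical is zero (because $g^\bullet h^\bullet$ factors through ${P^\bullet}^{n+1}\in\J$, which remains acyclic after tensoring with any sheaf) and whose horizontal maps are isomorphisms; hence $\H_i(N^\bullet\otimes_{\O_{\x}}X)=0$ for every $i$ and every $X$. Acyclicity of $N^\bullet$ and flatness of each of its kernels follow, and since each such kernel is coherent as a subsheaf of a coherent sheaf on a noetherian scheme, and coherent plus flat equals locally free of finite rank, we conclude $N^\bullet\in\ch_{\pp}(\L)$.

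The reverse inclusion $\ch_{\pp}(\L)\subseteq n$-$\proj\ch(\L)$ is where I expect the main obstacle, since one does not have an off-the-shelf analog of \cite[Proposition 2.6]{hs} in the purely locally free setting. The strategy is to produce, for each $N^\bullet\in\ch_{\pp}(\L)$, a finite resolution $0\to N^\bullet\to {J^\bullet}^{0}\to\cdots\to {J^\bullet}^{k}\to 0$ in $\ch(\L)$ with ${J^\bullet}^{i}\in\J$ and $k$ bounded by $\dim\x$, from which the long exact sequences for $\Ext^i_{\ch(\L)}(-, X^\bullet)$ with $i>n$ force $N^\bullet$ to be $n$-projective. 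Such a resolution can be built by embedding $N^\bullet$ in some ${J^\bullet}^{0}\in\J$ via Remark \ref{remfin}, verifying that the cokernel still lies in $\ch_{\pp}(\L)$ (exploiting that $\L$ is resolving in $\coh(\x)$, together with the coherent refinement of the flat cotorsion-theoretic step used in the proof of Theorem \ref{fp}), and iterating. Termination after at most $\dim\x$ steps comes from the same finite-dimensional cohomological input that bounds the corresponding length in Theorem \ref{fp}.
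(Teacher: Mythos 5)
Your first three paragraphs track exactly what the paper intends: the theorem is stated without a written proof, and Remark \ref{remfin} simply asserts that the argument of Theorem \ref{fp} transcribes with $\L$ in place of $\fl\x$. The existence of enough $n$-projectives and $n$-injectives coming from $\J$, the diagram chase giving $n$-$\proj\ch(\L)=n$-$\inj\ch(\L)$, and the tensor-product argument giving $n$-$\proj\ch(\L)\subseteq\ch_{\pp}(\L)$ all carry over as you describe; your observation that the kernels, being coherent and flat on a noetherian scheme, are automatically locally free of finite rank is precisely the extra step needed there, and it is correct.

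The gap is in the final inclusion $\ch_{\pp}(\L)\subseteq n$-$\proj\ch(\L)$, and your proposed repair does not close it. The only embedding of an arbitrary $N^{\bullet}\in\ch(\L)$ into an object of $\J$ that Remark \ref{remfin} supplies is $N^{\bullet}\hookrightarrow\co(1_{N^{\bullet}})$, whose cokernel is $N^{\bullet}[1]$; iterating this never reaches an object of $\J$ unless $N^{\bullet}$ was already contractible, so the claimed termination after $\dim\x$ steps does not occur. Any other monomorphism $N^{\bullet}\rightarrowtail J^{\bullet}$ with $J^{\bullet}\in\J$ faces the problem that the cokernel of an injection of locally free coherent sheaves need not be locally free, so the quotient complex may leave $\ch(\L)$ altogether; the hypothesis that $\L$ is resolving controls kernels, not cokernels. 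In Theorem \ref{fp} these difficulties are resolved by the cotorsion pair of \cite[Proposition 2.6]{hs}, which produces a degreewise pure embedding with flat-complex cokernel and a dg-cotorsion middle term, together with \cite[Corollary 3.1.2]{hs} to see that the middle terms are contractible, and the bound $k=\dim\x$ comes from the cotorsion dimension of flat sheaves; none of this has a stated coherent or locally free analogue, and your phrase ``coherent refinement of the flat cotorsion-theoretic step'' names the missing lemma rather than proving it. To be fair, the paper itself offers no argument for this inclusion either --- the theorem carries no proof and Remark \ref{remfin} merely asserts that the earlier argument ``clarifies'' it --- so the hole you have left open is the same one the paper leaves open; but as written your proposal does not establish the second assertion of the theorem.
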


\section{Quasi-invertible morphisms}

Assume that $\C$ is an $n$-Frobenius category. In this section, we will show that a morphism $f$ acts as invertible on $\Ext_{\C}^{n+1}$ from the left and the right, simultaneously. A morphism satisfying this condition will be called a quasi-invertible morphism in $\C$.

In the remainder of this paper, unless otherwise specified, by a conflation of length $t$, we mean a conflation of length $t$ in $\C$. Also, if there is no ambiguity, we drop the ``of length $t$''. Furthermore, instead of $\Ext^i_{\C}(-,-)$, we write $\Ext^i(-,-)$.

\begin{lem}\label{101}(1) Let $N\st{f}\rt X$ and $N\st{g}\rt X'$ be two morphisms in $\C$ such that $f$ or $g$ is an inflation. Then $N\st{[f~~g]^t}\lrt X\oplus X'$ is also an inflation.\\ (2) Let $X\st{f}\rt N$ and $X'\st{g}\rt N$ be two morphisms in $\C$ such that $f$ or $g$ is a deflation. Then $X\oplus X'\st{[f~~g]}\lrt N$ is a deflation.
\end{lem}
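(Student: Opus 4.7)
The plan is to prove (1) directly, and obtain (2) by dualizing the same argument. Both statements are standard facts about exact categories and follow just from the Quillen axioms (inflations are closed under composition and under direct sums), so no special feature of $n$-Frobenius categories is actually needed.

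For (1), assume without loss of generality that $f:N\rt X$ is an inflation. The key idea is to factor the map $[f~~g]^t$ as a composition of two clearly-inflative maps. Concretely, I would write
\[
[f~~g]^t \;=\; \begin{pmatrix} f & 0 \\ 0 & 1_{X'} \end{pmatrix}\circ [1_N~~g]^t,
\]
viewing the left factor as a morphism $N\oplus X'\rt X\oplus X'$ and the right factor as $N\rt N\oplus X'$. The composite is easily checked to equal $[f~~g]^t$ by evaluating both sides.

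Next I would verify that each factor is an inflation. The map $[1_N~~g]^t:N\rt N\oplus X'$ is split by the projection $N\oplus X'\rt N$, so it is a split monomorphism, hence an inflation. For the second factor, observe that $f:N\rt X$ sits in a conflation $N\rt X\rt \cok f$ and $1_{X'}$ sits in the trivial conflation $X'\rt X'\rt 0$. Since exact categories are closed under direct sums of conflations, the direct sum
\[
N\oplus X'\;\xrightarrow{\,\mathrm{diag}(f,1_{X'})\,}\;X\oplus X'\;\lrt\;\cok f\oplus 0
\]
is again a conflation, so the left factor is an inflation. Finally, by the Quillen axiom that the composition of two inflations is an inflation, $[f~~g]^t$ is an inflation, as claimed.

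Part (2) follows by applying the same argument in $\C^{\op}$: writing
\[
[f~~g]\;=\;[f~~1_N]\circ\begin{pmatrix} 1_X & 0 \\ 0 & g \end{pmatrix},
\]
the right factor is a deflation (being the direct sum of the conflation associated with the deflation $g$, assuming without loss of generality that $g$ is a deflation, and the trivial conflation at $X$), while the left factor is a split epimorphism and hence a deflation; composition then yields the result. There is no real obstacle in either part; the only point one has to be careful about is that the argument only uses closure of inflations/deflations under composition and under direct sums with identities, both of which are guaranteed by the exact structure inherited from $\A$ (by \cite[Lemma 10.20]{buh}, cited in the convention).
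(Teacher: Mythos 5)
Your proof is correct, but it takes a genuinely different route from the paper's. You factor $[f~~g]^t$ as $\mathrm{diag}(f,1_{X'})\circ[1_N~~g]^t$ and then invoke purely axiom-level facts about exact categories: split monomorphisms with cokernels are inflations, direct sums of conflations are conflations, and inflations are closed under composition. The paper instead forms the cokernel $L'$ of $[f~~g]^t$ in the ambient abelian category $\A$, notes that $L'$ is an extension of $L=\cok f$ by $X'$ and hence lies in $\C$ by extension-closedness, and then recognizes $N\rt X\oplus X'\rt L'$ as the pull-back of the conflation $N\st{f}\rt X\rt L$ along $L'\rt L$, so that it is a conflation because pull-backs of deflations stay in $\C$. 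Your argument is the more intrinsic one: it never leaves $\C$ and would work in an abstract exact category, whereas the paper's argument leans on the embedding $\C\subseteq\A$ (harmless, given the standing convention). The one step you should spell out is why $[1_N~~g]^t$ is an inflation: ``split by the projection, hence an inflation'' is not literally an axiom. Either conjugate the canonical split conflation $N\rt N\oplus X'\rt X'$ by the automorphism $\left(\begin{smallmatrix}1&0\\ g&1\end{smallmatrix}\right)$ of $N\oplus X'$, which exhibits $[1_N~~g]^t$ as isomorphic to the inflation $[1~~0]^t$, or invoke the standard fact that a split monomorphism admitting a cokernel is an inflation. With that detail added, your proof is complete.
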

\begin{proof}Let us prove the first assertion. The second one is obtained dually.
Without loss of generality, we may assume that $f$ is an inflation. Consider the following commutative diagram: {\footnotesize\[\xymatrix{ & X'\ar@{=}[r]\ar[d]& X'\ar[d]\\ N~\ar[r]^{{{\tiny {\left[\begin{array}{ll} f \\ g \end{array} \right]}}}}\ar@{=}[d] & X\oplus X' \ar[r]\ar[d]_{[1~0]}&L'\ar[d]_{h}\\ N\ar[r]^{f}&X\ar[r]& L.}\]}Since the bottom row is a conflation and $\C$ is closed under extensions, we infer that $L'\in\C$, and so, $h$ is a morphism in $\C$. Now, as $\C$ is closed under pull-back, the middle row will be a conflation, giving the desired result.
\end{proof}

\begin{dfn}Assume that $f:M\rt N$ is a morphism in $\C$ and $i\geq 0$ is an integer. We say that $f$ acts as invertible on $\Ext^i$ from the left (resp. right), if for any $X\in\C$, $\Ext^i(X, M)\st{\f}\rt\Ext^i(X, N)$ (resp. $\Ext^i(N, X)\st{\f}\rt\Ext^i(M, X)$) is an isomorphism.
\end{dfn}

{
\begin{lem}\label{conf}
Let $M\st{f}\rt N$ be a morphism in $\C$ acting as invertible on $\Ext^{n+1}$ from the left or the right. Assume that $Q\st{\pi}\rt N$ is a deflation and $M\st{l}\rt P$ is an inflation such that $P, Q\in n$-$\proj\C$. Then
\begin{enumerate}\item $M\oplus Q\st{[f~~\pi]}\rt N$ is a deflation such that its kernel is $n$-projective.\item $M\st{[f~~l]^{t}}\rt N\oplus P$ is an inflation such that its cokernel is $n$-projective.\end{enumerate}
\end{lem}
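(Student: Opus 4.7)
The opening assertions in (1) and (2)---that $[f~~\pi]$ is a deflation and $[f~~l]^{t}$ is an inflation---follow directly from Lemma \ref{101}(2) and (1) respectively. Writing $K$ for the kernel of $[f~~\pi]$ and $C$ for the cokernel of $[f~~l]^{t}$, we obtain conflations $K\rt M\oplus Q\rt N$ and $M\rt N\oplus P\rt C$, and my task will be to show that $K,C\in n$-$\proj\C$. The two parts are formally dual, so I will sketch (1) under the hypothesis that $f$ acts as invertible on $\Ext^{n+1}$ from the right; (2) will then be obtained by the dual argument under the ``from the left'' assumption.

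The key preliminary step will be to upgrade the single-degree hypothesis to invertibility in every degree $\ge n+1$. For arbitrary $X\in\C$, I choose a unit conflation $\syz X\rt P_X\rt X$ with $P_X\in n$-$\proj\C=n$-$\inj\C$. Applying $\Ext^*(Y,-)$ and using that $P_X$ is $n$-injective yields, naturally in $Y$, an isomorphism $\Ext^i(Y,X)\cong\Ext^{i+1}(Y,\syz X)$ whenever $i>n$. Since every object of $\C$ is a first syzygy---obtained by taking the cokernel of an inflation into an $n$-injective---induction on $i$ upgrades the hypothesis to the statement that $f^*:\Ext^i(N,X)\rt\Ext^i(M,X)$ is an isomorphism for every $X\in\C$ and every $i\ge n+1$.

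Once this stronger form of the hypothesis is available, I apply $\Ext^*(-,X)$ to the conflation $K\rt M\oplus Q\rt N$ and splice the five-term sequences of Remark \ref{exact} into a long exact sequence. Because $\Ext^i(Q,X)=0$ for $i>n$, we have $\Ext^i(M\oplus Q,X)=\Ext^i(M,X)$ in that range, so for each $i\ge n+1$ the relevant segment reads
\[
\Ext^i(N,X)\st{f^*}\rt\Ext^i(M,X)\rt\Ext^i(K,X)\rt\Ext^{i+1}(N,X)\st{f^*}\rt\Ext^{i+1}(M,X),
\]
in which both copies of $f^*$ are isomorphisms. Hence $\Ext^i(K,X)=0$ for every $i>n$, which proves $K$ is $n$-projective.

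Statement (2) will follow from the dual scheme: a dimension shift in the first variable using a unit conflation $X\rt P_X\rt\syz^{-1}X$ propagates the ``from the left'' invertibility to all $i\ge n+1$, and then applying $\Ext^*(X,-)$ to $M\rt N\oplus P\rt C$, combined with $\Ext^i(X,P)=0$ for $i>n$, will yield $\Ext^i(X,C)=0$; thus $C$ is $n$-injective and hence $n$-projective by the $n$-Frobenius property. The main subtlety I anticipate is the bootstrap step: without it, the five-term sequence only shows that $\Ext^{n+1}(K,X)$ injects into the kernel of $f^*$ at degree $n+2$, which is not a priori zero. Once invertibility is known in every degree $\ge n+1$, the sequences close at once and the conclusion is immediate.
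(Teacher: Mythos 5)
Your core computation is sound and, where it applies, it coincides with (or slightly improves on) the paper's: the paper likewise first bootstraps the hypothesis to invertibility on $\Ext^i$ for all $i>n$ (it merely asserts this is ``easily seen''; your dimension-shift via $\Ext^i(Y,X)\cong\Ext^{i+1}(Y,\syz X)$ for $i>n$ is the right justification), then absorbs the $n$-projective summand using $\Ext^i(Q,-)=0=\Ext^i(-,Q)$ for $i>n$ and reads off the vanishing from the long exact sequence of Remark \ref{exact}. Your treatment of (2) under the ``from the left'' hypothesis is literally the paper's argument, and your treatment of (1) under ``from the right'' is its exact dual.

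The gap is one of coverage. The lemma asserts \emph{both} conclusions under \emph{either} hypothesis, i.e.\ four hypothesis--conclusion pairings, and you prove only the two ``matched'' ones: (1) from the right and (2) from the left. You cannot dismiss the other two by duality, since duality swaps them in pairs with the ones you did. The paper instead fixes the single hypothesis ``from the left'' and proves both (1) and (2) under it (the ``from the right'' case then genuinely is dual). The price is visible exactly where you predicted a subtlety: applying the covariant functor $\Ext(X,-)$ to the kernel conflation $L'\rt M\oplus Q\st{[f~~\pi]}\rt N$ under left-invertibility only yields $\Ext^i(X,L')=0$ for $i>n+1$, because the segment needed to kill $\Ext^{n+1}(X,L')$ requires surjectivity of $f_*$ on $\Ext^n$, which is not assumed. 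The paper then closes the one remaining degree with a shift: choosing a conflation $X\rt P_X\rt\syz^{-1}X$ with $P_X\in n$-$\proj\C=n$-$\inj\C$ identifies $\Ext^{n+1}(X,L')$ with a subquotient of groups already known to vanish, so $L'\in n$-$\inj\C=n$-$\proj\C$. To repair your proof you should either add this mismatched-variance argument for the remaining two cases, or observe explicitly that your two matched cases already suffice to run the proof of Corollary \ref{lr}, after which left- and right-invertibility coincide and the missing cases of the lemma follow for free; as written, neither route is taken and the lemma as stated is not fully established.
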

\begin{proof}We only deal with the case $f$ acts as invertible on $\Ext^{n+1}$ from the left. The other case can be treated similarly. Since $M\st{l}\rt P$ is an inflation, by Lemma \ref{101}(1), $M\st{h}\rt N\oplus P\rt L$ is a conflation in $\C$, where $h={{{\tiny {\left[\begin{array}{ll} f \\ l \end{array} \right]}}}}$. {we shall prove that $L\in n$-$\proj\C$. Take an arbitrary object $X\in\C$. By applying the functor $\Ext(X, -)$ to this conflation and using Remark \ref{exact}, } we obtain the long exact sequence{\footnotesize $$\cdots\rt\Ext^{i}(X, M)\st{\h}\rt\Ext^{i}(X, N\oplus P)\rt\Ext^{i}(X, L)\rt\Ext^{i+1}(X, M)\st{\h}\rt\Ext^{i+1}(X, N\oplus P)\rt\cdots.$$}Since $f$ acts as invertible on $\Ext^{n+1}$, it acts as invertible on $\Ext^i$ for all $i>n$. {To see this, it suffices to prove the case only for $n+2$. Consider a conflation $\syz X\rt Q\rt X$, in which $Q\in n$-$\proj\C$. So another use of Remark \ref{exact} together with the $n$-projectivity of $Q$, gives us the following commutative square: {\footnotesize \[\xymatrix{\Ext^{n+1}(\syz X, M) \ar[r]^{\cong}\ar[d]_{\Ext^{n+1}({\syz X}, f)}& \Ext^{n+2}(X, M)\ar[d]_{\Ext^{n+2}(X, f)}\\ \Ext^{n+1}(\syz X, N)\ar[r]^{\cong}&\Ext^{n+2}(X, N).}\]}By our hypothesis, the left column is an isomorphism, and so, the same will be true for the right one. Namely, $f$ acts as invertible on $\Ext^{n+2}$.} Since $P\in n$-$\proj\C$, $l\Ext^{i}=0$ for all $i>n$, implying that $f\Ext^{i}=h\Ext^{i}$. In particular, $h$ acts as invertible on $\Ext^{i}$, and then, $\Ext^{i}(X, L)=0$ for all $i>n$. This means that $L\in n$-$\inj\C$, and so, it belongs to $n$-$\proj\C$, {giving the first assertion. For the second one,}
one may apply Lemma \ref{101}(2) to obtain the conflation $L'\rt M\oplus Q\st{[f~~\pi]}\rt N$. So, repeating the above method, yields that $\Ext^{i}(X, L')=0$ for all $i>n+1$. Now, as $\C$ is $n$-Frobenius, it is easily seen that $L'\in n$-$\proj\C$. So, the proof is finished.
\end{proof}

\begin{cor}\label{lr}Let $f:M\rt N$ be a morphism in $\C$. Then $f$ acts as invertible on $\Ext^{n+1}$ from the left if and only if it acts as invertible from the right.
\end{cor}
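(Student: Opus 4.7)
The plan is to deduce both implications from Lemma \ref{conf} by exploiting the crucial feature of $n$-Frobenius categories: every $n$-projective object is simultaneously $n$-injective, so for such an object $K$ one has $\Ext^i(K,-)=0=\Ext^i(-,K)$ for every $i>n$. The two directions are dual; each boils down to reading off an isomorphism from the long exact sequence of Remark \ref{exact} applied to the conflation supplied by Lemma \ref{conf}.

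$(\Rightarrow)$ Assume $f\colon M\rt N$ acts as invertible on $\Ext^{n+1}$ from the left. Using that $\C$ has enough $n$-injectives (which equal the $n$-projectives), pick an inflation $l\colon M\rt P$ with $P\in n$-$\proj\C$. Lemma \ref{conf}(2) then produces a conflation $M\st{h}\rt N\oplus P\rt L$, where $h=[f~~l]^{t}$ and $L\in n$-$\proj\C$. Fix an arbitrary $X\in\C$ and apply $\Ext(-,X)$; Remark \ref{exact} supplies the exact piece
$$\Ext^{n+1}(L,X)\rt\Ext^{n+1}(N\oplus P,X)\st{h^{*}}\rt\Ext^{n+1}(M,X)\rt\Ext^{n+2}(L,X).$$
Since $L$ is $n$-projective and hence (by $n$-Frobeniusness) also $n$-injective, both outer terms vanish, so $h^{*}$ is an isomorphism. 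Because $P$ is $n$-projective, $\Ext^{n+1}(P,X)=0$, so $\Ext^{n+1}(N\oplus P,X)=\Ext^{n+1}(N,X)$, and under this identification $h^{*}$ coincides with $f^{*}$. Thus $f$ acts as invertible on $\Ext^{n+1}$ from the right.

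$(\Leftarrow)$ The converse is the dual. Assuming $f$ acts as invertible on $\Ext^{n+1}$ from the right, pick a deflation $\pi\colon Q\rt N$ with $Q\in n$-$\proj\C$, apply Lemma \ref{conf}(1) to obtain a conflation $K\rt M\oplus Q\st{[f~~\pi]}\rt N$ with $K\in n$-$\proj\C$, and apply the functor $\Ext(X,-)$. The $n$-Frobenius hypothesis ensures that $K$ and $Q$ are $n$-injective, so $\Ext^{n+1}(X,K)=\Ext^{n+2}(X,K)=\Ext^{n+1}(X,Q)=0$, and the resulting exact sequence collapses to show that $f_{*}\colon\Ext^{n+1}(X,M)\rt\Ext^{n+1}(X,N)$ is an isomorphism.

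The argument is fundamentally routine once Lemma \ref{conf} is in place; the only point that requires attention is that one truly needs the $n$-Frobenius assumption, and not merely the existence of enough $n$-projectives, because the cancellation of the outer $\Ext$-terms in the long exact sequence relies on $L$ (respectively $K$) being $n$-injective, which only follows from the coincidence $n$-$\proj\C=n$-$\inj\C$. A small bookkeeping step along the way is to check that after identifying $\Ext^{n+1}(N\oplus P,X)$ with $\Ext^{n+1}(N,X)$ (respectively $\Ext^{n+1}(X,M\oplus Q)$ with $\Ext^{n+1}(X,M)$) the induced matrix map really restricts to $f^{*}$ (respectively $f_{*}$), which is immediate from the definition of $h$ and the vanishing of the $P$- and $Q$-summands.
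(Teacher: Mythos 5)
Your proof is correct and follows essentially the same route as the paper: both directions extract the conflation $M\rt N\oplus P\rt P'$ (resp.\ its dual) from Lemma \ref{conf} and read the required isomorphism off the long exact sequence of Remark \ref{exact}, using the vanishing of the outer $\Ext$-terms. The only nitpick is in your closing remark: in the forward direction the vanishing of $\Ext^{i}(L,X)$ for $i>n$ follows already from $L$ being $n$-\emph{projective} (it sits in the first argument), so the $n$-injectivity of $L$ is not what is being used there; the coincidence $n$-$\proj\C=n$-$\inj\C$ is genuinely needed only in the dual direction and, of course, inside the proof of Lemma \ref{conf} itself.
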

\begin{proof}Assume that $f$ acts as invertible on $\Ext^{n+1}$ from the left. In view of Lemma \ref{conf}, there exists a conflation $M\st{h}\rt N\oplus P\rt P'$ in $\C$, where $P, P'\in n$-$\proj\C$. Suppose that $X\in\C$ is arbitrary. Applying the functor $\Ext(-, X)$ to this conflation, yields the long exact sequence $$\cdots\rt\Ext^{i}(P', X)\rt\Ext^{i}(N\oplus P, X)\st{\h}\rt\Ext^{i}(M, X)\rt\Ext^{i+1}(P', X)\rt\cdots.$$ As $\Ext^{i}(P', X)=0$, for any $i>n$, $h$ (and so, $f$) will act as invertible on $\Ext^{n+1}$ from the right. Since the sufficiency can be shown in a dual manner, we ignore it. So the proof is finished.
\end{proof}
}

\begin{lem}\label{epi}
Let $M\st{f}\rt N$ be a morphism in $\C$ such that $\ker f, \cok f\in n$-$\proj\C$. Assume that $Q\st{\pi}\rt N$ is a deflation and $M\st{i}\rt P$ is an inflation such that $P, Q\in n$-$\proj\C$. Then
\begin{enumerate}\item
$M\oplus Q\st{[f~~\pi]}\lrt N$ is a deflation such that its kernel is $n$-projective.\item $M\st{[f~~i]^{t}}\lrt N\oplus P$ is an inflation such that its cokernel is $n$-projective.
\end{enumerate}
\end{lem}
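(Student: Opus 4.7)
The plan is to reduce Lemma \ref{epi} to Lemma \ref{conf} by showing that the hypothesis on $\ker f$ and $\cok f$ forces $f$ to act as invertible on $\Ext^{n+1}$ from one side, which by Corollary \ref{lr} means from both sides. Once this is in hand, assertions (1) and (2) follow immediately as direct instances of Lemma \ref{conf}(1) and (2), applied to the data $f$, $\pi$, and $i$.

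For the key reduction, I would first factor $f:M\rt N$ through its image, producing in $\C$ a pair of conflations
\[\ker f\rt M\rt\im f\quad\text{and}\quad\im f\rt N\rt\cok f.\]
Since $\C$ is $n$-Frobenius, the identification $n$-$\proj\C=n$-$\inj\C$ ensures that both $\ker f$ and $\cok f$ are also $n$-injective. Applying the functor $\Ext^i(X,-)$ to the first conflation and invoking the long exact sequence recalled in Remark \ref{exact}, the vanishing $\Ext^i(X,\ker f)=0$ for $i>n$ forces the induced map $\Ext^i(X,M)\rt\Ext^i(X,\im f)$ to be an isomorphism for every $i>n$; in particular, the deflation $M\twoheadrightarrow\im f$ acts as invertible on $\Ext^{n+1}$ from the left. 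The same reasoning applied to the second conflation, using $\Ext^i(X,\cok f)=0$ for $i>n$, shows that the inflation $\im f\hookrightarrow N$ also acts as invertible on $\Ext^{n+1}$ from the left. Composing, so does $f$, and Corollary \ref{lr} then upgrades this to two-sided invertibility on $\Ext^{n+1}$.

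The main subtle point is the very first step, namely producing the image factorization of $f$ as honest conflations inside $\C$; this is ensured by the $n$-Frobenius structure together with the assumption that $\ker f$ and $\cok f$ lie in $\C$, so that the relevant short exact sequences in the ambient abelian category $\A$ are indeed conflations of $\C$. With that in place, everything else is formal: apply Lemma \ref{conf}(1) to obtain the deflation $M\oplus Q\rt N$ with $n$-projective kernel, and Lemma \ref{conf}(2) to obtain the inflation $M\rt N\oplus P$ with $n$-projective cokernel.
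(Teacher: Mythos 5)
Your overall strategy is genuinely different from the paper's. The paper proves Lemma \ref{epi} by a direct construction: it factors $f=gh$ through $L=\im f$, builds the conflation $T\rt L\oplus Q\st{[g~~\pi]}\rt N$ with $T$ an extension of $Q$ by $\cok f$, and then pulls back along $\mathrm{diag}(h,1)$ to exhibit the kernel $T'$ of $[f~~\pi]$ explicitly as an extension of $n$-projectives; only afterwards does it deduce Corollary \ref{is} from Lemma \ref{epi}. You reverse this order: you prove the invertibility statement of Corollary \ref{is} directly from the two conflations of the image factorization and then quote Lemma \ref{conf}. Since Lemma \ref{conf} and Corollary \ref{lr} are established before and independently of Lemma \ref{epi}, this reversal is not circular, and your reading of the hypothesis (that the image factorization yields honest conflations in $\C$ with $n$-projective ends) matches what the paper's own proof silently assumes when it writes down $P'\rt M\st{h}\rt L$ and $L\st{g}\rt N\rt P''$.

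There is, however, a concrete error in your key step. For $\ker f\rt M\rt\im f$ the covariant long exact sequence argument is fine: both $\Ext^i(X,\ker f)$ and $\Ext^{i+1}(X,\ker f)$ vanish for $i>n$, so $\Ext^i(X,M)\rt\Ext^i(X,\im f)$ is an isomorphism. But for $\im f\rt N\rt\cok f$ the ``same reasoning'' does not apply, because the $n$-projective term now sits at the quotient end. The relevant segment is
\[
\Ext^{n}(X,\cok f)\lrt\Ext^{n+1}(X,\im f)\lrt\Ext^{n+1}(X,N)\lrt\Ext^{n+1}(X,\cok f)=0,
\]
which gives surjectivity of $\Ext^{n+1}(X,\im f)\rt\Ext^{n+1}(X,N)$ but not injectivity: $\Ext^{n}(X,\cok f)$ need not vanish and the connecting map need not be zero. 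As written, you have therefore only shown that $f$ acts surjectively on $\Ext^{n+1}$ from the left. The repair is straightforward with tools you already cite: apply the contravariant sequence of Remark \ref{exact} to $\im f\rt N\rt\cok f$, where $\Ext^{i}(\cok f,X)=0=\Ext^{i+1}(\cok f,X)$ for $i>n$, to conclude that this inflation acts as invertible on $\Ext^{n+1}$ from the \emph{right}, then invoke Corollary \ref{lr} on this single morphism to transfer to the left before composing with $M\rt\im f$. With that adjustment the reduction to Lemma \ref{conf} goes through and yields both assertions.
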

\begin{proof}Let us prove only the first assertion. The second one is obtained dually.
By the hypothesis, there exist conflations $P'\rt M\st{h}\rt L$ and $L\st{g}\rt N\rt P''$ such that $gh=f$ and $P', P''\in n$-$\proj\C$. Consider the following commutative diagram: {\footnotesize\[\xymatrix{T~\ar[r]\ar[d]_{\varphi}& Q\ar[r]^{l\pi}\ar[d]_{\pi}& P''\ar@{=}[d]\\ L~\ar[r]^{g} & N\ar[r]^l& P'',}\]}where $\varphi$ is an induced map. One should note that since $l$ and $\pi$ are deflation, $l\pi$ is so. Thus, the top row is also a conflation. Since $Q, P''\in n$-$\proj\C$, the same is true for $T$.
In particular, we have the conflation $T\st{[-\varphi~~\alpha]^t}\lrt L\oplus Q\st{[g~~\pi]}\lrt N$. Now consider the following pull-back diagram: {\footnotesize\[\xymatrix{P'~\ar[r]\ar@{=}[d]& T'\ar[r]\ar[d]& T\ar[d]\\ P'~\ar[r] & M\oplus Q\ar[r]^{u}\ar[d]& L\oplus Q\ar[d]\\ & N~\ar@{=}[r] & N,}\]}where $u={\tiny {\left[\begin{array}{ll} h & 0 \\ 0 & {1} \end{array} \right]}}$. Evidently, $T'$ is $n$-projective, because
$P',T\in n$-$\proj\C$, and then, $T'\rt M\oplus Q\st{[f~~\pi]}\rt N$ is the desired conflation. So the proof is finished.
\end{proof}
As a consequence of Lemma \ref{epi} and the proof of Corollary \ref{lr}, we include the next result.
\begin{cor}\label{is}Let $f:M\rt N$ be a morphism in $\C$ such that $\ker f, \cok f\in n$-$\proj\C$. Then $f$ acts as invertible on $\Ext^{n+1}$.
\end{cor}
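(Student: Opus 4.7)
The plan is to imitate the argument used in Corollary \ref{lr}, only substituting Lemma \ref{epi} for Lemma \ref{conf}. Since $\C$ is $n$-Frobenius, it has enough $n$-projectives, so I may fix any deflation $\pi:Q\rt N$ with $Q\in n$-$\proj\C$. Because the hypotheses of Lemma \ref{epi}(1) are fulfilled (we have $\ker f,\cok f\in n$-$\proj\C$ by assumption), that lemma furnishes a conflation $T'\rt M\oplus Q\st{[f~~\pi]}\lrt N$ with $T'\in n$-$\proj\C$.

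Next, for an arbitrary object $X\in\C$, I would apply the functor $\Ext(X,-)$ to this conflation and invoke Remark \ref{exact} to obtain the associated long exact sequence. The $n$-projectivity of $T'$ kills $\Ext^{n+1}(X,T')$ and $\Ext^{n+2}(X,T')$, while the $n$-projectivity of $Q$ gives $\Ext^{n+1}(X,Q)=0$. These vanishings collapse the long exact sequence to an isomorphism $\Ext^{n+1}(X,M)\cong\Ext^{n+1}(X,N)$. The delicate (but easy) point is to identify this isomorphism with multiplication by $f$: it is induced by the composite $M\hookrightarrow M\oplus Q\st{[f~~\pi]}\lrt N$, which is just $f$. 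Hence $\f$ acts as invertible on $\Ext^{n+1}$ from the left.

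Finally, since Corollary \ref{lr} has already been established and asserts that acting as invertible from the left and from the right on $\Ext^{n+1}$ are equivalent, I can conclude at once that $f$ is quasi-invertible. (Alternatively, the same computation applied to the dual conflation produced by Lemma \ref{epi}(2), using $\Ext(-,X)$ in place of $\Ext(X,-)$, yields right-invertibility directly.) There is no substantive obstacle: once Lemma \ref{epi} is in hand, the proof reduces to reading off the induced map in a truncated long exact sequence.
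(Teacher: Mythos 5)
Your proposal is correct and is essentially the paper's own argument: the paper states this corollary precisely as a consequence of Lemma \ref{epi} together with the long-exact-sequence computation from the proof of Corollary \ref{lr}, which is what you carry out. The only point worth making explicit is that the vanishing of $\Ext^{n+1}(X,T')$ and $\Ext^{n+2}(X,T')$ comes from $T'$ being $n$-injective, which holds because $n$-$\proj\C=n$-$\inj\C$ in the $n$-Frobenius category $\C$.
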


\begin{dfn}We say that a given morphism $f$ in $\C$ is quasi-invertible, provided that $f$ acts as invertible on $\Ext^{n+1}$. The class of all quasi-invertible morphisms will be denoted by $\si$.
\end{dfn}

\begin{rem}\label{rems}Assume that $f:M\rt N$ is a morphism in $\si$. As we have seen in the proof of Lemma \ref{conf}, taking an inflation (resp. a deflation) $g: M\rt P$ (resp. $g: P\rt N$), yields an inflation (resp. a deflation) $M\st{[f~~g]^t}\lrt N\oplus P$ (resp. $M\oplus P\st{[f~~g]}\lrt N$) such that its cokernel (resp. kernel) is $n$-projective. Moreover, since $\Ext^{n+1}(-, Q)=0=\Ext^{n+1}(Q, -)$, for any $Q\in n$-$\proj\C$, we may deduce that the maps $f$, ${\tiny {\left[\begin{array}{ll} f \\ g \end{array} \right]}}$ and $[f~~g]$ act identically on $\Ext^{n+1}$. So, if $f$ acts as invertible on $\Ext^{n+1}$, without loss of generality, we may further assume that $f$ is an inflation or a deflation with cokernel and kernel $n$-projective, respectively.
\end{rem}

The result below reveals that being a unit conflation is stable under the pull-back and push-out along morphisms in $\si$.

\begin{lem}\label{unit}Let $\ga\in\Ext^k(N, \syz^k N)$ with $k\geq 1$. Let $a:X\rt N$ and $b:\syz^kN\rt Y$ be two morphisms in $\si$. Then the following assertions hold:
\begin{enumerate}\item $\ga$ is a unit conflation if and only if $\ga a$ is so. \item $\ga$ is a unit conflation if and only if $b\ga$ is so.
\end{enumerate}
\end{lem}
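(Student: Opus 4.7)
The plan is to prove statement~(1) by reducing, via a factorization, to two tractable special cases for $a$; statement~(2) then follows by the formally dual argument. Fix a representative $\gamma:\syz^k N\to P_{k-1}\to\cdots\to P_0\to N$ and factor $\gamma=\gamma_{k-1}\cdots\gamma_0$ with $\gamma_0:K_0\to P_0\to N$. Pulling back along $a$ only alters the rightmost step: $\gamma a=\gamma_{k-1}\cdots\gamma_1(\gamma_0 a)$, where $\gamma_0 a:K_0\to P'_0\to X$ with $P'_0:=P_0\times_N X$, while the intermediate objects $P_{k-1},\ldots,P_1$ are unchanged. Hence $\gamma$ is a unit conflation iff $P_0\in n\text{-}\proj\C$ and $\gamma a$ is a unit conflation iff $P'_0\in n\text{-}\proj\C$, so the task reduces to proving $P_0\in n\text{-}\proj\C\iff P'_0\in n\text{-}\proj\C$.

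I would then dispatch two special shapes of $a$. \emph{Case~A:} $a$ is a deflation with $n$-projective kernel $Q$. The pull-back square induces a conflation $Q\to P'_0\to P_0$ in $\C$; since $\C$ is $n$-Frobenius, $Q\in n\text{-}\inj\C$, and Remark~\ref{exact} applied to $\Ext^\bullet(Y,-)$ yields $\Ext^j(Y,P'_0)\cong\Ext^j(Y,P_0)$ for every $j>n$ and every $Y\in\C$. Thus $P_0\in n\text{-}\inj\C\iff P'_0\in n\text{-}\inj\C$, and the identification $n\text{-}\proj\C=n\text{-}\inj\C$ delivers the claim. \emph{Case~B:} $a$ is an inflation with $n$-projective cokernel $R$. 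The pull-back now produces a conflation $P'_0\to P_0\to R$, and applying Remark~\ref{exact} in the first variable together with $R\in n\text{-}\proj\C$ gives $\Ext^j(P'_0,Y)\cong\Ext^j(P_0,Y)$ for $j>n$, yielding the same conclusion.

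For a general $a\in\si$, choose a deflation $\pi:Q\to N$ with $Q\in n\text{-}\proj\C$, which exists because $\C$ has enough $n$-projectives. By Lemma~\ref{conf}(1), $\bar a:=[a\ \pi]:X\oplus Q\to N$ is a deflation with $n$-projective kernel, and the canonical inclusion $\iota:=[1\ 0]^t:X\to X\oplus Q$ is an inflation whose cokernel is the $n$-projective object $Q$. Since $a=\bar a\iota$, functoriality of pull-back gives $\gamma a=(\gamma\bar a)\iota$, so combining Case~A for $\bar a$ with Case~B for $\iota$ yields $\gamma$ unit $\iff\gamma\bar a$ unit $\iff\gamma a$ unit. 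Statement~(2) is obtained dually, with push-outs in place of pull-backs and Lemma~\ref{conf}(2) in place of (1): any $b\in\si$ factors as $b=p\bar b$ where $\bar b:\syz^k N\to Y\oplus P$ is an inflation with $n$-projective cokernel (obtained from an inflation $\syz^k N\to P$ with $P\in n\text{-}\proj\C$) and $p:Y\oplus P\to Y$ is a deflation with $n$-projective kernel, after which the analogous $\Ext$-arguments close the argument.

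The main technical obstacle is this reduction step. One cannot simply invoke Remark~\ref{rems} to replace $a$ by a morphism of one of the two special shapes, because the pull-back $\gamma a$ depends on the actual map $a$ and not merely on its action on $\Ext^{n+1}$. The factorization $a=\bar a\iota$ as a composite of two morphisms, each of one of the tractable forms, is the key device that lets the $\Ext$-computations of Cases~A and~B decouple and compose correctly.
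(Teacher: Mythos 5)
Your proposal is correct and follows essentially the same route as the paper: both reduce to the rightmost step of the conflation, factor $a$ as the composite of the inflation $[1~0]^t\colon X\to X\oplus Q$ (with $n$-projective cokernel) and the deflation $[a~\pi]\colon X\oplus Q\to N$ (with $n$-projective kernel, via Lemma~\ref{conf}), and then compare the middle terms of the pulled-back conflations through a conflation with an $n$-projective object at one end. The only difference is cosmetic: the paper argues the ``simultaneously $n$-projective'' steps via closure of $n$-$\proj\C$ under extensions and cokernels of inflations, while you use the long exact $\Ext$-sequences of Remark~\ref{exact}; these are equivalent.
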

\begin{proof}We only prove the first assertion. The second one is obtained dually. First one should note that, by the definition of pull-back diagram, without loss of generality, we may assume that $k=1$. As $a\in\si$ , by Lemma \ref{conf}, there exists a conflation $Q\rt X\oplus P\st{[a~~\pi]}\rt N$, where $P, Q\in n$-$\proj\C$. So taking the following pull-back diagram: {\footnotesize \[\xymatrix{\ga [a~~\pi]: \syz N~\ar[r]\ar@{=}[d]& H\ar[r]\ar[d]& X\oplus P\ar[d]_{[a~~\pi]}\\ \ga: \syz N \ar[r] & T\ar[r] & N,}\]}gives rise to the conflation, $Q\rt H\rt T$. We show that $H$ and $T$ are $n$-projective, simultaneously. If $T$ is $n$-projective, then the same is true for $H$, because $n$-$\proj\C$ is closed under extensions. Conversely, assume that $H$ is $n$-projective. As $\C$ is an $n$-Frobenius category, it is easily seen that the class $n$-$\proj\C$ is closed under cokernels of monomorphisms, implying that $T$ is $n$-projective. This means that the conflation $\ga$ is unit if and only if $\ga[a~~\pi]$ is so. Next considering the following pull-back diagram: {\footnotesize\[\xymatrix{\be:\syz N~\ar[r]\ar@{=}[d]& L\ar[r]\ar[d]& X\ar[d]_{{{\tiny {\left[\begin{array}{ll} 1\\ 0 \end{array} \right]}}}}\\ \ga[a~~\pi]:\syz N~\ar[r] & H\ar[r]\ar[d]& X\oplus P\ar[d]\\ & P~\ar@{=}[r] & P,}\]}we conclude that $\ga[a~~\pi]$ and $\be$ are unit conflation, simultaneously. Finally, the equality $\be=(\ga[a~~\pi]){{{\tiny {\left[\begin{array}{ll} 1\\ 0 \end{array} \right]}}}}=\ga a$, completes the proof.
\end{proof}

We close this section with the following result.
\begin{lem}\label{sig}
Let $f: M\rt N$ be an inflation or a deflation in $\C$. Then the following assertions hold:
\begin{enumerate}
\item If there exists $\delta_N\in\U_n(N)$ such that $\delta_Nf\in\U_n(M)$, then $f$ lies in $\si$.
\item If there exists $\delta_M\in\U^n(M)$ such that $f\delta_M\in\U^n(N)$, then $f$ belongs to $\si$.
\end{enumerate}
\end{lem}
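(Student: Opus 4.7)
The plan is to reduce the quasi-invertibility of $f$ to the existence of a conflation of the form $P_0'\rt M\oplus P_0\rt N$ (in case (1)) or $M\rt N\oplus P^1\rt \widetilde{P}^1$ (in case (2)), whose outer objects are both $n$-projective. Once such a conflation is at hand, a short calculation in the long exact $\Ext$-sequence, combined with Corollary \ref{lr}, will finish the job.

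For (1), write $\delta_N$ as $\syz^nN\rt P_{n-1}\rt\cdots\rt P_0\st{\pi}\rt N$. The pull-back $\delta_N f$ then has the form $\syz^nN\rt P_{n-1}\rt\cdots\rt P_1\rt P_0'\rt M$, where $P_0'=P_0\times_N M$. The assumption $\delta_N f\in\U_n(M)$ forces $P_0'\in n$-$\proj\C$. I form the morphism $[f~~\pi]:M\oplus P_0\rt N$, which is a deflation by Lemma \ref{101}(2) since $\pi$ is. A direct computation inside the ambient abelian category identifies its kernel with $P_0'$, producing a conflation $P_0'\rt M\oplus P_0\rt N$ in $\C$ with both endpoints $n$-projective.

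For any $X\in\C$ I then apply $\Ext(X,-)$ to this conflation. Since $\C$ is $n$-Frobenius, $P_0$ and $P_0'$ are simultaneously $n$-projective and $n$-injective, so $\Ext^{n+1}(X,P_0')=\Ext^{n+2}(X,P_0')=0$ and $\Ext^{n+1}(X,P_0)=0$. The five-term exact sequence of Remark \ref{exact} therefore collapses to an isomorphism $\Ext^{n+1}(X,M)\st{\cong}\rt\Ext^{n+1}(X,N)$ induced by $f_*$. This shows that $f$ acts as invertible on $\Ext^{n+1}$ from the left, whence $f\in\si$ by Corollary \ref{lr}. Part (2) is treated entirely dually: the hypothesis $f\delta_M\in\U^n(N)$ forces the push-out $\widetilde{P}^1=N\oplus_M P^1$ to be $n$-projective, the map $[f~~i^1]^t:M\rt N\oplus P^1$ is an inflation by Lemma \ref{101}(1) with cokernel $\widetilde{P}^1$, and applying $\Ext(-,X)$ to the conflation $M\rt N\oplus P^1\rt\widetilde{P}^1$ produces the dual isomorphism $f^*:\Ext^{n+1}(N,X)\st{\cong}\rt\Ext^{n+1}(M,X)$.

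The main obstacle is a bookkeeping point: one must check that the kernel of $[f~~\pi]$ (respectively the cokernel of $[f~~i^1]^t$) inside the exact structure of $\C$ agrees, up to canonical isomorphism, with the pull-back $P_0'$ (resp. push-out $\widetilde{P}^1$) from the given unit-conflation data, and that the surviving map on $\Ext^{n+1}$ really is $f_*$ (resp. $f^*$). Once this identification is confirmed, what rescues the argument at the critical degree $n+1$ — where generic dimension-shifting via syzygies only yields surjectivity — is precisely the fact that in an $n$-Frobenius category both end terms of the auxiliary conflation are simultaneously $n$-injective, forcing two consecutive terms of the LES to vanish at once.
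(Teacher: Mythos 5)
Your proof is correct, but it finishes differently from the paper's. Both arguments start from the same place: the hypothesis $\delta_Nf\in\U_n(M)$ is read off the last square of the pull-back, giving a pull-back diagram of $K_0\rightarrowtail P_0\twoheadrightarrow N$ along $f$ in which the pulled-back object $P_0'=P_0\times_N M$ is $n$-projective. The paper then notes that (for $f$ an inflation) the induced map $P_0'\to P_0$ is an inflation whose cokernel is isomorphic to $\cok f$, concludes that $\ker f$ and $\cok f$ are both $n$-projective, and invokes Corollary \ref{is}. You instead assemble the mapping-cone conflation $P_0'\to M\oplus P_0\to N$ via Lemma \ref{101}(2) and run the long exact sequence of Remark \ref{exact} directly, using that $P_0'$ and $P_0$ are simultaneously $n$-projective and $n$-injective to kill the flanking terms; this is essentially an inlining of Lemma \ref{epi} and Corollary \ref{is}, except that you get the $n$-projective kernel for free from the pull-back data rather than from a factorization of $f$ through its image. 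The paper's route is shorter given the earlier machinery; yours is self-contained and, notably, never uses the hypothesis that $f$ is an inflation or a deflation, so it actually establishes the slightly stronger statement for arbitrary $f$. One wording slip worth fixing: the conflation $P_0'\to M\oplus P_0\to N$ does not have ``both endpoints $n$-projective'' ($N$ need not be), and what your argument really uses is that $P_0'$ and the summand $P_0$ are $n$-projective (hence $n$-injective); the computation itself is unaffected.
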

\begin{proof}
We only prove the first assertion. The second one is obtained dually. Without loss of generality,
we assume that $f$ is an inflation. By the definition of the pull-back diagram, we may assume that $n=1$. By the hypothesis, there exists a pull-back diagram
{\footnotesize\[\xymatrix{\syz N~\ar[r] \ar@{=}[d]& T\ar[r]\ar[d]_{h}& M\ar_{f}[d]\\ \delta_N:\syz N~\ar[r] & P\ar[r]& N}\]}where $P,T\in n$-$\proj\C$. Now since $h$ is an inflation, $\cok h$ is $n$-projective, and so, the same will be true for $\cok f$. Now Corollary \ref{is} completes the proof.
\end{proof}

\section{$\p$-subfunctor of $\Ext^n$}
Assume that $\C$ is an $n$-Frobenius category. This section aims to study a subfunctor of $\Ext^n$ consisting of all conflations arising as a pull-back along morphisms ending at $n$-projective objects that we call a $\p$-subfunctor of $\Ext^n$. We begin with the following useful observation.

\begin{rem}\label{use}Assume that $X, Z$ are arbitrary objects of $\C$. Consider the unit conflations $Z\rt P\rt\syz^{-1}Z$ and $\syz X\rt Q\rt X$. So, we will have the following commutative diagram with exact rows and columns:
\[\xymatrix{
\Ext^n(P, Q)~\ar[r]\ar[d]& \Ext^n(Z, Q)\ar[r]\ar[d]_{\beta}& \Ext^{n+1}(\syz^{-1}Z, Q)\ar[d]\ar[r] &0\\ \Ext^n(P, X)~\ar[r]^{\alpha}\ar[d]& \Ext^n(Z, X)\ar[r]^{\psi}\ar[d]_{\varphi}& \Ext^{n+1}(\syz^{-1}Z, X)\ar[d]_{\eta}\ar[r] &0\\ \Ext^{n+1}(P, \syz X) \ar[r] & \Ext^{n+1}(Z, \syz X)\ar[r]^{\theta} & \Ext^{n+2}(\syz^{-1}Z, \syz X)\ar[r]& 0.}\]
Since $P,Q\in n$-$\proj\C$, $\Ext^{n+1}(P, \syz X)=0=\Ext^{n+1}(\syz^{-1}Z, Q)$, and so, we may deduce that $\im\alpha=\im\beta$.
\end{rem}

\begin{s}\label{use1}Assume that $M,N\in\C$ and $\ga\in\Ext^n(M,N)$ such that there is a morphism $f:M\rt P$ with $P\in n$-$\proj\C$ and a conflation $\et\in\Ext^n(P, N)$ such that $\ga=\et f$. Since $\C$ is $n$-Frobenius, there is an inflation $i:M\rt P'$, where $P'\in n$-$\proj\C$.
According to Lemma \ref{101}(1), $[f~~i]^t:M\rt P\oplus P'$ is also an inflation. Now by setting $\et':=\et\oplus(0\rt\cdots\rt 0\rt P'\rt P')$, we have
$\ga=\et'[f~~i]^t$. Consequently, without loss of generality, we may assume that $f$ is an inflation. Dually, if $\ga=g\be$, for some morphism
$g:Q\rt N$, with $Q\in n$-$\proj\C$ and $\be\in\Ext^n(M, Q)$, one may assume that $g$ is a deflation.
\end{s}

The result below is an immediate consequence of Remark \ref{use} and \ref{use1}. So  its proof is omitted.

\begin{prop}\label{equal}Let $X, Z$ be arbitrary objects of $\C$ and $\ga\in\Ext^n(Z, X)$. Then the following statements are equivalent:
\begin{enumerate}\item There is an object $\et\in\Ext^n(P, X)$, with $P\in n$-$\proj\C$, and a morphism $Z\st{f}\rt P$ such that $\ga=\et f$.
\item There is an object $\et'\in\Ext^n(Z, Q)$, with $Q\in n$-$\proj\C$, and a morphism $Q\st{f'}\rt X$ such that $\ga=f'\et'$.
\end{enumerate}
\end{prop}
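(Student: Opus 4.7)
The plan is to deduce the proposition directly from Remark \ref{use} once the technical reduction in \ref{use1} has been applied. Only the implication $(1)\Rightarrow(2)$ requires genuine proof, since the reverse implication follows by dualising the argument.

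For $(1)\Rightarrow(2)$, start with $\ga=\et f$ where $f:Z\rt P$ has $n$-projective codomain and $\et\in\Ext^n(P,X)$. The first move is to upgrade $f$ to an inflation by means of \ref{use1}: after enlarging $P$ to $P\oplus P'$ along an inflation $Z\rt P'$ with $P'\in n$-$\proj\C$ and modifying $\et$ by the trivial split conflation $0\rt\cdots\rt 0\rt P'\st{1}\rt P'$, the equality $\ga=\et f$ still holds and $f$ is now an inflation. Consequently $Z\st{f}\rt P\rt\cok f$ is a unit conflation of the shape $Z\rt P\rt\syz^{-1}Z$, exactly matching the hypothesis of Remark \ref{use}.

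Next, fix any unit conflation $\syz X\rt Q\st{f'}\rt X$, which exists because $\C$ is $n$-Frobenius, and feed both unit conflations into Remark \ref{use}. The remark delivers the equality $\im\alpha=\im\beta$, where $\alpha:\Ext^n(P,X)\rt\Ext^n(Z,X)$ is pullback along $f$ and $\beta:\Ext^n(Z,Q)\rt\Ext^n(Z,X)$ is pushout along $f'$. Since $\ga=\alpha(\et)\in\im\alpha=\im\beta$, there exists $\et'\in\Ext^n(Z,Q)$ with $\ga=\beta(\et')=f'\et'$, which is exactly statement $(2)$.

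The reverse implication $(2)\Rightarrow(1)$ proceeds by the dual procedure: apply the dual half of \ref{use1} to replace $f'$ by a deflation with $n$-projective kernel, so that $\ker f'\rt Q\st{f'}\rt X$ becomes a unit conflation of the shape $\syz X\rt Q\rt X$; then pick any unit conflation $Z\rt P\rt\syz^{-1}Z$ and read off the equality $\im\beta=\im\alpha$ from Remark \ref{use} to produce the desired $\et$ and $f$. The main conceptual content of the proof is already packaged into Remark \ref{use}; the only preparatory point requiring care is the upgrade from an arbitrary morphism with $n$-projective codomain (resp.\ domain) to a genuine inflation (resp.\ deflation), and this is precisely what \ref{use1} supplies, so no further obstacle is expected.
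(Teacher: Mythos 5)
Your proof is correct and is precisely the route the paper intends: the paper states Proposition \ref{equal} as an immediate consequence of Remark \ref{use} and \ref{use1}, and you have simply supplied the omitted details (use \ref{use1} to upgrade $f$ to an inflation, resp.\ $f'$ to a deflation, so that the two unit conflations of Remark \ref{use} are available, and then read $\ga\in\im\alpha=\im\beta$ off that diagram). One small wording slip in the converse direction: the dual half of \ref{use1} produces a deflation with $n$-projective \emph{domain}, not an $n$-projective kernel --- the kernel is merely some syzygy $\syz X$, which is all that Remark \ref{use} requires --- so the argument is unaffected.
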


\begin{s}\label{pp}
{\sc $\p$-subfunctor.} For every pair $X, Y$ of objects $\C$, let $\p(X, Y)$ denote the additive subgroup of $\Ext^n(X, Y)$ satisfying one of the equivalent conditions in Proposition \ref{equal}. It is easily seen that for given morphisms $A\st{f}\rt X$ and $Y\st{g}\rt B$ in $\C$, the natural transformation $\Ext(f, g):\Ext^n(X, Y)\lrt\Ext^n(A, B)$ respects $\p$. Namely, for any $\ga\in\p(X, Y)$, $g(\ga f)=(g\ga)f\in\p(A, B)$. Consequently, $\p$ is a subfunctor of $\Ext^n$, see \cite{as, fght}. Indeed, from our point of view, $\p$ is a submodule of $\Ext^n$. A given conflation $\ga\in\Ext^n(X, Y)$ will be called a {\em $\p$-conflation,} whenever $\ga$ belongs to $\p(X, Y)$. It is worth noting that in the case $n=0$, $\p$-conflations are those morphisms in $\C$ factoring through projective objects. If there is no ambiguity, we denote $\p(-, -)$ by $\p$.
\end{s}
The next result is also a direct consequence of Remark \ref{use} and \ref{use1}. So we ignore its proof.
\begin{cor}\label{lem2}Let $f:Z\rt P$ with $P\in n$-$\proj\C$, be an inflation. Then any $\p$-conflation $\ga\in\Ext^n(Z, X)$, factors through $f$.
\end{cor}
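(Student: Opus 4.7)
The plan is to reduce the factorization of $\ga$ through $f$ to a surjectivity statement for an $\Ext$-map, which can then be read off directly from the $n$-projectivity built into the definition of $\p$.

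First I would invoke Proposition \ref{equal} to write $\ga = q\eta$ for some $Q \in n$-$\proj\C$, a morphism $q\colon Q \rt X$ and a conflation $\eta \in \Ext^n(Z, Q)$. Next I would complete the inflation $f\colon Z\rt P$ to a conflation $Z\st{f}\rt P\rt Z'$ with $Z' := \cok f$, and apply the contravariant long exact sequence of Remark \ref{exact} with second argument $Q$ to obtain the exact piece
\[
\Ext^n(P, Q) \lrt \Ext^n(Z, Q) \lrt \Ext^{n+1}(Z', Q),
\]
in which the first map is pull-back along $f$. The right-hand term vanishes because $Q \in n$-$\proj\C$, so pull-back along $f$ is surjective. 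Pick $\zeta \in \Ext^n(P, Q)$ with $\zeta f = \eta$; then, using the compatibility $l(\ga h) = (l\ga)h$ of push-out with pull-back recorded in Remark \ref{pp1}(2), we get
\[
\ga \;=\; q\eta \;=\; q(\zeta f) \;=\; (q\zeta) f,
\]
exhibiting $\ga$ as the pull-back along $f$ of the element $q\zeta \in \Ext^n(P, X)$. Hence $\ga$ factors through $f$.

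The argument is quite direct and I do not foresee a genuine obstacle. The one place where care is needed is recognizing that the hypothesis ``$f$ is an inflation'' is essential, since it is what produces the conflation $Z\rt P\rt Z'$ and so grants access to the $\Ext$-long exact sequence of Remark \ref{exact}; for a general morphism $f$ no such sequence would be available, which is precisely why the statement imposes this hypothesis. A secondary point to verify is that the push-out/pull-back swap identity of Remark \ref{pp1}(2), stated there for a morphism of conflations with fixed left or right ends, applies in the configuration above with $\zeta$, $f$ and $q$; this is immediate from the cited universal-property arguments.
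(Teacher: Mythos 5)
Your proof is correct and rests on the same mechanism the paper has in mind when it declares this corollary ``a direct consequence of Remark \ref{use} and \ref{use1}'': writing the $\p$-conflation as a push-out $q\eta$ with $\eta\in\Ext^n(Z,Q)$ and exploiting the vanishing $\Ext^{n+1}(\cok f,Q)=0$ for the $n$-projective (hence $n$-injective) object $Q$ to lift along $f$, exactly the vanishing that drives the $3\times 3$ diagram of Remark \ref{use}. Your version just unwinds that diagram into a direct lift of $\eta$ through the row $\Ext^n(P,Q)\rt\Ext^n(Z,Q)\rt\Ext^{n+1}(\cok f,Q)$ followed by the interchange $q(\zeta f)=(q\zeta)f$ of Remark \ref{pp1}(2), which is a perfectly faithful rendering of the intended argument.
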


{

\begin{prop}\label{pprop}Let $M\st{f}\rt N\st{g}\rt K$ be a conflation in $\C$. Then, for any object $X\in\C$, there exists an exact sequence
$$\Ext^n(K, X)/{\p}\st{\bar{\g}}\lrt\Ext^n(N, X)/{\p}\st{\bar{\f}}\lrt\Ext^n(M, X)/{\p}.$$
\end{prop}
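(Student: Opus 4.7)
The plan is to verify three items: (i) $\bar{\g}$ and $\bar{\f}$ descend to well-defined maps on the quotients by $\p$; (ii) $\bar{\f}\circ\bar{\g}=0$; and (iii) $\ker\bar{\f}\subseteq\im\bar{\g}$. Items (i) and (ii) are formal. Well-definedness is just the subfunctor property of $\p$ recorded in~\ref{pp}: pulling a $\p$-conflation back along $f$ or $g$ stays in $\p$. For (ii), the equality $\bar{\f}\bar{\g}=0$ already holds at the level of $\Ext^n$ before quotienting, since $gf=0$ in the conflation forces $(\ga g)f=\ga(gf)=0$ for every $\ga\in\Ext^n(K,X)$.

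All the content is in (iii). Given $\be\in\Ext^n(N,X)$ with $\be f\in\p(M,X)$, the strategy is to find $\mu\in\p(N,X)$ with $\mu f=\be f$. Once this is in hand, $(\be-\mu)f=0$, and the exactness of
$$\Ext^n(K,X)\lrt\Ext^n(N,X)\lrt\Ext^n(M,X)$$
from Remark~\ref{exact} produces $\ga\in\Ext^n(K,X)$ with $\be-\mu=\ga g$. Since $\mu\in\p(N,X)$, this reads $\bar{\be}=\bar{\g}(\bar{\ga})$ in $\Ext^n(N,X)/\p$, as required.

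The essential choice is to represent $\be f$ as a $\p$-conflation via form~(2) of Proposition~\ref{equal} rather than form~(1). Write $\be f=\phi\eta$ with $\eta\in\Ext^n(M,Q)$ and $\phi\colon Q\lrt X$, where $Q\in n$-$\proj\C$. Because $\C$ is $n$-Frobenius, $Q$ is simultaneously $n$-injective, so $\Ext^{n+1}(K,Q)=0$. Applying Remark~\ref{exact} to the conflation $M\lrt N\lrt K$ with coefficients in $Q$ yields
$$\Ext^n(N,Q)\lrt\Ext^n(M,Q)\lrt\Ext^{n+1}(K,Q)=0,$$
so there exists $\eta'\in\Ext^n(N,Q)$ with $\eta' f=\eta$. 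Setting $\mu:=\phi\eta'$, Proposition~\ref{equal}(2) gives $\mu\in\p(N,X)$, and $\mu f=(\phi\eta')f=\phi(\eta' f)=\phi\eta=\be f$ by the associativity identity in Remark~\ref{pp1}. The main obstacle is identifying this routing: form~(1) would place the $n$-projective on the domain side, where the relevant obstruction $\Ext^{n+1}(C,X)$ for a cokernel $C$ need not vanish; form~(2) places it on the coefficient side, where the $n$-injectivity of $n$-projectives kills $\Ext^{n+1}(K,Q)$.
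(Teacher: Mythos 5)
Your proof is correct, but it follows a genuinely different route from the paper's. The paper chooses inflations $N\st{h}\rt P$ and $K\st{h'}\rt P'$ into $n$-projectives, assembles them (via Lemma \ref{101}) into a $3\times 3$ commutative diagram whose middle row is $N\rt P\oplus P'\rt\syz^{-1}N$, applies $\Ext(-,X)$, and then uses Corollary \ref{lem2} to identify each quotient $\Ext^n(-,X)/{\p}$ with $\Ext^{n+1}(\syz^{-1}(-),X)$; exactness is then read off from the long exact sequence in degree $n+1$ attached to the conflation of cosyzygies $\syz^{-1}M\rt\syz^{-1}N\rt\syz^{-1}K$. You instead work directly with the definition of $\p$: given $\be$ with $\be f\in\p$, you write $\be f=\phi\eta$ using form (2) of Proposition \ref{equal}, lift the witness $\eta\in\Ext^n(M,Q)$ to $\eta'\in\Ext^n(N,Q)$ using $\Ext^{n+1}(K,Q)=0$ (which is exactly where the $n$-Frobenius hypothesis enters, since $Q$ must be $n$-injective), and subtract off $\mu=\phi\eta'\in\p(N,X)$ before invoking the plain exactness of Remark \ref{exact}. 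Your observation that form (2) rather than form (1) is the workable representation is the same structural point the paper exploits through Proposition \ref{equal}, just deployed pointwise instead of through a dimension shift. Each approach has its merits: yours is more elementary and self-contained, needing only the long exact sequences and Proposition \ref{equal}; the paper's buys the natural isomorphism $\Ext^n(M,N)/{\p}\cong\Ext^{n+1}(\syz^{-1}M,N)$ recorded in \ref{ccor}, which is reused repeatedly later (e.g.\ in Corollary \ref{qo}), so the extra machinery is not wasted there. The only blemish in your write-up is the closing aside: the obstruction to making form (1) work is the possible nonvanishing of $\Ext^1(K,P)$ (needed to extend $M\rt P$ along the inflation $f$), not of $\Ext^{n+1}(C,X)$; this does not affect the validity of the argument you actually give.
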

\begin{proof}
Take inflations $N\st{h}\rt P$ and $K\st{h'}\rt P'$, where $P, P'\in n$-$\proj\C$.
So, one may get the following commutative diagram: {\footnotesize
\[\xymatrix{M~\ar[r]^{hf} \ar[d]_{f}& P\ar[r]\ar[d]_{[1~~0]^t}& \syz^{-1}M\ar[d]_{f'}\\ N~\ar[r]^{[h~~h'g]^t} \ar[d]_{g} & P\oplus P'\ar[r] \ar[d]_{[0~~1]}& \syz^{-1}N \ar[d]_{g'} \\ K\ar[r]^{h'} & P' \ar[r] & \syz^{-1}K,}\]}whose rows and columns are conflation. Indeed, $hf$ is an inflation, because $f$ and $h$ are so. Moreover, $u:=[h~~h'g]^t$ is an inflation, thanks to Lemma \ref{101}. So, applying the functor $\Ext(-, X)$ to this diagram, gives us the following commutative diagram:
{\footnotesize
\[\xymatrix{\Ext^n(P', X)~\ar[r]^{\Ext^n(h', X)} \ar[d]& \Ext^n(K, X)\ar[r]^{\ga}\ar[d]_{\Ext^n(g, X)}& \Ext^{n+1}(\syz^{-1}K, X)\ar[d]\ar[r]&0\\ \Ext^n(P\oplus P', X)~\ar[r]^{\Ext^n(u, X)} \ar[d] & \Ext^n(N, X)\ar[r]^{\be} \ar[d]_{\Ext^n(f, X)}& \Ext^{n+1}(\syz^{-1}N, X) \ar[d]\ar[r]&0 \\ \Ext^n(P, X)\ar[r]^{\Ext^n(hf, X)} & \Ext^n(M, X) \ar[r]^{\et} & \Ext^{n+1}(\syz^{-1}M, X)\ar[r]&0,}\]}where rows and columns are exact. {Consequently, one may get the exact sequence, $\Ext^n(K, X)/{\im\Ext^n(h', X)}\rt\Ext^n(N, X)/{\im\Ext^n(u, X)}\rt\Ext^n(M, X)/{\im\Ext^n(hf, X)}$. In order to complete the proof, it suffices to show that $\p(K, X)=\im\Ext^n(h', X)$, because the two others can be obtained similarly. To this end, take an arbitrary object $\th\in\p(K, X)$. So, according to Corollary \ref{lem2}, $\th$ factors through $h'$, implying that $\th\in\im\Ext^n(h', X)$. Next take an object $\al\in\im\Ext^n(h', X)$. So there is an object $\al'\in\Ext^n(P', X)$ such that $\al' h'=\al$, and then, $\al$ is a $\p$-conflation, as desired. }
\end{proof}

\begin{s}\label{ccor}Let $M\rt P\rt\syz^{-1}M$ and $\syz N\rt Q\rt N$ be two arbitrary unit conflations in $\C$. According to the proof of Proposition \ref{pprop}, we may get the natural isomorphism $\Ext^{n+1}(\syz^{-1}M, N)\cong\Ext^n(M, N)/{\p}$. Also, a dual argument gives us the natural isomorphism $\Ext^{n+1}(M, \syz N)\cong\Ext^n(M, N)/{\p}$. These facts would imply the result below.
\end{s}
\begin{cor}\label{qo}A given morphism $f: M\rt N$ is quasi-invertible if and only if it acts as invertible on $\Ext^n/{\p}$.
\end{cor}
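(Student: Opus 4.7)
The plan is to deduce this corollary directly from the two natural isomorphisms recorded in \ref{ccor}, namely
\[
\Ext^{n+1}(\syz^{-1}M, N) \;\cong\; \Ext^n(M, N)/{\p} \quad \text{and} \quad \Ext^{n+1}(M, \syz N) \;\cong\; \Ext^n(M, N)/{\p},
\]
together with Corollary \ref{lr}. The whole proof is essentially a naturality argument, so I would not perform any new computation; I would simply transport quasi-invertibility across these isomorphisms.

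First, given $f\colon M\rt N$ and any object $X\in\C$, I would use the naturality in the first variable of the second isomorphism above to produce the commutative square
\[
\xymatrix{
\Ext^{n+1}(N,\syz X) \ar[r]^{\cong} \ar[d]_{\f} & \Ext^n(N,X)/{\p} \ar[d]^{\f} \\
\Ext^{n+1}(M,\syz X) \ar[r]^{\cong} & \Ext^n(M,X)/{\p}.
}
\]
Since the horizontal arrows are bijections, the left vertical arrow is an isomorphism if and only if the right one is. The key ancillary point is that, as $X$ ranges over $\C$, the object $\syz X$ also ranges over all of $\C$: because $\C$ is $n$-Frobenius, every $Y\in\C$ admits an inflation $Y\hookrightarrow P$ into an $n$-projective $P$ and so equals $\syz(\syz^{-1}Y)$. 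Consequently, $f$ acts as invertible on $\Ext^{n+1}$ from the right if and only if it acts as invertible on $\Ext^n/{\p}$ from the right. Using the first isomorphism $\Ext^{n+1}(\syz^{-1}M,N)\cong\Ext^n(M,N)/{\p}$ and its naturality in the second variable, the same argument gives the corresponding statement on the left. Finally, Corollary \ref{lr} guarantees that acting as invertible on $\Ext^{n+1}$ from the left and from the right are equivalent, so both conditions collapse to a single notion and the biconditional of the corollary follows.

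The only point that requires care—and which I expect to be the main (minor) obstacle—is verifying that the square above genuinely commutes. This is not a deep issue but needs to be traced through the construction of the isomorphism in \ref{ccor}, which comes from the long exact $\Ext$-sequence associated with a chosen unit conflation $\syz X\rt Q\rt X$; the connecting homomorphism is independent of $f$, so pre-composing with $\f$ commutes with it. Once this naturality is in place, the equivalence is immediate and no further structural input is needed.
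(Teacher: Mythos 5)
Your proposal is correct and follows essentially the same route as the paper: the paper derives Corollary \ref{qo} precisely from the two natural isomorphisms of \ref{ccor} together with Corollary \ref{lr}, exactly as you do. You merely make explicit two points the paper leaves tacit, namely the naturality of the connecting map coming from the long exact sequence of Proposition \ref{pprop} and the fact that every object of $\C$ occurs as a syzygy, both of which are handled correctly.
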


The next result indicates that being $\p$-conflation behaves well with respect to the pull-back and push-out along morphisms in $\si$.

\begin{prop}\label{nul} Let $a:N\rt X$ and $b:X'\rt X$ be two morphisms in $\si$. Then
\begin{enumerate}\item a given conflation $\ga\in\Ext^n(M, N)$ is a $\p$-conflation if and only if $a\ga$ is so.
\item a given object $\be\in\Ext^n(X, N)$ is a $\p$-conflation if and only if $\be b$ is so.
\end{enumerate}
\end{prop}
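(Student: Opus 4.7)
The plan is to combine two ingredients: (i) the fact, recorded in \ref{pp}, that $\p$ is an $\H$-bisubmodule of $\Ext^n$, which will give both ``only if'' implications essentially for free; and (ii) Corollary \ref{qo}, which characterises members of $\si$ as precisely those morphisms acting as isomorphisms on $\Ext^n/{\p}$, and which will yield the converses.

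For the ``only if'' direction of (1), I would suppose $\ga\in\p(M, N)$ and appeal to Proposition \ref{equal} to write $\ga=\et f$ for some morphism $f:M\rt P$ with $P\in n$-$\proj\C$ and some $\et\in\Ext^n(P, N)$. Remark \ref{pp1}(2) then gives $a\ga=(a\et)f$, which exhibits $a\ga$ again as a pull-back along a morphism ending at an $n$-projective, so $a\ga\in\p(M, X)$. The corresponding implication in (2) proceeds in the same way: applying Proposition \ref{equal} to $\be$ yields $\be=\et' f'$ with $f':X\rt P'$, $P'\in n$-$\proj\C$, $\et'\in\Ext^n(P', N)$, and then $\be b=\et'(f'b)\in\p(X', N)$ by the same associativity remark.

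For both converses I would invoke Corollary \ref{qo}. Since $a\in\si$, left multiplication by $a$ descends to an isomorphism $\bar{a}\colon\Ext^n(M, N)/{\p}\lrt\Ext^n(M, X)/{\p}$; likewise $b\in\si$ makes right multiplication by $b$ an isomorphism $\bar{b}\colon\Ext^n(X, N)/{\p}\lrt\Ext^n(X', N)/{\p}$. In particular both maps are injective, so that $a\ga\in\p$ (respectively $\be b\in\p$) forces the class of $\ga$ (respectively of $\be$) in the quotient to vanish, whence $\ga\in\p$ (respectively $\be\in\p$). This will finish both parts.

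I do not anticipate a serious obstacle, because Corollary \ref{qo} already encodes the nontrivial content; the rest is essentially the associativity in Remark \ref{pp1}(2). The only point one has to observe is that the left-multiplication map by $a$ and the right-multiplication map by $b$ on $\Ext^n$ are well-defined on the quotient $\Ext^n/{\p}$, but this is immediate from $\p$ being a subfunctor of $\Ext^n_{\C}$ as recorded in \ref{pp}.
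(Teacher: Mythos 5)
Your argument is correct and coincides with the paper's own proof: the ``only if'' directions follow from $\p$ being a subfunctor (closed under push-out and pull-back), and the converses follow from the injectivity of the isomorphisms on $\Ext^n/{\p}$ supplied by Corollary \ref{qo}. Your rendering of the ``only if'' part via Proposition \ref{equal} and Remark \ref{pp1}(2) is just a more explicit spelling-out of what the paper states in one line.
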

\begin{proof}By the similarity, we only prove the first assertion. Since the `only if' part follows from the fact the subfunctor $\p$ is closed under push-outs, we only prove the `if' part. To this end, assume that $a\ga$ is a $\p$-conflation. As $a\in\si$, Corollary \ref{qo}, the morphism $\Ext^n(M, N)/{\p}\lrt\Ext^n(M, X)/{\p}$ sending $\ga+\p$ to $a\ga+\p$ is an isomorphism. Now since $a\ga$ is $\p$-conflation, injectivity of this morphism yields that $\ga$ is a $\p$-conflation, as needed.
\end{proof}

\begin{cor}\label{div}Let $a:X\rt X'$ be a morphism in $\si$ and $Y\in\C$. Then
\begin{enumerate}\item for any $\ga\in\Ext^n(X, Y)$, there exists $\ga'\in\Ext^n(X', Y)$ such that $\ga-\ga'a$ is a $\p$-conflation. \item for a given $\be\in\Ext^n(Y, X')$, there exists $\be'\in\Ext^n(Y, X)$ such that $\be -a\be'$is a $\p$-conflation.
\end{enumerate}
\end{cor}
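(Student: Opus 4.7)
The plan is to observe that both statements are surjectivity assertions for maps on $\Ext^n/{\p}$ induced by $a$, and that such surjectivity is immediately supplied by Corollary \ref{qo}. Indeed, Proposition \ref{nul} extracted the injectivity of the same induced isomorphisms; Corollary \ref{div} will extract their surjectivity. The $\H$-bisubmodule property of $\p$, established in Proposition \ref{equal} and \ref{pp}, ensures that the maps induced on the quotients are well-defined, so no separate check is required.

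For (1), since $a\in\si$, Corollary \ref{qo} says that $a$ acts as an isomorphism on $\Ext^n/{\p}$ from the right. Concretely, the pull-back map
\[
\Ext^n(X',Y)/{\p}\lrt\Ext^n(X,Y)/{\p},\qquad \ga'+\p\longmapsto \ga'a+\p,
\]
is an isomorphism. In particular it is surjective, so for any $\ga\in\Ext^n(X,Y)$ there exists $\ga'\in\Ext^n(X',Y)$ with $\ga'a+\p=\ga+\p$, which is exactly the condition $\ga-\ga'a\in\p$.

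For (2), the argument is strictly dual. The hypothesis $a\in\si$, together with Corollary \ref{qo} applied from the left, yields that the push-out map
\[
\Ext^n(Y,X)/{\p}\lrt\Ext^n(Y,X')/{\p},\qquad \be'+\p\longmapsto a\be'+\p,
\]
is an isomorphism. Its surjectivity applied to $\be\in\Ext^n(Y,X')$ produces $\be'\in\Ext^n(Y,X)$ with $a\be'+\p=\be+\p$, i.e.\ $\be-a\be'\in\p$.

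There is no substantive obstacle: the entire content is that Corollary \ref{qo} provides not merely injectivity (as already leveraged in Proposition \ref{nul}) but a bona fide isomorphism, and both halves of Corollary \ref{div} are the resulting surjectivity statements. The only mild subtlety is to invoke Corollary \ref{qo} from the correct side in each part, which matches pull-back for (1) and push-out for (2).
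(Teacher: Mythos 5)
Your proof is correct and is essentially the argument the paper intends: the paper states Corollary \ref{div} without proof as a ``direct consequence of Proposition \ref{nul}'', and the real content in both cases is Corollary \ref{qo}, which makes the induced maps on $\Ext^n/{\p}$ isomorphisms --- Proposition \ref{nul} uses the injectivity, and Corollary \ref{div} the surjectivity, exactly as you describe. If anything, your citation of Corollary \ref{qo} is the more precise reference, since the bare statement of Proposition \ref{nul} only records the injectivity half.
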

\begin{proof}Let us deal only with the first statement. The other one is obtained similarly. As $a:X\rt X'$ lies in $\si$, by Corollary \ref{qo}, the map $\Ext^n(X', Y)/{\p}\st{\bar{\a}}\lrt\Ext^n(X, Y)/{\p}$ is an isomorphism. So for a given object $\ga\in\Ext^n(X, Y)$, one may fined an object  $\ga'\in\Ext^n(X', Y)$ such that  $\p+\ga=\p+\ga' a$, meaning that $\ga-\ga'a$ is a $\p$-conflation, as required.
\end{proof}

\section{Unit factorizations of conflations}
In this section, we show that any conflation in $\C$ can be represented as a pull-back, as well as, a push-out of unit conflations. We begin with the following lemma.

\begin{lem}\label{gencog}Let $k\geq 1$ and $\ga\in\Ext^k(M, N)$. Then the following assertions hold:
\begin{enumerate}\item There exists an object $\ga'\in\Ext^k(M, N)$ and a {commutative diagram}

{\footnotesize \[\xymatrix{\ga:N~\ar[r]\ar@{=}[d]& X_{k-1}\ar[r]\ar[d]_{a_{k-1}}& \cdots \ar[r]& X_1\ar[r]\ar[d]_{a_1} &X_0\ar[r]\ar[d]_{a_0}& M\ar@{=}[d]\\ \ga':N~\ar[r] &Q_{k-1}\ar[r]&\cdots\ar[r]& Q_1\ar[r]& H\ar[r] & M,}\]}such that {rows are conflation,} ${Q_i}^{,}$s are $n$-projective and each $a_i$ is an inflation.

\item There exists an object $\ga''\in\Ext^k(M, N)$ and a {commutative diagram}
{\footnotesize \[\xymatrix{\ga'':N~\ar[r]\ar@{=}[d]& H'\ar[r]\ar[d]_{b_{k-1}}& P_{k-2}\ar[r]\ar[d]_{b_{k-2}}& \cdots \ar[r] &P_0\ar[r]\ar[d]_{b_0}& M\ar@{=}[d]\\ \ga:N~\ar[r] &{X_{k-1}}\ar[r]& {X_{k-2}}\ar[r]& \cdots \ar[r]& X_0\ar[r] & M,}\]}such that {whose rows are conflation, } all $P_i^,$s are $n$-projective and each $b_i$ is a deflation.
\end{enumerate}
\end{lem}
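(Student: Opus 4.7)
The plan is to prove (1) by induction on $k$ and to deduce (2) by the formally dual argument in which pushouts and inflations are replaced throughout by pullbacks and deflations.

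For the base case $k=1$, take $\ga'=\ga$, $H=X_0$ and $a_0=\id_{X_0}$; there are no $Q_i$'s, so nothing further is required. For $k\ge 2$, split $\ga=\beta\cdot\delta$ where $\delta:N\rt X_{k-1}\rt K_{k-2}$ is the leftmost constituent conflation and $\beta:K_{k-2}\rt X_{k-2}\rt\cdots\rt X_0\rt M$ is a conflation of length $k-1$. Since $\C$ is $n$-Frobenius it has enough $n$-injectives, which coincide with the $n$-projectives, so we may choose an inflation $\iota_{k-1}:X_{k-1}\rt Q_{k-1}$ with $Q_{k-1}$ an $n$-projective object. The composition $N\rt X_{k-1}\rt Q_{k-1}$ is again an inflation; its cokernel $K_{k-2}'$ gives a conflation $\delta':N\rt Q_{k-1}\rt K_{k-2}'$ and a morphism $\delta\rt\delta'$ with left column $\id_N$ and middle column $\iota_{k-1}$. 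Because both squares have a fixed left end, the induced right column $\nu:K_{k-2}\rt K_{k-2}'$ has the same cokernel as $\iota_{k-1}$ and, in particular, is an inflation.

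I would then push out the leftmost constituent conflation $K_{k-2}\rt X_{k-2}\rt K_{k-3}$ of $\beta$ along $\nu$ to obtain $K_{k-2}'\rt Y\rt K_{k-3}$ with an inflation $X_{k-2}\rt Y$, and splice this with the remaining untouched constituent conflations of $\beta$ to produce a length-$(k-1)$ conflation $\beta^*:K_{k-2}'\rt Y\rt X_{k-3}\rt\cdots\rt X_0\rt M$, together with a morphism $\beta\rt\beta^*$ whose columns are $\nu$, $X_{k-2}\rt Y$, and identities at $X_{k-3},\ldots,X_0,M$. Applying the inductive hypothesis to $\beta^*$ furnishes a morphism $\beta^*\rt\beta^{**}$ with $\beta^{**}:K_{k-2}'\rt Q_{k-2}\rt\cdots\rt Q_1\rt H\rt M$, whose middle terms $Q_{k-2},\ldots,Q_1$ are $n$-projective and whose intermediate columns are all inflations.

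The required $\ga'$ is then the splice $\delta'\cdot\beta^{**}:N\rt Q_{k-1}\rt Q_{k-2}\rt\cdots\rt Q_1\rt H\rt M$, and the morphism $\ga\rt\ga'$ is the concatenation of $\delta\rt\delta'$ with the composite $\beta\rt\beta^*\rt\beta^{**}$; these pieces agree at the splice point $K_{k-2}$ since both assign the map $\nu$ to that column. Every intermediate vertical map of $\ga\rt\ga'$ is a composition of inflations and is therefore itself an inflation, completing the inductive step. Statement (2) follows by the dual construction: process the rightmost constituent conflation first, choose a deflation $P_0\rt X_0$ from an $n$-projective object, propagate the modification leftward by pullbacks, and apply induction on the length.

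The main technical point to watch is that the assembled morphism $\ga\rt\ga'$ is truly a morphism of conflations, which reduces to the matching of the column $\nu$ at the splice $K_{k-2}\rt K_{k-2}'$ (built into the construction) together with the fact that $X_{k-2}\rt Y$ is an inflation. The latter is the standard property that pushing out a conflation along an inflation yields an inflation of the middle term with the same cokernel, and this simultaneously guarantees that the compositions of inflations across the two stages remain inflations.
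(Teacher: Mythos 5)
Your proof is correct and follows essentially the same route as the paper's: induction on $k$, peeling off the leftmost constituent conflation, embedding $X_{k-1}$ into an $n$-projective object via an inflation, pushing out the remaining length-$(k-1)$ conflation along the induced map on the splice object, applying the inductive hypothesis, and resplicing (with part (2) obtained dually). The only difference is cosmetic: you make explicit that the induced map $\nu$ on the splice object is again an inflation, so that the pushed-out middle term receives an inflation; the paper leaves this point implicit.
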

\begin{proof}Let us prove only the statement (1), since the other will be gained dually. To this end, we argue by induction on $k$. If $k=1$, then there is nothing to prove. So assume that $k\geq 2$ and the result has been proved for all integers smaller than $k$. Assume that $\ga=N\rt X_{k-1}\rt\cdots\rt X_0\rt M$. Set $\ga_k=N\rt X_{k-1}\rt L$ and $\ga^{k-1}=L\rt X_{k-2}\rt\cdots\rt X_0\rt M$, to get the equality $\ga=\ga_k\ga^{k-1}$. Take the following commutative diagram:
\[\xymatrix{\ga_k:N\ar@{=}[d]\ar[r]&X_{k-1}\ar[r]\ar[d]_{a_{k-1}}&L\ar[d]_{a} \\ \delta':N\ar[r]& Q_{k-1} \ar[r]& T,}\]
where $a_{k-1}$ is an inflation with $Q_{k-1}\in n$-$\proj\C$. Since $a\ga^{k-1}\in\Ext^{k-1}(M, T)$, by the induction hypothesis, there is a conflation $\ga_1:T\rt Q_{k-2}\rt\cdots\rt Q_1\rt H\rt M$ and a {commutative diagram  {\footnotesize \[\xymatrix{a\ga^{k-1}:T~\ar[r]\ar@{=}[d]& Y\ar[r]\ar[d]& X_{k-3}\ar[r]\ar[d]_{a_{k-3}}& \cdots \ar[r]& X_1\ar[r]\ar[d]_{a_1} &X_0\ar[r]\ar[d]_{a_0}& M\ar@{=}[d]\\ \ga_1:T~\ar[r] &Q_{k-2}\ar[r]& Q_{k-3}\ar[r]&\cdots\ar[r]& Q_1\ar[r]& H\ar[r] & M.}\]}}
 Hence, by setting $\ga':=\delta'\ga_1$ and using the fact that $\ga=(\delta' a)\ga^{k-1}$, {one obtains the desired result.} 
\end{proof}

\begin{prop}\label{102}Let
$M, N\in\C$ and let $\ga\in\Ext^k(M, \syz^kN)$ with $k\geq 1$. Then the following assertions hold:
\begin{enumerate}
\item There exists a unit conflation $\delta\in\U^k(\syz^kN)$ and $f\in\H$ such that $\ga=\delta f$.
\item There exists a unit conflation $\delta \in\U_k(M)$ and $g\in\H$ such that $\ga=g\delta$.
\end{enumerate}
\end{prop}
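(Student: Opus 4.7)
I will prove (2) by induction on $k\ge 1$ and obtain (1) by the evident dualization (swap the $n$-projective deflation onto $X_0$ for an inflation of $X_0$ into an $n$-projective, and replace push-outs with pull-backs). The key observation driving everything is that an $n$-projective cover of the middle term $X_0$ of the rightmost constituent of $\ga$ automatically produces a unit conflation ending at $M$, and then the universal property of the kernel supplies the required morphism between the left-hand syzygies.

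\noindent\textbf{Base case $k=1$.}
Represent $\ga$ as $\syz N\st{i}\rt X_0\st{\pi}\rt M$. Because $\C$ has enough $n$-projectives, pick a deflation $p\colon Q_0\rt X_0$ with $Q_0\in n\text{-}\proj\C$. Then $\pi p\colon Q_0\rt M$ is again a deflation, so setting $\syz M:=\ker(\pi p)$ gives a unit conflation $\delta\colon\syz M\rt Q_0\rt M$ in $\U_1(M)$. The composite $\syz M\hookrightarrow Q_0\st{p}\rt X_0\st{\pi}\rt M$ vanishes, so the universal property of $i=\ker\pi$ produces a unique $g\colon\syz M\rt\syz N$ with $ig=p|_{\syz M}$. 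The triple $(g,p,1_M)$ is then a morphism of conflations $\delta\rt\ga$ with right end fixed; by Remark~\ref{pp1}(1) this is a push-out square, i.e. $\ga=g\delta$.

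\noindent\textbf{Inductive step.}
For $k\geq 2$ I decompose $\ga=\ga_{\mathrm r}\cdot\ga_0$ into its rightmost length-$1$ factor $\ga_0\colon K\rt X_0\rt M$ (with $K:=\ker(X_0\rt M)$) and a length-$(k-1)$ conflation $\ga_{\mathrm r}\in\Ext^{k-1}(K,\syz^k N)$. Applying the base case to $\ga_0$ supplies $\delta_0\colon\syz M\rt Q_0\rt M$ in $\U_1(M)$ and $g_0\colon\syz M\rt K$ with $\ga_0=g_0\delta_0$. Since $\syz^k N=\syz^{k-1}(\syz N)$, the pull-back $\ga_{\mathrm r}g_0$ lies in $\Ext^{k-1}(\syz M,\syz^{k-1}(\syz N))$ and fits the form required by the induction hypothesis (applied with $M,N$ replaced by $\syz M,\syz N$). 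This yields $\delta'\in\U_{k-1}(\syz M)$, say $\syz^k M\rt Q_{k-1}\rt\cdots\rt Q_1\rt\syz M$, and $g\colon\syz^k M\rt\syz^k N$ with $\ga_{\mathrm r}g_0=g\delta'$. Splicing produces $\delta:=\delta'\cdot\delta_0\in\U_k(M)$, and then
\[ g\delta=(g\delta')\cdot\delta_0=(\ga_{\mathrm r}g_0)\cdot\delta_0=\ga_{\mathrm r}\cdot(g_0\delta_0)=\ga_{\mathrm r}\cdot\ga_0=\ga, \]
completing the induction.

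\noindent\textbf{Main obstacle.}
The one delicate point is the mixed associativity $(\ga_{\mathrm r}g_0)\cdot\delta_0=\ga_{\mathrm r}\cdot(g_0\delta_0)$ used in the display, which expresses the compatibility of Yoneda splicing with pull-backs and push-outs along the intermediate morphism $g_0$. I plan to verify it diagrammatically by stacking the push-out square defining $g_0\delta_0$ beneath the pull-back diagram defining $\ga_{\mathrm r}g_0$, producing a morphism of length-$k$ conflations with both ends fixed, and then invoking Remark~\ref{pp1} together with \cite[Proposition~3.1]{mit} to identify the two spliced conflations.
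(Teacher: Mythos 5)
Your proof is correct, but it takes a genuinely different route from the paper. The paper proves assertion (1) (and dualizes for (2)) by first invoking Lemma \ref{gencog}(1) together with \cite[Proposition 3.1]{mit} to replace $\ga$ by an equivalent conflation $\syz^kN\rt P_{k-1}\rt\cdots\rt P_1\rt H\rt M$ in which every middle term except the rightmost one is already $n$-projective; it then handles the single remaining term $H$ by choosing an inflation $H\rt P_0$ into an $n$-projective, forming the cokernel $N'$ of the composite inflation $L\rt H\rt P_0$, and reading off $\ga=\delta f$ from the resulting morphism of conflations with left end fixed (a pull-back by Remark \ref{pp1}(1)). You instead run a direct induction on $k$, peeling off the rightmost length-one constituent, covering its middle term by an $n$-projective deflation to manufacture $\delta_0\in\U_1(M)$ and $g_0$, and then applying the inductive hypothesis to the pull-back $\ga_{\mathrm r}g_0$. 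The two organizations are essentially mirror images: the paper isolates the inductive content in Lemma \ref{gencog} and finishes with one extra push-out/pull-back step, while you fold everything into a single induction. What your version costs is the explicit verification of the mixed associativity $(\ga_{\mathrm r}g_0)\cdot\delta_0=\ga_{\mathrm r}\cdot(g_0\delta_0)$, which you correctly flag; your diagrammatic plan does work (the pull-back square over $g_0$ and the push-out square under $g_0$ stack to give a morphism of length-$k$ conflations with fixed ends, so \cite[Proposition 3.1]{mit} identifies them), and this compatibility is of the same nature as the ones the paper already uses in Remark \ref{pp1}. What your version buys is independence from Lemma \ref{gencog}; what the paper's version buys is that Lemma \ref{gencog} is needed elsewhere anyway, so the marginal cost of this proposition is one short paragraph. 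Both dualizations (you prove (2) first, the paper proves (1) first) are legitimate since $\C$ is $n$-Frobenius.
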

\begin{proof}We only prove the first assertion. The second one is obtained dually. By Lemma \ref{gencog} together with \cite[Chapter VII, Proposition 3.1]{mit}, we may assume that $\ga$ has the form $\syz^kN\rt P_{k-1}\rt\cdots\rt P_1\rt H\rt M$, where $P_i\in n$-$\proj\C$, for any $i$. Taking the conflation $L\rt H\rt M$ and an inflation $H\rt P_0$ with $P_0\in n$-$\proj\C$, one gets the following commutative diagram: \[\xymatrix{L~\ar[r]\ar@{=}[d]& H\ar[r]\ar[d]&M \ar[d]_{f}\\ L~\ar[r] & P_0\ar[r]& N'.}\]
Considering $\delta:=\syz^kN\rt P_{k-1}\rt\cdots\rt P_1\rt P_0\rt N'$, one has the equality $\ga=\delta f$, as desired.
\end{proof}

\begin{dfn}Assume that $\ga\in\Ext^k(M, \syz^kN)$ with $k\geq 1$, is given. Assume that there is a unit conflation $\delta\in\U^k(\syz^kN)$ and $f\in\H$ such that $\ga=\delta f$. Then we say $\ga$ factors through $f$ by the unit conflation $\delta$, and the equality $\ga=\delta f$ is said to be {\it a right unit factorization} (abb. $\ruf$) of $\ga$. Dually, if there exists $\delta\in\U_k(M)$ and $g\in\H$ such that $\ga=g\delta$, then we call this a {\it left unit factorization} (abb. $\luf$) of $\ga$.\\ In view of Proposition \ref{102}, every conflation admits an $\ruf$, as well as, an $\luf$.
\end{dfn}

}

\begin{dfn}Assume that $a$ is a morphism in $\si$ and $k\geq 1$. We say that $a$ is
\begin{enumerate}\item {\it induced by identity over $N$}, provided that there are unit conflations $\delta, \delta'\in\U_k(N)$ such that $\delta=a\delta'$ and $a$ is inflation.\item {\it co-induced by identity over $N$}, if there exist $\delta, \delta'\in\U^k(N)$ such that $\delta=\delta'a$ and $a$ is deflation.
\end{enumerate}
\end{dfn}

\begin{prop}\label{pro100}Let $k\geq 1$. Then the following statements hold:
\begin{enumerate}\item For given $\delta_1, \delta_2,\delta_3\in\U^k(N)$, there exist deflations {$a_1,a_2,a_3\in\si$} such that $\delta_1 a_1=\delta_2a_2=\delta_3a_3$. In particular, $a_1, a_2, a_3$ are co-induced by identity over $N$.
\item For given $\delta_1, \delta_2, \delta_3 \in\U_k(N)$, there exist inflations {$a_1,a_2,a_3\in\si$} such that $a_1\delta_1=a_2\delta_2=a_3\delta_3$. In particular, $a_1, a_2,a_3$ are induced by identity over $N$.
\end{enumerate}
\end{prop}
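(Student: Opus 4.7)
My plan is to prove assertion (1); assertion (2) will follow by the dual argument (swapping push-outs for pull-backs and inflations for deflations throughout). The proof proceeds by induction on $k$, with the crucial technical ingredient being a lifting lemma that transports a deflation $b\in\si$ through a unit conflation to obtain a morphism of unit conflations.

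First, I will handle the base case $k=1$. Given $\delta_i:N\xrightarrow{f_i}P_i\to B_i$ in $\U^1(N)$ for $i=1,2,3$, I form the map $h:=[f_1,f_2,f_3]^t:N\to P_1\oplus P_2\oplus P_3$, which is an inflation by Lemma \ref{101}(1) since each $f_i$ is. Let $K$ be its cokernel, so $\epsilon^{(1)}:N\xrightarrow{h}P_1\oplus P_2\oplus P_3\to K$ lies in $\U^1(N)$. The projection $P_1\oplus P_2\oplus P_3\to P_i$ satisfies $\pi_i h=f_i$, so it induces a morphism of conflations $\epsilon^{(1)}\to\delta_i$ with identity on $N$ and a deflation $b_i:K\to B_i$ on the right; by Remark \ref{pp1}(1) this is a pull-back, hence $\epsilon^{(1)}=\delta_i b_i$. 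A snake lemma computation identifies $\ker b_i$ with the direct sum of the two complementary summands $P_j\oplus P_\ell$, which is $n$-projective, so $b_i\in\si$ by Corollary \ref{is}.

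Second, I will prove the following lifting claim by induction on $m\geq 0$: given a deflation $b:K\to B$ in $\si$ with $n$-projective kernel $K'$ and any $\nu\in\U^m(B)$, there exist $\mu\in\U^m(K)$ and a deflation $a^*\in\si$ together with a morphism of conflations $\mu\to\nu$ having $b$ on the left end and $a^*$ on the right end. The case $m=0$ is trivial. For $m=1$ with $\nu:B\xrightarrow{g}P\to B'$, I choose any inflation $\iota:K\to Q$ with $Q\in n$-$\proj\C$; by Lemma \ref{101}(1) the map $[gb,\iota]^t:K\to P\oplus Q$ is an inflation, and I let $\mu$ be the resulting conflation with cokernel $L$, defining $a^*:L\to B'$ via the projection $[1,0]:P\oplus Q\to P$. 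The snake lemma, using $\cok b=0$ and $\ker[1,0]=Q$, yields $\cok a^*=0$ and $\ker a^*\cong Q/K'$; this is $n$-projective because $n$-$\proj\C$ is closed under cokernels of inflations between $n$-projectives, which follows from the equality $n$-$\proj\C=(n+1)$-$\proj\C$ in Remark \ref{remexam}(2). Hence $a^*\in\si$ by Corollary \ref{is}. For $m\geq 2$, decompose $\nu=\hat\nu\cdot\nu^{(1)}$ with $\nu^{(1)}$ of length one and splice the $m=1$ case with the inductive hypothesis applied to $\hat\nu$ and the output deflation.

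Third, I will assemble the inductive step. Decompose each $\delta_i=\tilde\delta_i\cdot\delta_i^{(1)}$ with $\delta_i^{(1)}\in\U^1(N)$ ending at $B_i$ and $\tilde\delta_i\in\U^{k-1}(B_i)$. The base case applied to $\delta_1^{(1)},\delta_2^{(1)},\delta_3^{(1)}$ produces $\epsilon^{(1)}\in\U^1(N)$ ending at some $K$, together with deflations $b_i\in\si$ satisfying $\delta_i^{(1)}b_i=\epsilon^{(1)}$. The lifting claim applied to each $(b_i,\tilde\delta_i)$ yields $\tilde\mu_i\in\U^{k-1}(K)$ and a deflation $a_i^*\in\si$ with $b_i\tilde\mu_i=\tilde\delta_i a_i^*$. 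The inductive hypothesis applied to $\tilde\mu_1,\tilde\mu_2,\tilde\mu_3\in\U^{k-1}(K)$ furnishes $\tilde\xi\in\U^{k-1}(K)$ and deflations $\tilde c_i\in\si$ with $\tilde\mu_i\tilde c_i=\tilde\xi$. Setting $\epsilon:=\tilde\xi\cdot\epsilon^{(1)}\in\U^k(N)$ and $a_i:=a_i^*\tilde c_i$ (a composition of deflations in $\si$, hence itself a deflation in $\si$), the identity $\delta_i a_i=\epsilon$ reduces to the splicing compatibility $(b_i\tilde\xi)\cdot\delta_i^{(1)}=\tilde\xi\cdot(\delta_i^{(1)}b_i)$, which follows by pasting the right-fixed morphism $\delta_i^{(1)}b_i\to\delta_i^{(1)}$ with the left-fixed morphism $\tilde\xi\to b_i\tilde\xi$ into a morphism of length-$k$ conflations with identity on both ends and invoking \cite[Proposition 3.1]{mit}. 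The main obstacle will be the lifting claim in the second step: constructing $\mu$ so that $a^*$ stays in $\si$ hinges essentially on the identity $n$-$\proj\C=(n+1)$-$\proj\C$ to ensure the snake-lemma quotient $Q/K'$ is again $n$-projective.
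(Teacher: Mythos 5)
Your proof is correct and rests on the same construction as the paper's: form the direct sum of the $n$-projective terms of the three unit conflations, adjoin an auxiliary $n$-projective at each stage so that the connecting maps remain inflations, and read off the $a_i$ as the right-end maps of the resulting ladders, whose kernels are $n$-projective by the snake lemma together with the equality $n$-$\proj\C=(n+1)$-$\proj\C$. The only difference is organizational --- the paper builds the common refinement $\delta_4$ in a single pass along the length of the conflations, whereas you package the one-step construction as a lifting lemma and run a double induction on $k$ and $m$ --- and you make explicit both the $n$-projectivity of the kernels and the final splicing-compatibility step via Mitchell's equivalence, which the paper leaves implicit.
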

\begin{proof}We deal only with the first assertion, the second one is obtained dually.
Assume that $\delta_i=N\st{j_i^0}\lrt P_i^1\lrt P_i^2\lrt\cdots\lrt P_i^k\lrt N_i$, for any $1\leq i\leq 3$. According to the proof of Lemma \ref{101}, one gets the conflation $N\st{j^0}\lrt\oplus_{i=1}^{3}P_i^1\lrt L^1$, where $j^0=[j_1^0~~j_2^0~~j_3^0]^t$. So we may have the following commutative diagram: \[\xymatrix{N~\ar[r]\ar@{=}[d]&\oplus_{i=1}^{3} P_i^1\ar[r]\ar[d]_{e_i^1}& L^1\ar[d]_{b_i^1}\\ N~\ar[r] & P_i^1\ar[r]& L_i^1,}\] where $e_i^1$ is the projection, {and then, the snake lemma yields that $b_i^1$ is a deflation with  $n$-projective kernel.} Now by taking an inflation $L^1\st{u^1}\lrt Q^2$, we obtain a conflation $L^1\st{j^1}\lrt\oplus_{i=1}^3P_i^2\oplus Q^2\lrt L^2$, where $j^1=[j_1^1b_1^1~~j_2^1b_2^1~~j_3^1b_3^1~~u^1]^t$ and $j_i^1:L_i^1\rt P_i^2$ is inclusion, for any $1\leq i\leq 3$. Consequently, there exists a commutative diagram \[\xymatrix{L^1~\ar[r]\ar[d]_{b_i^1}&\oplus_{i=1}^{3} P_i^2\oplus Q^2\ar[r]\ar[d]_{e_i^2}& L^2\ar[d]_{b_i^2}\\ L_i^1~\ar[r] & P_i^2\ar[r]& L_i^2,}\] where $e_i^2$ is the projection. {Another use of the sanke lemma, together with the fact that $n$-$\proj\C$ is closed under cokernels of monomorphisms, forces $b_i^2$ to be a deflation with  $n$-projective kernel, see \cite[Corollary 3.6]{buh}.} Continuing this procedure and splicing the diagrams, gives us the following commutative diagram:{\footnotesize \[\xymatrix{ \delta_4:N~\ar[r]^{j^0}\ar@{=}[d] & \oplus_{i=1}^3P_i^1\ar[d]_{e_i^1}\ar[r] & \oplus_{i=1}^3P_i^2\oplus Q^2\ar[d]_{e_i^2}\ar[r]&\cdots \ar[r]& \oplus_{i=1}^3P_i^k\oplus Q^k\ar[d]_{e_i^k}\ar[r]\ar[d]_{e_i^k} & N_4\ar[d]_{a_i}\\ \delta_i:N~\ar[r]^{j_i^0} & P_i^1\ar[r]& P_i^2\ar[r]& \cdots \ar[r] & P_i^k\ar[r] & N_i,}\]}where each $e^j_i$ is the projection  and  $a_i$ is a deflation with $n$-projective kernel, and so, it belongs to $\si$, thanks to Corollary \ref{is}. In particular,  Remark \ref{pp1}(1) yields that $\delta_4=\delta_i a_i$, for any $1\leq i\leq 3$. So the proof is finished.
\end{proof}

\begin{dfn}(1) Assume that $\delta, \delta' \in \U^k(N)$ with $k\geq 1$ and $a, a'$ are co-induced morphisms by identity over $N$ such that $\delta a=\delta'a'$. Then we say that $[\delta a, \delta'a']$
is a {\it co-angled pair}. In some cases, based on our need, we denote it by $\delta\st{a}\llf\delta''\st{a'}\lrt\delta'$, where $\delta''=\delta a$. One should note that by Proposition \ref{pro100} such a co-angled pair exists.\\
(2) Assume that $\delta_1, \delta_2 \in\U_k(N)$ with $k\geq 1$ and $a_1,a_2$ are induced morphisms by identity over $N$. Then we say that $[a_1\delta_1, a_2\delta_2]$ is {\it an angled pair}, if $a_1\delta_1=a_2\delta_2$. Sometimes we display it by $\delta_1\st{a_1}\lrt\delta_3\st{a_2}\llf\delta_2$, whenever $\delta_3=a_1\delta_1$.
\end{dfn}

\begin{prop}\label{coin}Let $\ga\in\Ext^k(M, \syz^k N)$ with $k\geq 1$ and let $\ga=\delta f=\delta'f'$ be two $\ruf$s of $\ga$. Let $\delta\st{a}\llf\delta''\st{a'}\lrt\delta'$ be a co-angled pair which is obtained in Proposition \ref{pro100}.
Then there is a morphism $h\in\C$ such that $f=ah$ and $f'=a'h$. Particularly, $\ga=\delta''h$ is an $\ruf$ of $\ga$.
\end{prop}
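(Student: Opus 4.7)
The main reduction is that any $h \in \H(M, N_{\delta''})$ satisfying $f = ah$ and $f' = a'h$ automatically yields the $\ruf$ $\ga = \delta'' h$, since
\[
\delta'' h \;=\; (\delta a)\, h \;=\; \delta (ah) \;=\; \delta f \;=\; \ga,
\]
by Remark \ref{pp1}(2); and symmetrically the same conclusion is cross-checked via $\delta'' h = \delta'(a' h) = \delta' f' = \ga$. So the entire proof reduces to constructing such an $h$.

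To construct $h$, I would exploit the explicit description of $\delta''$ from the proof of Proposition \ref{pro100}: there $\delta''$ arises from $\delta$ and $\delta'$ by a diagonal-type construction, with its middle terms being direct sums of the corresponding terms of $\delta$ and $\delta'$ (together with some $n$-projective padding $Q^j$ in the inner slots), and the deflations $a, a'$ induced from the componentwise projections. The two given $\ruf$s provide morphisms of conflations $\ga \to \delta$ over $f$ and $\ga \to \delta'$ over $f'$, both with fixed left end $\syz^k N$ (by Remark \ref{pp1}(1)). After using the equivalence $\delta f = \delta' f'$ in $\Ext^k$ to align the middle terms of $\ga$ coherently, these two morphisms package together into a single morphism of conflations $\ga \to \delta''$ with fixed left end, and its right-end component is the sought-after $h: M \to N_{\delta''}$.

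The verifications $ah = f$ and $a'h = f'$ then follow by routine diagram chases: since $a$ is obtained by factoring the composite $P_0 \oplus P'_0 \twoheadrightarrow P_0 \to N_\delta$ through $P_0 \oplus P'_0 \to N_{\delta''}$, the right-end commutativity of $\ga \to \delta'' \to \delta$ reproduces exactly the right-end square of $\ga \to \delta$, forcing $ah = f$ after canceling the deflation onto $M$; the argument for $a'h = f'$ is symmetric. The main technical obstacle is justifying the direct-sum packaging when $k \geq 2$, because of the inner padding $Q^j$ appearing in Proposition \ref{pro100}. This is most cleanly handled by induction on $k$; the base case $k = 1$ is entirely transparent, namely $\delta'': \syz N \to P_0 \oplus P'_0 \to N_{\delta''}$ and if $X$ denotes the last middle term of $\ga$, then the composite $X \to P_0 \oplus P'_0 \to N_{\delta''}$ kills the image of $\syz N$ (since that two-step composite is already zero in $\delta''$), so it factors through the deflation $X \twoheadrightarrow M$ to yield $h$. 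For the inductive step one pushes the argument down one slot of the conflations, absorbing the $Q^j$ padding into the next stage's direct-sum packaging.
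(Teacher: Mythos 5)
Your proof is correct and follows essentially the same route as the paper: both construct $h$ as the right-end component of a morphism of conflations $\ga\rt\delta''$ obtained by packaging the component maps of $\ga\rt\delta$ and $\ga\rt\delta'$ into column vectors landing in the direct-sum middle terms of $\delta''$ (with zero into the padding $Q^j$), and then read off $f=ah$, $f'=a'h$ from the projections. Your opening reduction (deriving $\ga=\delta'' h$ from $f=ah$ via Remark \ref{pp1}(2)) and the organization by induction on $k$ are cosmetic variations on the paper's ``splice the diagrams from Proposition \ref{pro100}'' argument, not a different method.
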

\begin{proof}
By the hypothesis, there are commutative diagrams{\footnotesize \[\xymatrix{\ga:\syz^kN~\ar[r]\ar@{=}[d]& X_{k-1}\ar[r]\ar[d]_{f_{k-1}}& \cdots \ar[r]& X_0\ar[r]\ar[d]_{f_0}& M\ar[d]_{f}\\ \delta:\syz^kN~\ar[r] &P_{k-1}\ar[r]&\cdots\ar[r]& P_0\ar[r]& N,}\] \[\xymatrix{\ga:\syz^kN~\ar[r]\ar@{=}[d]& X_{k-1}\ar[r]\ar[d]_{f'_{k-1}}& \cdots \ar[r]& X_0\ar[r]\ar[d]_{f'_0}& M\ar[d]_{f'}\\ \delta':\syz^kN~\ar[r] &P'_{k-1}\ar[r]&\cdots\ar[r]& P'_0\ar[r]& N'.}\]}So according to the proof of Proposition \ref{pro100}, one may obtain the following commutative diagram: {\footnotesize \[\xymatrix{\ga:\syz^kN~\ar[r]\ar@{=}[d]& X_{k-1}\ar[r]\ar[d]_{{{\tiny {\left[\begin{array}{ll} f_{k-1} \\ f'_{k-1} \end{array} \right]}}}}& \cdots \ar[r]& X_0\ar[r]\ar[d]_{{{{\tiny {\left[\begin{array}{ll} f_0 \\ f'_0 \\ 0 \end{array} \right]}}}}} & M\ar[d]_{h}\\ \delta'':\syz^kN~\ar[r] &P_{k-1}\oplus P'_{k-1}\ar[r]&\cdots\ar[r]& P_0\oplus P'_0\oplus Q_0\ar[r]& N'',}\]}namely $\delta''h=\ga$. Now since $\delta''=\delta a=\delta'a'$, we have $\delta(ah)=\delta'(a'h)=\ga$. Particularly, one has the commutative diagram
{\footnotesize\[\xymatrix{N~& N''\ar[l]_{a}\ar[r]^{a'}& N'\\ & M\ar[u]^{h}\ar[ul]^{f}\ar[ur]_{f'} ,& }\]}
meaning that $f=ah$ and $f'=a'h$. Thus the proof is complete.
\end{proof}

The result below can be obtained by dualizing the argument given in the proof of Proposition \ref{coin}. So its proof will be omitted.
\begin{prop}\label{lif}Let $k\geq 1$ and $\ga\in\Ext^k(M, \syz^k N)$ and let $\ga=f\delta=f'\delta'$ be two $\luf$s of $\ga$. Assume that $\delta\st{a}\lrt\delta''\st{a'}\llf\delta'$ is an angled pair which is obtained in Proposition \ref{pro100}. Then there exists $h\in\H$ such that $\ga=h\delta''$ is also an $\luf$ of $\ga$. In particular, $f=ha$ and $f'=ha'$.
\end{prop}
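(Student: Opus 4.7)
The statement is the precise dual of Proposition~\ref{coin}, with push-outs replacing pull-backs, $\luf$'s replacing $\ruf$'s, and angled pairs replacing co-angled pairs. My plan is therefore to mimic the argument of Proposition~\ref{coin}, dualising each step.

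First, from the two $\luf$'s $\ga = f\delta = f'\delta'$ I record the corresponding morphisms of conflations $\delta\lrt\ga$ and $\delta'\lrt\ga$, each with the identity on its right end at $M$ and with leftmost components $f$ and $f'$; the middle components are morphisms $\varphi_i:P_i\lrt Y_i$ and $\varphi'_i:P'_i\lrt Y_i$ from the projectives of $\delta,\delta'$ to the terms $Y_i$ of $\ga$. Second, I invoke the dual of the construction used in the proof of Proposition~\ref{pro100}(1) which furnishes the angled pair of Proposition~\ref{pro100}(2): in that dual construction, starting from the right at $M$, one inductively builds $\delta''$ whose middle projectives take the form $P_j\oplus P'_j$ at the rung closest to $M$ and $P_j\oplus P'_j\oplus Q_j$ at higher rungs (each $Q_j$ being an $n$-projective padding introduced to ensure the required deflation via Lemma~\ref{101}(2)), and whose leftmost object ${\syz''}^k M$ is the common target of the inflations $a:\syz^k M\lrt{\syz''}^k M$ and $a':{\syz'}^k M\lrt{\syz''}^k M$ satisfying $a\delta = a'\delta' = \delta''$.

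Third, I combine the two morphisms of conflations from the first step into a single morphism of conflations $\delta''\lrt\ga$ by taking at each middle rung the row vector whose entries are $\varphi_j,\varphi'_j$ on the $P_j,P'_j$ summands and $0$ on any added padding $Q_j$ --- the dual of the column-vector stacking employed in the proof of Proposition~\ref{coin}. Commutativity of this combined ladder at every rung reduces to the commutativity of the two individual ladders from the first step, since the padding coordinates $Q_j$ may be sent to zero. The leftmost entry of the combined ladder then yields the required morphism $h:{\syz''}^k M\lrt\syz^k N$, and Remark~\ref{pp1}(1) interprets the combined ladder (with identity on the right end) as a push-out diagram, so that $\ga = h\delta''$ is an $\luf$ of $\ga$. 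The equalities $f = ha$ and $f' = ha'$ then fall out of the construction: the composite push-out diagrams $\delta\st{a}\lrt\delta''\st{h}\lrt\ga$ and $\delta'\st{a'}\lrt\delta''\st{h}\lrt\ga$ have leftmost entries $ha$ and $ha'$, and reading off the first and second coordinates of the row vectors identifies these with $f$ and $f'$ respectively.

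The main obstacle, as in Proposition~\ref{coin}, is the bookkeeping in the third step --- verifying commutativity of the combined ladder at each inductive stage of the dual construction of $\delta''$, and in particular checking the compatibility between the deflations arising in the construction of the angled pair and the middle components of the push-outs $f\delta,f'\delta'$. Since all the necessary ingredients (Lemma~\ref{101}, Remark~\ref{pp1}(1), and the explicit shape of the construction in Proposition~\ref{pro100}) are already available, no genuinely new idea is required beyond this dualisation.
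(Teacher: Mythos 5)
Your proposal is correct and coincides with the paper's intended argument: the paper omits the proof of Proposition \ref{lif} entirely, stating only that it is obtained by dualizing the proof of Proposition \ref{coin}, and that is precisely the dualization you carry out (row vectors in place of column vectors, the deflation/padding construction of Proposition \ref{pro100}(2) in place of the inflation one, and Remark \ref{pp1}(1) to read the combined ladder with fixed right ends as a push-out, so that precomposing with the canonical inclusions recovers $f=ha$ and $f'=ha'$). Nothing further is needed.
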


\section{$n$-$\Ext$-phantom morphisms}

Assume that $\C$ is an $n$-Frobenius category. In this section, we will see that an object $f\in\H$ annihilates $\Ext^n/{\p}$ from the left and the right, simultaneously. We call such a morphism $f$, an {\em $n$-$\Ext$-phantom morphism}. This notion has a connection with many branches of mathematics, see \ref{s100}. We begin with the following easy observation.

\begin{s}\label{cof}Assume that $f:X\rt N$ is a morphism in $\C$ factoring through an $n$-projective object. Then $f$ annihilates $\Ext^n/{\p}$ from the both sides. To see this, take $P\in n$-$\proj\C$ and morphisms $f_1, f_2$ such that $f:X\st{f_1}\lrt P\st{f_2}\lrt N$. So for a given object $\ga\in\Ext^n(N, Y)$, $\ga f=(\ga f_2)f_1$ will be a $\p$-conflation, because $\ga f_2\in\Ext^n(P, Y)$. Namely, $(\Ext^n/{\p})f=0$. The same method reveals that $f(\Ext^n/{\p})=0$.
\end{s}

The next interesting result says that a given morphism $f$ annihilates $\Ext^n/{\p}$, whenever it annihilates some unit conflation $\delta$.
\begin{prop}\label{three}
Let $n\geq 1$ and $f:X\rt N$ be a morphism in $\C$. Then the following are equivalent: \begin{enumerate} \item There exists a unit conflation $\delta\in\U_n(N)$ such that $\delta f$ is a $\p$-conflation.\item
For any unit conflation $\delta'\in\U_n(N)$, $\delta' f$ is a $\p$-conflation.\item $(\Ext^n/{\p})f=0$. \end{enumerate}
\end{prop}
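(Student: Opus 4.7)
The plan is to prove the implications cyclically: (2)$\Rightarrow$(1)$\Rightarrow$(3)$\Rightarrow$(2). Two of these are essentially immediate, so the real content lies in (1)$\Rightarrow$(3). For (2)$\Rightarrow$(1) I would simply note that $\U_n(N)$ is non-empty since $\C$ is $n$-Frobenius (it has enough $n$-projectives), so any unit conflation $\delta' \in \U_n(N)$ witnesses (1). For (3)$\Rightarrow$(2), observe that any $\delta' \in \U_n(N)$ lies in $\Ext^n(N,\syz'^n N)$, and so the assumption $(\Ext^n/{\p})f=0$ forces $\delta' f \in \p$.

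For the core implication (1)$\Rightarrow$(3), I would fix a unit conflation $\delta\in\U_n(N)$ with $\delta f\in\p$, take an arbitrary $\ga\in\Ext^n(N,Y)$, and argue that $\ga f\in\p$. The strategy is to ``funnel'' both $\delta$ and $\ga$ through a common unit conflation. First, apply Proposition \ref{102}(2) to obtain an $\luf$ of $\ga$, namely $\ga = g\delta'$ with $\delta'\in\U_n(N)$ and $g\in\H$. Next, apply Proposition \ref{pro100}(2) to the two unit conflations $\delta,\delta'\in\U_n(N)$, producing inflations $a,a'\in\si$ (co\-induced by identity over $N$) together with a common unit conflation $\delta_3 := a\delta = a'\delta'$.

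Now the key computation: since $\p$ is closed under push-outs and $\delta f\in\p$, we have $\delta_3 f = a(\delta f) \in \p$. Rewriting via the other equality gives $a'(\delta' f) = \delta_3 f \in\p$. Because $a'\in\si$, Proposition \ref{nul}(1) allows us to descend this: $\delta' f \in \p$. Finally, push out by $g$ to conclude $\ga f = (g\delta')f = g(\delta' f) \in \p$ (using the bimodule identity in Remark \ref{pp1}(2) and closure of $\p$ under push-out). Since $\ga$ was arbitrary, this gives $(\Ext^n/{\p})f = 0$.

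The main obstacle is the passage from a single witnessing $\delta$ to every $\ga\in\Ext^n(N,Y)$, and this is precisely where the unit-factorization machinery developed in Sections 4 and 5 becomes indispensable: the $\luf$ from Proposition \ref{102}(2) converts an arbitrary $\ga$ into a push-out of a unit conflation, while Proposition \ref{pro100}(2) synchronizes two different unit conflations up to an inflation in $\si$. The descent step, where we pull $\p$-ness back along $a'\in\si$, is the one point that is not purely formal; it crucially uses the stability result in Proposition \ref{nul}. Everything else amounts to chasing the bimodule identities for pull-back and push-out.
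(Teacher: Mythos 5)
Your proposal is correct and follows essentially the same route as the paper: the paper proves $(1)\Rightarrow(2)$ by synchronizing $\delta$ and $\delta'$ via an angled pair from Proposition \ref{pro100}(2) and descending $\p$-ness along the resulting $\si$-morphism using Proposition \ref{nul}(1), then gets $(2)\Rightarrow(3)$ from an $\luf$ of $\ga$ and closure of $\p$ under push-out, with $(3)\Rightarrow(1)$ trivial. Your $(1)\Rightarrow(3)$ simply concatenates those two steps, so apart from the cyclic ordering of the implications the argument is the same.
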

\begin{proof}
$(1)\Rightarrow (2)$ Take an arbitrary unit conflation $\delta'\in\U_n(N)$. We intend to show that $\delta'f$ is a $\p$-conflation. To see this, consider an angled pair
$\delta\st{a}\rt\delta''\st{b}\lf \delta'$. Since $\delta f$ is a $\p$-conflation,   Proposition \ref{nul}(1)  implies that the same is true for $a(\delta f)=(a\delta)f$.
Moreover, as $a\delta=b\delta'$, another use of Proposition \ref{nul}(1) enables us to infer that $\delta'f$ is also a $\p$-conflation, as needed.\\
$(2)\Rightarrow (3)$ Assume that $\ga\in\Ext^n(N, M)$ is an arbitrary conflation and $\ga=g\delta'$ is an $\luf$ of $\ga$, where $\delta'\in\U_n(N)$. By the hypothesis, $\delta'f$ is a $\p$-conflation. So using the fact that being a $\p$-conflation is preserved under push-out, we infer that $\ga f=g(\delta'f)$ is also a $\p$-conflation, that is, $\ga f\in\p$.
Consequently, $(\Ext^n/{\p})f=0$ .\\
$(3)\Rightarrow (1)$ This implication is obvious. So the proof is finished.
\end{proof}

As an immediate consequence of Proposition \ref{three}, we include the next result.
\begin{cor}\label{ccoo}Let $n\geq 1$ and ${\ga}\in\Ext^n(M, \syz^n
N)$ be a conflation and let $\ga=\delta f$ be an $\ruf$
of $\ga$. Then $\ga$ is a $\p$-conflation if and only if $(\Ext^n/{\p})f=0$.
\end{cor}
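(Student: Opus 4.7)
The plan is to deduce Corollary \ref{ccoo} directly from Proposition \ref{three} by recognizing that the unit conflation appearing in the $\ruf$ of $\ga$ is, up to a change of viewpoint, exactly the kind of unit conflation to which Proposition \ref{three} applies. Concretely, write
\[
\delta : \syz^n N \to P^{1} \to \cdots \to P^{n} \to N',
\]
so that $\delta \in \U^n(\syz^n N)$, the $n$-projective objects are $P^1,\dots,P^n$, the endpoint is some object $N' \in \C$, and $f : M \to N'$ is the morphism through which $\ga = \delta f$ factors by pulling back $\delta$ along $f$.

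The key observation is that although $\delta$ was introduced as an element of $\U^n(\syz^n N)$ (a unit conflation \emph{beginning with} $\syz^n N$), it is simultaneously a unit conflation \emph{ending at} $N'$ of length $n$; that is, $\delta \in \U_n(N')$. This is a purely notational re-reading of the same datum, since a unit conflation of length $n$ has both a well-defined source and a well-defined target, and the two indexing conventions $\U^n(-)$ and $\U_n(-)$ address different endpoints of the same sequence.

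Now I would invoke Proposition \ref{three} with the morphism $f : M \to N'$ playing the role of its ``$f : X \to N$'' and with the chosen $\delta \in \U_n(N')$ playing the role of its unit conflation. The equivalence $(1) \Leftrightarrow (3)$ of that proposition says precisely that there exists some $\delta \in \U_n(N')$ with $\delta f$ a $\p$-conflation if and only if $(\Ext^n/{\p})f = 0$. Since by hypothesis $\delta f = \ga$, the existential statement in (1) is witnessed exactly by our $\delta$, and so $\ga \in \p$ is equivalent to $(\Ext^n/{\p})f = 0$.

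There is essentially no obstacle to overcome: the content of the corollary is already packaged inside Proposition \ref{three}, and the only point requiring a brief remark is the harmless translation between the two conventions $\U^n(\syz^n N)$ and $\U_n(N')$ for a unit conflation of length $n$. Once that identification is made, the biconditional drops out of Proposition \ref{three} with no further work.
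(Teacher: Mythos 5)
Your proof is correct and is exactly the route the paper intends: the paper states Corollary \ref{ccoo} as an immediate consequence of Proposition \ref{three} and omits the details, and your observation that the unit conflation $\delta\in\U^n(\syz^nN)$ of the $\ruf$ is simultaneously an element of $\U_n(N')$ (where $N'$ is its right end, so that $\syz^nN$ serves as an $n$-th syzygy of $N'$) is precisely the needed translation. The only nitpick is in the backward direction: from $(\Ext^n/{\p})f=0$ you should invoke the implication $(3)\Rightarrow(2)$ of Proposition \ref{three} (the universal statement) to conclude that \emph{your particular} $\delta f=\ga$ is a $\p$-conflation, rather than the existential statement $(1)$, which only produces some unit conflation; since all three conditions are equivalent in that proposition, this is a matter of citation, not of substance.
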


\begin{lem}\label{ruf}Let $\ga\in\Ext^n(M, \syz^nN)$ be a $\p$-conflation. Then there is an $\ruf$ $\ga=\delta f$ of $\ga$ with $f$ factoring through an $n$-projective object.
\end{lem}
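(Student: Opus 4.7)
My plan is to unwind the definition of a $\p$-conflation and then apply the existence of right unit factorizations to the auxiliary conflation that appears in that definition.

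First, since $\ga\in\Ext^n(M,\syz^nN)$ is a $\p$-conflation, Proposition \ref{equal} (in the form used for $\p(M,\syz^nN)$, see \ref{pp}) provides an $n$-projective object $P$, a morphism $g:M\rt P$, and a conflation $\et\in\Ext^n(P,\syz^nN)$ such that $\ga=\et g$. At this stage $\et$ is just some conflation ending at $P$; it is not yet in unit form, so $\et g$ is not yet a right unit factorization of $\ga$.

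To fix this, I would apply Proposition \ref{102}(1) to $\et$: there exist a unit conflation $\delta\in\U^n(\syz^nN)$ and a morphism $h\in\H$ (with source $P$ and target the terminal object of $\delta$) such that $\et=\delta h$. Substituting gives
\[
\ga \;=\; \et g \;=\; (\delta h)g \;=\; \delta(hg).
\]
Setting $f:=hg$, we obtain $\ga=\delta f$, which is by definition an $\ruf$ of $\ga$. Moreover, $f$ factors as $M\st{g}\lrt P\st{h}\lrt\,\cdot\,$ through the $n$-projective object $P$, which is the desired conclusion.

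No step here is really an obstacle: the whole content is that ``$\p$-conflation'' already records a factorization through an $n$-projective in the form $\ga=\et g$, and the general existence of right unit factorizations (Proposition \ref{102}(1)) lets us replace $\et$ by a unit conflation without disturbing the factoring morphism on the domain side. The only care required is bookkeeping: making sure that composing $h$ (coming from the $\ruf$ of $\et$) with $g$ (coming from $\p$-factorization of $\ga$) still lands in the correct target object of $\delta$, which is automatic since $\delta\in\U^n(\syz^nN)$ starts at $\syz^nN$ and both $\et$ and $\ga$ start at $\syz^nN$.
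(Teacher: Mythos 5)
Your proposal is correct and follows essentially the same route as the paper's own proof: unwind the definition of a $\p$-conflation via Proposition \ref{equal} to get $\ga=\et g$ with $g:M\rt P$ and $P$ $n$-projective, then take an $\ruf$ $\et=\delta h$ of $\et$ (Proposition \ref{102}(1)) and conclude $\ga=\delta(hg)$ with $hg$ factoring through $P$. The only difference is notational (the paper names the two morphisms in the opposite order), so there is nothing to add.
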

\begin{proof}Since $ \ga $ is a $ \p $-conflation, there exist an object $\et\in \Ext^n( P , \syz^n N)$ with $P\in n$-$\proj\C$, and a morphism $ h: M\rt P $ such that $\ga= \et h$. Taking an $\ruf$ $\et=\delta g$ of $\et$, we get $\ga=\delta(gh)$ is an $\ruf$ of $\ga$, in which $gh$ factors through the $n$-projective object $P$. So we are done.
\end{proof}

\begin{prop}\label{ph}Let $f:M\rt N$ be a morphism in $\C$ and $n\geq 1$. If $(\Ext^n/{\p})f=0$, then there are morphisms $N\st{a}\lf N''\st{b}\rt N'$ with $a, b\in\si$ and $h:M\rt N''$ such that $f=ah$ and $bh$ factors through an $n$-projective object.
\end{prop}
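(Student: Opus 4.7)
The plan is to produce, for a single conflation, two distinct right unit factorizations and then reconcile them via Proposition \ref{coin}. Specifically, I will apply the hypothesis to a well-chosen unit conflation in $\Ext^n(N,\syz^nN)$ to obtain a $\p$-conflation; Lemma \ref{ruf} will then provide a second $\ruf$ whose associated morphism factors through an $n$-projective; Proposition \ref{coin} will glue the two factorizations together over a common middle object, which will be the $N''$ sought in the statement.

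In detail, I would first pick any $\delta\in\U_n(N)$, of the form $\syz^nN\to P_{n-1}\to\cdots\to P_0\to N$. Since $\delta$ lies in $\Ext^n(N,\syz^nN)\subseteq\Ext^n$, the hypothesis $(\Ext^n/{\p})f=0$ forces the pull-back $\delta f\in\Ext^n(M,\syz^nN)$ to be a $\p$-conflation. Note that $\delta$ simultaneously belongs to $\U^n(\syz^nN)$, so the tautological identity $\delta f=\delta\cdot f$ is already an $\ruf$ of $\delta f$, whose associated morphism is $f$ itself.

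Next, Lemma \ref{ruf}, applied to the $\p$-conflation $\delta f$, yields a second $\ruf$ $\delta f=\delta^{*}g$, where $\delta^{*}\in\U^n(\syz^nN)$ ends at some object $N'$ and $g\colon M\to N'$ factors through an $n$-projective. Using Proposition \ref{pro100} I would then construct a co-angled pair $\delta\st{a}\llf\delta''\st{b}\lrt\delta^{*}$, so that $a\colon N''\to N$ and $b\colon N''\to N'$ are deflations in $\si$, where $N''$ is the endpoint of the unit conflation $\delta''$. Applying Proposition \ref{coin} to the two right unit factorizations $\delta\cdot f=\delta^{*}\cdot g$ of $\delta f$ then produces a morphism $h\colon M\to N''$ satisfying $f=ah$ and $g=bh$. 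Since $g$ factors through an $n$-projective and $bh=g$, this is exactly the required factorization $N\st{a}\llf N''\st{b}\lrt N'$ with $a,b\in\si$ and $h\in\H$.

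I do not anticipate a serious obstacle, since every ingredient is already assembled in the preceding sections. The one point that deserves care is the observation that a unit conflation $\syz^nN\to\cdots\to N$ lies simultaneously in $\U_n(N)$ and $\U^n(\syz^nN)$; this is immediate from the definitions and is what legitimizes treating $\delta\cdot f$ as an $\ruf$ of $\delta f$ when invoking Proposition \ref{coin}.
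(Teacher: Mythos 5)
Your proposal is correct and follows essentially the same route as the paper: fix a unit conflation $\delta_N\in\U_n(N)$, note that $\delta_Nf$ is a $\p$-conflation, invoke Lemma \ref{ruf} for a second $\ruf$ whose morphism factors through an $n$-projective, and reconcile the two $\ruf$s via a co-angled pair and Proposition \ref{coin}. Your explicit remark that $\delta_N$ lies simultaneously in $\U_n(N)$ and $\U^n(\syz^nN)$, so that $\delta_N\cdot f$ is itself an $\ruf$ of $\delta_Nf$, is a point the paper leaves implicit but uses in exactly the same way.
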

\begin{proof}Fix a unit conflation $\delta_N\in\Ext^n(N, \syz^nN)$. By our hypothesis, $\delta_Nf$ is a $\p$-conflation. By Lemma \ref{ruf}, there is an $\ruf$ $\delta_Nf=\delta g$ of $\delta_Nf$ such that $g$ factors through an $n$-projective object. Consider a co-angled pair $\delta_N\st{a}\leftarrow \delta''\st{b}\rt \delta$, as the one obtained in Proposition \ref{pro100}. Indeed, we have a pair of morphisms $N\st{a}\lf N''\st{b}\rt N'$, where $N'$ (resp. $N''$) is the right end term of the unit conflation $\delta$ (resp. $\delta''$). So applying Proposition \ref{coin} ensures the existence of a morphism $h: M\rt N''$ such that $\delta_Nf=\delta'' h$ is also an $\ruf$ of $\delta_Nf$. Particularly, $f=ah$ and $bh$ factors through an $n$-projective object, because of the equality $bh=g$. So the proof is completed.
\end{proof}

\begin{prop}\label{sif}Let $a:X\rt N$ be a morphism in $\si$. Then the following are satisfied:
\begin{enumerate}\item A given morphism $f:N\rt M$ annihilates $\Ext^n/{\p}$ from the right if and only if so does $fa$. \item A given morphism $g:M\rt X$ annihilates $\Ext^n/{\p}$ from the left if and only if so does $ag$.
\end{enumerate}
\end{prop}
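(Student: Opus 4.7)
The plan is to deduce both parts from Proposition \ref{nul}, which asserts that pulling back or pushing out along a morphism in $\si$ both preserves and reflects being a $\p$-conflation, combined with the bare functoriality of $\p$ recorded in \ref{pp} (closure under arbitrary pull-back and push-out).

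For (1), I would argue as follows. The forward implication is formal: if $(\Ext^n/{\p})f=0$, then for every $\ga\in\Ext^n(M,Y)$ the conflation $\ga f$ lies in $\p$, and hence $\ga(fa)=(\ga f)a$ lies in $\p$ as well, because $\p$ is closed under pull-backs. For the converse, assume $(\Ext^n/{\p})(fa)=0$. Then for each $\ga\in\Ext^n(M,Y)$ we have $(\ga f)a=\ga(fa)\in\p$. Since $a\in\si$, Proposition \ref{nul}(2), applied to the conflation $\be:=\ga f\in\Ext^n(N,Y)$, reflects this across $a$ to give $\ga f\in\p$. Thus $(\Ext^n/{\p})f=0$, as required.

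For (2), the argument is completely dual. If $g$ annihilates $\Ext^n/{\p}$ from the left, then $g\ga\in\p$ for every $\ga$, and pushing forward by $a$ yields $(ag)\ga=a(g\ga)\in\p$ by closure of $\p$ under push-outs. Conversely, if $(ag)\ga=a(g\ga)\in\p$ for all $\ga$, then Proposition \ref{nul}(1), applied to $\ga':=g\ga$ and the quasi-invertible morphism $a$, forces $g\ga\in\p$, and so $g$ annihilates $\Ext^n/{\p}$ from the left.

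I do not anticipate any real obstacle: Proposition \ref{nul} already packages the non-trivial content (that a quasi-invertible morphism becomes invisible in the quotient $\Ext^n/{\p}$ from either side), so the present statement is essentially a direct corollary obtained by rewriting $(\Ext^n/{\p})(fa)$ as the pull-back by $a$ of $(\Ext^n/{\p})f$, and $(ag)(\Ext^n/{\p})$ as the push-out by $a$ of $g(\Ext^n/{\p})$. The only care required is to keep track of which of the two parts of Proposition \ref{nul} applies in each direction.
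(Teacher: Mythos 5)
Your proof is correct and rests on the same key fact as the paper's, namely that Proposition \ref{nul} both preserves and reflects membership in $\p$ across a quasi-invertible morphism. The only (harmless) difference is that for the converse direction the paper first invokes Proposition \ref{three} to reduce to a single unit conflation $\delta_M$ before applying Proposition \ref{nul}(2), whereas you apply \ref{nul}(2) directly to each $\ga f$ (resp.\ \ref{nul}(1) to each $g\ga$), which is a slightly more direct route to the same conclusion.
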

\begin{proof}By the similarity, we only prove the first statement.
If $(\Ext^n/{\p})f=0$ , evidently the same is true for$(\Ext^n/{\p})(fa)$, because being $\p$-conflation is stable under pull-back. Now assume that $(\Ext^n/{\p})(fa)=0$. So, by Proposition \ref{three}, there exists a unit conflation $\delta_M\in\U_k(M)$ such that $\delta_M(fa)$ is a $\p$-conflation. Hence, invoking Proposition \ref{nul}(2) yields that $\delta_Mf$ is a $\p$-conflation, as well. Consequently, another use of Proposition \ref{three} forces $(\Ext^n/{\p})f$ to be zero, as required.
\end{proof}
Now we are ready to state the main result of this section.
\begin{theorem}\label{main}
Let $f:M\rt N$ be a morphism in $\C$. Then $(\Ext^n/{\p})f=0$ if and only if $f(\Ext^n/{\p})=0$.
\end{theorem}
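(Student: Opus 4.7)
The plan is to prove the nontrivial direction (``$(\Ext^n/{\p})f=0 \Rightarrow f(\Ext^n/{\p})=0$'') and then appeal to duality for the reverse. For $n=0$, both conditions collapse to ``$f$ factors through an $n$-projective''---taking $g=1_N$ in either hypothesis yields $f\in\p$, which immediately forces the opposite-sided vanishing by \ref{cof}. So I would open the proof by disposing of $n=0$ separately, and then work under $n\geq 1$, where Propositions \ref{ph} and \ref{sif} are available.

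For $n\geq 1$ and $f:M\rt N$ with $(\Ext^n/{\p})f=0$, I would invoke Proposition \ref{ph} to extract a triangle of morphisms
\[ N \;\st{a}{\llf}\; N'' \;\st{b}{\lrt}\; N'\]
with $a,b\in\si$, together with a morphism $h:M\rt N''$ such that $f=ah$ and $bh$ factors through an $n$-projective object. The point is that $bh$ then annihilates $\Ext^n/{\p}$ from the \emph{left} by the observation in \ref{cof} (which gave us two-sided annihilation for anything factoring through an $n$-projective).

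Next I would apply Proposition \ref{sif}(2) with the quasi-invertible morphism $b$ and with $g=h$: since $bh$ annihilates $\Ext^n/{\p}$ from the left, so does $h$ itself. Finally, for any $\ga\in\Ext^n$ ending at $M$, the push-out $f\ga=a(h\ga)$ lies in $\p$ because $h\ga\in\p$ and, by definition of the subfunctor $\p$ in \ref{pp}, $\p$ is closed under push-outs. This gives $f(\Ext^n/{\p})=0$, completing the forward implication. The reverse implication is obtained by dualizing the whole argument: one needs the evident dual of Proposition \ref{ph} (factoring $f$ as $h'a'$ with $h'b'$ through an $n$-projective and $a',b'\in\si$), then the dual observation in \ref{cof}, Proposition \ref{sif}(1), and closure of $\p$ under pull-backs.

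The main obstacle is really concentrated in the appeal to Proposition \ref{ph}, which is what lets us pass from the abstract hypothesis ``$\delta_N f$ is a $\p$-conflation for some unit conflation'' to a concrete factorization $f=ah$ with $a\in\si$ and $h$ composed with another quasi-invertible morphism landing in an $n$-projective. Once that factorization is in hand, the rest of the argument is formal: it is simply the interplay of closure of $\p$ under push-out/pull-back (\ref{pp}) and the fact that $\si$-morphisms can be cancelled from the left or right when testing annihilation on $\Ext^n/{\p}$ (Proposition \ref{sif}). The symmetry between the two implications then just mirrors the left/right symmetry already built into Propositions \ref{nul}, \ref{sif}, and \ref{ph}.
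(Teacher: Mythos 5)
Your proposal is correct and follows essentially the same route as the paper: dispose of $n=0$ directly, then for $n\geq 1$ use Proposition \ref{ph} to write $f=ah$ with $bh$ factoring through an $n$-projective, invoke \ref{cof} for the two-sided vanishing of $bh$, cancel the quasi-invertible morphisms via Proposition \ref{sif}, and dualize for the converse. The only cosmetic difference is that the paper applies Proposition \ref{sif} twice (once for $b$, once for $a$) where you replace the second application by the closure of $\p$ under push-out, which is the same fact.
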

\begin{proof}Since the result in the case $n=0$ holds obviously, we may assume that
$n\geq 1$. Suppose that $(\Ext^n/{\p})f=0$. In view of Proposition \ref{ph}, there are morphisms $N\st{a}\lf N''\st{b}\rt N'$ with $a, b{\in\si}$ and $h:M\rt N''$ such that $f=ah$ and $bh$ factors through an $n$-projective object. So, as we have observed in \ref{cof}, $(bh)(\Ext^n/{\p})=0$. As $a, b\in\si$, applying Proposition \ref{sif} successively, yields that $f(\Ext^n/{\p})=0$. Since the reverse implication is obtained analogously, we ignore it. So, the proof is finished.
\end{proof}

\begin{dfn} A morphism $f$ in $\C$ is called an {\em $n$-$\Ext$-phantom morphism}, if it annihilates $\Ext^n/{\p}$.
\end{dfn}

The result below follows directly from \ref{ccor}. So we skip its proof.
\begin{cor}A given morphism $f$ in $\C$ is an $n$-$\Ext$-phantom morphism if and only if $\Ext^{n+1}f=0$.
\end{cor}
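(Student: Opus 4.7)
The plan is to translate the phantom condition through the natural isomorphisms established in \ref{ccor}. Recall we have
\[
\Ext^n(A,B)/\p \,\cong\, \Ext^{n+1}(A,\syz B) \quad\text{and}\quad \Ext^n(A,B)/\p \,\cong\, \Ext^{n+1}(\syz^{-1}A,B),
\]
each obtained from the connecting map in the long exact $\Ext$-sequence of a unit conflation (in the variable $B$, respectively $A$). After substituting $B=\syz^{-1}Y$ in the first and $A=\syz Z$ in the second, they become
\[
\Ext^{n+1}(A,Y)\,\cong\,\Ext^n(A,\syz^{-1}Y)/\p \quad\text{and}\quad \Ext^{n+1}(Z,B)\,\cong\,\Ext^n(\syz Z,B)/\p,
\]
and these are natural in the variable $A$ (resp.\ $B$) that is not altered, so that a morphism $f:M\rt N$ acts directly on both sides without having to be lifted to syzygies.

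Now fix $f:M\rt N$. The first of the rewritten isomorphisms turns pull-back by $f$ on $\Ext^{n+1}(N,Y)\rt\Ext^{n+1}(M,Y)$ into pull-back by $f$ on $\Ext^n(N,\syz^{-1}Y)/\p\rt\Ext^n(M,\syz^{-1}Y)/\p$. Hence $(\Ext^n/\p)f=0$ is equivalent to the vanishing of the right action of $f$ on $\Ext^{n+1}(-,Y)$ for every $Y$ of the form $\syz^{-1}X$. But in an $n$-Frobenius category every object $Y$ is such a cosyzygy: take a unit conflation $Y\rt P\rt X$, whence $Y=\syz X$ and therefore $\syz^{-1}X=Y$. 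Thus $(\Ext^n/\p)f=0$ if and only if $\Ext^{n+1}f=0$ on the right. A parallel argument using the second rewritten isomorphism, together with the fact that every object is a syzygy, shows that $f(\Ext^n/\p)=0$ if and only if $\Ext^{n+1}f=0$ on the left.

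Combining these two equivalences with Theorem \ref{main}, which asserts that the left and right annihilation conditions on $\Ext^n/\p$ coincide, gives the claimed equivalence. The only content-bearing step is the change of variable $Y\leftrightarrow\syz^{-1}Y$ (respectively $Z\leftrightarrow\syz Z$), which needs only that every object of $\C$ is simultaneously a syzygy and a cosyzygy; this is immediate from the existence of enough $n$-projectives and $n$-injectives. I do not anticipate a substantial obstacle, since the naturality of the isomorphisms in the preserved variable is already built into the connecting-homomorphism construction used in the proof of Proposition \ref{pprop}.
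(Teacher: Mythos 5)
Your argument is correct and is exactly the route the paper intends: the paper omits the proof, saying the corollary ``follows directly from \ref{ccor},'' and your write-up simply makes explicit the dimension-shifting via the natural isomorphisms $\Ext^{n+1}(\syz^{-1}A,B)\cong\Ext^n(A,B)/{\p}\cong\Ext^{n+1}(A,\syz B)$, the observation that every object of an $n$-Frobenius category is both a syzygy and a cosyzygy, and the left/right symmetry from Theorem \ref{main}. No discrepancy with the paper's approach.
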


\begin{rem}It should be noted that our notion of $n$-$\Ext$-phantom morphisms, is indeed $(n+1)$-$\Ext$-phantom morphisms in the sense of Mao \cite{mao}. However, due to the harmony with the $n$-Frobenius category, we call them $n$-$\Ext$-phantom morphisms. It should be also emphasized that  a 1-$\Ext$-phantom morphism is exactly a $\p$-phantom morphism in the sense of Fu et. al. \cite{fght}.
\end{rem}

\begin{s}\label{s100}The concept of phantom morphisms has its roots in topology in the study of maps between CW-complexes \cite{mc}. A map $f: X \rt Y$ between CW-complexes is said to be a phantom map, if its restriction to each skeleton $X_n$ is null homotopic. Later, this notion has been used in various settings of mathematics. In the context of triangulated categories, phantom morphisms were first studied by Neeman \cite{ne}. The notion of phantom morphisms also was developed in the stable category of a finite group ring in a series of works by Benson and Gnacadja \cite{gn, be2, be1, be}. The definition of a phantom morphism was generalized to the category of $R$-modules over an associative ring $R$ by Herzog \cite{he}. Precisely, a morphism $f: M\rt N$ of left $R$-modules is called a phantom morphism, if the natural transformation $\Tor^R_1(-, f):\Tor^R_1(-, M)\rt\Tor^R_1(-, N)$ is zero, or equivalently, the pullback of any short exact sequence along $f$ is pure exact. Similarly, a morphism $g:M\rt N$ of left $R$-modules is said to be an $\Ext$-phantom morphism \cite{hext}, if the induced morphism $\Ext^1_R(B, g):\Ext^1_R(B,M)\rt\Ext^1_R(B,N)$ is 0, for every finitely presented left $R$-module $B$. For any integer $n\geq 1$, the concepts of $n$-phantom morphism and $n$-$\Ext$-phantom morphisms, which are higher dimensional generalizations of phantom morphisms and $\Ext$-phantom morphisms, respectively, have been introduced and studied by Mao \cite{mao, mao1}.
\end{s}

\section{A composition operator on $\Ext^n/{\p}$}

Assume that $\C$ is an $n$-Frobenius category. In this section, we introduce a composition operator $``\circ"$ on $\Ext^n/{\p}$. It is proved that the operator $``\circ"$ is associative and distributive over Baer sum on both sides. These facts enable us to see that for any object $M\in\C$, $(\Ext^n(M, \syz^nM)/{\p}, +, \circ)$ is a ring with identity, where $+$ stands for the Baer sum operation. Surprisingly, we find a ring homomorphism $\varphi:\hom_{\C}(M, M)\lrt\Ext^n(M, \syz^nM)/{\p}$ such that for any quasi-invertible morphism $f$, $\varphi(f)$ is invertible and $ \varphi(f)=0$, if $f$ is an $n$-$\Ext$-phantom morphism. We begin with the following notation.

\begin{s}Let $a:X\rt X'$ be a quasi-invertible morphism and $\ga\in\Ext^n(X, Y)$. In view of Corollary \ref{div}, there exists $\ga'\in\Ext^n(X', Y)$ such that $\ga-\ga'a$ is a $\p$-conflation. In this case, for simplicity, we write $\ga'=_{_{\p}}\ga a^{-1}$. Also, {for a given object} $\be\in\Ext^n(Y, X')$, there exists $\be'\in\Ext^n(Y, X)$ such that $\be -a\be'$ is a $\p$-conflation. Then we write $\be'=_{_{\p}}a^{-1}\be$. {From now on, a given object $\ga+\p\in\Ext^n/{\p}$, will be denoted by $\bar{\ga}$.}
\end{s}

\begin{dfn}\label{compo}Assume that $M, N, K\in\C$ and fix a unit conflation $\delta_N\in\Ext^n(N, \syz^nN)$. We define the composition
$$\circ: \Ext^n(N, \syz^nK)/{\p}\times\Ext^n(M, \syz^nN)/{\p}\lrt\Ext^n(M, \syz^nK)/{\p},$$ $(\bar{\be}, \bar{\ga})\lrt\bar{\be}\circ\bar{\ga}$ as follows: \\
Assume that $\ga=\delta f$ is an $\ruf$ of $\ga$, where $\delta\in\U^n(\syz^nN)$ and $f\in\H$.
Now we set $\bar{\be}\circ\bar{\ga}:=\overline{((\be a_1)b^{-1}_1)f}$, where $\delta_N\st{a_1}\llf\delta_1\st{b_1}\lrt\delta$
is a co-angled pair.
\end{dfn}

The result below allows us to assume that the co-angled pair in the above definition, as the one obtained in Proposition \ref{pro100}(1).
\begin{lem}\label{ds}With the notation above, assume that $\delta_N\st{a}\llf\delta''\st{b}\lrt\delta$ is the co-angled pair which is obtained in Proposition \ref{pro100}(1). Then $((\be a_1)b^{-1}_1)f=_{\p}((\be a)b^{-1})f$.
\end{lem}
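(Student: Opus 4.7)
The plan is to produce a common refinement of the two co-angled pairs $\delta_N\st{a_1}\llf\delta_1\st{b_1}\lrt\delta$ and $\delta_N\st{a}\llf\delta''\st{b}\lrt\delta$, and then exploit the fact that a morphism inducing the zero map on $\Ext^n/{\p}$ becomes invisible after passage to the quotient. Concretely, I would apply Proposition \ref{pro100}(1) to the three unit conflations $\delta_1,\delta'',\delta\in\U^n(\syz^n N)$ to obtain a unit conflation $\delta^*\in\U^n(\syz^n N)$ together with deflations $v_1,v_2,v_3$ (whose kernels are $n$-projective, hence lie in $\si$ by Corollary \ref{is}) such that $\delta_1 v_1=\delta'' v_2=\delta v_3=\delta^*$. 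Combined with $\delta_N a_1=\delta b_1=\delta_1$ and $\delta_N a=\delta b=\delta''$, this yields the conflation equalities $\delta(b_1 v_1)=\delta v_3=\delta(bv_2)$ and $\delta_N(a_1 v_1)=\delta^*=\delta_N(av_2)$.

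The second step is to translate these equalities of pulled-back conflations into the statement that each of the differences $b_1 v_1-v_3$, $bv_2-v_3$ and $a_1v_1-av_2$ is an $n$-$\Ext$-phantom morphism. Here I would use the bilinearity of the pull-back combined with Corollary \ref{ccoo}: the identities $\delta=\delta\cdot 1$ and $\delta_N=\delta_N\cdot 1$ are tautological $\ruf$s, so the vanishing of each of $\delta(b_1 v_1-v_3)$, $\delta(bv_2-v_3)$ and $\delta_N(a_1v_1-av_2)$ (each being a zero conflation, in particular a $\p$-conflation) forces the corresponding difference to annihilate $\Ext^n/{\p}$ from the right; Theorem \ref{main} then upgrades this to annihilation from the left as well.

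Setting $\eta_1:=(\be a_1)b_1^{-1}$ and $\eta_2:=(\be a)b^{-1}$, the defining relations $\be a_1-\eta_1 b_1\in\p$ and $\be a-\eta_2 b\in\p$, pulled back along $v_1$ and $v_2$ respectively, combine with the phantom relations of the previous step to give
\begin{equation*}
\be a_1 v_1\equiv \eta_1 v_3,\qquad \be a v_2\equiv \eta_2 v_3,\qquad \be a_1 v_1\equiv \be a v_2\pmod{\p}.
\end{equation*}
Chaining the three congruences produces $(\eta_1-\eta_2)v_3\in\p$, and since $v_3\in\si$, Proposition \ref{nul}(2) upgrades this to $\eta_1-\eta_2\in\p$; pulling back along $f$ then gives the asserted equality modulo $\p$.

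The main obstacle I anticipate is the bookkeeping in the middle step: differences of equivalence classes of length-$n$ conflations cannot be manipulated naively, and every cancellation has to be routed through Corollary \ref{ccoo} (to pass from a vanishing pull-back to a phantom morphism) and Theorem \ref{main} (to switch the side on which the phantom acts, so that it can be composed with $\be$). Once these conversions are in place, the remaining manipulations are essentially formal computations modulo $\p$.
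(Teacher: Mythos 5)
Your argument is correct, but it follows a genuinely different route from the paper's. The paper's proof is a two-line affair: it observes that $\delta_1=\delta_N a_1=\delta b_1$ exhibits two $\ruf$s of the unit conflation $\delta_1$, so Proposition \ref{coin} (which is exactly tailored to the distinguished pair $\delta_N\st{a}\llf\delta''\st{b}\lrt\delta$ of Proposition \ref{pro100}(1)) produces a single morphism $h$ with $a_1=ah$ and $b_1=bh$, after which the claim is the formal computation $((\be a_1)b_1^{-1})f=_{\p}((\be ah)(bh)^{-1})f=_{\p}((\be a)b^{-1})f$. You instead build a common refinement $\delta^*$ of the three unit conflations $\delta_1,\delta'',\delta$, convert the resulting equalities of pull-backs into phantom differences via Proposition \ref{three}/Corollary \ref{ccoo}, chase congruences modulo $\p$, and finally cancel $v_3\in\si$ with Proposition \ref{nul}(2). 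Your version is longer but buys something real: it is symmetric in the two co-angled pairs and never uses that $[\delta_N a,\delta b]$ is the distinguished pair from Proposition \ref{pro100}(1), so it in fact shows that \emph{any} two co-angled pairs for $(\delta_N,\delta)$ yield the same value of $((\be\,\cdot\,)(\cdot)^{-1})f$ modulo $\p$, which is slightly stronger than the stated lemma. One small correction: the appeal to Theorem \ref{main} is unnecessary. Every composition you form with a phantom difference — $\eta_1(b_1v_1-v_3)$, $\be(a_1v_1-av_2)$, etc. — is a pull-back, i.e.\ the morphism acts on the right, and right annihilation of $\Ext^n/{\p}$ is precisely what Proposition \ref{three} hands you from the vanishing of $\delta(b_1v_1-v_3)$ and $\delta_N(a_1v_1-av_2)$; no side-switching is ever required.
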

\begin{proof} Since $\delta_N\st{a_1}\llf\delta_1\st{b_1}\lrt\delta$ is a co-angled pair, we have $\delta_1=\delta_Na_1=\delta b_1$. So by Proposition \ref{coin}, $\delta_1=\delta''h$, for some morphism $h$ in $\C$. Indeed, the argument given in the proof of Proposition \ref{coin}, gives rise to the following commutative diagram:
{\footnotesize\[\xymatrix{N~& N''\ar[l]_{a}\ar[r]^{b}& N'\\ & N_1\ar[u]^{h}\ar[ul]^{a_1}\ar[ur]_{b_1} ,& }\]}where $N', N''$ and $N_1$ stand for the right end terms of $\delta, \delta''$ and $\delta_1$, respectively. This, in particular, implies that {$((\be a_1)b^{-1}_1)f=_{\p}((\be ah)(bh)^{-1})f=_{\p}((((\be a)h)h^{-1})b^{-1})f=_{\p}((\be a)b^{-1})f$}, giving the desired result.
\end{proof}

\begin{theorem}\label{welldef}The definition of $``\circ"$ is independent of the choice of an $\ruf$ of $\ga$.
\end{theorem}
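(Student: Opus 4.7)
The plan is to reduce the independence of the $\ruf$ of $\ga$ to a comparison between a single $\ruf$ and a particular refinement of it, and then exploit the cancellation property of quasi-invertible morphisms on $\Ext^n/\p$ that was established in Corollary \ref{qo} and Proposition \ref{nul}.

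First, given two $\ruf$s $\ga=\delta f=\delta'f'$ of $\ga$, Proposition \ref{coin} supplies a co-angled pair $\delta\stackrel{c}{\leftarrow}\delta''\stackrel{c'}{\rightarrow}\delta'$ and a morphism $h$ producing a third $\ruf$ $\ga=\delta''h$ with $f=ch$ and $f'=c'h$. By symmetry, it therefore suffices to verify that $\bar\be\circ\bar\ga$ computed via $\ga=\delta f$ agrees modulo $\p$ with the one computed via $\ga=\delta''h$.

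Next, I would apply Proposition \ref{pro100}(1) to the pair $\delta_N,\delta''$ to obtain deflations $a_2,\gamma\in\si$, co-induced by identity over $\syz^nN$, and a unit conflation $\delta_*$ with $\delta_Na_2=\delta''\gamma=\delta_*$. The key observation is the chain of equalities
$$\delta_*=\delta''\gamma=(\delta c)\gamma=\delta(c\gamma).$$
Since $\si$ is closed under composition (by Corollary \ref{qo}, $\si$ coincides with the class of morphisms acting as invertible on $\Ext^n/\p$, and isomorphisms compose), $c\gamma$ is a deflation in $\si$, co-induced by identity over $\syz^nN$. Hence $\delta_N\stackrel{a_2}{\leftarrow}\delta_*\stackrel{c\gamma}{\rightarrow}\delta$ is a valid co-angled pair relative to $\delta$.

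By Lemma \ref{ds}, the two candidate values of $\bar\be\circ\bar\ga$ may be written, respectively, as $\overline{((\be a_2)(c\gamma)^{-1})f}$ and $\overline{((\be a_2)\gamma^{-1})h}$. Setting $\et:=(\be a_2)(c\gamma)^{-1}$ and $\et':=(\be a_2)\gamma^{-1}$, the defining relations give $\et(c\gamma)\equiv_\p \be a_2\equiv_\p \et'\gamma$, and associativity of pull-back yields $(\et c)\gamma\equiv_\p \et'\gamma$. Since pull-back is additive over the Baer sum, $(\et c-\et')\gamma$ is a $\p$-conflation, and because $\gamma\in\si$, Proposition \ref{nul}(2) allows right-cancellation by $\gamma$ to give $\et c\equiv_\p \et'$. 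Multiplying by $h$ on the right and using $f=ch$ yields $\et f=(\et c)h\equiv_\p \et'h$, which is the required equality. The main obstacle to avoid is attempting to compare two arbitrary co-angled pairs directly (one relative to $\delta$, one relative to $\delta''$); the right move is to let $c\gamma$ serve as the second morphism of a single co-angled pair relative to $\delta$, after which the comparison collapses to right-cancellation by the quasi-invertible morphism $\gamma$ modulo $\p$.
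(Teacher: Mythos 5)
Your argument is correct and follows essentially the same route as the paper: both proofs funnel the two $\ruf$s through the common refinement $\ga=\delta''h$ supplied by Proposition \ref{coin}, build compatible co-angled pairs via Proposition \ref{pro100}(1) and Lemma \ref{ds}, and then conclude by manipulating equalities modulo $\p$ using that morphisms in $\si$ can be cancelled. Your symmetry reduction (comparing each $\ruf$ only against $\delta''h$) and the explicit right-cancellation of $\gamma$ via Proposition \ref{nul}(2) merely repackage, somewhat more cleanly, the paper's chain of six equalities built on the triple refinement $\delta_1a_4=\delta_2b_4=\delta_3c_4$.
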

\begin{proof}
Assume that $\ga=\delta'f'$ is another $\ruf$ of $\ga$ and suppose that $\delta_N\st{a_2}\llf\delta_2\st{b_2}\lrt\delta'$
is a co-angled pair. We shall prove that $((\be a_1)b_1^{-1})f=_{\p}((\be a_2)b_2^{-1})f'$. Take a co-angled pair
$\delta\st{a_3}\llf\delta_3\st{b_3}\lrt\delta'$. According to Proposition \ref{pro100}(1), there exist $a_4,b_4,c_4\in\si$ such that $\delta_1a_4=\delta_2b_4=\delta_3c_4$ and $a_4,b_4,c_4$ are co-induced by identity over $\syz^nN$. In particular, denoting the latter equalities by $\delta_4$, we will get the following diagram of co-angled pairs: {\footnotesize \[\xymatrix{&\delta_N& \\ \delta_1~\ar[d]_{b_1}\ar[ur]^{a_1}& \delta_4\ar[l]_{a_4}\ar[r]^{b_4}\ar[d]_{c_4}& \delta_2\ar[d]_{b_2}\ar[ul]_{a_2}\\ \delta~ & \delta_3\ar[l]_{a_3}\ar[r]^{b_3} & \delta'.}\]}{Since by Lemma \ref{ds}, the co-angled pair $\delta\st{a_3}\llf\delta_3\st{b_3}\lrt\delta'$ can be considered as the one obtained in Proposition \ref{pro100}(1), }
by virtue of Proposition \ref{coin}, there is a morphism $h$ such that $\ga=\delta_3h$, $f=a_3h$ and $f'=b_3h$. In particular, one may have the following diagram: {\footnotesize\[\xymatrix{\delta~& \delta_3\ar[l]_{a_3}\ar[r]^{b_3}& \delta'\\ & \ga\ar[u]^{h}\ar[ul]^{f}\ar[ur]_{f'} .& }\]}
Thus we have the following equalities modulo $\p$: $$((\be a_2)b_2^{-1})f'=_{\p}((\be a_2)b_2^{-1})b_3h=_{\p}(((\be a_2)b_2^{-1})b_3)h=_{\p}((((\be a_2)b_2^{-1})b_3)a_3^{-1})f$$ $$=_{\p}((((\be a_2)b_4){c_4}^{-1})a_3^{-1})f=_{\p} (((\be(a_2b_4){a_4}^{-1})b_1^{-1})f=_{\p}((\be a_1)b_1^{-1})f.$$
Here the first and third equalities hold, because the latter diagram is commutative, and the second one is clear.
The validity of the fourth and fifth equalities comes from the fact that $b_3c_4=b_2b_4$ and $b_1a_4=a_3c_4$, respectively. Finally, the last one holds true, because $a_2b_4=a_1a_4$. Thus
$((\be a_1)b_1^{-1})f=_{\p}((\be a_2)b_2^{-1})f'$, as needed.
\end{proof}

Let $M, N$ be two objects of $\C$. For given two objects $\bar{\ga}, \bar{\ga'}\in\Ext^n(M, \syz^nN)/{\p
}$, we define the addition $\bar{\ga}+\bar{\ga'}:=\overline{\ga+\ga'}$, where $\ga+\ga'$ is the usual Baer sum operation. Evidently, this definition is well-defined.

\begin{prop}\label{srt1}The operator $``\circ"$ is distributive over $``+"$ on both sides.
\end{prop}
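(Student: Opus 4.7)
My plan is to treat the two distributivity laws separately. For right distributivity, fix $\bar\ga$ with an $\ruf$ $\ga = \delta f$ and a co-angled pair $\delta_N \st{a_1}{\llf} \delta_1 \st{b_1}{\lrt} \delta$. I would compute
\[
(\bar\be + \bar{\be'})\circ \bar\ga = \overline{(((\be + \be')a_1)\,b_1^{-1})\,f}
\]
and check each of the three operations in turn: pull-back along $a_1$ distributes over Baer sum (bilinearity of the $\H$-bimodule structure on $\Ext^n$); the ``modular inverse'' $(-)\,b_1^{-1}$ respects addition because $\p$ is an additive subgroup (if $\be a_1 =_{_{\p}} b_1\alpha$ and $\be' a_1 =_{_{\p}} b_1\alpha'$, then $(\be+\be')a_1 =_{_{\p}} b_1(\alpha+\alpha')$); and push-out along $f$ distributes over Baer sum. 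Summing these yields $\bar\be\circ\bar\ga + \bar{\be'}\circ\bar\ga$.

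For left distributivity, the central problem is to produce a common $\ruf$ for $\ga_1,\ga_2$ and $\ga_1+\ga_2$. Starting from $\ruf$s $\ga_i = \delta_i f_i$ with $\delta_i\in\U^n(\syz^n N)$ ending at $N_i$, I would first form the length-$n$ conflation
\[
\ep := \nabla_{\syz^n N}(\delta_1\oplus\delta_2)\in\Ext^n(N_1\oplus N_2,\syz^n N).
\]
Using that push-out and pull-back commute with direct sums, I would verify that $\ep\iota_i = \delta_i$ (where $\iota_i\colon N_i\to N_1\oplus N_2$ are the canonical inclusions), hence
\[
\ga_i = \ep\,(\iota_i f_i), \qquad \ga_1+\ga_2 = \ep\,[f_1\ f_2]^t = \ep(\iota_1 f_1 + \iota_2 f_2).
\]
The object $\ep$ is \emph{not} itself a unit conflation (the first middle term is a push-out that need not be $n$-projective), so I apply Proposition \ref{102} to $\ep$ to get a genuine $\ruf$ $\ep = \delta_3 g$ with $\delta_3\in\U^n(\syz^n N)$ and $g\colon N_1\oplus N_2\to L$ the right end of $\delta_3$. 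This simultaneously produces compatible $\ruf$s
\[
\ga_i = \delta_3\,(g\iota_i f_i), \qquad \ga_1+\ga_2 = \delta_3\bigl(g\iota_1 f_1 + g\iota_2 f_2\bigr).
\]

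By Theorem \ref{welldef}, I may compute all three compositions $\bar\be\circ\bar{\ga_i}$ and $\bar\be\circ(\bar{\ga_1}+\bar{\ga_2})$ from these particular $\ruf$s using a \emph{single} co-angled pair $\delta_N\st{a}{\llf}\delta_3'\st{b}{\lrt}\delta_3$. Setting $\xi := (\be a)\,b^{-1}\in\Ext^n(L,\syz^n K)$, this gives
\[
\bar\be\circ\bar{\ga_i}=\overline{\xi\cdot(g\iota_i f_i)},\qquad \bar\be\circ(\bar{\ga_1}+\bar{\ga_2}) = \overline{\xi\cdot(g\iota_1 f_1 + g\iota_2 f_2)}.
\]
Additivity of the pull-back $\xi(-)$ in its map argument (a standard Baer-sum identity, again part of the $\H$-bimodule structure) yields $\xi(g\iota_1 f_1 + g\iota_2 f_2) = \xi(g\iota_1 f_1) + \xi(g\iota_2 f_2)$, and left distributivity follows.

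The step I expect to be the main obstacle is exactly the construction of this common $\ruf$. The tempting first attempt is to use Proposition \ref{pro100}(1) to get a common $\delta_*$ mapping to $\delta_1,\delta_2$ by deflations in $\si$, and then lift $f_1,f_2$ along these deflations; but the kernels, though $n$-projective, are not $1$-injective, so the morphism lifts need not exist. The device above bypasses this by moving the ``averaging'' of $\delta_1,\delta_2$ into the extension itself via the Baer-sum construction $\nabla(\delta_1\oplus\delta_2)$ and then appealing to the general $\ruf$-existence (Proposition \ref{102}) to replace this auxiliary conflation by an actual unit conflation.
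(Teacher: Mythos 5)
Your proof is correct and follows essentially the same route as the paper: right distributivity from the fact that pull-back distributes over the Baer sum, and left distributivity by forming the codiagonal push-out $\nabla(\delta_1\oplus\delta_2)$ of the two unit conflations, taking an $\ruf$ of that auxiliary conflation to obtain a common unit conflation factoring $\ga_1$, $\ga_2$ and $\ga_1+\ga_2$, and then using additivity of pull-back in the map argument. Your use of the inclusions $\iota_i$ in place of the paper's decomposition $g=[g'~~g'']$ is only cosmetic, and your remark about why Proposition \ref{pro100} alone does not suffice correctly identifies the reason the paper uses this Baer-sum device.
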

\begin{proof} Assume that $ K, M,N\in \C $ and fix unit conflations $\delta_N\in\Ext^n(N, \syz^nN)$ and $\delta_K\in\Ext^n(K, \syz^nK)$. Assume that $\bar{\al}\in\Ext^n(M, \syz^n N)/{\p}$, $\bar{\ga}\in \Ext^n(K, \syz^n L)/{\p}$ and $\bar{\be}, \bar{\be'}\in \Ext^n(N, \syz^n K)/{\p}$. First, we show that
$(\bar{\be}+\bar{\be'})\circ\bar{\al}=\bar{\be}\circ\bar{\al}+ \bar{\be'}\circ \bar{\al}$. In this direction, assume that
$\al=\delta f$ is an $\ruf$ of $\al$ and suppose that $[\delta_Na, \delta b]$ is a co-angled pair. So, $(\bar{\be}+\bar{\be'})\circ\bar{\al}=\overline{(((\be+\be')a)b^{-1})f}=\overline{((\be a)b^{-1})f}+\overline{((\be' a)b^{-1})f}=\bar{\be}\circ\bar{\al}+ \bar{\be'}\circ \bar{\al}$, where the second equality follows from the fact that pull-back distributes over the Baer sum, see \cite[Chapter VII, Lemma 3.2]{mit}. Next we would like to show that $\bar{\ga }\circ(\bar{\be}+\bar{ \be'})=\bar{\ga}\circ\bar{\be}+\bar{\ga}\circ\bar{\be'}$. Assume that $\be=\delta f$ and $\be'=\delta' f'$ are $\ruf$s of $\be$ and $\be'$, respectively. Take an $\ruf$, $\nabla(\delta\oplus\delta')=\delta''g$, where $\nabla:\syz^nK\oplus\syz^nK\st{[1~~1]}\lrt\syz^nK$.
Namely, there is a commutative diagram \[\xymatrix{\nabla(\delta\oplus\delta'):\syz^n K~\ar[r]\ar@{=}[d]& T_{n-1}\ar[r]\ar[d] &\cdots\ar[r] &{P_0\oplus P'_0}\ar[r]\ar[d] & K'\oplus K''\ar[d]_{g}\\ \delta'':\syz^nK ~\ar[r] & {Q}_{n-1}\ar[r]& \cdots\ar[r]& Q_0\ar[r]& K_1,}\] where each $Q_i$ is $n$-projective. Now setting $g:=[g'~~g'']$,
one may easily deduce that $\be=\delta''(g'f)$ and $\be'=\delta''(g''f')$ are also $\ruf$s of $\be$ and $\be'$, respectively. We claim that $\be+\be'=\delta''h$, for some morphism $h$ in $\C$. Since there is a { commutative diagram\[\xymatrix{\be\oplus\be':\syz^n K\oplus\syz^nK~\ar[r]\ar@{=}[d]& X_{n-1}\oplus Y_{n-1}\ar[r]\ar[d] &\cdots\ar[r] &X_0\oplus Y_0\ar[r]\ar[d] & N\oplus N\ar[d]_{f\oplus f'}\\ \nabla(\delta\oplus\delta'):\syz^nK ~\ar[r] & {T}_{n-1}\ar[r]& \cdots\ar[r]& P_0\oplus P'_0\ar[r]& K'\oplus K'',}\]
the universal property of the push-out yields the existence of the following commutative diagram:

\[\xymatrix{\nabla(\be\oplus\be'):\syz^n K~\ar[r]\ar@{=}[d]& L_{n-1}\ar[r]\ar[d] &\cdots\ar[r] &X_0\oplus Y_0\ar[r]\ar[d] & N\oplus N\ar[d]_{f\oplus f'}\\ \nabla(\delta\oplus\delta'):\syz^nK ~\ar[r] & {T}_{n-1}\ar[r]& \cdots\ar[r]& P_0\oplus P'_0\ar[r]& K'\oplus K''.}\]} Indeed, we have $\nabla(\be\oplus\be')=\nabla(\delta\oplus\delta')(f\oplus f')$, {see Remark \ref{pp1}(1)}. As $\be+\be'=\nabla(\be\oplus\be')\Delta$, where $\Delta:N\st{{{\tiny {\left[\begin{array}{ll} 1 \\ 1 \end{array} \right]}}}}\lrt N\oplus N$, if $h:=g(f\oplus f')\Delta$, we have $\be+\be'=\delta''h$, as claimed. Therefore, considering the co-angled pair $[\delta_Ka_1, \delta''b_1]$, we have $\bar{\ga}\circ(\bar{\be}+\bar{ \be'})=\overline{((\ga a_1)b_1^{-1})h}$. Now since $h=[g'~~g''](f\oplus f')\Delta=g'f+g''f'$, we infer that $\bar{\ga}\circ(\bar{\be}+\bar{ \be'})=\overline{((\ga a_1)b_1^{-1})g'f}+\overline{((\ga a_1)b_1^{-1})g''f'}=\bar{\ga}\circ\bar{\be}+\bar{\ga}\circ\bar{\be'}$. So the proof is complete.
\end{proof}

\begin{lem}\label{corwell}
Let ${\ga}\in\Ext^n(M, \syz^nN)$ and ${\be}\in\Ext^n(N, \syz^nK)$. If $\ga$ or $\be$ is a $\p$-conflation, then $\bar{\be}\circ\bar{\ga}=0$. In particular, if $\be'$ and $\ga'$ are two conflations such that $\bar{\be}=\bar{\be'}$ and $\bar{\ga}=\bar{\ga'}$, then $\bar{\be}\circ\bar{\ga}=\bar{\be'}\circ\bar{\ga'}$.
\end{lem}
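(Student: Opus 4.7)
The plan is to handle the two vanishing assertions separately and then derive the ``in particular'' clause from them using the bilateral distributivity of $\circ$ over $+$ already established in Proposition~\ref{srt1}.

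For the first case, suppose $\ga$ is a $\p$-conflation. I would invoke Lemma~\ref{ruf} to choose an $\ruf$ $\ga=\delta f$ in which $f\colon M\to N'$ factors through an $n$-projective $P$, say $f=f_2f_1$ with $f_1\colon M\to P$. By Theorem~\ref{welldef} the value of $\bar{\be}\circ\bar{\ga}$ does not depend on the chosen $\ruf$, so with this particular choice $\bar{\be}\circ\bar{\ga}=\overline{\eta' f}$, where $\eta'=(\be a_1)b_1^{-1}$ arises from any co-angled pair $\delta_N\xleftarrow{a_1}\delta_1\xrightarrow{b_1}\delta$. Writing $\eta' f=(\eta' f_2)f_1$ with $\eta' f_2\in\Ext^n(P,\syz^nK)$ and $f_1$ ending at the $n$-projective $P$, Proposition~\ref{equal}(1) forces $\eta' f\in\p$, so $\bar{\be}\circ\bar{\ga}=0$.

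For the second case, suppose $\be$ is a $\p$-conflation. Pick any $\ruf$ $\ga=\delta f$ together with a co-angled pair $\delta_N\xleftarrow{a_1}\delta_1\xrightarrow{b_1}\delta$. Since $\p$ is closed under pull-back, $\be a_1\in\p$. By definition of the notation $b_1^{-1}$, the element $\eta'=(\be a_1)b_1^{-1}$ satisfies $\eta' b_1-\be a_1\in\p$, and therefore $\eta' b_1\in\p$. Because $b_1\in\si$, Proposition~\ref{nul}(2) descends this to $\eta'\in\p$; pulling back once more gives $\eta' f\in\p$, whence $\bar{\be}\circ\bar{\ga}=0$. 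For the ``in particular'' clause, if $\bar{\be}=\bar{\be'}$ and $\bar{\ga}=\bar{\ga'}$ then $\be-\be',\ \ga-\ga'\in\p$, and the bilateral distributivity from Proposition~\ref{srt1} gives
\[
\bar{\be}\circ\bar{\ga}-\bar{\be'}\circ\bar{\ga'}=\overline{(\be-\be')}\circ\bar{\ga}+\bar{\be'}\circ\overline{(\ga-\ga')},
\]
and both terms on the right vanish by the two cases just handled.

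The main obstacle I anticipate lies in the second case: converting $\eta' b_1\in\p$ into $\eta'\in\p$ is exactly what Proposition~\ref{nul}(2) supplies, and it crucially needs $b_1\in\si$, which is guaranteed because $b_1$ is co-induced by identity. The first case is essentially free once Lemma~\ref{ruf} is invoked and Theorem~\ref{welldef} allows one to choose a convenient $\ruf$, while the ``in particular'' step is then purely formal.
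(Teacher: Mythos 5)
Your proposal is correct and follows essentially the same route as the paper: for $\ga\in\p$ you invoke Lemma \ref{ruf} to get an $\ruf$ whose morphism factors through an $n$-projective (the paper states the resulting membership in $\p$ as evident where you cite Proposition \ref{equal}(1)), for $\be\in\p$ you push the $\p$-membership through the co-angled pair via Proposition \ref{nul} exactly as the paper does, and the ``in particular'' clause is deduced from distributivity (Proposition \ref{srt1}) in both treatments. No gaps.
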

\begin{proof} First assume that $ \be$ is a $ \p $-conflation. Let $ \ga= \delta f $ be an $\ruf$ of $ \ga $ and let  $[\delta_N a , \delta b]$ be a co-angled pair, where $\delta_N\in\Ext^n(N, \syz^nN)$ is a unit conflation.
As $\be$ is a $\p$-conflation, Proposition \ref{nul} ensures that $(\be a)b^{-1}$ is a $\p$-conflation, and so, the same will be true for $((\be a)b^{-1})f$, meaning that $\bar{\be}\circ\bar{\ga}=\bar{0}$.\\
Next assume that $ \ga $ is a $ \p $-conflation. In view of Lemma \ref{ruf}, there is an $\ruf$ $\ga=\delta f$ of $\ga$ such that $f$ factors through an $n$-projective object.
Consider a co-angled pair $[\delta_N a, \delta b]$. According to the proof of Theorem \ref{welldef}, $\bar{\be}\circ\bar{\ga}=\overline{((\be a) b^{-1})f}$.
Now since $f$ factors through an $n$-projective object, it is evident that $((\be a) b^{-1})f$ is a $\p$-conflation, and then, $\bar{\be}\circ\bar{\ga}=\bar{0}$. The second assertion follows by combining the first assertion with Proposition \ref{srt1}. Thus the proof is finished
\end{proof}

\begin{rem}\label{bimod}Let $X, Y$ be two arbitrary objects in $\C$ and $\be, \be'\in\Ext^n(X, Y)$. Assume that there is a {commutative diagram
\[\xymatrix{\be:Y~\ar[r]\ar@{=}[d]& T_{n-1}\ar[r]\ar[d] &\cdots\ar[r] & T_0\ar[r]\ar[d] & X\ar@{=}[d]\\ \be':Y ~\ar[r] & T'_{n-1}\ar[r]& \cdots\ar[r]& T'_0\ar[r]& X.}\]}Then any $\ruf$ of $\be'$, will be an $\ruf$ of $\be$, as well. So, for any conflation $\ga$, we will have $\bar{\ga}\circ\bar{\be}=\bar{\ga}\circ\bar{\be'}$, where the composition makes sense. On the other hand,
since for a given morphism $f$, the equality $\be f=\be'f$ holds, one may infer that $\bar{\be}\circ\bar{\al}=\bar{\be'}\circ\bar{\al}$, for all conflations $\al\in\Ext^n(Z, X)$.
These facts, in conjunction with \cite[Chapter VII, Proposition 3.1]{mit} guarantee that the
operator $``\circ^{,,}$ is compatible with the equivalence classes in $\Ext^n(X, Y)$. On the other hand, as we have mentioned in \ref{pp}, $\p$ is an $\H$-submodule of $\Ext^n$. Hence $\Ext^n/{\p}$ will be an $\H$-bimodule.
\end{rem}

Taking all of the previous results together, leads us to deduce that the composition operator $``\circ"$, introduced in Definition \ref{compo}, is well-defined. Indeed we have the next result.
\begin{theorem}\label{circ}The composition $``\circ"$ is well-defined.
\end{theorem}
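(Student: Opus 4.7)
The plan is to verify that the recipe $\overline{((\be a_1)b_1^{-1})f}$ of Definition \ref{compo} depends only on the pair of classes $(\bar{\be},\bar{\ga})$ and not on the auxiliary choices made to compute it, with the unit conflation $\delta_N\in\Ext^n(N,\syz^nN)$ fixed throughout. Three such auxiliary choices enter the definition: the co-angled pair $\delta_N\st{a_1}\llf\delta_1\st{b_1}\lrt\delta$; the right unit factorization $\ga=\delta f$; and the representatives $\be\in\bar{\be}$ and $\ga\in\bar{\ga}$ of the two classes. I will dispose of each in turn, drawing on the preceding results of the section.

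First I would address the co-angled pair. By Lemma \ref{ds}, any co-angled pair $\delta_N\st{a_1}\llf\delta_1\st{b_1}\lrt\delta$ gives the same value modulo $\p$ as the canonical one furnished by Proposition \ref{pro100}(1); so we may assume without loss of generality that the co-angled pair is canonical. With the co-angled pair normalized in this way, Theorem \ref{welldef} applies and shows that $\overline{((\be a_1)b_1^{-1})f}$ is independent of the particular $\ruf$ $\ga=\delta f$ chosen.

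It remains to check independence from the representatives $\be,\ga$. This is precisely the content of the second assertion of Lemma \ref{corwell}, which states that $\bar\be\circ\bar\ga=\bar{\be'}\circ\bar{\ga'}$ whenever $\bar\be=\bar{\be'}$ and $\bar\ga=\bar{\ga'}$; note also Remark \ref{bimod}, which records that the operator $``\circ"$ is compatible with passage to equivalence classes of conflations inside $\Ext^n(X,Y)$, so that replacing a conflation by an equivalent one does not alter the outcome. Combining these ingredients shows that every auxiliary choice in Definition \ref{compo} is immaterial, and therefore $(\bar\be,\bar\ga)\mapsto\bar\be\circ\bar\ga$ is an honest function $\Ext^n(N,\syz^nK)/{\p}\times\Ext^n(M,\syz^nN)/{\p}\lrt\Ext^n(M,\syz^nK)/{\p}$.

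Since each piece of the verification is already proved in the preceding results of this section, the main body of the proof will be a short synthesis and citation list. The one subtlety that has to be pointed out is that the three inputs must be applied in the correct logical order (independence of the co-angled pair first, then of the $\ruf$, then of the representatives), so there is no hidden circularity; this matches exactly the order in which Lemma \ref{ds}, Theorem \ref{welldef}, and Lemma \ref{corwell} were established.
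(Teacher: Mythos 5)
Your proposal is correct and follows essentially the same route as the paper: the paper states Theorem \ref{circ} as the synthesis of the preceding results, namely Lemma \ref{ds} for independence of the co-angled pair, Theorem \ref{welldef} for independence of the $\ruf$, and Lemma \ref{corwell} together with Remark \ref{bimod} for independence of the representatives modulo $\p$ and of the equivalence classes of conflations. Your explicit ordering of these three reductions is exactly the intended argument, just spelled out more carefully than the paper does.
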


\begin{prop}\label{ass}The composition operator $``\circ"$ is associative.
\end{prop}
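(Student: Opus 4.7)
The plan is to verify associativity by a direct computation that unravels both $(\bar{\al}\circ\bar{\be})\circ\bar{\ga}$ and $\bar{\al}\circ(\bar{\be}\circ\bar{\ga})$ via $\ruf$s and then invokes the well-definedness from Theorem \ref{welldef} and Lemma \ref{corwell} to bring them into a common form. I would fix $\ruf$s $\ga=\delta f$ with $\delta\in\U^n(\syz^nN)$ and $f:M\rt X$, and $\be=\delta''h$ with $\delta''\in\U^n(\syz^nK)$ and $h:N\rt Z$, along with unit conflations $\delta_N\in\U^n(\syz^nN)$, $\delta_K\in\U^n(\syz^nK)$ and co-angled pairs $[\delta_Na_1,\delta b_1]$, $[\delta_Ka_3,\delta''b_3]$ produced by Proposition \ref{pro100}; by Lemma \ref{ds} these particular choices of co-angled pair are harmless.

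For the right-hand side I first obtain $\bar{\be}\circ\bar{\ga}=\overline{\be' f}$ with $\be':=(\be a_1)b_1^{-1}\in\Ext^n(X,\syz^nK)$ satisfying $\be' b_1=_{\p}\be a_1=\delta''(h a_1)$, the last equality by Remark \ref{pp1}(2). To iterate $\circ$ I need an $\ruf$ of $\be'$; the key step is to exhibit $\be'=\hat{\delta}\hat{h}$ with $\hat{\delta}\in\U^n(\syz^nK)$ and $\hat{h}:X\rt\hat{Z}$ by exploiting that $b_1$ is a deflation with $n$-projective kernel (being co-induced by identity over $\syz^nN$) and forming a push-out of $\delta''(h a_1)$ against this kernel, producing a unit conflation $\hat{\delta}$ through which $\be'$ factors. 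Choosing a co-angled pair $[\delta_Ka_2,\hat{\delta}b_2]$ then gives $\bar{\al}\circ(\bar{\be}\circ\bar{\ga})=\overline{((\al a_2)b_2^{-1})\hat{h} f}$. For the left-hand side, $\bar{\al}\circ\bar{\be}=\overline{\al'' h}$ with $\al'':=(\al a_3)b_3^{-1}$, and another application of Remark \ref{pp1}(2) yields $(\bar{\al}\circ\bar{\be})\circ\bar{\ga}=\overline{((\al'' h)a_1)b_1^{-1} f}=\overline{(\al''(h a_1))b_1^{-1} f}$.

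Finally I would compare the two sides. Cancelling the common factor $f$ on the right via Proposition \ref{sif}, the problem reduces to the identity $((\al a_2)b_2^{-1})\hat{h}=_{\p}(\al''(h a_1))b_1^{-1}$ in $\Ext^n(X,\syz^nL)/{\p}$. Multiplying on the right by $b_1$ and substituting the defining relations $\al'' b_3=_{\p}\al a_3$ and $\hat{\delta}b_2=\delta_Ka_2$, together with the compatibility between $\hat{\delta}$ and $\delta''$ that is built into the push-out construction of the $\ruf$ of $\be'$, this turns into a commutative diagram of co-angled pairs that is closed by Proposition \ref{coin}. The principal obstacle I anticipate is precisely the coherent production of the $\ruf$ $\be'=\hat{\delta}\hat{h}$ compatibly with the pre-chosen $\ruf$ of $\be$ and with the co-angled pair $[\delta_Na_1,\delta b_1]$ — in other words, the fact that right-division by a $\si$-map can be realised by a suitable push-out modulo $\p$. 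Once this coherence is in place, the rest is a routine diagram chase resting on Remark \ref{pp1} and on the uniqueness-up-to-common-refinement of $\ruf$s provided by Proposition \ref{coin}.
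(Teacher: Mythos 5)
Your overall skeleton --- unravel both sides through $\ruf$s and co-angled pairs supplied by Proposition \ref{pro100}, then compare --- matches the paper's strategy, but the step you yourself flag as the principal obstacle is a genuine gap, and the mechanism you propose for closing it does not work. You want to produce an $\ruf$ of $\be':=(\be a_1)b_1^{-1}$ ``compatibly'' by forming a push-out of $\delta''(h a_1)$ against the ($n$-projective) kernel of $b_1$. But a push-out modifies the \emph{left} end of a conflation (the $\syz^nK$ end), whereas passing from $\be a_1\in\Ext^n(N_1,\syz^nK)$ to $\be'\in\Ext^n(N',\syz^nK)$ requires changing the \emph{right} end; that would be a pull-back along a morphism $N'\rt N_1$, and $b_1$ points in the opposite direction. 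Indeed, $\be'$ is only defined modulo $\p$ via Corollary \ref{div} (the relevant restriction map is surjective only after killing $\p$), so there is no canonical construction of $\be'$ itself, let alone of an $\ruf$ of it that is ``coherent'' with the pre-chosen $\ruf$ of $\be$. A smaller slip: you ``cancel the common factor $f$'' via Proposition \ref{sif}, but $f$ need not lie in $\si$; the correct (and sufficient) direction is to prove the identity before pulling back along $f$ and then apply $f$, using that $\p$ is closed under pull-back.

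The paper's proof shows that the coherence you are trying to engineer is unnecessary. One simply takes an \emph{arbitrary} $\ruf$ $\be'=\delta'g'$ of $\be'$ (which exists by Proposition \ref{102}) and an arbitrary co-angled pair $[\delta_Ka_2,\delta'b_2]$, so that $\bar{\ga}\circ(\bar{\be}\circ\bar{\al})=\overline{((\ga a_2)b_2^{-1})g'f}$. The link between the two computations is then made not by a compatible construction but by well-definedness: since $\be a_1-\be'b_1$ is a $\p$-conflation, Lemma \ref{corwell} gives $\bar{\ga}\circ\overline{\be a_1}=\bar{\ga}\circ\overline{\be'b_1}$, and Theorem \ref{welldef} lets you evaluate the left side with the $\ruf$ $\be a_1=\delta''(ga_1)$ and the right side with the $\ruf$ $\be'b_1=\delta'(g'b_1)$. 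This yields $((\ga a_3)b_3^{-1})ga_1=_{\p}((\ga a_2)b_2^{-1})g'b_1$, which is exactly the statement that $(\bar{\ga}\circ\bar{\be})\circ\bar{\al}$, computed through the co-angled pair $[\delta_Na_1,\delta b_1]$, equals $\overline{(((\ga a_2)b_2^{-1})g')f}$. If you replace your push-out construction by this appeal to Lemma \ref{corwell} and Theorem \ref{welldef}, the rest of your diagram chase goes through.
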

\begin{proof}
Assume that $M, N, K, L\in\C$. Fix unit conflations $\delta_N\in\Ext^n(N, \syz^nN)$ and $\delta_K\in\Ext^n(K, \syz^nK)$ and let $\bar{\al}\in\Ext^n(M, \syz^nN)/{\p}$, $\bar{\be}\in\Ext^n(N, \syz^n K)/{\p}$ and $\bar{\ga}\in\Ext^n(K, \syz^nL)/{\p}$. We would like to show that $(\bar{\ga}\circ\bar{\be})\circ\bar{\al}=\bar{\ga}\circ(\bar{\be}\circ\bar{\al})$. Let us first compute $\bar{\ga}\circ(\bar{\be}\circ\bar{\al})$. In this direction, assume that $\al=\delta f$ is an $\ruf$ of $\al$ and suppose that $[\delta_Na_1, \delta b_1]$ is a co-angled pair. So by definition, $\bar{\be}\circ\bar{\al}=\overline{((\be a_1)b_1^{-1})f}$. For the simplicity, set $\be':=(\be a_1)b_1^{-1}$ and take an $\ruf$, $\be'=\delta' g'$ of $\be'$, then, $\be'f=\delta'(g'f)$ is an $\ruf$ of $\be'f$. Now, assume that $[\delta_Ka_2, \delta'b_2]$ is a co-angled pair, one has the equality $\bar{\ga}\circ(\bar{\be}\circ\bar{\al})=\overline{((\ga a_2)b_2^{-1})g'f}$. Next we calculate $(\bar{\ga}\circ\bar{\be})\circ\bar{\al}$. Suppose that $\be=\delta''g$ is an $\ruf$ of $\be$ and $[\delta_Ka_3, \delta''b_3]$ is a co-angled pair, and so, $\bar{\ga}\circ\bar{\be}=\overline{((\ga a_3)b_3^{-1})g}$. By our assumption, $\be a_1=_{\p}\be'b_1$. {So  Lemma \ref{corwell} yields that $0=\bar{\ga}\circ\overline{(\be a_1-\be'b_1)}=\bar{\ga}\circ\overline{\be a_1}-\bar{\ga}\circ\overline{\be'b_1}$. As $\be=\delta''g$, we have $\be a_1=\delta''(ga_1)$. Thus, considering the co-angled pair $[\delta_Ka_3, \delta''b_3]$, one has $\bar{\ga}\circ\overline{\be a_1}=\overline{((\ga a_3)b_3^{-1})g a_1}$. Similarly, since $\be'b_1=\delta'(g'b_1)$, we obtain that $\bar{\ga}\circ\overline{\be' b_1}=\overline{((\ga a_2)b_2^{-1})g' b_1}$.} Consequently, $((\ga a_3)b_3^{-1})g a_1=_{\p}{((\ga a_2)b_2^{-1})g'b_1}$, meaning that $(\bar{\ga}\circ\bar{\be})a_1=\overline{(((\ga a_2)b_2^{-1})g')b_1}$. Thus, by considering the co-angled pair $[\delta_Na_1, \delta b_1]$, one may deduce that $(\bar{\ga}\circ\bar{\be})\circ\bar{\al}=\overline{(((\ga a_2)b_2^{-1})g')f}$. Therefore, $(\bar{\ga}\circ\bar{\be})\circ\bar{\al}=\bar{\ga}\circ(\bar{\be}\circ\bar{\al})$, as needed.
\end{proof}

\begin{cor}\label{cor100}Let $M\in\C$ and fix a unit conflation $\delta_M\in\Ext^n(M, \syz^nM)$. Then $(\Ext^n(M, \syz^nM)/{\p}, +, \circ)$ has a ring structure with the identity element $\bar{\delta}_M$. Moreover, for any unit conflation $\delta\in\Ext^n(M, \syz^nM)$, $\bar{\delta}$ is invertible.
\end{cor}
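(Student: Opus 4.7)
The plan is to assemble the ring axioms for $R_M:=\Ext^n(M,\syz^nM)/\p$, verify that $\bar{\delta}_M$ serves as the identity, and then construct a two-sided inverse for $\bar{\delta}$ whenever $\delta$ is a unit conflation. Associativity of $\circ$ is Proposition~\ref{ass}, its two-sided distributivity over the Baer sum is Proposition~\ref{srt1}, and $(R_M,+)$ is an abelian group because the Baer sum descends to the quotient, so only the identity axiom and invertibility need arguing.

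First I would verify that $\bar{\delta}_M$ is a two-sided identity by feeding Definition~\ref{compo}. For $\bar{\delta}_M\circ\bar{\gamma}=\bar{\gamma}$, pick any RUF $\gamma=\delta_\gamma f$ and a co-angled pair $\delta_M\stackrel{a}{\longleftarrow}\delta'\stackrel{b}{\longrightarrow}\delta_\gamma$ produced by Proposition~\ref{pro100}(1); since $\delta_Ma=\delta_\gamma b$ by construction, $\delta_\gamma$ itself represents $(\delta_Ma)b^{-1}$ modulo $\p$, and Definition~\ref{compo} outputs $\overline{\delta_\gamma f}=\bar{\gamma}$. For $\bar{\gamma}\circ\bar{\delta}_M=\bar{\gamma}$, the trivial RUF $\delta_M=\delta_M\cdot 1_M$ together with the trivial co-angled pair $\delta_M\stackrel{1}{\longleftarrow}\delta_M\stackrel{1}{\longrightarrow}\delta_M$ collapses Definition~\ref{compo} to $\overline{\gamma\cdot 1_M}=\bar{\gamma}$.

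For invertibility of a unit conflation $\bar{\delta}$ I would construct a left inverse and a right inverse separately and then invoke the elementary ring-theoretic fact that if $xa=1_R=ay$ in a unital ring then $x=y$. Since $\delta,\delta_M\in\U^n(\syz^nM)$, Proposition~\ref{pro100}(1) supplies deflations $a,b\in\si$ with $\delta_Ma=\delta b$, and Corollary~\ref{div}(1) (applicable because $a\in\si$) yields $\gamma\in\Ext^n(M,\syz^nM)$ with $\delta_Mb-\gamma a\in\p$. Plugging the RUF $\delta=\delta\cdot 1_M$ and the co-angled pair $\delta_M\stackrel{a}{\longleftarrow}\delta'\stackrel{b}{\longrightarrow}\delta$ into Definition~\ref{compo} gives
\[
\bar{\gamma}\circ\bar{\delta}=\overline{((\gamma a)b^{-1})\cdot 1_M}=\bar{\delta}_M,
\]
because the relation $\gamma a\equiv\delta_Mb\pmod\p$ makes $\delta_M$ a valid representative of $(\gamma a)b^{-1}$. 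For the right inverse I would run the dual construction: $\delta,\delta_M\in\U_n(M)$ and Proposition~\ref{pro100}(2) provide inflations $c,d\in\si$ with $c\delta_M=d\delta$; Corollary~\ref{div}(2) then furnishes $\gamma'\in\Ext^n(M,\syz^nM)$ with $d\delta_M-c\gamma'\in\p$; and computing $\bar{\delta}\circ\bar{\gamma'}$ via the push-out presentation of $\circ$ (an LUF of $\delta$ together with an angled pair in $\U_n(M)$) yields $\bar{\delta}\circ\bar{\gamma'}=\bar{\delta}_M$.

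The hard part will be this right-inverse step, because Definition~\ref{compo} is phrased entirely in pull-back terms (RUFs and co-angled pairs) whereas the symmetric construction naturally produces push-out data. I would therefore first establish the expected dual description of $\circ$ — namely, that an LUF $\beta=g\delta_\beta$ together with an angled pair $\delta_N\stackrel{a_1}{\longrightarrow}\delta''\stackrel{b_1}{\longleftarrow}\delta_\beta$ computes $\bar{\beta}\circ\bar{\gamma}$ as $\overline{g(b_1^{-1}(a_1\gamma))}$, with $b_1^{-1}(a_1\gamma)$ interpreted modulo $\p$ via the quasi-invertibility of $a_1,b_1$ — which is itself a coherence check in the spirit of Theorem~\ref{welldef}. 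Once this dual description is in hand, the right-inverse computation is entirely parallel to the left-inverse one, and the corollary follows.
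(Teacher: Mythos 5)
Your verification that $\bar{\delta}_M$ is a two-sided identity is correct and is exactly the paper's computation. The gap is in the invertibility step. Your left inverse $\gamma$ (characterized by $\gamma a\equiv\delta_M b\pmod{\p}$ for a co-angled pair $\delta_M\st{a}\llf\delta'\st{b}\lrt\delta$) is the same element the paper constructs, and the computation $\bar{\gamma}\circ\bar{\delta}=\bar{\delta}_M$ is fine. But your right inverse rests entirely on a ``push-out presentation of $\circ$'' --- that an $\luf$ together with an angled pair computes the same composition --- which you only assert ``in the spirit of Theorem~\ref{welldef}'' without proving. That statement is not in the paper, and it is not a routine remark: it amounts to a second well-definedness theorem (coherence of the $\luf$/angled-pair recipe, plus agreement with Definition~\ref{compo}), comparable in length to the proof of Theorem~\ref{welldef} itself. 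Since the ring-theoretic trick $xa=1=ay\Rightarrow x=y$ needs the right inverse to actually exist, the argument as written is incomplete.

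The detour is avoidable, and the missed ingredient is Lemma~\ref{unit}: because $\gamma a=\delta_M b$ (one can arrange exact equality, since $a$ is a deflation with $n$-projective kernel) and $\delta_M b$ is a unit conflation, $\gamma$ is itself a unit conflation. Hence $\gamma=\gamma\cdot 1_M$ is an $\ruf$ of $\gamma$, and $\bar{\delta}\circ\bar{\gamma}$ can be computed inside the existing pull-back formalism using the co-angled pair $\delta_M\st{b}\llf(\delta_M b=\gamma a)\st{a}\lrt\gamma$; the relation $\delta b=\delta_M a$ then gives $\bar{\delta}\circ\bar{\gamma}=\overline{((\delta b)a^{-1})1_M}=\bar{\delta}_M$. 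So the same element is a two-sided inverse, no dual description of $\circ$ is needed, and no appeal to ``left inverse equals right inverse'' is required. I recommend replacing your right-inverse construction with this observation.
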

\begin{proof}According to Theorem \ref{circ} and Propositions \ref{ass} and \ref{srt1}, we only need to show that $\bar{\delta}_M$ is the unit element of $\Ext^n(M, \syz^nM)/{\p}$. To see this, take $\bar{\ga}\in\Ext^n(M, \syz^n M)/{\p}$. Since $\delta_M=\delta_M 1_M$ is an $\ruf$ of $\delta_M$, one has $\bar{\ga}\circ\bar{\delta}_M =\bar{\ga}$. Now suppose that $\ga=\delta f $ is an $\ruf$ of $ \ga $ and $[\delta_Ma, \delta b]$ is a co-angled pair. Consequently, $\bar{\delta}_M\circ\bar{\ga}=\overline{((\delta_Ma)b^{-1})f}=\overline{\delta f}= \bar{ \ga} $, gives the first assertion.
For the second assertion, assume that $\delta\in\U_n(M)\cap\Ext^n(M, \syz^nM)$. Consider a co-angled pair $\delta_M\st{a}\llf\delta'\st{b}\lrt\delta$. Take $\al\in\Ext^n(M, \syz^nM)$ such that $(\al a)b^{-1}=_{\p}\delta_M$, i.e., $\al a=\delta_Mb$. According to Lemma \ref{unit}, $\al$ is a unit conflation. As $\al=\al 1_M$ is an $\ruf$ of $\al$, $\bar{\delta}\circ\bar{\al}=\overline{((\delta b)a^{-1})1}_M=\bar{\delta}_M$. Similarly, since $\delta=\delta 1_M$ is an $\ruf$ of $\delta$, by the definition, we have $\bar{\al}\circ\bar{\delta}=\overline{((\al a)b^{-1})1}_M=\bar{\delta}_M$. Hence $\bar{\al}$ is the inverse of $\bar{\delta}$, and so, the proof is finished.
\end{proof}

We close this section with the following interesting result.

\begin{prop}\label{ringhom} Let $M\in\C$ and fix a unit conflation $\delta_M\in\Ext^n(M, \syz^nM)$. Then there exists a ring homomorphism $\varphi :\hom_{ \C}(M,M) \rightarrow \Ext^n(M, \syz^n M)/{\p} $ with $ \varphi(f)=\overline{ \delta_Mf}$ such that for any quasi-invertible morphism $f$, $\varphi(f)$ is an invertible element of $\Ext^n(M, \syz^n M)/{\p}$, and $\varphi(f)=0$, if $f$ is an $n$-$\Ext$-phantom morphism.
\end{prop}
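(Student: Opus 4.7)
The plan is to check in turn that $\varphi$ is additive, that $\varphi(1_M)=\bar{\delta}_M$, that $\varphi$ is multiplicative, and finally that $\varphi$ sends quasi-invertible morphisms to invertible elements and vanishes on $n$-$\Ext$-phantom morphisms.

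Additivity is immediate once I recall that pull-back distributes over the Baer sum: $\delta_M(f+g)=\delta_M f+\delta_M g$, as already invoked in the proof of Proposition \ref{srt1}. Passing to the quotient gives $\varphi(f+g)=\varphi(f)+\varphi(g)$. The computation $\varphi(1_M)=\overline{\delta_M\cdot 1_M}=\bar{\delta}_M$ exhibits the image of the identity, and by Corollary \ref{cor100} this class is the unit of the target ring.

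Multiplicativity is the step where the real work lies. Given $f,g\in\hom_{\C}(M,M)$, I will unwrap $\varphi(f)\circ\varphi(g)=\overline{\delta_M f}\circ\overline{\delta_M g}$ via Definition \ref{compo}, taking the fixed unit conflation at the intermediate object $M$ to be $\delta_M$ itself. The key observation is that $\delta_M g$ carries the tautological $\ruf$ $\delta_M g=\delta_M\cdot g$ with $\delta_M\in\U^n(\syz^n M)$, and that the trivial co-angled pair $\delta_M\st{1}\llf\delta_M\st{1}\lrt\delta_M$ is admissible: indeed $1_M\in\si$ and the equality $\delta_M=\delta_M\cdot 1_M$ exhibits $1_M$ as co-induced by identity over $M$. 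Independence of the composition $\circ$ from the choice of $\ruf$ (Theorem \ref{welldef}) and from the particular co-angled pair (Lemma \ref{ds}) legitimises the calculation
\[
\varphi(f)\circ\varphi(g)=\overline{((\delta_M f)\cdot 1_M)(1_M)^{-1}\,g}=\overline{(\delta_M f)\,g}=\overline{\delta_M(fg)}=\varphi(fg),
\]
where the penultimate equality is the $\H$-bimodule associativity on $\Ext^n$ recorded in Remark \ref{pp1}.

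For the last two properties, I would argue as follows. If $f\in\si$, then Lemma \ref{unit}(1) guarantees that the pull-back $\delta_M f$ is still a unit conflation in $\Ext^n(M,\syz^n M)$, so by the second assertion of Corollary \ref{cor100} its class $\varphi(f)=\overline{\delta_M f}$ is invertible. If $f$ is $n$-$\Ext$-phantom, then $(\Ext^n/{\p})f=0$ by definition; applied to the class $\overline{\delta_M}$ this gives $\overline{\delta_M f}=0$, hence $\varphi(f)=0$. The main obstacle is the multiplicativity step, and within it the justification that the trivial pair $(1_M,1_M)$ is a legitimate co-angled pair and delivers the same value modulo $\p$ as the canonical pair from Proposition \ref{pro100}(1); this is exactly what Lemma \ref{ds} together with Theorem \ref{welldef} provide. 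The case $n=0$ is handled separately (and classically) by interpreting $\delta_M$ as $1_M$, so that $\varphi$ becomes the canonical projection $\hom_{\C}(M,M)\twoheadrightarrow\hom_{\C}(M,M)/{\p}$.
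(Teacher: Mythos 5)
Your proposal is correct and follows essentially the same route as the paper: additivity via distributivity of pull-back over the Baer sum, multiplicativity via the tautological $\ruf$ of $\delta_M g$ together with the trivial co-angled pair $[\delta_M 1_M,\delta_M 1_M]$, invertibility of $\varphi(f)$ for $f\in\si$ via Lemma \ref{unit} and Corollary \ref{cor100}, and vanishing on $n$-$\Ext$-phantoms via Proposition \ref{three} (which is just the definition applied to $\delta_M$). The extra remarks on legitimising the trivial co-angled pair through Lemma \ref{ds} and Theorem \ref{welldef}, and on the $n=0$ case, are harmless elaborations of what the paper leaves implicit.
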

\begin{proof} Assume that $f, g\in\hom_{\C}(M, M).$ According to \cite[Chapter VII, Lemma 3.2]{mit}, $ \delta_M (f+g)= \delta_Mf + \delta_Mg$, meaning that $ \varphi$ is a morphism of abelian groups. Moreover, we have $\varphi(g)\circ\varphi(f)=\overline{\delta_Mg}\circ\overline{\delta_Mf}=\overline{(\delta_Mg)f}=\overline{\delta_M(gf)}=\varphi(gf)$. Note that the second equality follows from the fact that, one may choose $[\delta_M1_M, \delta_M 1_M]$ as a co-angled pair. Finally, $\varphi(1_M)=\overline{\delta_M1}_M=\bar{\delta}_M$, and then, $\varphi$ is a ring homomorphism. Next assume that $f\in\si$. Since $\delta_M\in\U_n(M)$, by Lemma \ref{unit}, $\delta_Mf \in \U_n(M)$. Indeed, $\delta_Mf\in\U_n(M)\cap\Ext^n(M, \syz^nM)$. So Corollary \ref{cor100} implies that $\varphi(f)$ is invertible. Moreover, the last assertion holds true, because of Proposition \ref{three}. Hence the proof is finished.
\end{proof}

\section{An equivalence relation on $\Ext^n/{\p}$}
Let $M$ and $N$ be objects of $\C$. This section is devoted to defining an equivalence relation on the class $\bigcup_{ \delta_N}(\Ext^n( M, \syz^n N)/{\p}, \delta_N)$.
It will be observed that this relation is compatible with the composition operator $``\circ"$, as well as the operator $``+"$.

\begin{dfn}\label{rel}
Assume that $M,N\in\C$ and $(\bar{\ga}, \delta_N), (\bar{\ga'}, \delta'_N)$ are arbitrary objects of $\bigcup_{ \delta_N\in\U_n(N)}(\Ext^n(M, \syz^n N)/{\p}, \delta_N)$. We write $(\bar{\ga}, \delta_N)\thicksim(\bar{\ga'}, \delta'_N)$, if $a\ga-b\ga'$ is a $\p$-conflation, where $[a\delta_N, b\delta'_N]$ is an angled pair, which exists by Proposition \ref{pro100}.
\end{dfn}

In the sequel, we show that $``\thicksim"$ is an equivalence relation.

\begin{theorem}
$``\thicksim"$ is an equivalence relation.
\end{theorem}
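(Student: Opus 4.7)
The plan is to verify reflexivity, symmetry, and transitivity of $\thicksim$, along with well-definedness of Definition \ref{rel} with respect to the choice of angled pair.

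Reflexivity is immediate by choosing the trivial angled pair $[1\cdot\delta_N, 1\cdot\delta_N]$: the identity is an inflation in $\si$ and satisfies $\delta_N = 1 \cdot \delta_N$, so it is induced by identity over $N$, and the difference $\ga - \ga = 0$ lies in $\p$. For symmetry, swapping sides turns $[a\delta_N, b\delta'_N]$ into the angled pair $[b\delta'_N, a\delta_N]$, and since $\p$ is an additive subgroup, $b\ga' - a\ga = -(a\ga - b\ga') \in \p$ whenever $a\ga - b\ga' \in \p$.

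The substantive step is transitivity. Suppose $(\bar{\ga}_1, \delta_1) \thicksim (\bar{\ga}_2, \delta_2)$ via an angled pair $[a_1\delta_1, b_1\delta_2]$ with $a_1\ga_1 - b_1\ga_2 \in \p$, and $(\bar{\ga}_2, \delta_2) \thicksim (\bar{\ga}_3, \delta_3)$ via $[a_2\delta_2, b_2\delta_3]$ with $a_2\ga_2 - b_2\ga_3 \in \p$. I would apply Proposition \ref{pro100}(2) to the three unit conflations $a_1\delta_1, a_2\delta_2, b_2\delta_3 \in \U_n(N)$ to obtain inflations $e_1, e_2, e_3 \in \si$, induced by identity over $N$, satisfying $e_1(a_1\delta_1) = e_2(a_2\delta_2) = e_3(b_2\delta_3)$. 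Setting $c := e_1 a_1$ and $d := e_3 b_2$, the pair $[c\delta_1, d\delta_3]$ is then an angled pair, reducing the task to showing $c\ga_1 - d\ga_3 \in \p$. Pushing the two hypothesized $\p$-conflations forward along $e_1$ and $e_2$ respectively (pushout preserves $\p$) yields $e_1a_1\ga_1 - e_1b_1\ga_2 \in \p$ and $e_2a_2\ga_2 - e_2b_2\ga_3 \in \p$, so summation reduces everything to controlling the cross term $e_1b_1\ga_2 - e_2a_2\ga_2$.

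The main obstacle is precisely this cross term. From $a_1\delta_1 = b_1\delta_2$ and the construction above, we have $(e_1b_1)\delta_2 = e_1(a_1\delta_1) = e_2(a_2\delta_2) = (e_2a_2)\delta_2$ as length-$n$ conflations; yet this equality does not force $e_1b_1 = e_2a_2$ as morphisms. The standard characterization of equality in $\Ext^n$ via push-out implies that $e_1b_1 - e_2a_2$ must factor through the first map of $\delta_2$, whose codomain is an $n$-projective. Consequently $(e_1b_1 - e_2a_2)\ga_2$ is a push-out of $\ga_2$ along a morphism factoring through an $n$-projective object, and any such conflation lies in $\p$ by the dual description in Proposition \ref{equal} (take $f' = 1$ on the $n$-projective target). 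Combining the three $\p$-memberships yields $c\ga_1 - d\ga_3 \in \p$, finishing transitivity. The same cross-term analysis, together with Proposition \ref{nul}(1) used to cancel a push-out by a morphism in $\si$, simultaneously establishes that the relation does not depend on the choice of angled pair, which ideally would be verified first.
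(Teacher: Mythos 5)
Your overall architecture matches the paper's: reduce both hypotheses to a common angled pair via Proposition \ref{pro100}, push the two given $\p$-conflations forward, and absorb the resulting cross term. The genuine problem is your justification of the cross-term step. From $(e_1b_1)\delta_2=(e_2a_2)\delta_2$ you conclude that $g:=e_1b_1-e_2a_2$ ``must factor through the first map of $\delta_2$, whose codomain is an $n$-projective''. That is the classical dimension-shifting description of the kernel of $\hom(W,-)\rt\Ext^n(N,-)$ (where $W$ is the left end term of $\delta_2$), and it is valid only when the middle terms of $\delta_2$ are honestly projective, or when $n=1$. Here they are merely $n$-projective: the map $g\mapsto g\delta_2$ is a composite of $n$ connecting homomorphisms whose kernels involve $\Ext^i(P_j,-)$ for $1\le i\le n-1$, and these groups need not vanish. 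So $g\delta_2=0$ does not force $g$ to factor through an $n$-projective, and the subsequent appeal to Proposition \ref{equal}(2) collapses. The conclusion you want, namely $g\ga_2\in\p$, is nevertheless true, and the correct route --- the one the paper takes --- is the dual of Proposition \ref{three}: $\delta_2$ is a unit conflation in $\U^n(W)$, $g\delta_2=0$ lies in $\p$, hence $g$ annihilates all of $\Ext^n/{\p}$ from the left, and in particular $g\ga_2\in\p$. Replacing your factorization claim by this one sentence repairs the step.

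Two smaller points. First, by introducing three separate morphisms $e_1,e_2,e_3$ and setting $d=e_3b_2$, you create a second cross term $e_2b_2\ga_3-e_3b_2\ga_3$ that your summation does not reach; either set $d:=e_2b_2$ (legitimate, since $a_2\delta_2$ and $b_2\delta_3$ are the same element of $\Ext^n$, so $c\delta_1=(e_2b_2)\delta_3$ is still an angled pair) or kill the extra term by the same dual-of-Proposition-\ref{three} argument applied to $(e_2-e_3)b_2$. Second, you are right to insist that the relation be shown independent of the chosen angled pair --- the paper's proof is silent on this --- and your proposed mechanism (dominate two angled pairs by a third and cancel push-outs along morphisms of $\si$ via Proposition \ref{nul}) is the right one, again provided the cross terms are disposed of by the dual of Proposition \ref{three} rather than by the factorization claim.
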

\begin{proof}
Since the reflexivity and symmetry hold trivially, we only need to show the transitivity. To this end,
assume that $(\bar{\ga}, \delta)\sim (\bar{\ga'}, \delta')$ and $(\bar{\ga'}, \delta')\sim (\bar{\ga''}, \delta'')$. We shall prove that $(\bar{\ga}, \delta)\thicksim (\bar{\ga''}, \delta'')$. By our assumption, there are angled pairs $\delta'\st{a_1}\lrt\delta_1\st{b_1}\llf\delta$ and $\delta'\st{a_2}\lrt\delta_2\st{b_2}\llf\delta''$ such that $a_1\ga'-b_1\ga$ and $a_2\ga'-b_2\ga''$
are $\p$-conflations. Now considering an angled pair $\delta_1\st{a_3}\lrt\delta_3\st{b_3}\llf\delta_2$, which exists by
part (2) of Proposition \ref{pro100}, we will get the following commutative diagram of angled pairs: \[\xymatrix{&&\delta'\ar[ld]_{a_1}\ar[rd]^{a_2}&& \\ \delta\ar[r]^{b_1}&\delta_1~\ar[r]^{a_3}& \delta_3& \delta_2\ar[l]_{b_3}&\delta''\ar[l]_{b_2} .}\] Since $(a_3a_1-b_3a_2)\delta'=0$, a dual argument given in the proof of Proposition \ref{three}, would imply that $(a_3a_1-b_3a_2)\ga'$ is a $\p$-conflation. Hence, in view of our hypothesis, we may deduce that $(a_3b_1)\ga-(b_3b_2)\ga''$ is a $\p$-conflation. Since the class $\si$ is closed under composition, $[(a_3b_1)\delta, (b_3b_2)\delta'']$ will be also an angled pair. Consequently, $(\bar{\ga}, \delta)\thicksim (\bar{\ga''}, \delta'')$, as desired.
\end{proof}

\begin{prop}\label{ind}Let $M, N$ be objects of $\C$ and fix a unit conflation $\delta_N\in\Ext^n(N, \syz^nN)$. Then for any $(\bar{\ga'}, \delta'_{N})\in(\Ext^n( M, {\syz'}^nN)/{\p}, \delta'_N)$, there is a unique object $(\bar{\ga}, \delta_N)\in(\Ext^n(M, \syz^nN)/{\p}, \delta_N)$ which is equivalent to $(\bar{\ga'}, \delta'_N)$.
\end{prop}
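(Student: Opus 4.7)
The plan is to prove existence and uniqueness separately. For \emph{existence}, I would apply Proposition~\ref{pro100}(2) to the pair $\delta_N,\delta'_N\in\U_n(N)$, yielding an angled pair $\delta_N\st{a}\lrt\delta_3\st{b}\llf\delta'_N$ with $a,b\in\si$ and $a\delta_N=b\delta'_N$. Since $a\in\si$, Corollary~\ref{div}(2) applied to $b\ga'\in\Ext^n(M,\text{left end of }\delta_3)$ produces an element $\ga\in\Ext^n(M,\syz^nN)$ with $b\ga'-a\ga\in\p$, and by the definition of $\sim$ this is exactly the assertion $(\bar\ga,\delta_N)\sim(\bar{\ga'},\delta'_N)$.

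For \emph{uniqueness}, suppose $(\bar{\ga_1},\delta_N)$ and $(\bar{\ga_2},\delta_N)$ are both equivalent to $(\bar{\ga'},\delta'_N)$; by transitivity of $\sim$ we have $(\bar{\ga_1},\delta_N)\sim(\bar{\ga_2},\delta_N)$, so there is an angled pair $[u\delta_N,v\delta_N]$ with $u,v\in\si$, $u\delta_N=v\delta_N$, and $u\ga_1-v\ga_2\in\p$. The decomposition
\[u\ga_1-v\ga_2\;=\;u(\ga_1-\ga_2)+(u-v)\ga_2\]
reduces the problem to showing that $(u-v)\ga_2\in\p$: granting that, $u(\ga_1-\ga_2)\in\p$, and since $u\in\si$, Proposition~\ref{nul}(1) forces $\ga_1-\ga_2\in\p$, i.e., $\bar{\ga_1}=\bar{\ga_2}$.

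The main obstacle is therefore extracting $(u-v)\ga_2\in\p$ from the single relation $(u-v)\delta_N=u\delta_N-v\delta_N=0$. To handle it, I would take an $\ruf$ $\ga_2=\delta' g$ with $\delta'\in\U^n(\syz^nN)$, so that $(u-v)\ga_2=((u-v)\delta')g$ and it suffices to prove $(u-v)\delta'\in\p$. Viewing the fixed $\delta_N$ also as an element of $\U^n(\syz^nN)$, Proposition~\ref{pro100}(1) supplies a co-angled pair $\delta_N\st{c_1}\llf\delta_4\st{c_2}\lrt\delta'$ with $c_1,c_2\in\si$ and $\delta_Nc_1=\delta'c_2$. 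Then the bimodule associativity gives
\[((u-v)\delta')c_2\;=\;(u-v)(\delta'c_2)\;=\;(u-v)(\delta_Nc_1)\;=\;((u-v)\delta_N)c_1\;=\;0\in\p,\]
and Proposition~\ref{nul}(2) applied to $c_2\in\si$ upgrades this to $(u-v)\delta'\in\p$, completing the uniqueness argument.
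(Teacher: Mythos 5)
Your proof is correct. The existence half is exactly the paper's argument: choose an angled pair $[a\delta_N,b\delta'_N]$ via Proposition \ref{pro100}(2) and apply Corollary \ref{div}(2) to $b\ga'$ to produce $\ga$ with $a\ga-b\ga'\in\p$. Where you diverge is in the uniqueness half, which the paper dispatches in one line (``uniqueness follows from Proposition \ref{nul}''), the intended computation being $a(\ga_1-\ga_2)=(a\ga_1-b\ga')-(a\ga_2-b\ga')\in\p$ followed by Proposition \ref{nul}(1); this is valid only when both equivalences are witnessed by the \emph{same} angled pair. You instead allow the two witnesses to differ, pass via symmetry and transitivity to an angled pair $[u\delta_N,v\delta_N]$ with possibly $u\neq v$, and then supply the missing ingredient that $(u-v)\ga_2\in\p$ whenever $(u-v)\delta_N=0$. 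Your derivation of that fact --- take an $\ruf$ of $\ga_2$, compare the two unit conflations in $\U^n(\syz^nN)$ by a co-angled pair, and reflect membership in $\p$ back along a $\si$-morphism via Proposition \ref{nul}(2) --- is precisely the dual of the argument in Proposition \ref{three}, the same dualization the paper itself invokes in the transitivity proof of $\thicksim$. So your route costs a bit more work but is more robust: it does not presuppose that the relation $\thicksim$ is independent of the choice of angled pair, a point the paper leaves implicit.
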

\begin{proof}
Take an angled pair $\delta_N\st{a}\lrt\delta''\st{b}\llf\delta'_N$. Since $b\ga'\in\Ext^n(M, {\syz''}^nN)$ and $(\syz^nN\st{a}\rt{\syz''}^nN)\in\si$, in view of Corollary \ref{div}(2), there exists $\ga\in\Ext^n(M, \syz^nN)$ such that $a\ga-b\ga'$ is a $\p$-conflation. Moreover, uniqueness of $(\bar{\ga}, \delta_N)$ follows from Proposition \ref{nul}. So the proof is finished.
\end{proof}

\begin{rem}
Assume that $M,N\in\C$ and fix a unit conflation $\delta_N\in\Ext^n(N, \syz^nN)$. Then it follows from Proposition \ref{ind} that $ (\Ext^n(M, \syz^n N)/{\p}, \delta_N)$ is equal to the set of all equivalence classes of $\bigcup_{{\delta'}_{N}}(\Ext^n(M, {\syz'}^n N)/{\p}, \delta'_N)$ modulo the equivalence relation $``\thicksim"$.
\end{rem}

The following easy observation is needed in the subsequent proposition.
\begin{lem}\label{pushco}Let $\delta, \delta'\in\U^n(\syz^nM)$ and $\delta\st{a}\llf\delta''\st{a'}\lrt\delta'$ be a co-angled pair. Let $b:\syz^nM\rt{\syz'}^nM$ be a morphism in $\si$.
Then $b\delta\st{a}\llf b\delta''\st{a'}\lrt b\delta'$ is also a co-angled pair.
\end{lem}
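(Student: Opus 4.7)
The plan is to check the two defining conditions of a co-angled pair directly from the original hypothesis, using that push-out commutes with pull-back on the compatible side and that unit conflations are preserved by push-out along morphisms in $\si$.

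First I would verify that $b\delta, b\delta', b\delta''$ all lie in $\U^n({\syz'}^n M)$. By assumption $\delta, \delta' \in \U^n(\syz^n M)$, and from the given co-angled pair one has $\delta'' = \delta a = \delta' a'$ where $a, a'$ are (by the definition of ``co-induced by identity'') deflations in $\si$ such that $\delta''$ is a unit conflation as well. Applying Lemma \ref{unit}(2) to the morphism $b \in \si$ then yields at once that $b\delta$, $b\delta'$, and $b\delta''$ are unit conflations in $\U^n({\syz'}^n M)$.

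Next I would establish the required pull-back equalities $b\delta'' = (b\delta)a$ and $b\delta'' = (b\delta')a'$. These are instances of the compatibility of push-out with pull-back recorded in Remark \ref{pp1}(2), which gives $l(\gamma h) = (l\gamma)h$ for any composable triple. Applying this with $l = b$, $\gamma = \delta$ (resp.\ $\delta'$), $h = a$ (resp.\ $a'$) and using $\delta a = \delta' a' = \delta''$ from the original co-angled pair gives $(b\delta)a = b(\delta a) = b\delta'' = b(\delta' a') = (b\delta')a'$.

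Finally, since $a$ and $a'$ are the very same deflations as before (only the left-hand data has been pushed out along $b$), and we have now exhibited unit conflations $b\delta, b\delta', b\delta'' \in \U^n({\syz'}^n M)$ with $b\delta'' = (b\delta)a = (b\delta')a'$, the morphisms $a$ and $a'$ are co-induced by identity over ${\syz'}^n M$ and $[(b\delta)a, (b\delta')a']$ is by definition a co-angled pair. I do not anticipate any real obstacle here, since the statement is essentially a bookkeeping consequence of Lemma \ref{unit}(2) together with the bimodule identity in Remark \ref{pp1}(2); the only point requiring a moment of care is recording that $a, a'$ remain deflations after the change of base point, which is immediate because they are unchanged.
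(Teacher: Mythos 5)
Your proof is correct and follows essentially the same route as the paper: both arguments invoke Lemma \ref{unit} to see that push-out along $b\in\si$ preserves unit conflations, and Remark \ref{pp1} for the identity $b(\delta a)=(b\delta)a$, which transports the equalities $\delta a=\delta''=\delta' a'$ to $(b\delta)a=b\delta''=(b\delta')a'$. The extra care you take in noting that $a,a'$ remain deflations co-induced by identity over ${\syz'}^nM$ is a reasonable elaboration of what the paper leaves implicit.
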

\begin{proof}Since $b\in\si$, by Lemma \ref{unit}, the push-out of any unit conflation along $b$ is also a unit conflation.
By the hypothesis, $\delta a=\delta''=\delta'a'$, and so, by Remark \ref{pp1}, we obtain the co-angled pair $b\delta\st{a}\llf b\delta''\st{a'}\lrt b\delta'$, as desired.
\end{proof}

\begin{prop}\label{comequ}The equivalence relation $``\thicksim"$ is compatible with the composition $``\circ"$.
\end{prop}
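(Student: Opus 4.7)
The plan is as follows. Suppose $(\bar{\ga}, \delta_N) \sim (\bar{\ga'}, \delta'_N)$ with $\ga \in \Ext^n(M, \syz^nN)$, $\ga' \in \Ext^n(M, {\syz'}^nN)$, and $(\bar{\be}, \delta_K) \sim (\bar{\be'}, \delta'_K)$ with $\be \in \Ext^n(N, \syz^nK)$, $\be' \in \Ext^n(N, {\syz'}^nK)$. By Definition~\ref{rel} there exist angled pairs $\delta_N \xrightarrow{a} \delta''_N \xleftarrow{a'} \delta'_N$ and $\delta_K \xrightarrow{c} \delta''_K \xleftarrow{c'} \delta'_K$ (with $a, a', c, c' \in \si$ by Proposition~\ref{pro100}(2)) satisfying $a\ga - a'\ga' \in \p$ and $c\be - c'\be' \in \p$. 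The goal is to produce an angled pair witnessing $(\overline{\be \circ \ga}, \delta_K) \sim (\overline{\be' \circ \ga'}, \delta'_K)$, where $\be \circ \ga$ is computed via $\delta_N$ and $\be' \circ \ga'$ via $\delta'_N$. I would use the same angled pair $[c\delta_K, c'\delta'_K]$ already in hand, so it suffices to verify $c(\be \circ \ga) - c'(\be' \circ \ga') \in \p$ inside $\Ext^n(M, {\syz''}^nK)$.

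Two auxiliary identities drive the argument. First, push-out on the left factor commutes with composition: for $c$ as above, one has $c(\bar{\be} \circ \bar{\ga}) = \overline{c\be} \circ \bar{\ga}$ in $\Ext^n(M, {\syz''}^nK)/\p$, where both compositions are formed via $\delta_N$. To prove this, choose an $\ruf$ $\ga = \delta f$ and a co-angled pair $\delta_N \xleftarrow{a_1} \delta_1 \xrightarrow{b_1} \delta$; Definition~\ref{compo} gives $\bar{\be} \circ \bar{\ga} = \overline{((\be a_1)b_1^{-1})f}$, and applying $c$ yields $\overline{(((c\be) a_1)b_1^{-1})f}$ via the commutativity of push-out and pull-back from Remark~\ref{pp1}(2), which is precisely $\overline{c\be} \circ \bar{\ga}$. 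Second, the composition is invariant under pushing the base unit conflation along a morphism in $\si$: if $\delta''_N = a\delta_N$ with $a \in \si$ and $\mu \in \Ext^n(N, {\syz''}^nK)$, then $\bar{\mu} \circ \bar{\ga}$ computed via $\delta_N$ equals $\bar{\mu} \circ \overline{a\ga}$ computed via $\delta''_N$. Indeed, since $a \in \si$, Lemma~\ref{unit} yields $a\delta \in \U^n({\syz''}^nN)$, so $a\ga = (a\delta)f$ is an $\ruf$ of $a\ga$; Lemma~\ref{pushco} turns $[\delta_N a_1, \delta b_1]$ into the co-angled pair $[\delta''_N a_1, (a\delta) b_1]$; Definition~\ref{compo} then produces the literally identical expression $\overline{((\mu a_1)b_1^{-1})f}$ on both sides.

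Chaining these identities yields
\[
c(\bar{\be} \circ \bar{\ga}) \;=\; \overline{c\be} \circ \bar{\ga} \;=\; \overline{c\be} \circ \overline{a\ga}, \qquad c'(\bar{\be'} \circ \bar{\ga'}) \;=\; \overline{c'\be'} \circ \overline{a'\ga'},
\]
where the rightmost compositions are both formed via $\delta''_N$ and lie in $\Ext^n(M, {\syz''}^nK)/\p$. Since $\overline{c\be} = \overline{c'\be'}$ and $\overline{a\ga} = \overline{a'\ga'}$ by hypothesis, Lemma~\ref{corwell} forces $\overline{c\be} \circ \overline{a\ga} = \overline{c'\be'} \circ \overline{a'\ga'}$, so $c(\be \circ \ga) - c'(\be' \circ \ga') \in \p$, as required.

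The principal difficulty I anticipate is the second auxiliary identity: one must track simultaneously the transformation of the $\ruf$ via Lemma~\ref{unit} and that of the co-angled pair via Lemma~\ref{pushco} under push-out along $a$, and then verify that the two instances of Definition~\ref{compo} return the same expression before passing to $\p$-classes. Once this bookkeeping is in place, the rest of the argument reduces to the well-definedness of $\circ$ established in Theorem~\ref{circ} and the preservation of $\p$ under push-out and pull-back.
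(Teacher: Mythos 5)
Your proof is correct and follows essentially the same route as the paper: your first auxiliary identity (push-out along $c$ commutes with $\circ$, via Remark \ref{pp1}(2) and closure of $\p$ under push-out) is the paper's argument for varying $\be$, and your second (invariance under replacing $\delta_N$ by $a\delta_N$ and $\ga$ by $a\ga$, via Lemmas \ref{unit} and \ref{pushco}) is the paper's argument for varying $\ga$. The only difference is that you chain the two identities to treat both variations simultaneously and exhibit the explicit witnessing angled pair $[c\delta_K, c'\delta'_K]$, whereas the paper handles the two arguments in separate steps.
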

\begin{proof}Let $(\bar{\ga}, \delta_M)\in(\Ext^n(K, \syz^nM)/{\p}, \delta_M)$ and $(\bar{\ga'}, \delta'_M)\in(\Ext^n(K, {\syz'}^nM)/{\p}, \delta'_M)$ such that $(\bar{\ga}, \delta_M)\thicksim(\bar{\ga'}, \delta'_M)$. Suppose that $(\bar{\be}, \delta_N)\in(\Ext^n(M, \syz^nN)/{\p}, \delta_N)$. We must show that $(\bar{\be}\circ\bar{\ga}, \delta_N)\thicksim(\bar{\be}\circ\bar{\ga'}, \delta_N)$. By our hypothesis, there is an angled pair $\delta_M\st{a}\lrt\delta''_M\st{b}\llf\delta'_M$
such that $a\ga-b\ga'$ is a $\p$-conflation. Set $\ga'':=a\ga$. So $(\bar{\ga''}, \delta''_M)\in(\Ext^n(K, {\syz''}^nM)/{\p}, \delta''_M)$. Hence in order to obtain the result, it suffices to show that $(\bar{\be}\circ\bar{\ga}, \delta_N)\thicksim(\bar{\be}\circ\bar{\ga''}, \delta_N)$. To do this, assume that $\ga=\delta_1f$ is an $\ruf$ of $\ga$. Take a co-angled pair $[\delta_M a_1, \delta_1 b_1]$. As $a\in\si$, by Lemma \ref{pushco}, $[(a\delta_M) a_1, (a\delta_1) b_1]$ is also a co-angled pair. Since $\ga=\delta_1f$, one has $\ga''=a\ga=a(\delta_1f)=(a\delta_1)f$, meaning that $\ga''=(a\delta_1)f$ is an $\ruf$ of $\ga''$. Consequently, $\bar{\be}\circ\bar{\ga}=\overline{((\be a_1){b_1}^{-1})f}=\bar{\be}\circ\bar{\ga''}$, and so, $(\bar{\be}\circ\bar{\ga}, \delta_N)\thicksim(\bar{\be}\circ\bar{\ga''}, \delta_N)$, as desired.

Next consider $(\bar{\be'}, \delta'_N)\in(\Ext^n(M, {\syz'}^nN)/{\p}, \delta'_N)$ such that $(\bar{\be}, \delta_N)\thicksim(\bar{\be'}, \delta'_N)$. We would like to show that $(\bar{\be}\circ\bar{\ga}, \delta_N)\thicksim(\bar{\be'}\circ\bar{\ga}, \delta'_N)$.
By the hypothesis, there is an angled pair $[a\delta_N, a'\delta'_N]$ such that $a\be-a'\be'$ is a $\p$-conflation, and so, $a\bar{\be}=a'\bar{\be'}$. Now from the defnition of the composition $``\circ"$, one may deduce that $a(\bar{\be}\circ\bar{\ga})=a'(\bar{\be'}\circ\bar{\ga})$, meaning that $(\bar{\be}\circ\bar{\ga}, \delta_N)\thicksim(\bar{\be'}\circ\bar{\ga}, \delta'_N)$. Thus, the proof is finished.
\end{proof}

\begin{lem}The equivalence relation $``\thicksim"$ is compatible with $``+"$.
\end{lem}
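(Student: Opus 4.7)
The plan is to exhibit a single angled pair that simultaneously witnesses both given equivalences; once this is achieved, distributivity of push-out over the Baer sum delivers the conclusion at once. From the hypotheses, take angled pairs $\delta_N\st{a_i}\lrt\delta''_i\st{b_i}\llf\delta'_N$ with $a_i\ga_i - b_i\ga_i' \in \p$ for $i=1,2$. Applying Proposition \ref{pro100}(2) to the unit conflations $\delta''_1, \delta''_2 \in \U_n(N)$ produces inflations $c_1, c_2 \in \si$ satisfying $c_1\delta''_1 = c_2\delta''_2$; setting $a := c_1 a_1$ and $b := c_1 b_1$, the chain $a\delta_N = c_1\delta''_1 = b\delta'_N$ exhibits $[a\delta_N, b\delta'_N]$ as an angled pair, and the case $i=1$ is immediate from $a\ga_1 - b\ga_1' = c_1(a_1\ga_1 - b_1\ga_1') \in \p$, since $\p$ is closed under push-outs.

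The main obstacle will be verifying $a\ga_2 - b\ga_2' \in \p$, because the hypothesis for $i=2$ is stated in terms of $(a_2, b_2)$ rather than $(c_1 a_1, c_1 b_1)$. The underlying issue is that $c_1 a_1\delta_N = c_2 a_2\delta_N$ as conflations need not force $c_1 a_1 = c_2 a_2$ as morphisms; nevertheless, their difference $g := c_1 a_1 - c_2 a_2$ satisfies $g\delta_N = c_1\delta''_1 - c_2\delta''_2 = 0 \in \p$. To bootstrap this single vanishing to $g\ga_2 \in \p$, I would take an $\ruf$ $\ga_2 = \delta^\flat f$ with $\delta^\flat \in \U^n(\syz^n N)$ afforded by Proposition \ref{102}(1), and apply Proposition \ref{pro100}(1) to $\delta^\flat, \delta_N \in \U^n(\syz^n N)$ to secure a co-angled pair $\delta^\flat \st{u}\llf \mu \st{u'}\lrt \delta_N$ with $u, u' \in \si$. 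Using that pull-back and push-out commute (Remark \ref{pp1}),
\[
(g\delta^\flat)u \;=\; g(\delta^\flat u) \;=\; g(\delta_N u') \;=\; (g\delta_N)u' \;=\; 0 \in \p,
\]
so Proposition \ref{nul}(2) (applied to $u \in \si$) yields $g\delta^\flat \in \p$, and hence $g\ga_2 = (g\delta^\flat)f \in \p$. The parallel argument applied to $c_1 b_1 - c_2 b_2$ (using $\delta'_N \in \U^n({\syz'}^n N)$) gives $(c_1 b_1 - c_2 b_2)\ga_2' \in \p$. Combining these with $c_2(a_2\ga_2 - b_2\ga_2') \in \p$ through the identity
\[
a\ga_2 - b\ga_2' \;=\; c_2(a_2\ga_2 - b_2\ga_2') + (c_1 a_1 - c_2 a_2)\ga_2 - (c_1 b_1 - c_2 b_2)\ga_2'
\]
places $a\ga_2 - b\ga_2'$ in $\p$.

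With $a\ga_i - b\ga_i' \in \p$ for both $i=1,2$ in hand, distributivity of push-out over the Baer sum produces $a(\ga_1+\ga_2) - b(\ga_1'+\ga_2') = (a\ga_1 - b\ga_1') + (a\ga_2 - b\ga_2') \in \p$, so the common angled pair $[a\delta_N, b\delta'_N]$ witnesses the desired equivalence $(\overline{\ga_1+\ga_2}, \delta_N) \thicksim (\overline{\ga_1'+\ga_2'}, \delta'_N)$.
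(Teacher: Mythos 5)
Your proof is correct, and its final step coincides with the paper's: once a single angled pair $[a\delta_N,b\delta'_N]$ witnesses both hypothesized equivalences, biadditivity of push-out over the Baer sum gives $a(\ga_1+\ga_2)-b(\ga_1'+\ga_2')=(a\ga_1-b\ga_1')+(a\ga_2-b\ga_2')\in\p$ and you are done. The difference is in what precedes that step. The paper's proof is two lines: it reads off from the definition that $a\ga-b\be$ and $a\ga'-b\be'$ lie in $\p$ ``where $[a\delta_N,b\delta'_N]$ is an angled pair,'' i.e.\ it tacitly assumes the \emph{same} angled pair can be taken for both hypotheses. Under the existential reading of Definition \ref{rel} (which is how the paper itself reads it in the transitivity proof, where it explicitly allows two different angled pairs), this is a genuine gap that your argument fills: you use Proposition \ref{pro100}(2) to dominate the two witnessing pairs $[a_i\delta_N,b_i\delta'_N]$ by a common one $[c_1a_1\delta_N,c_1b_1\delta'_N]$, and then show that the defect morphism $g=c_1a_1-c_2a_2$, which kills $\delta_N$, in fact kills every class in $\Ext^n(M,\syz^nN)$ modulo $\p$ --- via an $\ruf$ of $\ga_2$, a co-angled pair in $\U^n(\syz^nN)$, the compatibility $l(\ga h)=(l\ga)h$ of Remark \ref{pp1}, and Proposition \ref{nul}(2). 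That bootstrap is essentially a push-out analogue of the implication $(1)\Rightarrow(3)$ in Proposition \ref{three}, and what it buys is a proof, nowhere written in the paper, that the relation $\thicksim$ is independent of the chosen angled pair; this independence is what legitimizes the paper's shortcut here (and similar silent choices elsewhere). Your version is longer but self-contained; the paper's is shorter but leans on an unproved well-definedness claim.
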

\begin{proof}Assume that $(\bar{\ga}, \delta_N), (\bar{\ga'}, \delta_N)\in(\Ext^n(M, \syz^nN)/{\p}, \delta_N)$ and $(\bar{\be}, \delta'_N), (\bar{\be'}, \delta'_N)\in(\Ext^n(M, {\syz'}^n N)/{\p}, \delta'_N)$ such that $(\bar{\ga}, \delta_N)\thicksim(\bar{\be}, \delta'_N)$ and $(\bar{\ga'}, \delta_N)\thicksim(\bar{\be'}, \delta'_N)$. We shall prove that $(\overline{\ga+\ga'}, \delta_N)\thicksim(\overline{\be+\be'}, \delta'_N)$.
Since $(\bar{\ga}, \delta_N)\thicksim(\bar{\be}, \delta'_N)$ and $(\bar{\ga'}, \delta_N)\thicksim(\bar{\be'}, \delta'_N)$, by the definition, $a\ga-b\be$ and $a\ga'-b\be'$ are $\p$-conflations, where $[a\delta_N, b\delta'_N]$ is an angled pair. Hence, $a(\ga+\ga')-b(\be+\be')$ is a $\p$-conflation, as well. Consequently, $(\overline{\ga+\ga'}, \delta_N)\thicksim (\overline{\be+\be'}, \delta'_N)$, as required.
\end{proof}

\begin{prop}\label{val}Let $(\bar{\ga}, \delta_N)\thicksim(\bar{\ga'}, \delta'_N)$. Then $\ga$ is $\p$-conflation if and only if $\ga'$ is so.
\end{prop}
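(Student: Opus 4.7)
The plan is to unpack the definition of $\sim$ and then invoke Proposition \ref{nul}, which says that for a morphism $a \in \si$, a conflation $\al$ is a $\p$-conflation if and only if $a\al$ is. The hypothesis $(\bar{\ga}, \delta_N) \sim (\bar{\ga'}, \delta'_N)$ gives an angled pair $\delta_N \st{a}\lrt \delta'' \st{b}\llf \delta'_N$ with $a, b \in \si$ (both being induced by identity over $N$, hence quasi-invertible by Corollary \ref{is}) and such that $a\ga - b\ga'$ is a $\p$-conflation.

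First I would argue the forward direction. Assume $\ga$ is a $\p$-conflation. Since $\p$ is a subfunctor of $\Ext^n$ closed under push-out, $a\ga$ is a $\p$-conflation. Because $\p$ is an additive subgroup and $a\ga - b\ga' \in \p$, it follows that
\[
b\ga' \;=\; a\ga \;-\; (a\ga - b\ga') \;\in\; \p.
\]
Now $b \in \si$, so Proposition \ref{nul}(1) (applied to the push-out along $b$) forces $\ga' \in \p$.

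The reverse direction is symmetric: if $\ga' \in \p$, then $b\ga' \in \p$ by closure of $\p$ under push-out, so $a\ga = b\ga' + (a\ga - b\ga') \in \p$, and then Proposition \ref{nul}(1) applied to $a \in \si$ yields $\ga \in \p$.

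There is no serious obstacle here; the only conceptual point is recognizing that the morphisms appearing in an angled pair lie in $\si$, which is already built into the notion of being induced by identity. Everything else reduces to the fact that $\p$ is a subgroup of $\Ext^n$ closed under push-out, combined with the biconditional provided by Proposition \ref{nul}.
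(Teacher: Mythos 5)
Your proof is correct and follows essentially the same route as the paper's: both arguments pass from $\ga$ to $a\ga$ via closure of $\p$ under push-out, use that $\p$ is a subgroup to transfer membership to $b\ga'$, and then invoke Proposition \ref{nul} with $b\in\si$ to descend back to $\ga'$. Your explicit remark that the morphisms in an angled pair lie in $\si$ by definition is a correct and worthwhile clarification, but it does not change the substance of the argument.
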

\begin{proof}Assume that $\ga$ is a $\p$-conflation. We need to show that $\ga'$ is a $\p$-conflation, as well. By the hypothesis, $a\ga-b\ga'$ is a $\p$-conflation, where $[a\delta_N, b\delta'_N]$ is an angled pair. Since $\ga$ is a $\p$-conflation, Proposition \ref{nul} yields that $a\ga$ is also a $\p$-conflation. So using the fact that $a\ga-b\ga'$ is a $\p$-conflation, one may infer that the same is true for $b\ga'$. Hence, another use of Proposition \ref{nul}, guarantees that $\ga'$ is also a $\p$-conflation. Since the reverse implication is obtained similarly, we skip it. So the proof is finished.
\end{proof}

{
\section{Phantom stable categories}
Inspired by the stabilization of a Frobenius category, we introduce and study the notion of the phantom stable category of an $n$-Frobenius category. We begin with the following motivating observation.

\begin{s}\label{s1s1}
Let $\C'$ be a Frobenius category and let $\mathcal{I}$ be the ideal consisting of all morphisms factoring through projective objects. Assume that $\C'/{\mathcal{I}}$ is the stable category of $\C'$. So we have the natural functor $\pi:\C'\lrt\C'/{\mathcal{I}}$ such that for any morphism $f$ that its kernel and cokernel are both projective, $\pi(f)$ is an isomorphism and for any $g\in\mathcal{I}$, $\pi(g)=0$. It is easily seen that the pair $(\C'/{\mathcal{I}}, \pi)$ has a universal property with respect to these conditions. This fact is our idea to introduce the notion of the {\it phantom stable category} of an $n$-Frobenius category, for any $n\geq 0$. To be precise, let $\C$ be an $n$-Frobenius category and let $\Ext^n$ be all equivalence classes of conflations of length $n$. Assume that $\p$ is a subfunctor of $\Ext^n$ consisting of all
conflations of length $n$ which are obtained as pull-back of conflations along morphisms of the form $M\rt P$, for some $M\in\C$ and $P\in n$-$\proj\C$. Assume that $\si$ is the class of all morphisms acting as invertible on $\Ext^{n+1}$. We introduce the additive category $\C_{\p}$ and an additive functor $T:\C\rt\C_{\p}$ with $T(s)$ is an isomorphism, for any quasi-invertible morphism $s$ and $T(f)=0$ for all $n$-$\Ext$-phantom morphisms $f$, which has universal property with respect to these conditions. Indeed, we have the following definition.
\end{s}

\begin{dfn}We say that a couple $(\C_{\p}, T)$, where $\C_{\p}$ is an additive category and $T:\C\lrt\C_{\p}$ is a covariant additive functor, is the {\it phantom stable category of $\C$}, if:\\ (1) $T(s)$ is an isomorphism in $\C_{\p}$, for any quasi-invertible morphism $s$.\\
(2) For any $n$-$\Ext$-phantom morphism $\varphi$, $T(\varphi)=0$ in $\C_{\p}$.\\(3) Any covariant additive functor $T':\C\lrt\D$ satisfying the conditions (1) and (2), factors in a unique way through $T$.
\end{dfn}

In the following, we show the existence of the phantom stable category of $\C$. First, we quote a couple of auxiliary results.

\begin{lem}\label{iso}Let $(\bar{\ga}, \delta_N)\in(\Ext^n(M, \syz^nN)/{\p}, \delta_N)$ such that $\ga$ is a unit conflation. Then for any $\delta_M\in\U_n(M)$, there exists $(\bar{\ga'}, \delta_M)\in(\Ext^n(N, \syz^nM)/{\p}, \delta_M)$ such that $\bar{\ga'}\circ\bar{\ga}=\bar{\delta}_M$ and $\bar{\ga}\circ\bar{\ga'}=\bar{\delta}_N$.
\end{lem}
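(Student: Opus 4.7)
The plan is to construct $\gamma'$ via a co-angled pair and Corollary \ref{div}(1), to verify the first identity directly from the definition of the composition $\circ$, and then to deduce the second identity by an associativity-plus-cancellation argument that avoids having to compute an $\ruf$ of $\gamma'$.

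\textbf{Step 1 (construction of $\gamma'$).} Both $\gamma$ and $\delta_N$ belong to $\U^n(\syz^n N)$. Applying Proposition \ref{pro100}(1) produces a co-angled pair
\[
\delta_N \stackrel{a}{\longleftarrow} \delta'' \stackrel{b}{\longrightarrow} \gamma,
\qquad \delta''=\delta_N a=\gamma b,
\]
with $a:L\to N$ and $b:L\to M$ deflations in $\si$. By Lemma \ref{unit}(1), $\delta_M b\in\Ext^n(L,\syz^n M)$ is again a unit conflation. Corollary \ref{div}(1), applied to $a\in\si$, yields some $\gamma'\in\Ext^n(N,\syz^n M)$ with $\gamma' a=_{\p}\delta_M b$, which is our candidate.

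\textbf{Step 2 (first identity).} Since $\gamma$ is unit, $\gamma=\gamma\cdot 1_M$ is an $\ruf$ of $\gamma$. Using the co-angled pair of Step 1, Definition \ref{compo} gives
\[
\bar{\gamma'}\circ\bar\gamma=\overline{((\gamma' a)b^{-1})\cdot 1_M}=\overline{(\gamma' a)b^{-1}}=\bar{\delta_M}
\]
by the defining property $\gamma' a=_{\p}\delta_M b$.

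\textbf{Step 3 (cancellation setup for the second identity).} The identities
$\bar\gamma\circ\bar{\delta_M}=\bar\gamma$ and $\bar{\delta_N}\circ\bar\gamma=\bar\gamma$ follow by direct computation: for the first, take the trivial $\ruf$ and trivial co-angled pair of $\delta_M$ with itself; for the second, use the $\ruf$ $\gamma=\gamma\cdot 1_M$ together with the Step 1 co-angled pair, and note that the identity $\delta_N a=\gamma b$ forces $(\delta_N a)b^{-1}=\gamma$ literally. Combined with associativity (Proposition \ref{ass}) and Step 2,
\[
(\bar\gamma\circ\bar{\gamma'})\circ\bar\gamma=\bar\gamma\circ(\bar{\gamma'}\circ\bar\gamma)=\bar\gamma\circ\bar{\delta_M}=\bar\gamma=\bar{\delta_N}\circ\bar\gamma.
\]
Setting $\bar\mu:=\bar\gamma\circ\bar{\gamma'}-\bar{\delta_N}\in\Ext^n(N,\syz^n N)/\p$, we obtain $\bar\mu\circ\bar\gamma=0$.

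\textbf{Step 4 (cancellation).} Computing $\bar\mu\circ\bar\gamma$ with the same $\ruf$ $\gamma=\gamma\cdot 1_M$ and co-angled pair as in Step 3, we have $\bar\mu\circ\bar\gamma=\overline{(\mu a)b^{-1}}$. The vanishing of this class means that the representative $\nu\in\Ext^n(M,\syz^n N)$ characterised by $\mu a=_{\p}\nu b$ is a $\p$-conflation; hence $\mu a\in\p$. Since $a\in\si$, Proposition \ref{nul}(2) gives $\mu\in\p$, i.e.\ $\bar\mu=0$. Therefore $\bar\gamma\circ\bar{\gamma'}=\bar{\delta_N}$, finishing the proof.

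The key point, and what I expect to be the main obstacle to execute cleanly, is Step 4: one has to recognise that because $\gamma$ is a unit conflation, the operation $\bar\mu\mapsto\bar\mu\circ\bar\gamma$ simplifies (via the canonical co-angled pair of Step 1) to pull-back along $a\in\si$, whose kernel is trivial modulo $\p$ by Proposition \ref{nul}(2). This circumvents having to produce an explicit $\ruf$ of $\gamma'$ or to argue that $\gamma'$ itself may be taken to be a unit conflation.
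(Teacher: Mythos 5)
Your proposal is correct, and it splits into a half that coincides with the paper and a half that does not. The construction of $\gamma'$ and the verification of $\bar{\gamma'}\circ\bar{\gamma}=\bar{\delta}_M$ are exactly the paper's: both take the co-angled pair between $\gamma$ and $\delta_N$ furnished by Proposition \ref{pro100}(1), define $\gamma'$ by dividing the pull-back of $\delta_M$ by the quasi-invertible leg via Corollary \ref{div}, and evaluate the composition on the trivial $\ruf$ $\gamma=\gamma\cdot 1_M$. For the second identity the routes diverge. The paper argues directly: it invokes Lemma \ref{unit} to conclude that $\gamma'$ is itself a unit conflation, takes the trivial $\ruf$ $\gamma'=\gamma'\cdot 1_N$ together with the co-angled pair $[\gamma'b,\delta_M a]$, and reads off $\bar{\gamma}\circ\bar{\gamma'}=\overline{(\gamma a)b^{-1}}=\bar{\delta}_N$ from Theorem \ref{welldef}. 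You instead prove the two identity laws $\bar{\gamma}\circ\bar{\delta}_M=\bar{\gamma}$ and $\bar{\delta}_N\circ\bar{\gamma}=\bar{\gamma}$ (the same computations as in Corollary \ref{cor100}, in their bimodule form), and then run an associativity-plus-cancellation argument: $\bar{\mu}:=\bar{\gamma}\circ\bar{\gamma'}-\bar{\delta}_N$ satisfies $\bar{\mu}\circ\bar{\gamma}=0$ by Propositions \ref{ass} and \ref{srt1}, and since composing with the unit conflation $\gamma$ amounts (through the fixed co-angled pair) to pulling back along $a\in\si$, Proposition \ref{nul}(2) forces $\bar{\mu}=0$. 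What each approach buys: the paper's is shorter once one accepts that $\gamma'$ is a unit conflation, but that step is delicate, since Corollary \ref{div} only gives $\gamma'b=_{\p}\delta_M a$ rather than an on-the-nose equality, so applying Lemma \ref{unit} and forming the co-angled pair $[\gamma'b,\delta_M a]$ requires either choosing $\gamma'$ so that the equality is exact or an extra argument; your route sidesteps this entirely, working only modulo $\p$ throughout, at the cost of invoking the ring-theoretic machinery (identities, associativity, distributivity) that the paper establishes anyway. Both are valid; yours is the more robust of the two.
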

\begin{proof}Take a co-angled pair $[\ga a, \delta_Nb]$ and set $\ga':=(\delta_Ma)b^{-1}$. So considering an $\ruf$ $\ga=\ga 1_M$ of $\ga$, we have $\bar{\ga'}\circ\bar{\ga}=\overline{(\ga'b)a^{-1}}=\overline{(((\delta_Ma)b^{-1})b)a^{-1}}=\bar{\delta}_M$. Next we show that $\bar{\ga}\circ\bar{\ga'}=\bar{\delta}_N$. Since $\ga'=(\delta_Ma)b^{-1}$, {by Lemma \ref{unit}, $\ga'$ is a unit conflation and so} we may take the co-angled pair $[\ga'b, \delta_Ma]$. Now considering $\ga'=\ga' 1_N$ as an $\ruf$ of $\ga'$, and  Theorem \ref{welldef}, $\bar{\ga}\circ\bar{\ga'}=\overline{(\ga a)b^{-1}}=\bar{\delta}_N$, would give the desired result.
\end{proof}

\begin{rem}\label{00}Assume that $(\bar{\ga}, \delta_N)\thicksim(\bar{\ga'}, \delta'_N)$. So, by the definition, $a\ga-b\ga'$ is a $\p$-conflation, where $[a\delta_N, b\delta'_N]$ is an angled pair. Thus for any morphism $f$, $(a\ga-b\ga')f=a(\ga f)-b(\ga'f)$ will be also a $\p$-conflation, meaning that $(\overline{\ga f}, \delta_N)\thicksim(\overline{\ga'f}, \delta'_N)$.
\end{rem}

The result below is the main theorem of this section.
\begin{theorem}\label{thmst}The phantom stable category $(\C_{\p}, T)$ of $\C$ exists.
\end{theorem}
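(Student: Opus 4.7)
The plan is to let $\C_{\p}$ have the same objects as $\C$ and set
\[
\hom_{\C_{\p}}(M,N) := \bigl(\bigsqcup_{\delta_N \in \U_n(N)}(\Ext^n(M,\syz^n N)/{\p},\,\delta_N)\bigr)/{\sim},
\]
with composition induced by the operator $\circ$ of Definition~\ref{compo} and identity at $M$ given by the class of $(\bar{\delta}_M,\delta_M)$ for any $\delta_M \in \U_n(M)$. The functor $T\colon\C\to\C_{\p}$ sends $M$ to $M$ and a morphism $f\colon M\to N$ to $T(f):=[(\overline{\delta_N f},\delta_N)]$; this class is independent of $\delta_N$, since for any angled pair $[a\delta_N,b\delta_N']$ one has $a(\delta_N f)=b(\delta_N' f)$, so $(\overline{\delta_N f},\delta_N)\sim(\overline{\delta_N' f},\delta_N')$. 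The facts that $\C_{\p}$ is additive and that $T$ is an additive functor assemble from Theorem~\ref{circ}, Propositions~\ref{ass} and~\ref{srt1}, Corollary~\ref{cor100}, and Proposition~\ref{comequ}; multiplicativity of $T$ is the one-line check $T(g)\circ T(f)=\overline{(\delta_K g)f}=\overline{\delta_K(gf)}=T(gf)$, carried out with the trivial co-angled pair $\delta_K \xleftarrow{1}\delta_K \xrightarrow{1}\delta_K$.

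The verification of conditions (1) and (2) is then direct: for $s \in \si$, Lemma~\ref{unit} shows $\delta_N s$ is a unit conflation, and Lemma~\ref{iso} produces an explicit inverse class $[(\bar{\gamma'},\delta_M)]$ for $T(s)$ in $\C_{\p}$; for an $n$-$\Ext$-phantom $\varphi$, Proposition~\ref{three} gives $\delta_N\varphi \in \p$, whence $T(\varphi)=0$.

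For the universal property, suppose $T'\colon\C\to\D$ satisfies (1) and (2). I would define $F\colon\C_{\p}\to\D$ on objects by $F(M):=T'(M)$ and on a morphism class $[(\bar{\gamma},\delta_N)]$ by the following recipe. Choose an $\ruf$ $\gamma=\delta f$ with $\delta \in \U^n(\syz^n N)$ ending at some $N'$ and $f\colon M \to N'$; choose a co-angled pair $\delta_N \xleftarrow{a}\delta''\xrightarrow{b}\delta$ from Proposition~\ref{pro100}(1), so that $a\colon N''\to N$ and $b\colon N''\to N'$ both lie in $\si$; and set
\[
F([(\bar{\gamma},\delta_N)]) := T'(a)\circ T'(b)^{-1}\circ T'(f),
\]
which is legitimate because $T'(a)$ and $T'(b)$ are isomorphisms in $\D$. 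The identity $F \circ T = T'$ is then immediate from the trivial co-angled pair $\delta_N \xleftarrow{1}\delta_N\xrightarrow{1}\delta_N$, and uniqueness of $F$ follows because the $\ruf$/co-angled pair decomposition writes every morphism of $\C_{\p}$ in the form $T(a)\circ T(b)^{-1}\circ T(f)$, so $F$ is forced by its values on the image of $T$.

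The central obstacle is showing $F$ is well-defined and functorial. Three compatibilities must be verified: (i) independence of the chosen $\ruf$, handled by Proposition~\ref{coin}, which, given two $\ruf$s $\gamma=\delta f=\delta' f'$ and a co-angled pair $\delta \leftarrow \delta''\to\delta'$, produces a morphism $h$ with $f=ah$, $f'=a'h$ and hence a common factorization $\gamma=\delta'' h$; (ii) independence of the chosen co-angled pair, dominated by a third via Proposition~\ref{pro100}(1) combined with the fact that $T'(\si)$ consists of isomorphisms; and (iii) vanishing on $\p$-conflations, reduced via Lemma~\ref{ruf} to the case that $f$ factors through an $n$-projective, whereupon $f$ is phantom by~\ref{cof} and hence $T'(f)=0$. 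Compatibility with $\sim$ then follows using Corollary~\ref{div} together with the definition of angled pairs. The most intricate step is the multiplicativity $F(\bar\beta \circ \bar\gamma)=F(\bar\beta)\circ F(\bar\gamma)$: one must unwind Definition~\ref{compo}, match the $\ruf$ and co-angled pair chosen for the composite against those chosen separately for $\bar\beta$ and $\bar\gamma$, and show, via repeated application of Proposition~\ref{pro100} and the iso property of $T'(\si)$, that the resulting discrepancies collapse after passing through $T'$.
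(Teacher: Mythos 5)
Your proposal is correct and follows essentially the same route as the paper: the same hom-sets (your disjoint-union-modulo-$\thicksim$ description agrees with the paper's fixed-$\delta_N$ version by Proposition \ref{ind}), the same composition and identity, the same verification of conditions (1) and (2) via Lemmas \ref{unit}, \ref{iso} and Proposition \ref{three}, and the same formula $F([(\bar{\gamma},\delta_N)])=T'(a)T'(b)^{-1}T'(f)$ for the universal property, with well-definedness reduced to Propositions \ref{coin} and \ref{pro100} exactly as in the paper. The only place you stay at the level of a sketch is the multiplicativity of $F$, which the paper also singles out as the delicate computation, but your outline of how the discrepancies collapse through $T'(\si)$ being invertible is the argument the paper carries out.
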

\begin{proof}We define the category $\C_{\p}$ as follows: the objects of $\C_{\p}$ are the same as the  objects of $\C$. For any two objects $M, N\in\C$, first fix a unit conflation $\delta_N\in\Ext^n(N, \syz^nN)$ and set $\hom_{\C_{\p}}(M, N):=(\Ext^n(M, \syz^nN)/{\p}, \delta_N)$ modulo the equivalence relation $``\thicksim"$.
Assume that $(\bar{\ga}, \delta_N)\in(\Ext^n(M, \syz^nN)/{\p}, \delta_N)$ and $(\bar{\be}, \delta_K)\in(\Ext^n(N, \syz^nK)/{\p}, \delta_K)$. We define $(\bar{\be}, \delta_K)\circ(\bar{\ga}, \delta_N):=(\bar{\be}\circ\bar{\ga}, \delta_K)$, which is well-defined, thanks to Proposition \ref{comequ}. According to Proposition \ref{ass}, the composition operator $``\circ"$ is associative. For a given object $M\in\C_{\p}$, fix a unit conflation $\delta_M\in\Ext^n(M, \syz^nM)$. We claim that $1_M=(\bar{\delta}_M, \delta_M)$.
Indeed, it is evident that for any $(\bar{\ga}, \delta_N)\in(\Ext^n(M, \syz^nN)/{\p}, \delta_N)$, $(\bar{\ga}, \delta_N)\circ(\bar{\delta}_M, \delta_M)=(\bar{\ga}, \delta_N)$. Now, take a morphism
$(\bar{\al}, \delta'_M)\in(\Ext^n(N, {\syz'}^nM)/{\p}, \delta'_M)$. Applying Propositions \ref{comequ} and \ref{ind}, allows us to assume that $\delta'_M=\delta_M$, and then, one may easily see that $(\bar{\delta}_M, \delta_M)\circ(\bar{\al}, \delta_M)=(\bar{\al}, \delta_M)$. Thus $1_M=(\bar{\delta}_M, \delta_M),$ as claimed. So $\C_{\p}$ is a category. Clearly, $\C_{\p}$ is closed under finite direct sums, because the same is true for $\C$, and for any two objects $M, N$, $\hom_{\C_{\p}}(M, N)$ is an abelian group. Moreover, Propositions \ref{srt1} and \ref{comequ} guarantee that the composition is bilinear, that is, the composition distributes over addition. Consequently, $\C_{\p}$ is an additive category.

Now let us define an additive covariant functor $T:\C\lrt\C_{\p}$. For any object $M\in\C$, we let $T(M)=M$. For given $M, N\in\C$,  consider a unit conflation $\delta_N$, and define the morphism $T_{M,N}:\hom_{\C}(M, N)\lrt\hom_{\C_{\p}}(M, N)$, as $T(f):=(\overline{\delta_Nf}, \delta_N)$, for any morphism $f\in\hom_{\C}(M, N)$. It should be pointed out that, Proposition \ref{ind} together with Remark \ref{00}, would imply that $T$ is well-defined.
By our definition, $T(1_N)=(\bar{\delta}_N, \delta_N)=1_{T(N)}$. Furthermore, it is easily seen that for any two composable morphisms $M\st{f}\rt N\st{g}\rt K$ in $\C$, the equalities $\overline{\delta_Kg}\circ\overline{\delta_Nf}=\overline{(\delta_Kg)f}=\overline{\delta_K(gf)}$ hold true, implying that $T(gf)=T(g)\circ T(f)$. Thus $T$ is a covariant functor. Since $\C$ and $\C_{\p}$ are additive categories and $T$ preserves finite direct sums, it will be an additive functor. Assume that $f:M\rt N$ lies in $\si$. By Lemma \ref{unit}, $\delta_Nf$ is a unit conflation, and so, Lemma \ref{iso} forces $T(f)$ to be an isomorphism.
Suppose that $h: X\rt N$ is an $n$-$\Ext$-phantom morphism. So, by Proposition \ref{three}, $\delta_Nh$ will be a $\p$-conflation, implying that $T(h)=0$.
Finally, assume that $T':\C\lrt\D$ is a covariant additive functor such that $T'(f)$ is an isomorphism, for any $f\in\si$ and for any $n$-$\Ext$-phantom morphism $g$, $T'(g)=0$. To complete the proof, we must prove that there exists a unique additive functor $F:\C_{\p}\lrt\D$ such that $FT=T'$. To do this, we shall define the functor $F:\C_{\p}\lrt\D$ as follows: for any object $X\in\C_{\p}$, write $F(X)=T'(X)$. Also, for any morphism $(\bar{\ga}, \delta_N)\in\hom_{\C_{\p}}(M,N)=(\Ext^n(M, \syz^nN)/{\p}, \delta_N)$, take an $\ruf$ $\ga=\delta' f'$ of $\ga$. So, considering a co-angled pair $[\delta_Na_1, \delta' b_1]$, we define $F((\bar{\ga}, \delta_N))=T'(a_1)T'(b_1)^{-1}T'(f')$, with $(T'(b_1))^{-1}$ being the inverse of $T'(b_1)$. We would like to show that $F$ is well-defined. In this direction, first we prove that $F((\bar{\ga}, \delta_N))$ is independent of the choice of $\ruf$ of $\ga$. Assume that $\ga=\delta'' f''$ is also another $\ruf$ of $\ga$. Now taking a co-angled pair $[\delta_N a_2, \delta''b_2]$, we shall show that $T'(a_1)T'(b_1)^{-1}T'(f')=T'(a_2)T'(b_2)^{-1}T'(f'')$. According to the proof and notations of Theorem \ref{welldef}, we get the following commutative diagram in $\C$:
{\footnotesize \[\xymatrix{&N& \\ N_1~\ar[d]_{b_1}\ar[ur]^{a_1}& N_4\ar[l]_{a_4}\ar[r]^{b_4}\ar[d]_{c_4}& N_2\ar[d]_{b_2}\ar[ul]_{a_2}\\ N'~ & N_3\ar[l]_{a_3}\ar[r]^{b_3} & N'',}\]}where $N'$ (resp. $N''$) is the right end term of the unit conflation $\delta'$ (resp. $\delta''$), $N_i$ for any $i$, is the right end term of $\delta_i$ and all morphisms are co-induced by identity over $\syz^nN$ . Moreover, by Proposition \ref{coin} and combining with Lemma \ref{ds}, there is a morphism $h: M\rt N_3$ such that $f'=a_3h$ and $f''=b_3h$. Namely, one may have the following commutative diagram in $\C$: {\footnotesize\[\xymatrix{N'~& N_3\ar[l]_{a_3}\ar[r]^{b_3}& N''\\ & M\ar[u]^{h}\ar[ul]^{f'}\ar[ur]_{f''} .& }\]}Since $T'$ is an additive functor, applying to the above diagrams, gives us commutative diagrams in $\D$. This, in conjunction with the fact that $T'(a)$ is an isomorphism, for any $a\in\si$, would imply that $T'(a_1)T'(b_1)^{-1}T'(f')=T'(a_2)T'(b_2)^{-1}T'(f'')$, as claimed. Finally, we show that if $(\bar{\ga}, \delta_N)\thicksim(\bar{\ga'}, \delta'_N)$, then $F((\bar{\ga}, \delta_N))=F((\bar{\ga'}, \delta'_N))$. Assume that $\ga=\delta f$ and $\ga'=\delta'f'$ are $\ruf$s of $\ga$ and $\ga'$, respectively. So, taking co-angled pairs $[\delta_Na_1, \delta b_1]$ and $[\delta'_Na_2, \delta'b_2]$, we have to show that $T'(a_1)T'(b_1)^{-1}T'(f)=T'(a_2)T'(b_2)^{-1}T'(f')$. By our hypothesis, there is an angled pair $\delta_N\st{a}\lrt\delta''_N\st{b}\llf\delta'_N$
such that $a\ga-b\ga'$ is a $\p$-conflation. Set $\ga'':=a\ga=_{\p}b\ga'$. So considering $(\bar{\ga''}, \delta''_N)\in(\Ext^n(M, {\syz''}^nN)/{\p}, \delta''_N)$, we infer that $\ga''=(a\delta)f=(b\delta')f'$ are two $\ruf$s of $\ga''$. According to Lemma \ref{pushco}, one has the co-angled pairs, $[\delta''_Na_1,(a\delta)b_1]$ and $[\delta''_Na_2, (b\delta')b_2]$. Since  the definition of $F$ is independent of the choice of $\ruf$ of $\ga''$, we infer that $T'(a_1)T'(b_1)^{-1}T'(f)=T'(a_2)T'(b_2)^{-1}T'(f')$, as desired. {For a given object $M\in\C_{\p}$, we clearly have the equalities, $F(1_M)=(\bar{\delta}_M, \delta_M)=T'(1_M)=1_{T'(M)}=1_{F(M)}$. Next assume that $(\bar{\ga}, \delta_N)\in(\Ext^n(M, \syz^nN)/{\p}, \delta_N)$ and $(\bar{\be}, \delta_K)\in(\Ext^n(N, \syz^nK)/{\p}, \delta_K)$. We have to show that $F((\bar{\be}, \delta_K))\circ F((\bar{\ga}, \delta_N))=F((\bar{\be}\circ\bar{\ga}, \delta_K))$. Suppose that $\ga=\delta_{N'}f$ and $\be=\delta_{K'}$ are $\ruf$s of $\ga$ and $\be$, respectively. Thus, taking co-angled pairs $[\delta_Na, \delta_{N'}b]$ and $[\delta_Kc, \delta_{K'}e]$, we get the equalities; $F((\bar{\ga}, \delta_N))=T'(a)T'(b)^{-1}T'(f)$ and $F((\bar{\be}, \delta_K))=T'(c)T'(e)^{-1}T'(g)$, and so, $F((\bar{\be}, \delta_K))\circ F((\bar{\ga}, \delta_N))=T'(c)T'(e)^{-1}T'(g)T'(a)T'(b)^{-1}T'(f)$. On the other hand, $\bar{\be}\circ\bar{\ga}=\overline{((\be a)b^{-1})f}.$ Set $\et:=(\be a)b^{-1}$. So one has $(\overline{\be a}, \delta_K)=(\overline{\et b}, \delta_K)$. Taking an $\ruf$ $\et=\delta_{K''}h$ of $\et$, one gets $\et f=\delta_{K''}(hf)$ is an $\ruf$ of $\et f$. Therefore, considering a co-angled pair$[\delta_K s, \delta_{K''}s]$, we have that $F(\bar{\be}\circ\bar{\ga}, \delta_K)=F(\overline{\et f}, \delta_K)=T'(s)T'(t)^{-1}T'(h)T'(f)$. Moreover, the well-definedness of $F$ yeilds that $F((\overline{\be a}, \delta_K))=F((\overline{\et b}, \delta_K))$, and so, $T'(c)T'(e)^{-1}T'(g)T'(a)=T'(s)T'(t)^{-1}T'(h)T'(b)$. This would imply that $F((\bar{\be}, \delta_K))\circ F((\bar{\ga}, \delta_N))=F((\bar{\be}\circ\bar{\ga}, \delta_K))$, as desired.} It is clear that $FT=T'$ and also uniqueness of $F$ is obvious. So the proof is complete.
\end{proof}

Assume that $\C'$ is a Frobenius category (or 0-Frobenius category in our sense). Then it follows from the proof of the above theorem that the phantom stable category $(\C'_{\p}, T)$ is indeed $(\C'/{\I}, \pi)$, which has been mentioned in \ref{s1s1}. Namely, in the case $n=0$, the phantom stable category is actually the classical stable category of a Frobenius category.

From now on, to simplify the notation, we shall denote $\hom_{\C_{\p}}(-, -)$ by $\C_{\p}(-, -)$.

\begin{lem}\label{lem1}Let $\C'$ be a full subcategory of $\C$ which is {closed under extensions and kernels of epimorphisms}. Assume that
\begin{enumerate}\item $\C'$ is an $n$-Frobenius category. \item $n$-$\proj\C'\subseteq n$-$\proj\C$. \item $\C$ has enough $n$-$\proj\C'$.
\end{enumerate}Then for any two objects $M, N$ in $\C'$, ${\C_{\p}}(M, N)={\C'_{\p}}(M, N)$.
\end{lem}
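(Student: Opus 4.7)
The plan is to construct a natural group homomorphism $\Phi: \C'_{\p}(M, N) \to \C_{\p}(M, N)$ induced by the inclusion $\C' \hookrightarrow \C$, and to verify that it is a bijection. To set this up, I would first fix a unit conflation $\delta_N \in \U_n(N)$ whose middle terms lie in $n$-$\proj\C'$ (available because $\C'$ is $n$-Frobenius); by hypothesis (2) its middle terms also lie in $n$-$\proj\C$, and iterating closure of $\C'$ under kernels of deflations along the conflations $\syz^kN \to P_{k-1} \to \syz^{k-1}N$ shows $\syz^n N \in \C'$, so $\delta_N$ simultaneously serves as a unit conflation of $\C$. Applying Proposition \ref{ind} in each of $\C'$ and $\C$ then identifies
\[
\C'_{\p}(M, N) \cong \Ext^n_{\C'}(M, \syz^n N)/\p_{\C'}, \qquad \C_{\p}(M, N) \cong \Ext^n_{\C}(M, \syz^n N)/\p_{\C}.
\]
Because $n$-$\proj\C' \subseteq n$-$\proj\C$ by (2), every $\p_{\C'}$-conflation is a $\p_{\C}$-conflation, so the inclusion of conflations descends to a well-defined $\Phi$.

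For surjectivity, I would take an arbitrary representative $\eta \in \Ext^n_{\C}(M, \syz^n N)$ and use hypothesis (3) to build a unit conflation $\delta_M \in \U_n(M)$ with middle terms in $n$-$\proj\C'$; by the same closure argument $\syz^n M \in \C'$. Applying Proposition \ref{102}(2) in $\C$ produces a factorization $\eta = g\delta_M$ for some morphism $g: \syz^n M \to \syz^n N$, which is automatically a $\C'$-morphism by fullness. The push-out conflation $g\delta_M$ differs from $\delta_M$ only in its leftmost middle term $P'_{n-1}$, which sits in a conflation $\syz^n N \to P'_{n-1} \to \syz^{n-1} M$ with outer terms in $\C'$; closure of $\C'$ under extensions forces $P'_{n-1} \in \C'$. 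Hence $g\delta_M \in \Ext^n_{\C'}(M, \syz^n N)$, and it maps to $[\eta]$ under $\Phi$.

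For injectivity, the task reduces to showing that any $\eta \in \Ext^n_{\C'}(M, \syz^n N) \cap \p_{\C}$ already lies in $\p_{\C'}$. Via the natural isomorphism $\Ext^n(A, B)/\p \cong \Ext^{n+1}(\syz^{-1}A, B)$ from \ref{ccor}, applied in both $\C'$ and $\C$ using the \emph{same} syzygy $\syz^{-1} M$ built from a $\C'$-unit conflation $M \to Q \to \syz^{-1}M$ with $Q \in n$-$\proj\C'$ (so $\syz^{-1}M \in \C'$ by the dual of Lemma \ref{cok}), the question becomes whether the natural map $\Ext^{n+1}_{\C'}(\syz^{-1}M, \syz^n N) \to \Ext^{n+1}_{\C}(\syz^{-1}M, \syz^n N)$ is injective. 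By hypotheses (1)--(3), one can resolve $\syz^{-1}M$ by objects of $n$-$\proj\C'$ whose syzygies remain in $\C'$ (repeated use of Lemma \ref{cok}); since $n$-$\proj\C' \subseteq n$-$\proj\C$, this same resolution computes $\Ext^{\geq n+1}$ in $\C$ as well, so comparing term-by-term yields the desired isomorphism and hence injectivity of $\Phi$.

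The main obstacle is this final comparison of higher Ext groups: a priori, equivalence of long extensions in $\C'$ is finer than in $\C$ because intermediate terms of the zigzag are constrained to $\C'$. Overcoming this subtlety requires refining any $\C$-zigzag between two $\C'$-extensions to one whose intermediate objects also lie in $\C'$, which is precisely what the $n$-$\proj\C'$-resolutions from (3), together with the closure properties of $\C'$, provide.
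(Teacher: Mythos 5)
Your overall architecture --- an inclusion-induced map $\Phi$, surjectivity by replacing a $\C$-conflation with an equivalent one whose middle terms come from $n$-$\proj\C'$, and injectivity reduced to showing $\Ext^n_{\C'}(M,\syz^nN)\cap\p_{\C}\subseteq\p_{\C'}$ --- is the same as the paper's, and your surjectivity half is essentially their argument (they run the construction of Lemma \ref{gencog} with all the $n$-projectives chosen in $n$-$\proj\C'$, which hypothesis (3) permits, and then invoke closure of $\C'$ under extensions and kernels of epimorphisms exactly as you do). A small repair is needed there: Proposition \ref{102}(2) produces \emph{some} unit conflation $\delta$, not a factorization through a pre-chosen $\delta_M$, so you should phrase this as ``run the construction choosing the deflations from $n$-$\proj\C'$'' rather than ``factor through the fixed $\delta_M$''.

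The genuine gap is in your injectivity step. After passing to $\Ext^{n+1}$ via \ref{ccor}, the claim that a resolution of $\syz^{-1}M$ by objects of $n$-$\proj\C'$ ``computes $\Ext^{\geq n+1}$ in $\C$ as well, so comparing term-by-term yields the desired isomorphism'' is not justified: in an exact category, relative $n$-projectives do not compute $\Ext$ by applying $\hom$ to a resolution, and dimension shifting along a conflation $\syz X\rt Q\rt X$ with $Q$ $n$-projective only identifies $\Ext^{n+1}(X,-)$ with $\Ext^{n}(\syz X,-)/{\p}$ --- that is, it returns you to precisely the statement you are trying to prove. Your closing paragraph correctly names the real obstacle (a $\C$-zigzag of equivalences between two $\C'$-extensions need not have its intermediate objects in $\C'$) but only asserts that the resolutions ``provide'' the refinement, without carrying it out. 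The paper sidesteps the zigzag comparison entirely by arguing directly on the subgroup $\p$: if $\ga=h\ep$ with $\ep\in\Ext^n_{\C}(M,P)$ and $P\in n$-$\proj\C$, use (3) to choose a deflation $Q\re P$ with $Q\in n$-$\proj\C'$; its kernel $\syz P$ lies in $n$-$\proj\C=n$-$\inj\C$, so $\Ext^{n+1}_{\C}(M,\syz P)=0$ and $\Ext^n_{\C}(M,Q)\rt\Ext^n_{\C}(M,P)$ is surjective. Lifting $\ep$ to $\et\in\Ext^n_{\C}(M,Q)$ and representing $\et$ by a conflation in $\C'$ (your surjectivity step again, applied with target $Q$) exhibits $\ga$ as factoring through $Q\in n$-$\proj\C'$, hence $\ga\in\p_{\C'}$. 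Replacing your Ext-group comparison with this lifting argument closes the gap.
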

\begin{proof}
Assume that $M, N$ are arbitrary objects of $\C'$ and take a morphism $(\bar{\ga}, \delta_N)\in{\C_{\p}}(M, N)=(\Ext^n_{\C}(M, \syz^nN)/{\p}, \delta_N)$. {Since $n$-$\proj\C'\subseteq n$-$\proj\C$, we may assume that $\syz^nN\in\C'$. Moreover}, using the fact that $\C$ has enough $n$-$\proj\C'$, one may follow the argument given in the proof of Lemma \ref{gencog}, and get the following commutative diagram:
{\footnotesize \[\xymatrix{\ga':\syz^nN~\ar[r]\ar@{=}[d]& H'\ar[r]\ar[d]_{b_{n-1}}& P_{n-2}\ar[r]\ar[d]_{b_{n-2}} &\cdots \ar[r]& P_0\ar[r]\ar[d]_{b_0} & M\ar@{=}[d]\\ \ga:\syz^nN~\ar[r] &X_{n-1}\ar[r]& X_{n-2}\ar[r]&\cdots\ar[r]& X_0\ar[r] & M,}\]}such that all $P_i^,$s belong to $n$-$\proj\C'$. Since $\C'$ is a full subcategory of $\C$ which is closed under extensions and kernels of epimorphisms, $\ga'$ will be a conflation in $\C'$. This, in turn, implies that $\ga$ can be considered as a conflation in $\C'$, because $\ga=\ga'$.
Now we should prove that if $\ga$ is a $\p$-conflation in $\C'$, then it is a $\p$-conflation in
$\C$ and vice versa. First, assume that $\ga$ is a $\p$-conflation in $\C'$. Since $\C'\subseteq\C$ and any $n$-projective of $\C'$ lies in $n$-$\proj\C$, we infer that $\ga$ is a $\p$-conflation in $\C$. Next, assume that $\ga$ is a $\p$-conflation in $\C$.
Thus, there is a morphism $h:P\rt\syz^nN$ with $P\in n$-$\proj\C$, and $\ep\in\Ext^n_{\C}(M, P)$ such that $\ga=h\ep$. Take a conflation $\syz P\rt Q\st{g}\rt P$, where $Q\in n$-$\proj\C'$. Evidently, $\syz P\in n$-$\proj\C$. This, in turn, yields that the morphism $\Ext^n_{\C}(M, Q)\rt\Ext^n_{\C}(M, P)$ is an epimorphism, and so, there exists $\et\in\Ext^n_{\C}(M, Q)$ such that $g\et=\ep$. As $M, Q\in\C',$ similar to the above diagram, we may assume that $\et\in\Ext^n_{\C'}(M, Q)$. Consequently, $\ga=(hg)\et$, i.e. $\ga$ is a $\p$-conflation in $\C'$, as needed.
\end{proof}

\begin{prop}\label{l}Let $\C'$ be a full subcategory of $\C$ which is closed under extensions and kernels of epimorphisms and let $k\leq n$ be non-negative integers. Assume that
\begin{enumerate}\item $\C'$ is a $k$-Frobenius category. \item $k$-$\proj\C'\subseteq n$-$\proj\C$. \item $\C$ has enough $k$-$\proj\C'$.
\end{enumerate}Consider the composition functor $T'':\C'\st{i}\rt\C\st{T}\rt\C_{\p}$, with $i$ the inclusion functor.
Then there is a unique induced fully faithful functor $F:\C'_{\p}\lrt\C_{\p}$
such that $FT'=T''$, where $(\C_{\p}, T)$ and $(\C'_{\p}, T')$ are phantom
stable categories.
\end{prop}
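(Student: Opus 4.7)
The plan is to apply the universal property of $(\C'_{\p}, T')$ to the composite functor $T'' = T \circ i : \C' \to \C_{\p}$, and then to identify the resulting unique functor $F$ on hom-sets via Lemma~\ref{lem1}. Before invoking the universal property I must verify that $T''$ sends quasi-invertibles of $\C'$ to isomorphisms and kills $k$-$\Ext$-phantoms of $\C'$.

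For a quasi-invertible morphism $s : A \to B$ in $\C'$, Remark~\ref{rems} allows us to assume $s$ is an inflation whose cokernel lies in $k$-$\proj \C'$; by hypothesis~(2) this cokernel lies in $n$-$\proj \C$, and since $\C'$ is closed under extensions in $\C$ the conflation persists in $\C$, so Corollary~\ref{is} applied in $\C$ makes $s$ quasi-invertible in $\C$ and hence $T(s)$ an isomorphism in $\C_{\p}$. For a $k$-$\Ext$-phantom $\varphi$ in $\C'$, the key observation is that $\Ext_{\C'}^{k+1} \varphi = 0$ already forces $\Ext_{\C'}^{n+1} \varphi = 0$: any $\gamma \in \Ext_{\C'}^{n+1}(X, A)$ splits as a Yoneda product $\gamma = \eta \xi$ with $\eta \in \Ext_{\C'}^{k+1}(L, A)$ and $\xi \in \Ext_{\C'}^{n-k}(X, L)$, and then $\varphi \gamma = (\varphi \eta) \xi = 0$ by bilinearity of the Yoneda product. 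By Remark~\ref{remexam}(2), $\C'$ is also $n$-Frobenius with the same class of $n$-projectives, so hypotheses (1)--(3) also hold for $\C'$ viewed as $n$-Frobenius; Lemma~\ref{lem1} then identifies $\C_{\p}(A, B)$ with the hom-set in the phantom stable category of $\C'$ viewed at level $n$, and under this identification $T(\varphi)$ corresponds to the image of $\varphi$ in that stable category, which vanishes because $\varphi$ is $n$-$\Ext$-phantom in $\C'$. Hence $T''(\varphi) = 0$, and the universal property of $(\C'_{\p}, T')$ delivers a unique additive functor $F : \C'_{\p} \to \C_{\p}$ with $F T' = T''$.

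For fully faithfulness, I would show that the phantom stable category of $\C'$ is intrinsic to $\C'$, independent of the choice of Frobenius index; composed with Lemma~\ref{lem1}, this gives an isomorphism $\C'_{\p}(M, N) \cong \C_{\p}(M, N)$ for $M, N \in \C'$ that must coincide with $F_{M, N}$ by uniqueness in the universal property. Independence of the Frobenius index reduces to checking that the defining classes of the phantom stable category agree at levels $k$ and $n$: the equality of quasi-invertibles follows from Remark~\ref{rems} and Corollary~\ref{is} together with $k$-$\proj \C' = n$-$\proj \C'$, the inclusion $\Phi^{(k)} \subseteq \Phi^{(n)}$ of the $\Ext$-phantom classes is the Yoneda-product splitting used above, and for the reverse inclusion, given $\varphi \in \Phi^{(n)}$ and $\eta \in \Ext_{\C'}^{k+1}(X, A)$, I would splice $\eta$ on the right with a unit $(n-k)$-cosyzygy $\zeta$ of $X$: then $(\varphi \eta) \zeta = \varphi(\eta \zeta) = 0$, and right-splicing with $\zeta$ is an isomorphism on $\Ext_{\C'}^{\geq k+1}$ by the standard dimension shift from $\Ext^{i}_{\C'}(Q, -) = 0$ for $i > k$ and $Q \in k$-$\proj \C'$, forcing $\varphi \eta = 0$. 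The main obstacle in this plan is the careful interplay between the $k$- and $n$-Frobenius viewpoints on $\C'$; in particular, the Yoneda-product splitting argument underlying $\Phi^{(k)} \subseteq \Phi^{(n)}$, and the verification that the hom-set identification of Lemma~\ref{lem1} really transports the vanishing of the image of $\varphi$ in $\C'$'s stable category at level $n$ to the vanishing of $T(\varphi)$ in $\C_{\p}$.
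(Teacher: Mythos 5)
Your proposal is correct, but it takes a genuinely different route from the paper's for both halves of the argument. For the step ``$T''$ kills the phantoms of $\C'$'', the paper invokes Proposition \ref{ph} to write a $k$-$\Ext$-phantom $f$ of $\C'$ as $f=ah$ with $a,b\in\si$ and $bh$ factoring through an object of $k$-$\proj\C'\subseteq n$-$\proj\C$, then concludes $T(f)=T(a)T(h)=0$; you instead promote $f$ to an $n$-$\Ext$-phantom of $\C'$ by splicing ($\Ext^{k+1}_{\C'}f=0\Rightarrow\Ext^{n+1}_{\C'}f=0$) and transfer the vanishing through Lemma \ref{lem1}. Both work; the paper's version avoids any appeal to biadditivity of the Yoneda splice product, while yours avoids Proposition \ref{ph} entirely. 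The more substantial divergence is in fully-faithfulness: the paper proves faithfulness and fullness by direct dimension-shifting computations inside $\C_{\p}(M,N)$ (showing a morphism killed by $F$ comes from a $\p$-conflation of $\C'$, and producing preimages via $\Ext^n_{\C}(M,\syz^nN)\cong\cdots\cong\Ext^k_{\C}(M,\syz^kN)/\p$), and only afterwards deduces the index-independence of the phantom stable category as Corollary \ref{p}. You reverse this logical order: you prove index-independence of $\C'_{\p}$ first, by checking that the quasi-invertibles and the phantom classes of $\C'$ agree at levels $k$ and $n$ (so the two universal properties coincide), and then compose with the identification of Lemma \ref{lem1}. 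This is a clean and arguably more conceptual organization, and your verifications of the two class equalities (via Lemma \ref{conf}/Corollary \ref{is} for $\si$, and splicing with unit cosyzygies in both directions for the phantoms) are sound; the underlying dimension shifts are the same ones the paper uses.

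Two points deserve slightly more care than your sketch gives them, though neither is a genuine gap. First, your argument $(\varphi\eta)\xi=\varphi(\eta\xi)=0\Rightarrow\varphi\eta=0$ and the inclusion $\Phi^{(k)}\subseteq\Phi^{(n)}$ both rest on the splice product being additive (so that it respects the zero class and, when $\xi$ is a unit cosyzygy, realizes the connecting isomorphisms); this is standard and available in the paper's reference for relative $\Ext$, but it is used nowhere else in the paper and should be stated. Second, Lemma \ref{lem1} as stated is only an equality of hom-sets; to conclude that it yields a fully faithful \emph{functor} compatible with $T$ and $T'^{(n)}$ you need that the compositions agree and that $T$ computed with a unit conflation drawn from $k$-$\proj\C'$ coincides with $T'^{(n)}$ --- this follows from the independence statements (Theorem \ref{welldef}, Proposition \ref{ind}, Remark \ref{00}), and the paper itself uses Lemma \ref{lem1} in exactly this way for the case $k=n$, so you are at the paper's own level of rigor, but the check should be made explicit.
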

\begin{proof} For any $M, N\in \C'$,  consider a unit conflation $\delta_N:\syz^nN\rt P_{n-1}\rt\cdots\rt P_0\rt N$, with $P_i\in k$-$\proj\C'$, for any $i$, and so, $T''(f)=(\overline{\delta_Nf}, \delta_N)$, for any morphism $f:M\rt N$, because the definition of $T$ is independent of the choice of the unit conflation $\delta_N$. Assume that $f\in\hom_{\C'}(M, N)$ which belongs to $\si$. {In view of Lemma \ref{conf}, there is a conflation $M\rt N\oplus Q\rt P$, in which $P, Q\in k$-$\proj\C'$. Since by our assumption, $P, Q\in n$-$\proj\C$,  Corollary \ref{is} together with Remark \ref{rems}, imply that $f\in\si,$ as a morphism in $\C$, and then,} $T''(f)=T(f)$ will be an isomorphism in $\C_{\p}$. Now suppose that $f:M\rt N$ is an $n$-$\Ext$-phantom morphism in $\C'$. We shall prove that $T''(f)=0$ in $\C_{\p}$. In view of Proposition \ref{ph}, there are morphisms $N\st{a}\lf N''\st{b}\rt N'$ with $a, b\in\si$ and $h:M\rt N''$ such that $f=ah$ and $bh$ factors through an object $Q\in k$-$\proj\C'$. By our assumption, $Q\in n$-$\proj\C$, and so $bh$ will be an $n$-$\Ext$-phantom morphism in $\C$, implying that $T''(bh)=T(bh)=0$. Therefore, as $T''(b)$ is an isomorphism, we have $T''(h)=0$. Consequently, $T''(f)=T''(a)T''(h)=0$ in $\C_{\p}$, as needed. Therefore, the universal property of the phantom stable category gives rise to the existence of a unique functor $F:\C'_{\p}\rt\C_{\p}$ such that $FT'=T''$.

Now we prove that $F$ is faithful. The result for the case $k=n$ follows from Lemma \ref{lem1}. So assume that $k<n$.
Take a morphism $(\bar{\ga}, \delta)\in{\C'_{\p}}(M, N)=(\Ext^k_{\C'}(M, \syz^kN)/{\p}, \delta)$ such that $F((\bar{\ga}, \delta))=0$ in $\C_{\p}$. Suppose that $\ga=\delta' f$ is an $\ruf$ of $\ga$ and take a co-angled pair $[\delta a, \delta' b]$. Since, by the definition $F((\bar{\ga}, \delta))=T''(a)T''(b)^{-1}T''(f)$ and $T''(a)$ and $T''(b)^{-1}$ are isomorphisms, one concludes that $T''(f)\cong 0$ in $\C_{\p}$.
So taking a unit conflation $\delta_1:\syz^nN\lrt P_{n-1}\lrt\cdots\lrt P_k\lrt\syz^kN$ with $P_i^,s\in k$-$\proj\C'$, and setting $\delta'_N:=\delta_1\delta'$, we have $T''(f)=(\overline{\delta'_Nf}, \delta'_N)$. Thus one has the push-out diagram {\footnotesize\[\xymatrix{\et:P~\ar[r]\ar[d]_{g} & L\ar[d]\ar[r] &\cdots \ar[r]& P_1\ar[r]\ar@{=}[d]\ar[r]& H\ar@{=}[d]\ar[r] & M\ar@{=}[d]\\ \delta'_Nf:\syz^nN~\ar[r] & P_{n-1}\ar[r]& \cdots \ar[r] & P_1\ar[r]& H\ar[r] & M,}\]}where $P\in n$-$\proj\C$. Take a unit conflation $\syz P\rt Q\rt P$, where $Q\in k$-$\proj\C'$. Since $k<n$ and $\syz P\in n$-$\inj\C$, it is easily seen that $\Ext^n_{\C}(-, P)=0$ over $\C'$. Consequently, $\et=0$, and then, the same is true for $\delta'_Nf$. Now decomposing the unit conflation $\syz^nN\rt P_{n-1}\rt\cdots\rt P_k\rt\syz^kN$ into conflations of length 1, one may obtain the isomorphisms $\Ext^n_{\C'}(M, \syz^nN)\cong\Ext^{n-1}_{\C'}(M, \syz^{n-1}N)\cong\cdots\cong\Ext^{k+1}_{\C'}(M, \syz^{k+1}N)$. So, for $\delta_2:=\syz^{k+1}N\rt P_k\st{h}\rt\syz^kN$, we will have $\delta_2\ga=0$. Now the exact sequence $\Ext^k_{\C'}(M, P_k)\st{\h}\rt\Ext^k_{\C'}(M, \syz^kN)\st{}\rt\Ext^{k+1}_{\C'}(M, \syz^{k+1}N)$, yields that there exists $\al\in\Ext^k_{\C'}(M, P_k)$ such that $h\al=\ga$, that is, $\ga$ is a $\p$-conflation in $\C'$, and then, $(\bar{\ga}, \delta)=0$.
Finally, we show that the functor $F$ is full. Assume that $M, N\in\C'$ and fix a unit conflation $\delta_N:\syz^nN\lrt P_{n-1}\rt\cdots\rt P_0\rt N$, where $P_i\in n$-$\proj\C$, for any $i$. Suppose that $(\bar{\ga}, \delta_N)\in{\C_{\p}}(M, N)=
(\Ext^n(M, \syz^nN)/\p, \delta_N)$.
In the case $k=n$, the fullness of $F$ follows from Lemma \ref{lem1}.
So assume that $k<n$. Since any object of $k$-$\proj\C'$ is $n$-projective over $\C$, we may further
assume that all terms of the unit conflation $\delta_N$ lie in $\C'$. Therefore, one may deduce that for any $Q\in k$-$\proj\C'$ and $i>k$, $\Ext^i_{\C}(-, Q)=0$ over $\C'$. Now, decomposing the unit conflation
$\delta_1:\syz^nN\rt P_{n-1}\rt\cdots\rt P_k\rt\syz^kN$ into conflations of length 1, gives the isomorphisms
$$\Ext^n_{\C}(M, \syz^nN)\cong\Ext^{n-1}_{\C}(M, \syz^{n-1}N)\cong\cdots\cong\Ext^{k+1}_{\C}(M, \syz^{k+1}N)\cong \Ext^k_{\C}(M, \syz^kN)/\p.$$ Hence, there exists $(\bar{\ga'}, \delta'_N)\in{\C'_{\p}}(M, N)$ such that $F((\bar{\ga'}, \delta'_N))=(\overline{\delta_1\ga'}, \delta_1\delta'_N)=(\bar{\ga}, \delta_N)$, where $\delta'_N:\syz^kN\rt P_{k-1}\rt\cdots\rt P_0\rt N$. So the proof is finished.
\end{proof}

\begin{cor}\label{p}Let $n>k$ and $\C$ be also a $k$-Frobenius category. Then the phantom stable categories of $\C$ as $k$ and $n$-Frobenius, are equivalent.
\end{cor}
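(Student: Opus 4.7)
The plan is to deduce the corollary directly from Proposition \ref{l} by taking $\C' = \C$. First I would record the key observation: since $\C$ is $k$-Frobenius and $k \leq n$, Remark \ref{remexam}(2) guarantees that $\C$ is automatically $i$-Frobenius for every $i \geq k$, with $k$-$\proj\C = i$-$\proj\C$. In particular, the $n$-Frobenius structure on $\C$ referred to in the hypothesis has exactly the same projective-injective objects as the $k$-Frobenius one, so the inclusion $k$-$\proj\C \subseteq n$-$\proj\C$ (the second hypothesis of Proposition \ref{l}) is actually an equality.

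With this in hand, I would verify that the triple of conditions in Proposition \ref{l} holds trivially for the pair $\C' = \C$: condition (1) is the assumption that $\C$ is $k$-Frobenius; condition (2) has just been noted; and condition (3), that $\C$ has enough $k$-projectives of $\C'$, is again the $k$-Frobenius hypothesis on $\C$. Since $\C$ is closed under extensions and kernels of epimorphisms in itself, the ambient hypothesis of Proposition \ref{l} is satisfied as well. Applying the proposition then yields a unique additive functor $F:\C_{\p}^{(k)}\lrt\C_{\p}^{(n)}$ (where the superscripts distinguish the phantom stable categories built from the $k$- and $n$-Frobenius structures) such that $FT' = T''$, and $F$ is fully faithful.

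To upgrade $F$ to an equivalence, I would note that by construction in Theorem \ref{thmst} both $T$ and $T'$ are the identity on objects; hence $FT' = T'' = T\circ i$ (with $i$ the identity inclusion) forces $F$ to be the identity on objects, making it essentially surjective. Combined with the full faithfulness coming from Proposition \ref{l}, this yields the required equivalence. Since all of the real content has already been packaged into Proposition \ref{l}, there is no substantive obstacle; the only point that deserves care is the observation that $k$-$\proj\C$ and $n$-$\proj\C$ coincide, which is what allows the hypotheses of Proposition \ref{l} to be checked for the pair $(\C,\C)$ despite the shift in the dimension of the Frobenius structure.
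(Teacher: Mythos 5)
Your proposal is correct and follows essentially the same route as the paper: apply Proposition \ref{l} with $\C'=\C$ to obtain a fully faithful functor between the two phantom stable categories, and then observe that it is dense (being the identity on objects) to conclude it is an equivalence. The paper's own proof is terser and does not spell out the verification of the hypotheses of Proposition \ref{l}; your observation via Remark \ref{remexam}(2) that $k$-$\proj\C = n$-$\proj\C$ is the right justification and fills in that detail faithfully.
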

\begin{proof}Assume that $\C'_{\p}$ and $\C_{\p}$ denote the phantom stable categories of $\C$, as a $k$ and an $n$-Frobenius category, respectively. According to Proposition \ref{l}, there is a fully faithful functor $F:\C'_{\p}\lrt\C_{\p}$. Evidently, $F$ is also dense, and then, the proof is finished.
\end{proof}

\begin{example}\label{exfaith}
(1) With the notation of Example \ref{ex1}, we set $\C:=\G^{<\infty}$ and $\C':=\G$. Then, by Proposition \ref{l}, there exists a fully faithful functor $\C'_{\p}\rt\C_{\p}$. \\
(2) Assume that $\C$ (resp. $\C'$) is the category consisting of all syzygies of complete resolutions of $n$-projectives (resp. locally free) sheaves over $\x$. If $\C$ has enough locally free sheaves, then Proposition \ref{l} ensures the existence of
a fully faithful functor $\C'_{\p}\rt\C_{\p}$.
\end{example}
}

\begin{prop}
For a given object $M\in\C_{\p}$, the following are equivalent:
\begin{enumerate}
\item ${\C_{\p}}(-, M)=0$.
\item ${\C_{\p}}(M, -)=0$.
\item ${\C_{\p}}(M, M)=0$.
\item $M\in n$-$\proj\C$.
\end{enumerate}
\end{prop}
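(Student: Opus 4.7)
The plan is to establish $(1), (2) \Rightarrow (3) \Rightarrow (4) \Rightarrow (1)$, together with $(4) \Rightarrow (2)$. The implications $(1) \Rightarrow (3)$ and $(2) \Rightarrow (3)$ are immediate by specialising the varying hom-variable to $M$ itself.

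Before attacking the remaining implications I want to record a preliminary fact that will be used throughout: in an $n$-Frobenius category, the class $n$-$\proj\C = n$-$\inj\C$ is \emph{thick}, meaning it is closed under extensions, kernels of deflations, and cokernels of inflations. I would prove this in one line each: for extensions and kernels, apply the long exact sequence of Remark \ref{exact} in the first variable together with $n$-projectivity; for cokernels, apply it in the second variable using $n$-injectivity. Granted thickness, the implications $(4) \Rightarrow (1)$ and $(4) \Rightarrow (2)$ become easy. Indeed, if $M \in n$-$\proj\C$, then splicing a unit conflation $\delta_M \colon \syz^n M \to P_{n-1} \to \cdots \to P_0 \to M$ into length-one pieces and iterating thickness forces $\syz^n M \in n$-$\proj\C$. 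Then for any $X$ and any $\ga \in \Ext^n(X, \syz^n M)$, the trivial identity $\ga = 1_{\syz^n M} \ga$ realises $\ga$ as a push-out along $1_{\syz^n M}$ from the $n$-projective $\syz^n M$, so by the push-out half of Proposition \ref{equal}, $\ga \in \p$. Hence $\Ext^n(X, \syz^n M)/{\p} = 0$ and $\C_{\p}(X, M) = 0$. The symmetric identity $\ga = \ga\, 1_M$, with $M$ itself $n$-projective as target, settles $(4) \Rightarrow (2)$ via the pull-back half of Proposition \ref{equal}.

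The main obstacle is $(3) \Rightarrow (4)$. The starting point is that $1_M = (\bar{\delta}_M, \delta_M)$ vanishes in $\C_{\p}(M, M)$, and since the $\sim$-equivalence class of zero consists exclusively of $\p$-conflations by Proposition \ref{val}, the unit conflation $\delta_M$ must itself be a $\p$-conflation. A bare $\p$-factorization $\delta_M = \eta f$ from Proposition \ref{equal} guarantees only that the endpoint of $\eta$ is $n$-projective, not its middle terms, which is insufficient for a splicing argument. I would therefore apply Proposition \ref{102} to $\eta$ to rewrite $\eta = \delta'' g$, producing the factorization $\delta_M = \delta''(gf)$ in which $\delta'' \colon \syz^n M \to R_{n-1} \to \cdots \to R_0 \to P''$ is now a \emph{unit} conflation: its endpoint $P''$ is $n$-projective and all intermediate $R_i$ are $n$-projective by the very definition of a unit conflation. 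Splicing $\delta''$ and iterating thickness then forces $\syz^n M \in n$-$\proj\C$.

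Finally, splicing the unit conflation $\delta_M$ itself, now with both its starting term $\syz^n M$ and all intermediate terms $n$-projective, and invoking closure of $n$-$\proj\C$ under cokernels of inflations, yields $M \in n$-$\proj\C$. The step I expect to need most care is this upgrade via Proposition \ref{102}: without replacing the bare $\p$-factorization by a unit factorization, the splicing-and-thickness argument simply collapses.
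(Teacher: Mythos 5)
Your logical scheme and the easy implications are fine: the thickness of $n$-$\proj\C=n$-$\inj\C$ does follow from Remark \ref{exact} exactly as you indicate, and your arguments for $(4)\Rightarrow(1)$ and $(4)\Rightarrow(2)$ (splice $\delta_M$ to get $\syz^nM\in n$-$\proj\C$, then use the trivial factorizations $\ga=1_{\syz^nM}\ga$ and $\ga=\ga 1_M$ together with Proposition \ref{equal}) are correct. The problem is in $(3)\Rightarrow(4)$, which is the whole content of the proposition. After writing $\delta_M=\eta f$ with $\eta\in\Ext^n(P,\syz^nM)$ and $P\in n$-$\proj\C$, you apply Proposition \ref{102} to get $\eta=\delta''g$ with $\delta''\in\U^n(\syz^nM)$, and you then assert that the right endpoint of $\delta''$ is $n$-projective ``by the very definition of a unit conflation.'' That is not what the definition says: a unit conflation $\syz^nM\rt R_{n-1}\rt\cdots\rt R_0\rt N''$ has $n$-projective \emph{intermediate} terms $R_i$ only; the end terms $\syz^nM$ and $N''$ are arbitrary objects of $\C$. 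So you have traded one deficiency for another --- $\eta$ had an $n$-projective endpoint but uncontrolled middle terms, while $\delta''$ has $n$-projective middle terms but an uncontrolled endpoint --- and your splicing-plus-thickness induction cannot get started: the rightmost length-one piece $L_1\rt R_0\rt N''$ has only one $n$-projective term, so thickness gives nothing about $L_1$. The intended conclusion $\syz^nM\in n$-$\proj\C$ is logically equivalent to $M\in n$-$\proj\C$, so nothing short of the full implication will rescue this step; there is no purely structural splicing argument available from the factorization data you have assembled.

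For comparison, the paper routes around this by never trying to prove $n$-projectivity of a syzygy structurally. It uses the push-out form of Proposition \ref{equal} to write $\delta_M=h\et$ with $h:P\rt\syz^nM$ and $P\in n$-$\proj\C$, takes a \emph{left} unit factorization $\et=g\delta'_M$ with $\delta'_M\in\U_n(M)$, and obtains $\delta_M=(hg)\delta'_M$: a push-out between two unit conflations ending at $M$ along a map $\syz'^nM\rt\syz^nM$ that factors through $P$. The two unit conflations give dimension-shift isomorphisms $\Ext^{n+1}(X,M)\cong\Ext^{2n+1}(X,\syz'^nM)$ and $\Ext^{n+1}(X,M)\cong\Ext^{2n+1}(X,\syz^nM)$ fitting into a commutative square whose right column is induced by $hg$ and hence vanishes, while the left column is the identity; therefore $\Ext^{n+1}(X,M)=0$ for all $X$, so $M\in n$-$\inj\C=n$-$\proj\C$. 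If you want to keep your outline, you should replace your splicing step by this dimension-shifting argument (or an equivalent one, e.g.\ via Lemma \ref{sig} and the observation that a morphism that is simultaneously quasi-invertible and factors through an $n$-projective forces the relevant $\Ext^{n+1}$ groups to vanish).
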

\begin{proof}
$(1\Rightarrow 2)$: As ${\C_{\p}}(-, M)=0$, we have ${\C_{\p}}(M, M)=0$. So $\delta_M$ is a $\p$-conflation, and in particular, the same is true for $\delta_M1_M$. So  Corollary \ref{ccoo}, yields that $(\Ext^n/{\p})1_M=0$, and particularly, $(\Ext^n(M, -)/{\p})1_M=0$, namely, $\C_{\p}(M, -)=0$.\\
$(2\Rightarrow 3)$: This is obvious.\\
$(3\Rightarrow 4)$: Take the identity morphism $(\bar{\delta}_M, \delta_M)\in\C_{\p}(M, M)$. By our assumption $\delta_M$ is a $\p$-conflation. Thus there exists a morphism $h: P\rt\syz^nM$ with
$P\in n$-$\proj\C$ and $\et\in\Ext^n(M, P)$ such that $\delta_M=h\et$. Taking an $\luf$ $\et=g\delta'_M$ of $\et$, we obtain the following push-out diagram:
\[\xymatrix{{\delta'_M:\syz'}^nM ~\ar[r] \ar[d]_{hg}& Q\ar[r]\ar[d]& \cdots\ar[r] & P_0\ar[r] \ar@{=}[d] & M\ar@{=}[d]\\ \delta_M:\syz^n M~\ar[r] &P_{n-1}~\ar[r] & \cdots\ar[r] & P_0 \ar[r] & M.}\] Hence, for a given object $X\in\C$, we will have a commutative square
{\footnotesize\[\xymatrix{\Ext^{n+1}(X, M)~\ar[r]^{\cong} \ar@{=}[d]& \Ext^{2n+1}(X, {\syz'}^nM)\ar[d]^{ \h\g}\\ \Ext^{n+1}(X, M)~\ar[r]^{\cong} &\Ext^{2n+1}(X, \syz^nM).}\]}As $hg$ factors through the $n$-projective object $P$, the right column is zero, and then $\Ext^{n+1}(X, M)=0$, meaning that $M\in n$-$\inj\C$. Therefore, $M\in n$-$\proj\C$, because $\C$ is $n$-Frobenius.\\
$(4\Rightarrow 1)$: This implication is clear. So the proof is finished.
\end{proof}

\begin{prop}\label{cp}(1) Let $f:M\rt N$ be a morphism in $\si$. Then for any $X\in\C$, ${\C_{\p}}(N, X)\st{\bar{\f}}\rt{\C_{\p}}(M, X)$ is an isomorphism.\\ (2) Let $M\st{f}\rt N\st{g}\rt K$ be a conflation in $\C$. Then, for any object $X\in\C$, there exists an exact sequence
$${\C_{\p}}(K, X)\st{\bar{\g}}\lrt{\C_{\p}}(N, X)\st{\bar{\f}} \lrt{\C_{\p}}(M, X).$$
\end{prop}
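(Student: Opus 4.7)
The overall strategy is to work at the level of the representing hom groups. After fixing a unit conflation $\delta_X\in\U_n(X)$, Proposition \ref{ind} tells us that each $\sim$-class in $\C_\p(M,X)$ has a unique representative lying in $(\Ext^n(M,\syz^n X)/\p,\delta_X)$, so the functor $\C_\p(-,X)$ is naturally identified, as a group-valued functor on $\C$, with $\Ext^n(-,\syz^n X)/\p$. I plan to show that under this identification the map $\bar f$ in the statement is precisely the right multiplication by $f$ on $\Ext^n/\p$ described in Remark \ref{zero}, equivalently, precomposition in $\C_\p$ with the image $T(f)$. With this bookkeeping out of the way, both parts reduce to results already established.

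For assertion (1), since $f\in\si$, clause (1) of the defining property of the phantom stable category (established in Theorem \ref{thmst}) tells us that $T(f)$ is an isomorphism in $\C_\p$. Hence precomposition with $T(f)$ is an isomorphism of abelian groups $\C_\p(N,X)\to\C_\p(M,X)$. To verify that this coincides with the asserted $\bar f$, I would unfold Definition \ref{compo} in the simple case $\bar\ga\circ T(f)=\bar\ga\circ\overline{\delta_N f}$: the equality $\delta_N f=\delta_N\cdot f$ is itself an $\ruf$ of $\delta_N f$, and the trivial co-angled pair $[\delta_N\cdot 1_N,\delta_N\cdot 1_N]$ is admissible, so the definition collapses to $\bar\ga\circ T(f)=(\overline{\ga f},\delta_X)$, as required.

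For assertion (2), with $\syz^n X$ in place of $X$, Proposition \ref{pprop} applied to the conflation $M\xrightarrow{f}N\xrightarrow{g}K$ immediately furnishes the exact sequence
\[
\Ext^n(K,\syz^n X)/\p\xrightarrow{\bar g}\Ext^n(N,\syz^n X)/\p\xrightarrow{\bar f}\Ext^n(M,\syz^n X)/\p,
\]
and translating back along the identification of the first paragraph recovers the desired exact sequence $\C_\p(K,X)\to\C_\p(N,X)\to\C_\p(M,X)$. The only non-routine point in the whole argument is the initial identification of the induced map $\bar f$ on $\C_\p$-hom-sets with the right $\H$-module action of $f$ on $\Ext^n/\p$; once this is settled, (1) is the universal property of the phantom stable category and (2) is a direct transcription of Proposition \ref{pprop}.
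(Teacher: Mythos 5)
Your proposal is correct and follows essentially the same route as the paper, whose entire proof reads ``The first statement is clear. The second one follows from Corollary \ref{qo} and Proposition \ref{pprop}.'' You have simply made explicit the bookkeeping the authors leave implicit: the identification of $\C_{\p}(-,X)$ with $\Ext^n(-,\syz^nX)/{\p}$ via Proposition \ref{ind}, and the verification (using the trivial co-angled pair $[\delta_N 1_N,\delta_N 1_N]$, exactly as in the proof of Proposition \ref{ringhom}) that the induced map $\bar{\f}$ is precomposition with $T(f)$, i.e.\ pull-back along $f$.
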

\begin{proof}The first statement is clear. The second one follows from Corollary \ref{qo} and Proposition \ref{pprop}.
\end{proof}

\begin{s}Let $M,N\in\C$ and let $\syz M\rt Q\rt M$ and $\syz N\rt P\rt N$ be two syzygy sequences of $M$ and $N$, respectively. We would like to define an induced map $\syz:\C_{\p}(M, N)\rt\C_{\p}(\syz M, \syz N)$. In this direction, we must define a map $$\syz:(\Ext^n(M, \syz^nN)/{\p}, \delta_N)\lrt(\Ext^n(\syz M, \syz^{n+1}N)/{\p}, \delta_{\syz N}).$$ One should note that, Proposition \ref{ind} allows us to take $\delta_N:=\syz^nN\rt P_{n-1}\rt\cdots\rt P_0\rt N$ and $\delta_{\syz N}:=\syz^{n+1}N\rt P_n\rt P_{n-1}\rt\cdots\rt P_1\rt\syz N.$ Consider the natural isomorphisms $\Ext^n(M, \syz^nN)/{\p}\cong\Ext^{n+1}(M, \syz^{n+1}N)\cong\Ext^n(\syz M, \syz^{n+1}N)/{\p}$, where the first isomorphism holds true, because of Proposition \ref{pprop} and the second one comes from \ref{ccor}. Denoting the composition of these isomorphisms by $\theta$, we define $\syz((\bar{\ga}, \delta_N)):=(\theta(\bar{\ga}), \delta_{\syz N})$. Indeed, we have the following result, which is analogous to the well-known result in the classical stable category of a Frobenius category.
\end{s}
\begin{theorem}\label{syziso}With the notation above, there is an induced map $\syz:\C_{\p}(M, N)\rt\C_{\p}(\syz M, \syz N)$ which is isomorphism.
\end{theorem}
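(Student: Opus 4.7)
The plan is to show that $\syz$ is a well-defined bijection by exhibiting $\theta$ as the composition of two natural group isomorphisms from earlier in the paper, both of which are already defined modulo $\p$, so no additional compatibility check on the $\sim$-equivalence classes is needed beyond what Proposition \ref{ind} already provides.

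First, I would fix compatible unit conflations via Proposition \ref{ind}: take $\delta_N:\syz^nN\rt P_{n-1}\rt\cdots\rt P_0\rt N$ and then the extension $\delta_{\syz N}:\syz^{n+1}N\rt P_n\rt P_{n-1}\rt\cdots\rt P_0\rt N$ obtained by splicing a chosen conflation $\syz^{n+1}N\rt P_n\rt\syz^nN$ on the left of $\delta_N$. With these choices together with the syzygy sequence $\syz M\rt Q\rt M$ fixed, $\C_{\p}(M,N)$ identifies with $\Ext^n(M,\syz^nN)/{\p}$, and $\C_{\p}(\syz M,\syz N)$ identifies with $\Ext^n(\syz M,\syz^{n+1}N)/{\p}$.

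Next, I would apply Proposition \ref{pprop} to the conflation $\syz^{n+1}N\rt P_n\rt\syz^nN$ and the object $M$: since $P_n\in n$-$\proj\C$, so that $\Ext^{n+1}(M,P_n)=0$, the connecting map of the resulting long exact sequence descends to an isomorphism $\Ext^n(M,\syz^nN)/{\p}\st{\cong}\lrt\Ext^{n+1}(M,\syz^{n+1}N)$. Applying \ref{ccor} to the syzygy sequence $\syz M\rt Q\rt M$ then yields a second natural isomorphism $\Ext^{n+1}(M,\syz^{n+1}N)\st{\cong}\lrt\Ext^n(\syz M,\syz^{n+1}N)/{\p}$. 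Composing them gives $\theta$, and $\syz((\bar{\ga},\delta_N)):=(\theta(\bar{\ga}),\delta_{\syz N})$ is well-defined on representatives, since the domain and codomain of $\theta$ are already quotients by $\p$.

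Finally, bijectivity is immediate: $\theta$ is the composition of two isomorphisms of abelian groups, hence itself an isomorphism, and this transfers verbatim to $\syz$ on $\C_{\p}$-Hom sets. The main obstacle I anticipate is the diagram chase needed to confirm that the connecting map of the long exact sequence associated to $\syz^{n+1}N\rt P_n\rt\syz^nN$ lands in the subquotient cut out by $\p$ (rather than a larger subgroup) and does so naturally in $M$; this is already prefigured in the proof of Proposition \ref{pprop}, so the verification reduces to transporting its commutative diagram through the morphism induced by $\syz M\rt Q\rt M$, which is straightforward.
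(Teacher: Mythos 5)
Your proposal matches the paper's own argument essentially verbatim: the paper likewise fixes $\delta_N$ and its splice $\delta_{\syz N}$ via Proposition \ref{ind}, defines $\theta$ as the composite of the two natural isomorphisms $\Ext^n(M,\syz^nN)/{\p}\cong\Ext^{n+1}(M,\syz^{n+1}N)\cong\Ext^n(\syz M,\syz^{n+1}N)/{\p}$ coming from Proposition \ref{pprop} and \ref{ccor}, and concludes that $\syz$ is an isomorphism because $\theta$ is. The ``obstacle'' you flag at the end (that the kernel of the connecting map is exactly $\p$) is precisely what \ref{ccor} and Corollary \ref{lem2} already settle, so your argument is complete and in line with the paper's.
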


\begin{rem}Let $\L$ be the subcategory of $\coh(\x)$ consisting of all locally free sheaves. As observed in Proposition \ref{locally}, $\C(\L)$ is an $n$-Frobenius subcategory of $\coh(\x)$ with $n$-$\proj\C(\L)=\L$. So considering the phantom stable category $\C(\L)_{\p}$, we have that an object $\F\in\C(\L)$ is locally free if and only if $\C(\L)_{\p}(\F, -)=0=\C(\L)_{\p}(-, \F)$. Next assume that $\x$ is a Grenstein scheme, i.e. all its local rings are Gorenstein local rings. Then, for any $\F\in\coh(\x)$, $\syz^d\F\in\C(\L)$, where $d=\dim\x$, see \cite[Theorem 2.2.3]{ajs}. Thus, we may infer that $\x$ is singular if and only if $\C(\L)_{\p}=0$.
\end{rem}

{\bf{Acknowledgements.}} The authors are grateful to Sergio Estrada and Rasool Hafezi for their reading and suggestions on the first draft of the paper. The authors would like to thank the referee for his/her
careful reading and valuable comments.




\end{document}